\renewcommand{\a }{\alpha }
\renewcommand{\b }{\beta }
\renewcommand{\d}{\delta }
\newcommand{\D }{\Delta }
\newcommand{\e }{\varepsilon }
\renewcommand{\l }{\lambda }
\newcommand{\Ric} {{\rm Ric}}
\newcommand{\n }{\nabla }
\newcommand{\var }{\varphi }
\newcommand{\s }{\sigma }
\renewcommand{\S }{\Sigma}
\renewcommand{\o }{\omega }
\newcommand{\pa }{\partial}
\newcommand{\ov}{\overline}
\newcommand{\be}{\begin{equation}}
\newcommand{\ee}{\end{equation}}
\newtheorem{remark}{Remark}[section]
\newcommand{\R}{\mathbb{R}}
\newcommand{\T}{\mathbb{T}}
\newcommand{\Z}{\mathbb{Z}}
\renewcommand{\P}{\mathbb{P}}
\newcommand{\DD}{\mathbb{D}}
\newcommand{\N}{\mathbb{N}}
\newcommand{\Sc}{\mathrm{Sc}}
\newcommand{\Refx}{\mathrm{Ref}_{\mathbf{e}_{x}}}
\newcommand{\SA}{S^{2}_{\tilde{A}}}
\newcommand{\la}{\langle}
\newcommand{\ra}{\rangle}
\newcommand{\ex}{\mathbf{e}_{x}}
\newcommand{\ScP}{\mathrm{Sc}_P}
\newcommand{\tvph}{\tilde{\varphi}}
\newcommand{\tth}{\tilde{\theta}}
\newcommand{\bvph}{\bar{\varphi}}
\newcommand{\bth}{\bar{\theta}}
\newcommand{\bbe}{\mathbf{e}}
\newcommand{\rd}{\mathrm{d}}
\author{Norihisa Ikoma 
\\ \vspace{-0.1cm}
\footnotesize Faculty of Mathematics and Physics
\\ \vspace{-0.1cm}
       \footnotesize       Institute of Science and Engineering
       \\       \vspace{-0.1cm}
 \footnotesize              Kanazawa University
         \\     \vspace{-0.1cm}
   \footnotesize            Kakuma, Kanazawa
              \\ \vspace{-0.1cm}
    \footnotesize           Ishikawa 9201192, JAPAN
\\
\footnotesize \texttt{ikoma@se.kanazawa-u.ac.jp}
\and  Andrea Malchiodi 
\\ \vspace{-0.1cm}
\footnotesize Scuola Normale Superiore 
\\ \vspace{-0.1cm}
\footnotesize Piazza dei Cavalieri, 7 
\\ \vspace{-0.1cm}
\footnotesize  56126 Pisa, ITALY
\\
\footnotesize \texttt{andrea.malchiodi@sns.it} 
\and  Andrea Mondino 
\\ \vspace{-0.1cm}
\footnotesize Mathematics Institute
\\ \vspace{-0.1cm}
\footnotesize Zeeman Building 
\\ \vspace{-0.1cm}
\footnotesize University of Warwick
\\ \vspace{-0.1cm}
\footnotesize Coventry CV4 7AL, UK
 \\ \footnotesize \texttt{A.Mondino@warwick.ac.uk}}
\title{Embedded area-constrained Willmore tori of small area in Riemannian three-manifolds II:  
Morse Theory}
\begin{document}

\hyphenation{ma-ni-fold}

\newtheorem{lem}{Lemma}[section]
\newtheorem{pro}[lem]{Proposition}
\newtheorem{thm}[lem]{Theorem}
\newtheorem{rem}[lem]{Remark}
\newtheorem{cor}[lem]{Corollary}
\newtheorem{df}[lem]{Definition}

\maketitle


\

\begin{abstract}
	This is the second part of a series of two papers where we construct embedded Willmore tori with small area constraint in Riemannian three-manifolds. 
	In both papers the construction relies on a Lyapunov-Schmidt reduction, the difficulty being the M\"obius degeneration of the tori.  
	In the first paper the construction was performed via minimization, here by Morse Theory. 
	To this aim  we establish  new geometric expansions of the derivative of the Willmore functional on small 
	Clifford tori (in geodesic normal coordinates) which degenerate to small geodesic spheres
	with a small handle under the action of the M\"obius group.
	By using these  sharp asymptotics we give sufficient conditions, in terms of the ambient curvature tensors and  Morse inequalities,  
	for having existence/multiplicity of embedded tori which are stationary for the Willmore functional under the constraint of prescribed (sufficiently small) area.  
\end{abstract}

\begin{center}
	
	\bigskip\bigskip
	
	\noindent{\it Key Words:}  Willmore functional, Willmore tori, nonlinear fourth order partial differential equations,  Lyapunov-Schmidt reduction, Morse theory.

	\bigskip
	
	\centerline{\bf AMS subject classification: } 
	49Q10, 53C21, 53C42, 35J60, 83C99.
\end{center}

\section{Introduction}

This is the second part  of a series of two papers where  embedded area-constrained  Willmore tori  in Riemannian 3-manifolds are constructed. Here the construction is performed via Morse theory, whereas  in the previous paper \cite{IMM1} it was  achieved via minimization/maximization. 
\\

Let us start by recalling the  basic definitions and properties of  the Willmore functional. Given an immersion $i:\Sigma \hookrightarrow (M,g)$ of a closed (compact without boundary) 2-dimensional surface $\Sigma$ into a Riemannian $3$-manifold $(M,g)$, the \emph{Willmore functional} is defined by 
\[
W(i):=\int_{\Sigma} H^2 \, d\sigma
\]
where $d\sigma$ is the area form induced by the immersion and  $H$ is the mean curvature  (we adopt the convention that $H$ is the sum of the principal curvatures or, in other words, $H$ is the trace of the second fundamental form $A_{ij}$  with respect to the induced metric $\bar{g}_{ij}$, i.e. $H:=\bar{g}^{ij} A_{ij}$).

An immersion $i$ is called \emph{Willmore surface} (or Willmore immersion) if it is a critical point of the Willmore functional with respect to normal perturbations or, equivalently, if it satisfies the associated Euler-Lagrange equation
\be\label{eq:WillmoreEq}
\Delta_{\bar{g}} H + H |\mathring{A}|^2 + H \Ric(n, n)=0.
\ee
Here $\Delta_{\bar{g}}$ is the Laplace-Beltrami operator corresponding to the induced metric $\bar{g}$, $(\mathring{A})_{ij}:=A_{ij}-\frac{1}{2}H\bar{g}_{ij}$ is the trace-free second fundamental form, $n$ is a normal unit vector to $i$, and $\Ric$ is the Ricci tensor of the ambient manifold $(M,g)$. Since of course a minimal immersion (i.e. an immersion with vanishing mean curvature) satisfies the Willmore equation, Willmore surfaces are a natural higher order generalization of minimal surfaces. Analogously, area-constrained Willmore surfaces satisfy the equation
\[
\Delta_{\bar{g}} H + H |\mathring{A}|^2 + H \Ric(n, n)=\lambda H,
\]
for some $\lambda\in \R$ playing the role of Lagrange multiplier. These immersions are naturally linked to the {\em Hawking mass} 
\[
m_H(i):=\frac{\sqrt{Area(i)}}{64 \pi^{3/2}} \left(16\pi-W(i)\right),
\]
a quantity introduced in general relativity to measure the mass of a portion of space by means of the 
bending effect on light rays. Clearly,  by the latter formula, the critical points of the Hawking mass under area constraint are exactly the  area-constrained Willmore immersions (see \cite{LMS} and the references therein for more material about the Hawking mass).
\\ 

In case the ambient manifold is the \emph{Euclidean three-dimensional space}, the Willmore functional is invariant under the action of the  M\"obius group (i.e. under composition of the immersion with isometries, homotheties  and inversions with respect to spheres), so the  theory of Willmore surfaces can be seen  as a natural merging between \emph{conformal invariance} and \emph{minimal surface theory}.  This was indeed the motivation of Blaschke and Thomsen in the 1920-'30 to introduce such an energy,  rediscovered by Willmore \cite{Will}  in the 60's  and thoroughly studied in  the last twenty years by a number of authors \cite{BK,BR, KS,LY,MN,Riv1,Ros,SiL,Top} (for more details see the introduction of our first paper \cite{IMM1}). Here let us just recall that the minimum of $W$ among all immersed surfaces in $\R^3$ is  achieved by the round sphere  \cite{Will}, the minimum among immersed surfaces of strictly positive genus  is  achieved by the Clifford torus and its M\"obius deformations (the existence of a smooth  minimum among genus one surfaces was proved by Simon \cite{SiL}, the characterization of the minimum was the long standing Willmore conjecture recently proved by Marques-Neves \cite{MN}), and for every positive genus the infimum is achieved by a smooth immersion (the proof of Bauer-Kuwert \cite{BK} is built on top of Simon's work \cite{SiL}  and some geometric ideas of Kusner \cite{Kus}; see also the different approach by Rivi\`ere \cite{Riv1,Riv2})  but it is a challenging open problem to characterize such immersion.      
\\

While all the aforementioned results about Willmore surfaces concern immersions into  the Euclidean space (or, equivalently by conformal invariance, for immersions into a round sphere); the results concerning Willmore immersions into curved Riemannian manifolds are much more limited and recent. In a first stage \cite{CM,LM1,LM2,LM,LMS, Mon1,Mon2}  the existence of Willmore spheres has been investigated  in a perturbative setting. 
The global variational problem, i.e. the existence of smooth immersed spheres minimizing quadratic curvature functionals in compact  Riemannian 3-manifolds,  was then studied in \cite{KMS} by ex\-ten\-ding the Simon's ambient approach to Riemannian manifolds (see also \cite{MonSch} for the non compact case).  
In  \cite{MR1}-\cite{MR2},  a parametric approach  for weak immersions into Riemannian manifolds was developed and the  existence of branched area-constrained Willmore spheres in homotopy classes  established  (as well as the existence of Willmore spheres under various assumptions and  constraints).  
\\

Since all the above existence results  in Riemannian manifolds concern surfaces \emph{of genus $0$}, a natural question is about the \emph{existence of higher genus Willmore surfaces in general curved spaces}; in particular we will focus on the \emph{genus one} case. 

Let us mention that if the ambient space has some special symmetry then the Willmore equation  \eqref{eq:WillmoreEq}  simplifies and it is possible to  construct explicit examples (see for instance  \cite{Wang} for product manifolds and  \cite{BFLM} for  warped product metrics). See also \cite{ChenLi} for the existence of stratified weak branched immersions of arbitrary genus minimizing quadratic curvature functionals under various constraints. 
\\

The goal of the present (and the previous \cite{IMM1}) work is to construct smooth embedded Willmore tori with small area 
constraint in Riemannian 3-manifolds, under some curvature/topological condition but without any symmetry assumption.  More precisely we obtain the following  main result.

\begin{thm}[Existence]\label{thm:Exixtence}
Let $(M, g)$ be a smooth  closed orientable  three-dimensional Riemannian manifold. 
Assume that the scalar curvature is a Morse function and that at every critical point $P$ of the scalar curvature, the Ricci tensor has three distinct eigenvalues.
 Then there exists $\e_0>0$ such that for every  $\e \in (0,\e_0]$ there exists a smooth embedded Willmore torus in $(M,g)$ with constrained 
	area equal to $4 \sqrt{2} \pi^2 \e^2$.   
More precisely, the above surfaces are obtained as normal graphs over exponentiated (M\"obius transformations of) Clifford tori and the corresponding graph functions 
(dilated by a factor $1/\e $) converge to $0$ in $C^{4,\a}$-norm as $\e\to 0$ with decay rate $O(\e^2)$. 
\end{thm}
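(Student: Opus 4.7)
The plan is to implement a Lyapunov–Schmidt reduction in which the infinite-dimensional Willmore problem is reduced to finding critical points of a finite-dimensional functional on a parameter space of approximate solutions. The approximate solutions are small exponentiated Möbius deformations of Clifford tori; since the noncompact Möbius orbit in $\R^{3}$ degenerates to small geodesic spheres with a handle, the main analytic difficulty is to carry the reduction down to these degenerate limits and to extract sufficiently sharp asymptotics of the reduced functional along the degenerating direction.

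First, we parametrize the approximate solutions by data $(P,\Phi,\e)$, where $P\in M$, $\e>0$ is a small scale fixed so that the unperturbed area equals $4\sqrt{2}\pi^{2}\e^{2}$, and $\Phi$ ranges over a slice $\mathcal{N}$ of the Möbius group of $\R^{3}$ transverse to the isometric stabilizer of the standard Clifford torus $T_{Cl}\subset\R^{3}$. The candidate surface $\Sigma_{P,\Phi,\e}$ is the image of $\e\,\Phi(T_{Cl})$ under the exponential map at $P$ in geodesic normal coordinates. Using the estimates from the companion paper, we solve the Willmore equation orthogonally to the (approximate) kernel of the linearized Willmore operator by a contraction-mapping argument in a weighted $C^{4,\a}$-space, producing a smooth normal graph function $w_{P,\Phi,\e}$ on $\Sigma_{P,\Phi,\e}$ satisfying $\|w_{P,\Phi,\e}/\e\|_{C^{4,\a}}=O(\e^{2})$.

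Second, we expand the reduced functional $\mathcal{F}_{\e}(P,\Phi):=W(\Sigma_{P,\Phi,\e}+w_{P,\Phi,\e})$ in the scale $\e$. Using the sharp geometric expansions of the derivative of the Willmore functional on small Clifford tori (the main technical content announced in the abstract), one obtains
\[
\mathcal{F}_{\e}(P,\Phi)=W(T_{Cl})+\e^{2}\,\Psi(P,\Phi)+o(\e^{2})
\]
in $C^{2}$-norm with respect to $(P,\Phi)$, where $\Psi$ is explicit in terms of $\Sc(P)$, traces of $\Ric(P)$ against the conformal ``axis'' encoded by $\Phi$, and the Möbius modular parameter of the degeneration. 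Under the hypothesis that $\Sc$ is Morse and that $\Ric$ has three distinct eigenvalues at each critical point of $\Sc$, the function $\Psi$ is coercive along the noncompact $\Phi$-directions and attains its infimum at an interior critical point $(P_{0},\Phi_{0})$ of $M\times\mathcal{N}$. A standard perturbative argument applied to $\mathcal{F}_{\e}/\e^{2}$ then produces, for $\e$ small, a nearby critical point $(P_{\e},\Phi_{\e})$ of $\mathcal{F}_{\e}$, and the corresponding surface $\Sigma_{P_{\e},\Phi_{\e},\e}+w_{P_{\e},\Phi_{\e},\e}$ is a genuine area-constrained Willmore immersion; smoothness follows from elliptic regularity for \eqref{eq:WillmoreEq}, and embeddedness is ensured by the $O(\e^{2})$ smallness of the graph.

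The hard step is controlling the reduction along the degeneration of the Möbius orbit: as $\Phi$ escapes to infinity in $\mathcal{N}$, the torus $\e\,\Phi(T_{Cl})$ collapses to a geodesic sphere with a shrinking handle, and both the geometry and the spectrum of the linearized Willmore operator degenerate in a delicate way. Showing coercivity of $\Psi$ in this direction requires precisely the nondegeneracy provided by the three-distinct-eigenvalues condition on $\Ric$, which forces an anisotropic Ricci contribution to dominate the isotropic scalar-curvature term along the degenerating Möbius parameter. Making this qualitative picture quantitative, with enough uniformity to feed it back into the Lyapunov–Schmidt scheme, is the role of the new geometric expansions established earlier in the paper.
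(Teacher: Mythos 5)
Your overall Lyapunov--Schmidt framework matches what the paper does, but the critical step where you invoke \emph{coercivity} of the reduced functional along the degenerating M\"obius direction is incorrect, and this is precisely where the paper's real work lies. As the M\"obius parameter $\o$ escapes to $\pa\DD$, the inverted Clifford torus collapses to a small geodesic sphere of the same area, and the reduced energy $\Phi_\e$ does \emph{not} blow up: by Proposition \ref{p:globest} it converges to $8\pi^2 - \tfrac{8\sqrt{2}}{3}\pi^2\e^2\,\Sc_P + O(\e^3)$, a finite value depending smoothly on $P$. Thus there is no coercivity. Worse, the derivative of $\Phi_\e$ in the degenerating direction (the normal derivative at $\pa\mathcal{T}_{\e,K_r}$) is governed by the anisotropic Ricci quantity
$\mathcal{F}(P,R)=\Ric_P(R\mathbf{e}_y,R\mathbf{e}_y)-\Ric_P(R\mathbf{e}_z,R\mathbf{e}_z)$
(see Proposition \ref{p:varvar2} and Remark \ref{r:varvar2}), and this quantity changes sign on $SO(3)$. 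So at some boundary points the energy increases inward and at others it decreases; a naive minimization on the closure would a priori be attained on the degenerate boundary, producing a geodesic sphere rather than a torus.

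What actually rescues the argument is a boundary analysis, not coercivity. The paper's route is Morse theory for manifolds with boundary (Morse--Van Schaack): one shows that $\Phi_\e$ satisfies the general boundary conditions on $\pa\mathcal{T}_{\e,K_r}$ (Lemma \ref{l:gbdrycond}), computes the Betti numbers $\tilde{\b}_q$ of $\mathcal{T}_{\e,K_r}\simeq M\times\mathbb{B}\R\P^2$ (Remark \ref{r:betti}), and counts ($\tilde C_q$) the boundary critical points whose normal derivative points inward, using the sharp expansion of the normal derivative in terms of $\mathcal{F}$. The Morse inequalities $\tilde C_q\geq\tilde\b_q$ would be violated for $q=0,1$ without interior critical points, which gives existence. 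Alternatively (Remark \ref{r:minmax}) a short min/max argument works: at the boundary point realizing $\min_{\pa}\Phi_\e$, which lies near the maximum of $\Sc$, one can rotate $R$ so that $\mathcal{F}(P,R)>0$ (possible since $\Ric_P$ is not a multiple of the identity), forcing the normal derivative to push energy down into the interior --- but this is an explicit sign argument based on the derivative expansion, not coercivity. In short: your proposal identifies the right geometric ingredient (the anisotropic Ricci term dominating along the degeneration) but attributes to it the wrong mechanism; without proving coercivity you cannot conclude, and coercivity does in fact fail. You must replace it either by the Morse--Van Schaack framework or by the explicit boundary normal-derivative sign argument.
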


\begin{remark}\label{r:mild}
(i) The assumptions in Theorem \ref{thm:Exixtence} are generic in the metric $g$.

(ii) If the Ricci tensor is not a multiple of the identity at all points of global maximum and minimum of the scalar curvature then we have at least two critical tori, see Remark \ref{r:minmax}. 
\end{remark}

\

We also obtain a generic multiplicity result.  To state it, we need to introduce some more notation.  As above assume that $(M,g)$ is a closed  connected and orientable three-manifold, that the scalar curvature $P \mapsto \ScP$ is a Morse function and that  
at every critical point $P$ of the scalar curvature the Ricci tensor  $\Ric_P$ has three distinct eigenvalues.
For $q =0,\ldots,3$, we set 
\[
		C_q := \sharp \{ P_i \in M  \;:\;  \nabla \Sc (P_i) = 0, \quad 
		\mathrm{index}\, ( -\nabla^2 \mathrm{Sc} (P_i) )  = q  \};
\]
we then define
\begin{equation}\label{eq:deftCq}
\tilde{C}_0=\tilde{C_1}:=0, \quad \tilde{C}_2:=4 \, C_0,\quad \tilde{C}_q:=4\, C_{q-2}+2 \, C_{q-3},\, q=3,4,5, \quad \tilde{C}_6:=2 \, C_3. 	
\end{equation}
Finally, considering the Betti numbers of $M$ with $\Z_2$ coefficients 
$$
  \beta_q := \mathrm{rank}_{\Z_2} ( H_q ( M; \Z_2 ) ); \qquad q \geq 0, 
$$	
we define  
	\begin{equation}\label{eq:defbq}
	\tilde{\b}_0 = 1; \quad \tilde{\b}_1 = \b_1 + 1; \quad \tilde{\b}_2 = \tilde{\b}_3 = \b_1 + 
	\b_2 + 1; \quad \tilde{\b}_4 = \b_2 + 1; \quad \tilde{\b}_5 = 1; \quad \tilde{\b}_k = 0 
	\; \; \hbox{ for } k \geq 6. 	
	\end{equation}
	
	\begin{remark}\label{r:betti}

	\emph{(i)} The numbers $\tilde{\b}_q$ are the Betti numbers (with $\Z_2$ coefficients) 
	of the projective tangent bundle  over $M$.  By a classical result of differential topology due to Stiefel 
	(see for instance  \cite[page 148]{MilS}), three-dimensional oriented manifolds are parallelizable, 
	i.e. the tangent bundle is trivial: $TM \simeq M \times \R^3$.  As a consequence, the projective tangent bundle is homeomorphic to $M \times \R\P^2$. Since 
	 $H_k(\R\P^2, \Z_2)=\Z_2$ for $0\leq k \leq 2$ and zero otherwise, the $\tilde{\b}$'s can be 
	 computed as a direct application of K\"unneth's formula. 

   \emph{(ii)} Using the homology of $M$ with $\Z_2$ coefficients is more convenient than using  
   standard $\Z$ coefficients for a number of reasons. First of all K\"unneth's formula turns out to be easier. 
   Secondly, the Betti numbers with $\Z_2$ coefficients  of a compact manifold $X$ are always bounded below by the Betti numbers with $\Z$ coefficients, this because they also keep track of the $\Z_2$-torsion part. The precise relation between the two is given by the  Universal Coefficients Theorem (see for instance \cite[Chapter 3.A]{Hatcher}), which implies that $H_k(X, \Z_2)$ consists of 
  \begin{itemize}
  \item a $\Z_2$ summand for each $\Z$ summand  of $H_k(X,\Z)$,
  \item a $\Z_2$ summand for each $\Z_{2^n}$ summand in $H_k(X,\Z)$, $n\geq 1$,
  \item a  $\Z_2$ summand for each $\Z_{2^n}$ summand in $H_{k-1}(X,\Z)$, $n\geq 1$.
  \end{itemize} 
   In particular, in our case of $X=M\times \R\P^2$, the $\Z$-Betti numbers vanish in dimension larger than three while  the  $\Z_2$-Betti numbers do not vanish in dimension 4 and 5.   Clearly this permits stronger conclusions in terms of existence and multiplicity of critical points via Morse-theoretic arguments. 
   \end{remark}

Now we are ready to state our second main theorem. 

\begin{thm}[Generic multiplicity]\label{thm:Multiplicity}
	Let $(M, g)$ be a smooth  closed orientable  three-dimensional Riemannian manifold. 
Then for generic metrics $g$, if  $\tilde{\beta}_q-\tilde{C}_q >0$ for some $q\in\{0,\ldots,4\}$, then there exists $\e_0>0$ such that for every  $\e \in (0,\e_0]$  there are at least $\tilde{\beta}_q-\tilde{C}_q$
smooth embedded Willmore tori in $(M,g)$ with constrained  area equal to $4 \sqrt{2} \pi^2 \e^2$ and with index $q$. In particular there are at least  $\sum_{q=0}^4 (\tilde{\beta}_q-\tilde{C}_q)^+$ area-constrained Willmore tori.
\end{thm}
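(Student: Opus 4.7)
The plan is to perform a Lyapunov--Schmidt reduction and then apply finite--dimensional Morse theory to the resulting reduced functional, using the sharp geometric expansions of the derivative of $W$ on M\"obius-deformed Clifford tori established earlier in the paper. The approximate Willmore tori are parameterized by: (i) a centre point $P\in M$; (ii) an unoriented axis in $T_PM$, i.e.\ an element of $\R\P^2$; and (iii) M\"obius parameters whose interior corresponds to genuine thin tori and whose boundary describes the degeneration of the Clifford torus into a small geodesic sphere with a thin handle. The natural compactification of this parameter space is the projective tangent bundle $\P(TM)$, which by Remark \ref{r:betti}(i) is homeomorphic to $M\times\R\P^2$ and whose mod--$2$ Betti numbers are the $\tilde\b_q$ of \eqref{eq:defbq}.

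After the reduction, one obtains a smooth reduced functional $\tilde F_\e$ on this compactified parameter space whose critical points are in one--to--one correspondence with area--constrained Willmore immersions close to the approximate ones. The asymptotic expansion of $\tilde F_\e$ as $\e\to 0$, derived from the sharp expansions of $W$, shows that the $\R\P^2$--dependence is controlled at leading order by a quadratic form $\Phi_P$ in the Ricci eigenvalues at $P$, while the $M$--dependence (after suitable averaging over $\R\P^2$) is controlled by the scalar curvature $\Sc(P)$; under the standing assumption that $\Ric_P$ has three distinct eigenvalues at every $\Sc$--critical point $P$, the form $\Phi_P$ is Morse on $\R\P^2$, with three critical axes of indices $0,1,2$. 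A careful analysis of $\tilde F_\e$ near the M\"obius boundary then identifies the critical points originating from the sphere--with--handle degeneration as exactly those with $P$ a $\Sc$--critical point and $\xi\in\R\P^2$ a critical axis of $\Phi_P$; a multiplicity count based on the sharp expansions shows that each such $P$ of $\Sc$--index $k$ contributes exactly $4$ critical points of reduced index $k+2$ and $2$ of reduced index $k+3$, reproducing the numbers \eqref{eq:deftCq}.

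Finally, applying the Morse inequalities for $\tilde F_\e$ on $\P(TM)\simeq M\times\R\P^2$ yields, for each $q\in\{0,\ldots,4\}$, at least $\tilde\b_q$ critical points of reduced index $q$, hence at least $\tilde\b_q - \tilde C_q$ critical points lying in the interior M\"obius region. Each such critical point corresponds via the Lyapunov--Schmidt reduction to a smooth area-constrained Willmore immersion of Morse index $q$; the same perturbative estimates used in \cite{IMM1} (and in the first half of this paper) show that the rescaled graph functions tend to $0$ in $C^{4,\a}$, so the tori are embedded for $\e$ small enough. Genericity of $g$ is used to guarantee simultaneously that $\Sc$ is Morse, that $\Ric$ has three distinct eigenvalues at every $\Sc$--critical point (so that $\Phi_P$ is Morse on $\R\P^2$), and that $\tilde F_\e$ itself is Morse on the compactified parameter space. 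The main obstacle I foresee is the rigorous treatment of the M\"obius boundary: one must control $\tilde F_\e$ and its critical points in a neighbourhood of the degeneration with enough precision to identify both their location and their Morse indices from the sharp asymptotics, and one must verify that on the smooth model $\P(TM)$ the spurious (sphere--with--handle) critical points separate cleanly from the genuine (torus) ones, so that the global Morse--theoretic count is meaningful.
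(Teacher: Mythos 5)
Your overall strategy (Lyapunov--Schmidt reduction, asymptotic expansion of the reduced energy, identification of the boundary contributions as the $\tilde C_q$, genericity via transversality) is aligned in spirit with the paper's, but there is a genuine structural gap in the Morse-theoretic step.

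You claim that the natural compactification of the parameter space is the projective tangent bundle $\P(TM)\simeq M\times\R\P^2$, a \emph{closed} $5$-manifold, and propose to apply closed-manifold Morse inequalities there, then subtract off "spurious'' critical points located at the M\"obius degeneration. This does not match the geometry. Once you keep track of (i) the base point $P\in M$, (ii) the axis $\xi\in\R\P^2$, \emph{and} (iii) the two-dimensional M\"obius disk $\DD$, the reduced functional lives on the $7$-dimensional manifold-with-boundary $M\times\mathbb{B}\R\P^2$ ($\mathbb{B}\R\P^2$ the open unit-ball bundle of $T\R\P^2$), whose boundary $M\times S(T\R\P^2)$ has dimension $6$, not on the $5$-dimensional $\P(TM)$. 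The space $M\times\R\P^2$ only appears as the \emph{homotopy retract} of $\mathcal T_{\e,K_r}$ obtained by deforming $r\to 0$; it is the source of the Betti numbers $\tilde\b_q$, but it is not a closed manifold carrying the reduced functional. You cannot run Morse theory for a function on a $7$-manifold on its $5$-dimensional deformation retract, nor is there a sense in which $\tilde F_\e$ "extends'' across the M\"obius degeneration to a Morse function on $\P(TM)$: the boundary expansion (Proposition \ref{p:globest}) shows that as $r\to 1$ the $R$-dependence washes out at rate $(1-r)^2$, so any naive extension degenerates along the full $SO(3)$ fibre.

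The correct mechanism, used in the paper, is Morse theory for manifolds with boundary in the sense of Morse--Van~Schaack \cite{MVS}. One fixes $r<1$ close to $1$, works on the manifold with boundary $\mathcal T_{\e,K_r}$, and verifies that $\Phi_\e$ satisfies the \emph{general boundary conditions} (nondegenerate gradient at the boundary, Morse restriction to the boundary; this is Lemma \ref{l:gbdrycond}, and it requires the $C^2$-expansion of Proposition \ref{p:globest} together with the $(1-r)^2$-scaled $\mathcal F$-term coming from Proposition \ref{p:varvar2}). The $\tilde C_q$ are then, by definition, the number of boundary critical points of $\Phi_\e|_{\partial\mathcal T_{\e,K_r}}$ of index $q$ whose gradient points \emph{inward}; the extra factor $\tfrac12$ accounts for the $\Z_2$-redundancy of the boundary parametrization (Remark \ref{r:5new}). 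The Morse--Van~Schaack inequalities on $\mathcal T_{\e,K_r}$ relate the $\tilde\b_q$ (computed from the retract $M\times\R\P^2$) and the $\tilde C_q$ directly to the number of interior critical points of index $q$, giving the count $\tilde\b_q-\tilde C_q$. Your "spurious critical point'' heuristic lands on the right numbers, but without the manifold-with-boundary framework there is no rigorous identification of $\tilde C_q$ with a count of things you are allowed to subtract, and the dimensional mismatch means the picture of a closed-manifold Morse function cannot be repaired. The genericity step you sketch (making $\Phi_\e$ Morse via metric perturbation) is fine; the paper carries it out via the Saut--Temam transversality theorem \cite{ST}.
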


\begin{remark}\label{r:01}
Notice that we always have $\tilde{\beta}_q-\tilde{C}_q>0$, for $q=0,1$,  so the above result implies in particular that for generic metrics there exist at least two area-constrained Willmore tori, one with index zero and the other with index one, the  
index being intended for critical points of the Willmore functional under  area constraint. Also, as the Morse inequalities on $M$ imply $C_q \geq \beta_q$ for generic metrics, the condition $\tilde{\beta}_q-\tilde{C}_q >0$ is 
not satisfied for  $q = 5$ or $q=6$. 
\end{remark}

\

\noindent {\bf Examples.}  
If	 $M$ is homeomorphic to $S^3$, $S^2\times S^1$ or $S^1\times S^1 \times S^1$, we get the following values for $\tilde{\beta}_k$.
	\begin{eqnarray}
		M=S^3 &:& \tilde{\beta}_k= 1\; \text{ for } k=0,\ldots,5, \;  \tilde{\beta}_k=0\text{ for } k\geq 6.  \nonumber\\
		M=S^2\times S^1 &:& \tilde{\beta}_0=\tilde{\beta}_5=1, \;  \tilde{\beta}_1=\tilde{\beta}_4=2, \; \tilde{\beta}_2=\tilde{\beta}_3=3,\;  \tilde{\beta}_k=0\text{ for } k\geq 6. \nonumber\\
		M=(S^1)^3 &:& \tilde{\beta}_0=\tilde{\beta}_5=1, \; \tilde{\beta}_1=\tilde{\beta}_4=4,\; \tilde{\beta}_2=\tilde{\beta}_3=7,\; \tilde{\beta}_k=0\text{ for } k\geq 6. \nonumber
	\end{eqnarray}

\subsection*{Outline of the strategy}
As in our first paper \cite{IMM1} the proof relies on a Lyapunov-Schmidt reduction (encoding the 
variational structure of the problem, see \cite{ab1,ab2} and the book \cite{am2}). Using  such  techniques, together with the stability property of Clifford tori proved by Weiner \cite{WEINER} (see also the related gap-energy result \cite{MonNgu}), we reduce the problem of finding area-constrained Willmore tori to a \emph{finite dimensional} variational problem.  More precisely we consider the finite dimensional space of the images, via the exponential map in $(M,g)$, of   M\"obius-inverted Clifford tori with small area. Notice that since the action of the M\"obius group is non-compact, such a finite dimensional space is non-compact too, with degeneracy due to the presence of 
a shrinking handle.  

In the present work, the rough idea used to  infer existence of critical points is to exploit  the topology of the finite dimensional space ${\mathcal T}_\e$ of exponentiated and rotated  M\"obius images of Clifford tori  having area $4 \sqrt{2} \pi^2 \e^2$ and argue via a Morse-theoretical approach. To this aim, recalling that under our assumptions $M$ is parallelizable, we first observe that the space  ${\mathcal T}_\e$  is diffeomorphic to $M\times \mathbb{B}\R\P^2$,  $\mathbb{B}\R \P^2$ being the bundle of tangent vectors to $\R\P^2$ with length less than 1. The geometric situation of tori degenerating to geodesic spheres with shrinking handles corresponds 
to approaching the boundary of $M\times \mathbb{B}\R\P^2$, consisting in the vectors of length one in $T \R \P^2$. 
In order to apply Morse theory to a manifold with boundary (see for instance the classical work of Morse-Van Schaack \cite{MVS}) it is crucial to understand the normal derivative at the boundary of the manifold; this corresponds in our framework to computing the derivative with respect to the M\"obius parameter. Such a computation is quite delicate since we need sharp estimates and since the torus is degenerating  (as it is natural to expect, the computation involves singular integrals); this will take a large part of the present paper (for the final result see Proposition \ref{p:varvar2} and Remark \ref{r:varvar2}). 

A crucial role in such an expansion of the normal derivative is played by the  function $\mathcal{F}$ defined below. 
Given $P \in M$, and  an orthonormal frame $\{\mathbf{e}_{P,1}, \mathbf{e}_{P,2}, \mathbf{e}_{P,3}\}_{P \in M}$ at $P$, 
we define $\mathcal{F}(P,\cdot): SO(3) \to \R$ by 
	\[
		\mathcal{F}(P,R) := \Ric_{P}(R \mathbf{e}_{P,2}, R \mathbf{e}_{P,2}) 
		- \Ric_{P} ( R \mathbf{e}_{P,3}, R \mathbf{e}_{P,3} ). 
	\]
The assumptions of Theorem \ref{thm:Exixtence} imply indeed the following 
non-degeneracy condition for $\ScP$ and $\mathcal{F}$: for the proof of the second one see Proposition  
\ref{p:FNonDeg}.
	\begin{enumerate}
		\item[$(ND1)$] 
			The function $P \mapsto \ScP: M \to \R$ is a Morse function. 
			In particular, $\mathrm{Sc}$ has finitely many critical points 
			$P_1,\ldots,P_k$. 
		\item[$(ND2)$] 
			For each $i=1,\ldots,k$, 
			 $\mathcal{F}_i(R) := \mathcal{F}(P_i,R): SO(3) \to \R$ is a Morse function 
		for every $1 \leq i \leq k$,  and 
			$\mathcal{F}_i(R) \neq 0$ if $\nabla \mathcal{F}_i(R) = 0$. 
	\end{enumerate} 
By $(ND2)$, every $\mathcal{F}_i$ has finitely many critical points and 
we call them $R_{i,1}, \ldots, R_{i,\ell_i}$: recalling \eqref{eq:deftCq}, 
by Proposition \ref{p:FNonDeg} it turns out that 
\begin{equation}\label{eq:10-nov}
\tilde{C_q} := \frac 12 \sharp \{ (P_i, R_{i,\ell}) \in M \times SO(3)  \;:\;
		\mathrm{index}\, ( -\nabla^2 \mathrm{Sc} (P_i) ) 
		+ \mathrm{index}\, (-\nabla^2 \mathcal{F}_i (R_{i,\ell})) = q  \text{ and } \mathcal{F}_i(R_{i,\ell})<0\}. 
\end{equation}
By our energy expansions, see Section \ref{s:pf}, the  $\tilde{C}_q$'s represent the numbers of critical points of index $q$ 
for the restriction of the Willmore to the boundary of ${\mathcal T}_\e$ (defined above) such that 
the gradient of the energy points inwards ${\mathcal T}_\e$.  Notice that the factor $\frac 12$ in the definition of $\tilde{C}_q$  is a consequence of the symmetry of the degenerate Clifford torus: indeed for every degenerate Clifford torus there exists a non trivial rotation $R\in SO(3), R \neq Id$ leaving  the surface invariant (for more details see Remark \ref{r:5new}). 
The conclusion of  Theorems \ref{thm:Exixtence} and \ref{thm:Multiplicity} will then follow from 
the general results in \cite{MVS}.


\

Besides  Theorems  \ref{thm:Exixtence} and \ref{thm:Multiplicity}, the main contribution of the present paper is the aforementioned  expansion for the derivative of the Willmore energy on degenerating tori (see Proposition \ref{p:varvar2}). We believe that it might play a role in further developments of the topic, especially in ruling-out possible degeneracy phenomena under global (non-perturbative) variational approaches to the problem, as it has already happened for
the case of Willmore spheres (see for instance \cite{KMS,MR2,MonSch}).

\

\noindent 

The outline of the paper is as follows: in Section \ref{s:prel} we recall some preliminary results, as well as 
the finite-dimensional reduction of the constrained Willmore problem from \cite{IMM1}. In Section \ref{s:degtori} we
analyse in detail the M\"obius degeneration of Clifford tori to spheres, describing their asymptotics (away from the shrinking handle) 
as normal graphs. In Section  \ref{s:comp} we derive one core estimate, namely the variation of the  Willmore energy on (degenerated) Clifford tori with respect to the M\"obius parameter. In Section \ref{s:pf} we prove our main theorem via Morse theory, and 
finally in the Appendix we collect some explicit computations.

\subsection*{Acknowledgements}
The first author was supported by JSPS Research Fellowships 24-2259. 
The second author was supported by the FIRB project 
{\em Analysis and Beyond} and by the PRIN 
{\em Variational and perturbative aspects of nonlinear differential problems}. 
The third author acknowledges  the support of  the ETH fellowship. 
This work was done while the first author visited the second author 
at the University of Warwick and at SISSA: 
he would like to express his gratitude for the hospitality.

\section{Preliminaries}\label{s:prel}

Denoting by $g_0$ the flat Euclidean metric, let us first state a basic property of the Willmore functional $W_{g_0}$ for immersions $i : \Sigma \to \R^3$ 
$$
W_{}(i(\Sigma)) = \int_{\Sigma} H^2 d \s. 
$$

\begin{pro}\label{p:Mobinv}
	Let $\Sigma$ be a closed surface of class $C^2$  and let $i : \Sigma \to \R^3$ be an immersion. 
	Then, if $\l > 0$ and if  $\Phi_{x_0,\eta}$ is a M\"obius inversion (see \eqref{eq:Phix0eta}), one has the invariance properties 
	$$
	a) \quad W_{g_0}(\l i(\Sigma)) = W_{g_0}(i(\Sigma)) \quad \quad \hbox{ and } \quad \quad b) \quad W_{g_0}((\Phi_{x_0,\eta} 
	\circ i)(\Sigma)) = W_{g_0}(i(\Sigma)) 
	\quad \hbox{ provided } x_0 \not\in i(\Sigma). 
	$$
\end{pro}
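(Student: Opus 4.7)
The plan is to handle (a) and (b) separately, with very different arguments.

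For part (a), a direct scaling computation suffices. Parametrizing $\Sigma$ locally by an immersion $X$, the dilated surface is parametrized by $\lambda X$. The induced metric and its inverse scale as $\bar g_{ij}\mapsto \lambda^2\bar g_{ij}$ and $\bar g^{ij}\mapsto \lambda^{-2}\bar g^{ij}$, the unit normal is unchanged, and $A_{ij}=-\la \partial_i n,\partial_j X\ra\mapsto \lambda A_{ij}$. Hence $H=\bar g^{ij}A_{ij}\mapsto \lambda^{-1}H$ while $d\sigma\mapsto\lambda^2\, d\sigma$, so $H^2\,d\sigma$ is pointwise invariant and (a) follows by integration.

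For part (b), the map $\Phi_{x_0,\eta}$ is a conformal diffeomorphism of $\R^3\setminus\{x_0\}$ with conformal factor $\eta^2/|x-x_0|^2$, so it is not an isometry and $H$ is \emph{not} individually preserved. The clean route is to split the integrand by means of the identity
\[
H^2 \;=\; 2|\mathring A|^2 + 4K,
\]
where $K$ denotes the Gaussian curvature of the induced metric; this follows from $|\mathring A|^2=(\kappa_1-\kappa_2)^2/2$, $K=\kappa_1\kappa_2$ and $H=\kappa_1+\kappa_2$. The conclusion then rests on two classical facts. First, $|\mathring A|^2\,d\sigma$ is \emph{pointwise} invariant under any conformal change of the ambient metric: the traceless second fundamental form is conformally covariant with exactly the weight that compensates the rescaling of the area element. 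Second, since $x_0\notin i(\Sigma)$, the composition $\Phi_{x_0,\eta}\circ i$ is still a $C^2$ immersion of the closed surface $\Sigma$ with the same topology, so by the Gauss--Bonnet theorem
\[
\int_{\Sigma} K\, d\sigma \;=\; 2\pi\chi(\Sigma) \;=\; \int_{(\Phi_{x_0,\eta}\circ i)(\Sigma)} K\, d\sigma.
\]
Integrating the identity for $H^2$ over $\Sigma$ and comparing with the analogous integral over the image then yields (b).

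The main technical point is the pointwise conformal invariance of $|\mathring A|^2\,d\sigma$. Establishing it requires tracking how the induced metric, the unit normal, and the Weingarten map transform under a conformal ambient rescaling, but the needed identities are standard. A more hands-on alternative would be to compute the transformation of $H$ and $n$ directly from the explicit formula $D\Phi_{x_0,\eta}(x)=(\eta^2/|x-x_0|^2)\,R(x)$ with $R(x)$ orthogonal; this route, however, is considerably more tedious and obscures the underlying conformal nature of the statement.
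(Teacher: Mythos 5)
Your proof is correct. The paper states Proposition \ref{p:Mobinv} without proof, treating it as a classical fact about the Euclidean Willmore energy, so there is no authorial argument to compare against. Your route is the standard one: the direct scaling calculation for $(a)$ is exactly what one writes down, and for $(b)$ the decomposition $H^2 = 2|\mathring{A}|^2 + 4K$, combined with the pointwise conformal invariance of $|\mathring{A}|^2\,d\sigma$ and Gauss--Bonnet, is precisely the classical argument (going back to Blaschke/Thomsen/Willmore). Two small remarks. First, the hypothesis $x_0\notin i(\Sigma)$ is used twice: once to ensure $\Phi_{x_0,\eta}\circ i$ is again a $C^2$ immersion of the compact $\Sigma$ (so the Euler characteristic and hence $\int K\,d\sigma$ is unchanged), and once to ensure the conformal factor $\eta^2/|x-x_0|^2$ is smooth and bounded on a neighbourhood of the image; you flag the first use but it is worth making the second explicit. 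Second, the claimed pointwise invariance of $|\mathring{A}|^2\,d\sigma$ follows from the transformation law $\tilde{\mathring A}_{ij}=e^u\mathring A_{ij}$ under $\tilde g = e^{2u}g_0$, which gives $|\tilde{\mathring A}|^2_{\tilde{\bar g}} = e^{-2u}|\mathring A|^2_{\bar g}$ and $d\tilde\sigma = e^{2u}d\sigma$; spelling out this one computation would make the proof self-contained rather than resting on a cited "classical fact," but as an outline it is sound.
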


\noindent We will next introduce some notation and  recall the finite-dimensional reduction 
procedure from \cite{IMM1}.

\subsection{Notation and small tori in manifolds}\label{ss:not}

We consider the standard Clifford torus $\T$ obtained via the following 
parametrization  
	\[
		\T := \left\{ X( \tvph , \tth) \;:\;
		\tvph, \tth \in [-\pi,\pi] \right\} 
	\]
where 
	\[
		X(\tvph,\tth) := 
		\left( (\sqrt{2}+\cos \tvph) \cos \tth, \ 
		(\sqrt{2} + \cos \tvph) \sin \tth, \ 
		\sin \tvph  \right).
	\]

\noindent		For $x_0 \in \R^3$ and 
$\eta > 0$, the spherical inversion 
with respect to $\partial B_{\eta}(x_0)$ 
is defined by 
\begin{equation}\label{eq:Phix0eta}
\Phi_{x_0,\eta}(x) := 
		\frac{\eta^2}{|x - x_0|^2} ( x - x_0 ) + x_0.
\end{equation}
For any smooth compact surface 
$\Sigma \subset \R^3 \backslash \{x_0\}$, 
we set $\overline{\Sigma} := \Phi_{x_0,\eta}(\Sigma)$ 
and we denote 
the volume elements of $\Sigma$ and $\overline{\Sigma}$ 
by $d \sigma_{\Sigma}$ and $d \sigma_{\overline{\Sigma}}$ respectively. 
Then it is well known that 
	\[
		d \sigma_{\overline{\Sigma}} = 
		\frac{\eta^4}{|x-x_0|^4} d \sigma_\Sigma. 
	\]
We are interested in M\"obius maps which preserve the area of $\T$: we first 
translate the torus by the vector $- ( \sqrt{2} + 1 + \xi) \mathbf{e}_x$, $\xi > 0$ 
where $\mathbf{e}_x := (1,0,0)$ (so that it will be contained in $\{x_1 < 0 \}$), 
and then choose $\xi = \xi_\eta > 0 $ depending on $\eta$ so to preserve the area (see Lemma 2.1 in \cite{IMM1}).  We set
	\[
		\T_{\xi_\eta} := \T - ( \sqrt{2} + 1 + \xi_\eta) \mathbf{e}_x, 
		\quad 
		Y(\tvph,\tth,\eta):= 
		X(\tvph,\tth) - (\sqrt{2}+1+\xi_\eta) 
		\mathbf{e}_x
	\]
and observe that 
\begin{equation}\label{2}
  4 \sqrt{2} \pi^2 = {\rm Area}(\Phi_{0,\eta}(\T_{\xi_\eta})) 
  		= \eta^4 \int_{-\pi}^{\pi} \int_{-\pi}^{\pi} 
  		\frac{\sqrt{2} + \cos \tvph}
  		{|Y(\tvph,\tth,\eta)|^4} d \tvph d \tth. 
\end{equation}

\noindent
Our aim is to describe degenerating tori, namely to understand quantitatively 
the behaviours of 
$\xi_{\eta}$ and $\Phi_{0,\eta}(\T_{\xi_{\eta}})$  as $\eta \to 0$. 
To do so, we define the following map:
	\[
		Z(\bvph,\bth,\eta) 
		:= \Phi_{0,\eta} 
		\left( Y(\eta^2 \bvph, \eta^2 \bth,\eta) \right)
		= \Phi_{0,1} \left( 
		\eta^{-2} Y(\eta^2 \bvph, \eta^2 \bth,\eta)
		\right)
	\]
for $(\bvph,\bth) \in \R^{2}$. In Section 2 of \cite{IMM1} the following result 
was proved.

	\begin{lem}\label{104} (\cite{IMM1})
For each $\eta > 0$, there exists a unique $\xi_{\eta}>0$ such that 
	\[
		{\rm Area}\, (\Phi_{0,\eta} ( \T_{\xi_{\eta}})) = 4 \sqrt{2} \pi^{2}.
	\]
Moreover, the map $\eta \mapsto \xi_{\eta}: (0,\infty) \to (0,\infty)$ 
is smooth and strictly increasing in $(0,\infty)$. 
In addition, $\xi_{\eta} \to 0$ as $\eta \to 0$ and 
$\xi_{\eta} \to \infty$ as $\eta \to \infty$. Furthermore we have the properties

\begin{description}

\item{\rm{(i)}} 
$\eta^{4} / \xi_{\eta}^2 = 4 \sqrt{2} \pi + O(\eta^2)$ as $\eta \to 0$.

\item{\rm{(ii)}} 
$\Phi_{0,\eta}(\T_{\xi_{\eta}})$ converges to 
the sphere with radius $\sqrt[4]{2\pi^{2}} $ 
centred at $-\sqrt[4]{2\pi^{2}} \mathbf{e}_x$ 
in the following sense: 
for any $R>0$ and $k \in \N$, if $\eta \leq 1/ R^4$, then 
	\[
		\left\| Z(\cdot, \cdot, \eta  ) - Z_0    \right\|_{C^k ( [-R,R]^2 ) }
		\leq C_{k} \eta^{3/2}
	\]
as $\eta \to 0$, where $C_k$ depends only on $k$ and 
$Z_0$ is defined by 
	\[
		Z_0(\bvph,\bth) := 
		\Phi_{0,1} \left(  
		- \frac{1}{2 \sqrt[4]{2\pi^{2}}} \mathbf{e}_{x}
		 + (\sqrt{2}+1) \bth \mathbf{e}_y + \bvph \mathbf{e}_z
		 \right)
	\]
where $\mathbf{e}_y := (0,1,0)$ and $\mathbf{e}_z := (0,0,1)$. 
\end{description}
\end{lem}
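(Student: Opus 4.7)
The plan is to write the area condition \eqref{2} as a scalar equation in the unknown $\xi$ and then perform a careful asymptotic analysis. Define
\[
A(\xi) := \int_{-\pi}^{\pi}\int_{-\pi}^{\pi} \frac{\sqrt 2 + \cos\tvph}{|X(\tvph,\tth) - (\sqrt{2}+1+\xi)\ex|^4} \, d\tvph \, d\tth \qquad (\xi > 0),
\]
so that the area condition ${\rm Area}(\Phi_{0,\eta}(\T_{\xi})) = 4\sqrt{2}\pi^2$ reduces to $\eta^4 A(\xi) = 4\sqrt{2}\pi^2$. The integrand is smooth in $\xi$ and uniformly bounded on compact subsets of $(0,\infty)$, so $A \in C^\infty(0,\infty)$; and since $X_1 = (\sqrt 2 + \cos\tvph)\cos\tth \le \sqrt 2 + 1 < \sqrt 2 + 1 + \xi$ pointwise, differentiation under the integral yields $A'(\xi) < 0$ strictly. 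The analysis of step (i) below shows $A(\xi)\to +\infty$ as $\xi \to 0^+$, while $A(\xi) \to 0$ as $\xi \to +\infty$ by dominated convergence. The intermediate value theorem then gives existence and uniqueness of $\xi_\eta$, the implicit function theorem gives its smoothness, implicit differentiation of $\eta^4 A(\xi_\eta) = 4\sqrt 2 \pi^2$ gives $\xi_\eta' > 0$, and the claimed limits of $\xi_\eta$ follow.

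For (i), I would localize $A$ near the singular point $(\tvph,\tth) = (0,0)$ via the blow-up $\tvph = \xi a$, $\tth = \xi b$. Taylor expansion gives
\[
Y_1 = -\xi - \tfrac{1}{2}\xi^2 a^2 - \tfrac{\sqrt 2+1}{2}\xi^2 b^2 + O(\xi^4), \qquad Y_2 = (\sqrt 2 + 1)\xi b + O(\xi^3), \qquad Y_3 = \xi a + O(\xi^3),
\]
so that $|Y|^2 = \xi^2\bigl(1 + a^2 + (\sqrt 2 + 1)^2 b^2\bigr) + O(\xi^4)$, and after the substitution $c := (\sqrt 2 + 1) b$ the leading contribution of the rescaled local integral is
\[
\int_{\R^2} \frac{da\,dc}{(1+a^2+c^2)^2} = \pi.
\]
The contribution of the complementary region (where $|Y|$ is bounded below) and of the Taylor remainders are $O(1)$ uniformly in small $\xi$ after multiplication by $\xi^2$, producing $\xi^2 A(\xi) = \pi + O(\xi^2)$. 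Substituting into $\eta^4 A(\xi_\eta) = 4\sqrt 2 \pi^2$ and using $\xi_\eta = O(\eta^2)$ gives $\eta^4/\xi_\eta^2 = 4\sqrt 2 \pi + O(\eta^2)$.

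For (ii) I would exploit the formula $Z(\bvph,\bth,\eta) = \Phi_{0,1}\bigl(\eta^{-2} Y(\eta^2 \bvph, \eta^2\bth, \eta)\bigr)$. Taylor-expanding $X(\eta^2\bvph, \eta^2\bth)$ in $\eta^2$ uniformly on $(\bvph,\bth)\in[-R,R]^2$ gives
\[
\eta^{-2} Y(\eta^2\bvph, \eta^2\bth, \eta) = -\frac{\xi_\eta}{\eta^2}\ex + (\sqrt 2 + 1)\bth\,\mathbf{e}_y + \bvph\,\mathbf{e}_z + O(\eta^2) \quad \text{in } C^k([-R,R]^2).
\]
By (i), $\xi_\eta/\eta^2 = 1/\bigl(2\sqrt[4]{2\pi^2}\bigr) + O(\eta^2)$ (which is sharper than the claimed $\eta^{3/2}$ rate), so the right-hand side converges in every $C^k([-R,R]^2)$ to the argument of $\Phi_{0,1}$ appearing in $Z_0$. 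Since this limiting argument lies in a compact set bounded away from $0 \in \R^3$, $\Phi_{0,1}$ is smooth there and the chain rule propagates the error bound to $Z-Z_0$. The main obstacle is the sharp asymptotic $\xi^2 A(\xi) = \pi + O(\xi^2)$: one has to carry the Taylor expansion of $|Y|^2$ at $(\tvph,\tth) = (0,0)$ to the required order, control the rescaled integrand outside a large ball in the blown-up variables, and verify that neither the higher Taylor terms nor the regular piece of the domain spoils the remainder. Once this is in hand, (ii) is a routine Taylor expansion composed with the smooth inversion $\Phi_{0,1}$.
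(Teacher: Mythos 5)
The lemma is cited from \cite{IMM1} and has no proof in this paper, so my comments address only the internal correctness of your argument; your overall strategy (area condition $\eta^4 A(\xi)=4\sqrt2\pi^2$, monotonicity of $A$, blow-up at the singular point, Taylor expansion for the convergence to $Z_0$) is the natural one and is fully consistent with the analytic framework used in the later estimates of the paper (compare the $I_\eta/J_\eta$ decomposition and the polar change of variables in the proof of Lemma~\ref{l:xi-xi'}).

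However, there is a genuine error in the claimed remainder in step~(i). You assert $\xi^2 A(\xi)=\pi+O(\xi^2)$, supported by the claim that ``the Taylor remainders are $O(1)$ uniformly in small $\xi$ after multiplication by $\xi^2$.'' That is not so. The Taylor expansion of the distance (cf.\ \eqref{eq:exp-Y}) is
\[
|Y|^2 = \tvph^2 + (\sqrt2+1)^2\tth^2 + \xi^2 + \xi\bigl(\tvph^2 + (\sqrt2+1)\tth^2\bigr) + O(\tvph^4+\tth^4),
\]
and the cross term $\xi(\tvph^2+(\sqrt2+1)\tth^2)$, which you drop at leading order, produces a first-order correction to $1/|Y|^4$ that is \emph{not} integrable to something of size $O(1)$ on the local region. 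A direct computation in the polar coordinates $(\tvph,\tth)=(r\cos\Theta,(\sqrt2+1)^{-1}r\sin\Theta)$ gives a correction proportional to $\xi^3\int_0^{\rho_0} r^3(r^2+\xi^2)^{-3}\,dr = O(\xi)$, so in fact $\xi^2A(\xi)=\pi-\tfrac{\pi\sqrt2}{2}\xi+O(\xi^2\log(1/\xi))$ with a \emph{nonzero} linear term. (That this linear term is really present is corroborated by Remark~\ref{r:conv-psi-eta}, where $\eta^{-4}(\xi_\eta'\eta-2\xi_\eta)\to -\sqrt2/(8\tilde A^2)=-1/(8\pi)\ne0$; a remainder $O(\xi^2)$ in your asymptotic would force this limit to vanish.) Your final conclusion $\eta^4/\xi_\eta^2=4\sqrt2\pi+O(\eta^2)$ is nevertheless correct, because $\xi_\eta\sim\eta^2/(2\tilde A)$ converts the $O(\xi_\eta)$ error into $O(\eta^2)$; you should state the blow-up estimate as $\xi^2A(\xi)=\pi+O(\xi)$ and conclude from there.

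There is also an imprecision in step~(ii). You estimate $\eta^{-2}Y(\eta^2\bvph,\eta^2\bth,\eta)=-\tfrac{\xi_\eta}{\eta^2}\ex+(\sqrt2+1)\bth\,\mathbf e_y+\bvph\,\mathbf e_z+O(\eta^2)$ ``uniformly on $[-R,R]^2$'' and then remark that this ``is sharper than the claimed $\eta^{3/2}$ rate.'' It is not, because the Taylor remainder in the $\ex$-component is $O(\eta^2(\bvph^2+\bth^2))=O(\eta^2R^2)$, so your constant depends on $R$, whereas the lemma asserts a constant $C_k$ \emph{independent} of $R$. The $\eta^{3/2}$ rate stated in the lemma is exactly what survives after imposing $\eta\le 1/R^4$ (so that $\eta^2R^2\le\eta^{3/2}$); your argument is consistent with this but you need to make the $R$-dependence explicit and then pass to the uniform constant via the hypothesis $\eta\le 1/R^4$ rather than treating $R$ as fixed.
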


\

\noindent A more detailed analysis of $\xi_\eta$ will be carried out in Section \ref{s:degtori}. Incorporating 
also rotations around the $z$ axis, we obtain a smooth two-dimensional family of tori with the same area 
which includes $\T$. Its properties can be summarized in the following result. 

%

\begin{pro}\label{p:disk} (\cite{IMM1}, Section 2) There exists a smooth family of conformal immersions $T_\omega$ of $\T$ into $\R^3$, parametrized by 
$\o \in \DD$, $\DD$ being the unit disk in $\R^2$, which preserves the area of $\T$ and for which the following  hold 
\begin{description}
\item{a)}  $T_0$ = Id;  

\item{b)} for $\o \neq 0$, $T_\omega$ is an inversion with respect to a sphere centred at a point in $\R^3$ aligned to $\o$ (viewed as  
an element of $\R^3$ with null $z$-component); 

\item{c)} as $|\omega|$ approaches $1$, $T_\omega(\T)$ degenerates to a sphere of radius $\sqrt[4]{2\pi^2} $ centred at $\sqrt[4]{2\pi^2} \frac{\o}{|\o|}$. 
\end{description}

\end{pro}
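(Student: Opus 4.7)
The plan is to construct $T_\omega$ explicitly by combining the area-preserving M\"obius inversions of Lemma \ref{104} with rotations about the $z$-axis. Write $\omega = \nu \hat\omega$ with $\nu = |\omega| \in [0,1)$ and $\hat\omega \in S^1 \subset \R^2$, identified with a unit vector in $\R^3$ having null $z$-component. First I reparametrize the M\"obius parameter $\eta \in (0,\infty)$ by a smooth strictly monotone bijection $\nu = \nu(\eta)$ arranged so that $\nu \to 1$ as $\eta \to 0$ (the degenerate limit) and $\nu \to 0$ as $\eta \to \infty$ (an identity-like limit, after a suitable centering translation). Letting $R_{\hat\omega}$ denote the rotation about the $z$-axis sending $-\mathbf{e}_x$ to $\hat\omega$, I then set, for $\omega \neq 0$,
\[
T_\omega(x) := R_{\hat\omega}\bigl(\Phi_{0,\eta(\nu)}(x - c_{\eta(\nu)}\mathbf{e}_x)\bigr), \qquad c_\eta := \sqrt{2}+1+\xi_\eta,
\]
and $T_0 := \mathrm{Id}$. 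Each $T_\omega$ is a composition of M\"obius maps (translations, spherical inversion, rotation), and hence conformal.

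Granted the construction, the three properties follow directly. Area preservation is exactly the content of Lemma \ref{104}, combined with the fact that rotations about the $z$-axis are isometries. Statement (a) holds by definition. For (b), the essential inversion in $T_\omega$ is about the sphere of radius $\eta$ centered at $R_{\hat\omega}(c_\eta \mathbf{e}_x) = -c_\eta\hat\omega$, which lies along the line through $\omega$. For (c), as $|\omega| \to 1$ one has $\eta(\nu) \to 0$, and by Lemma \ref{104}(ii) the surface $\Phi_{0,\eta}(\T_{\xi_\eta})$ converges in $C^k$ on compact subsets to the sphere of radius $\sqrt[4]{2\pi^2}$ centered at $-\sqrt[4]{2\pi^2}\mathbf{e}_x$; applying $R_{\hat\omega}$ yields the sphere of the same radius centered at $\sqrt[4]{2\pi^2}\hat\omega = \sqrt[4]{2\pi^2}\,\omega/|\omega|$, as required.

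The main technical obstacle is ensuring that the family is smooth at $\omega = 0$: M\"obius inversions do not converge to the identity under naive parameter limits, so an additional compensating translation absorbing the far-field limit of $\Phi_{0,\eta}\circ(\cdot - c_\eta\mathbf{e}_x)$ as $\eta \to \infty$ must be built into the definition, together with a carefully tailored reparametrization $\nu(\eta)$. Once this gluing at the origin is arranged --- which is the delicate point of the construction, already carried out in Section 2 of \cite{IMM1} --- the smoothness on the whole disk $\DD$ follows from the smoothness of $\eta \mapsto \xi_\eta$ from Lemma \ref{104}, and the verification of (a), (b), (c) is straightforward.
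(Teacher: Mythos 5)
This proposition is quoted from \cite{IMM1}, Section~2; the present paper does not reprove it, so there is no internal proof against which your sketch can be checked. On its own terms, your outline puts the main pieces in the right places --- the reparametrization $\nu(\eta)$, the rotation $R_{\hat\omega}$, the use of Lemma~\ref{104} for area preservation and for the degenerate limit (c) --- but you have deliberately left open the one point that carries the real content, namely the smooth gluing at $\omega=0$, and that gap is genuine rather than a matter of routine verification.

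Concretely, the difficulty is not merely ``an additional compensating translation.'' As $\eta\to\infty$ the inversion $\Phi_{0,\eta}$, after subtracting its value at a base point and letting the center $c_\eta\mathbf{e}_x$ recede to infinity, converges to a \emph{reflection} composed with a similarity, not to the identity: this is already visible from the derivative formula \eqref{eq:DPhi}, whose eigenvalue in the radial direction is negative. So no choice of compensating translation alone can make $\Phi_{0,\eta(\nu)}\circ(\cdot-c_{\eta(\nu)}\mathbf{e}_x)$ converge to $\mathrm{Id}$ as $\nu\to 0$; one must also build a reflection into the normalization (cf.\ the role of $\Refx$ and $\Psi_\eta=\Refx\circ\Phi_\eta$ in Section~\ref{ss:32}). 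That reflection interacts with property~(b): once you insert it, $T_\omega$ is no longer a bare sphere inversion, so one must either exploit the reflection symmetry of $\T$ to move the reflection to the domain side, or argue at the level of $T_\omega(\T)$. You should also make explicit why the dependence on $\hat\omega$ through $R_{\hat\omega}$ becomes negligible as $\nu\to 0$, since $\hat\omega$ is discontinuous at $\omega=0$; this requires that the whole composite map be independent of $\hat\omega$ \emph{to all orders} in the limit, which again hinges on the normalization you have postponed. Until the construction at $\omega=0$ is written down and these points are addressed, the argument is an honest outline but not a proof.
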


\

\noindent In what follows, we will use the symbol $\T_{\omega}$ for $T_{\omega}(\T)$. We will describe next the global structure 
of exponential maps of scaled and rotated tori in the manifold $M$. 
\\For each $P \in M$ we construct a family of surfaces from $\T$, 
$R \in SO(3)$ and $T_{\o}$: 
\[
\{ \exp_{P} ( \e R \T_{\o}  ) \;:\; R \in SO(3), \ \o \in \DD \}, 
\]
where $\e > 0$ is chosen small.  Notice that, due to the rotation invariance of the Clifford torus $\T$, the above family is 4-dimensional and not 5-dimensional; indeed it is not difficult to see that it can be parametrized by 
$\mathbb{B}\R \P^2$,  the bundle of tangent vectors to $\R\P^2$ with length less than 1. Letting then $P$ vary, we obtain 
a seven-dimensional bundle over $M$ with fiber $\mathbb{B}\R \P^2$. We will see in the next subsection that the above tori 
form a family of approximate solutions to our problem, and that they may be slightly modified to become true solutions.

\begin{rem}\label{r:78}
In order to further simplify the notation we  will sometimes parametrize the space of exponentiated tori $\exp_P (\e R \T_\o)$ by $(P,R,\o)\in M\times SO(3) \times \DD$. Notice that in this way we are using an extra parameter; this has the advantage of simplifying our notation.
\end{rem}

\subsection{Finite-dimensional reduction}\label{ss:22}

We also recall the finite-dimensional procedure in \cite[Section 3]{IMM1} to attack the constrained 
Willmore problem. This procedure consists in finding first a family of approximate solutions, which will 
be then adjusted to constrained Willmore surfaces up to some Lagrange multiplier.

\

\noindent We fix a  compact set $K$ (typically, a closed ball centred at the origin) of the unit disk $\DD$ and we consider then the family 
\[
   \hat{\mathcal{T}}_{\e, K}= \left\{ \e  \, R \, \T_\o \; : \; R \in SO(3), \o \in K  \right\}. 
\]
We notice that, by construction, elements in $\hat{\mathcal{T}}_{\e,K}$ consists of  Willmore  surfaces in 
$\R^3$ all with area identically equal to $4 \e ^2 \sqrt{2} \pi^2$. 
We then construct a family of surfaces in $M$ defined by exponential maps of elements in  $\hat{\mathcal{T}}_{\e,K}$ 
from arbitrary points $P$ of $M$.
Here we remark that since $M$ is parallelizable (see Remark \ref{r:betti}), 
there exist a global orthonormal frame $\{F_{P,1}, F_{P,2}, F_{P,3}\}_{P \in M}$ 
and we may identify $TM$ with $M \times \R^3$. 
Using this identification, we may also regard the exponential 
map $\exp_P^g$ as a map from $\R^3$ into $M$ for each $P \in M$. 
Then we set 
\begin{equation} \label{eq:tildetKe}
   {\mathcal{T}}_{\e, K}= \left\{ \exp_P(\Sigma) \; : \; P \in M, \Sigma \in \hat{\mathcal{T}}_{\e, K} \right\}. 
\end{equation}
It will be convenient for us to scale coordinates in order to work with surfaces whose area is of order 1, 
exploiting the scaling invariance of the Willmore functional. 
Precisely, introduce a new metric $g_\e $ by 
	\[
		g_\e (P) := \frac{1}{\e^2} g(P).
	\]
Then we have the following facts: (see Section 3 in \cite{IMM1})

	\begin{enumerate}
		\item[(i)] 
			Write $W_g$ and $W_{g_\e}$ for the Willmore functional on 
			$(M,g)$ and $(M,g_\e)$. Then $\Sigma$ is a Willmore surface with the 
			area constraint in $(M,g)$ if and only if  it is so  in $(M,g_\e)$. 
		
		\item[(ii)] 
			The exponential maps $\exp_P^g$ on $(M,g)$ are diffeomorphic  
			on the Euclidean ball $B_{\rho_0}$ for each $P \in M$ and 
			satisfies 
				\[
					\exp_P^{g} (\e z) = \exp^{g_\e}_P(z)
				\]
			for all $|z| \leq \e^{-1} \rho_0$ where $\exp^{g_\e}_P$ is 
			the exponential map on $(M,g_\e)$. 
		
		\item[(iii)] 
			Set $g_{P} := (\exp_P^g)^\ast g$ and 
			$g_{\e ,P} := (\exp_P^{g_\e})^\ast g_{\e}$. Then $g_{\e ,P,\a \b}$ 
			has the following expansion: 
				\begin{equation}\label{eq:ge=d+eh}
					g_{\e ,P,\a \b} (y) = \delta_{\a \b} + \e^2 h^{\e}_{P,\a \b} (y) 
					\qquad 
					\text{for each $|y|_{g_0} \leq \e^{-1} \rho_0$ }
				\end{equation}
			where $h^\e_{P,\a \b}(y)$ satisfies 
				\begin{align}
						& h_{P,\a \b}^\e (y) = 
						\frac{1}{3} R_{\alpha \mu \nu \beta} y^\mu y^\nu 
						+ \tilde{R}_{\a \b}(\e ,y) , \quad 
						\sum_{i = 0}^\ell  \left| \nabla^i \tilde{R}(\e ,\cdot) \right| 
						\leq C_\ell \e^3, 
						\label{eq:ex-g-2}
						\\
						& |y|^{-2} | h^\e _{P,\a \b} (y) | 
						+ |y|^{-1} | \nabla_y h^\e_{P,\a \b} (y) | + 
						\sum_{i=2}^\ell | \nabla^i h^\e_{P,\a \b} (y) | 
						\leq h_{0,\ell},
						\label{eq:defh}
						\\
						& |y|^{-2} |D^{k+1}_P g_{\e ,P,\a \b} (y)| 
						+ |y|^{-1} |D^{k+1}_P \nabla_y g_{\e ,P,\a \b}(y) | 
						+ \sum_{i=2}^\ell | D_P^{k+1} \nabla_y^i g_{\e ,P,\a \b}(y) | 
						\leq C_{k,\ell} \e^2
						\label{eq:met-deri}
				\end{align}
			for all $|y|_{g_0} \leq \e^{-1} \rho_0$, $k, \ell \in \N$. 
			Here $D_P$ denotes the differential by $P$ in the original metric of $M$. 
			
		\item[(iv)] 
			The family $\mathcal{T}_{\e ,K}$ is expressed as 
				\[
					\mathcal{T}_{\e ,K} = \{  \exp^{g_\e}_P ( R \T_\o ) \; : \; 
					P \in M , \ R \in SO(3), \ \o \in K  \}. 
				\]
	\end{enumerate}

%
%

We recall next the following well-known result concerning variations of $W_{g_\e}$ 
(see for example Section 3 in \cite{LMS}). 
\begin{pro}\label{p:1-2-var}
For an immersion $i : \Sigma \to (M,g_\e)$ one has 
\begin{equation}\label{eq:varWvarW}
  d W_{g_\e}(i(\Sigma))[\var] 
  = \int_{\S } \left( L H + \frac{1}{2} H^3 \right) \var \, d \s = - 
  \int_{\Sigma}  \left\{ \Delta H +  
  \left(|\mathring{A}|^2 + \Ric(n,n) \right)H  \right\} \var d \s, 
\end{equation}
where $L$ is the elliptic, self-adjoint operator 
\[
 L \var  := - \D  \var - \var \left( |A|^2 + \Ric(n, n) \right). 
\]
We also write $W_{g_\e}'(i(\Sigma)) := LH + H^3/2$. 
%
%
\end{pro}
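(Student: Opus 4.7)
The plan is to compute the first variation of $W_{g_\e}$ along a normal deformation of the immersion, namely $i_s(x) := \exp^{g_\e}_{i(x)}\bigl(s\var(x) n(x)\bigr)$ for small $s$; tangential variations correspond to reparametrisations of $\Sigma$ and therefore do not change the value of $W_{g_\e}$, so it is enough to treat the normal case. Both identities in \eqref{eq:varWvarW} will then follow from two classical pointwise variation formulas and one integration by parts on the closed surface $\Sigma$.

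The two key pointwise formulas I would invoke are
\begin{equation*}
\frac{d}{ds}\bigg|_{s=0} d\s_s = -\var H\, d\s, \qquad \frac{d}{ds}\bigg|_{s=0} H_s = -L\var.
\end{equation*}
The first is immediate from $\pa_s g_{ab}\big|_{s=0} = -2\var A_{ab}$ combined with $\pa_s\sqrt{\det g} = \tfrac12 \sqrt{\det g}\, g^{ab}\pa_s g_{ab}$. The second is subtler: one differentiates $A_{ab}(s) = \la \n_{\pa_a}\pa_b i_s , n_s\ra_{g_\e}$ at $s=0$, which produces the intrinsic Hessian $\n^2_{ab}\var$, a quadratic term $\var A_{ak}A^k{}_b$, and, crucially, a Riemann curvature correction $\var R(n,\pa_a, n,\pa_b)$ coming from the ambient Levi-Civita connection. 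Combining with $\pa_s g^{ab}\big|_{s=0} = 2\var A^{ab}$ and tracing with $g^{ab}$ collapses the curvature contribution precisely into $\var\,\Ric(n,n)$, yielding the displayed identity. This is standard and can be found in the references already cited in the statement.

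Substituting these into $\frac{d}{ds}\big|_{s=0}\int_\Sigma H_s^2\, d\s_s = \int_\Sigma \bigl( 2 H\,\pa_s H_s + H^2\, \pa_s d\s_s /d\s \bigr)$ produces a linear functional in $\var$ involving $H\cdot L\var$ and $H^3\var$. Since $\Sigma$ is closed, integration by parts is free of boundary contributions, and the self-adjointness of $L$ on $\Sigma$ (a direct consequence of its form as $-\D$ plus a zeroth-order multiplication operator) allows one to transfer $L$ from $\var$ to $H$ via $\int_\Sigma H\, L\var\, d\s = \int_\Sigma \var\, LH\, d\s$. Collecting the resulting terms produces the first stated form $\int_\Sigma (LH + \tfrac12 H^3)\var\, d\s$.

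For the second equality in \eqref{eq:varWvarW} one simply expands $LH + \tfrac12 H^3 = -\D H - H(|A|^2 + \Ric(n,n)) + \tfrac12 H^3$ and uses the two-dimensional algebraic identity $|\mathring A|^2 = |A|^2 - \tfrac12 H^2$, which follows from the definition $\mathring A_{ij} = A_{ij} - \tfrac12 H \bar g_{ij}$ together with $\bar g^{ij}\bar g_{ij} = 2$. The main technical obstacle is the careful derivation of $\pa_s A_{ab}$ in the curved ambient: one must distinguish the intrinsic Hessian on $\Sigma$ from the ambient Hessian and track correctly the contribution of the ambient Riemann tensor. Once this is in hand, both stated forms of the first variation are obtained by elementary algebraic manipulations and one integration by parts.
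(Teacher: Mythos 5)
Your outline follows the standard first-variation computation, which is indeed what the paper relies on implicitly (it gives no proof of this proposition, referring instead to \cite{LMS}); the identification of the ambient Riemann curvature contribution with $\Ric(n,n)$ after tracing, and the reduction to $|\mathring A|^2 = |A|^2 - \tfrac12 H^2$, are both correct and are the only genuinely subtle points. However, two issues need attention before the outline actually yields the stated identity. First, your signs conflict with the paper's conventions. You take $\pa_s g_{ab}\big|_{s=0} = -2\var A_{ab}$, $\pa_s(d\s)\big|_{s=0} = -\var H\,d\s$, $\pa_s H\big|_{s=0} = -L\var$, which corresponds to $A_{ab} = \la\n_{\pa_a}\pa_b\, i, n\ra$ and hence $H<0$ for a sphere with outward normal. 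But the paper uses $H_{\SA} = 2/\tilde A>0$ with the outward normal, so its implicit convention is $A_{ab} = -\la\n_{\pa_a}\pa_b\, i, n\ra$, and the pointwise formulas then read $\pa_s g_{ab} = +2\var A_{ab}$, $\pa_s(d\s) = +\var H\,d\s$, $\pa_s H = +L\var$.

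Second, and more substantially, carrying out the substitution you describe does not give the right-hand side of \eqref{eq:varWvarW} as you assert. With the paper's conventions one finds
\[
\frac{d}{ds}\Big|_{s=0}\int_{\S} H_s^2\,d\s_s
= \int_{\S}\bigl(2H\,L\var + H^3\var\bigr)\,d\s
= 2\int_{\S}\Bigl(LH + \tfrac12 H^3\Bigr)\var\,d\s,
\]
i.e.\ an extra factor $2$ appears (with your signs one gets instead $-2\int_\S(LH+\tfrac12 H^3)\var\,d\s$). The expression without the factor $2$ is the first variation of $\tfrac12\int_\S H^2\,d\s$, which is the normalization used in \cite{LMS}, whereas this paper defines $W=\int_\S H^2\,d\s$. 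This multiplicative constant is harmless for all later uses in the paper (only the vanishing of $W'_{g_\e}$ matters), but your claim that ``collecting the resulting terms produces the first stated form'' would not survive an explicit computation: you must either adopt the $\tfrac12$-normalization from \cite{LMS} or carry the factor $2$ through, and you should reconcile the sign of $A_{ab}$ with the paper's choice.
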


%
%

\

\noindent 
$\mathcal{T}_{\e, K}$ form a family of approximate solutions to our problem. In fact, 
let us recall the following result.

\begin{lem}\label{l:appsol} (\cite{IMM1}, Section 3)
Consider the rescaled framework described above.
Fix $K$ as before, $\ell \in \N$ and $\gamma \in (0,1)$. 
There exists a constant $C_{K,\ell}$ such that for $\e$ small 
$$
  \| W_{g_\e}' (\Sigma) \|_{C^{\ell,\gamma}(\Sigma)} \leq C_{K,\ell} \e^2 
  \qquad \quad \hbox{ for every } \Sigma \in  \mathcal{T}_{K , \e}. 
$$  
\end{lem}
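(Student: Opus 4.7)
The plan is to exploit the fact that, in geodesic normal coordinates at the base point $P$ in the rescaled metric $g_\e$, each surface $\Sigma \in \mathcal{T}_{\e,K}$ is literally the set $R\T_\omega \subset \R^3$, independently of $\e$. Since $R\T_\omega$ is the image of the Clifford torus under a rotation and a M\"obius inversion, Proposition \ref{p:Mobinv} together with the fact that $\T$ is Willmore in $(\R^3,g_0)$ implies that $R\T_\omega$ is itself an exact Willmore surface in Euclidean space, so $W'_{g_0}(R\T_\omega)\equiv 0$. The strategy is then to write
\[
W'_{g_\e}(\Sigma) \;=\; W'_{g_\e}(\Sigma) \,-\, W'_{g_0}(R\T_\omega)
\]
and bound the right-hand side by $C_{K,\ell}\e^2$ in $C^{\ell,\gamma}$.

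First I would fix $(P,R,\omega)\in M\times SO(3)\times K$ and pull everything back to $\R^3$ via $\exp_P^{g_\e}$. In these coordinates the underlying surface is the fixed smooth embedded torus $R\T_\omega$, while the ambient metric has the expansion \eqref{eq:ge=d+eh} with $\e^2 h^\e_{P,\a\b}(y)$ satisfying the uniform derivative bounds \eqref{eq:defh}--\eqref{eq:met-deri}. Because $K\Subset\DD$, the surfaces $R\T_\omega$ vary over a compact family of smooth embeddings with uniformly bounded geometry, so every Euclidean geometric quantity on $R\T_\omega$ enjoys $C^{\ell,\gamma}$-bounds that are uniform in $(R,\omega)\in SO(3)\times K$.

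Next I would expand each ingredient of $W'_{g_\e} = LH + \tfrac12 H^3$ (Proposition \ref{p:1-2-var}) around its Euclidean counterpart: the induced metric $\bar g_\e$ and its inverse, the unit normal $n_\e$, the second fundamental form $A_\e$, the mean curvature $H_\e$ and its trace-free part $\mathring A_\e$, the Laplace--Beltrami operator $\D_{\bar g_\e}$, and the ambient Ricci tensor $\Ric_{g_\e}$ (which is itself $O(\e^2)$ in $C^{\ell,\gamma}$ since $\Ric$ depends on two derivatives of $g_\e$ and $g_\e - g_0 = \e^2 h^\e$). Each of these equals the Euclidean analogue plus a $C^{\ell,\gamma}$ correction of size $C_{K,\ell}\e^2$; substituting into $W'_{g_\e}$ and subtracting the vanishing Euclidean Willmore gradient gives the claimed bound.

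The main technical obstacle is organizing the expansion and confirming that the $C^{\ell,\gamma}$-norm of every term in the difference is controlled uniformly in $(P,R,\omega)$. This is where the $P$-derivative estimates \eqref{eq:met-deri}, together with the compactness of $K$ inside $\DD$ (which keeps the surfaces $R\T_\omega$ away from M\"obius degeneration and thus with uniformly bounded principal curvatures and injectivity-radius-type quantities), are both essential.
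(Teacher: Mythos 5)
Your proposal is correct and is essentially the standard argument used in the cited companion paper \cite{IMM1}: pull back to $\R^3$ via $\exp_P^{g_\e}$ where the surface is the fixed torus $R\T_\omega$, use that it is an exact Willmore critical point for $g_0$ (Clifford torus plus conformal invariance), and expand every geometric quantity entering $W'_{g_\e}=LH+\tfrac12 H^3$ in the metric perturbation $g_\e-g_0=\e^2 h^\e_{P}$, with uniformity over $(P,R,\o)$ coming from \eqref{eq:met-deri} and the compactness of $K\Subset\DD$ keeping the family $R\T_\omega$ away from M\"obius degeneration. Nothing is missing.
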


\

\noindent Next we consider small perturbations of the surfaces in $\mathcal{T}_{\e,K}$ 
in the following way. 
As in Section 3 of \cite{IMM1}, for $(P,R,\o) \in M \times SO(3) \times \DD$, 
we denote by $g_{\e ,P,R,\o}$ 
the pull back of $g_{\e ,P}$ via the map $R \circ T_\o $: 
$g_{\e ,P,R,\o} := (R \circ T_\o)^\ast g_{\e ,P} 
= T_\o^\ast \circ R^\ast \circ (\exp_{P}^{g_\e})^\ast g_\e $. 
Observe that $(\T,g_{\e ,P,R,\o})$ is isometric to 
$(\exp_{P}^{g_\e} (R \T_\o), g_\e  )$ and $(R \T_\o , g_{\e ,P})$. 
We write $n_{\e ,P,R,\o}$ for the unit outer normal to $(\T,g_{\e ,P,R,\o})$. 
Then for regular functions $\varphi : \T \to \R$, 
we consider perturbations of $(\T,g_{\e ,P,R,\o})$ as follows: 
	\begin{equation} \label{eq:defSw}
		\begin{aligned}
			&(\T[\varphi])_{\e ,P,R,\o} := 
			\{ p + \varphi(p) n_{\e ,P,R,\o}(p) \;:\; p \in \T \},
			\\
			& \left( R \T_\o [\varphi] \right)_{\e ,P} 
			:= \left\{ R T_\o ( p  + \varphi(p) n_{\e ,P,R,\o} (p)  ) \;:\; p \in \T \right\},
			\quad 
			\Sigma_{\e ,P,R,\o} [\varphi] 
			:= \exp_P^{g_\e} \left( \left( R \T_\o [\varphi] \right)_{\e ,P}  \right).
		\end{aligned}
	\end{equation}
Noting that $(R \T_\o [0])_{\e, P} = R \T_\o $, let us also set 
\be \label{eq:defSw2}
\Sigma_{\e,P,R,\omega} = \Sigma_{\e,P,R,\omega} [0] 
=\exp^{g_\e}_P( R \T_\omega  ).
\ee 
Given a positive constant $\ov{C}$, we  define next the family of functions
\[
   \mathcal{M}_{\e ,P,R,\o} = \left\{  \var \in C^{4,\gamma}(\T,\R)  \; : \; \|\var \|_{C^{4,\gamma}(\T)} \leq 
    \ov{C} \e^2  \text{ and such that} \left|\Sigma_{\e,P,R,\omega}[\var]\right|_{g_\e} 
    = 4 \sqrt{2} \pi^2
   \right\}.
\]
Here we remark that since we only consider small perturbations, 
$\Sigma_{\e ,P,R,\o}[\varphi]$ can be expressed as a normal graph 
of $\T$. Hence, we pull back all geometric quantities of $\Sigma_{\e ,P,R,\o}[\varphi]$ 
onto $\T$. 
Finally, on $\T$, we consider Jacobi fields $Z_{i,R,\o}$, $i = 1, \dots, 7$ 
for $R \T_\o $ which generate conformal maps 
preserving the area of the torus (see also the notation in \cite{IMM1}). 
%
%
%
%
Exploiting the non-degeneracy property  from \cite{WEINER}, 
one can prove the following result.

\begin{pro}\label{p:lyap} (\cite{IMM1}, Section 3)
Fix a compact subset $K$ of  $\DD$ as above. 
Then there exist positive constants  $\bar{C}_K$ and $\bar{\e}_K$ such that 
for any $\e\in (0,\bar{\e}_K]$ and 
every $(P,R,\o) \in M \times SO(3) \times K$, 
there exists a  function 
$\var_{\e} = \var_{\e}(P,R,\omega) \in C^{5,\gamma}(\T)$ such that 
$$
  a) \quad  W_{g_\e}'(\Sigma_{\e,P,R,\omega} [\var_{\e}(P,R,\omega)]) = 
  \b_0 H_{\e ,P,R,\o}[\var_\e] + 
     \sum_{i=1}^7 \beta_i Z_{i,R,\o};  \qquad  
   \quad b) \quad |\Sigma_{\e,P,R,\omega}  [\var_{\e}]|_{g_\e} = 4 \sqrt{2} \pi^2, 
$$
for some numbers $\b_0, \dots, \b_7$. Here $\Sigma_{\e,P,R,\omega} [\var]$ is  as in \eqref{eq:defSw}, 
while $H_{\e,P,R,\o}[\var_\e]$ stands for the mean curvature of  $\Sigma_{\e,P,R,\omega} [\var_\e]$. 
Moreover, the map $M\times SO(3) \times K\to C^{5,\gamma}(\T)$ defined by  $(P,R,\omega)\mapsto \var_\e(P,R,\omega)$ is smooth and satisfies 
	\[
		\sum_{k=0}^2 \left\| D^{k}_{P,R,\o} \varphi_\e (P,R,\o) 
		\right\|_{C^{5,\gamma}(\T)} \leq C_K \e^2.
	\]
In particular, $\varphi_\e(P,R,\o) \in \mathcal{M}_{\e ,P,R,\omega}$. 
\end{pro}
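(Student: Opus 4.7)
The plan is to solve the system (a)-(b) via a fixed-point / implicit-function-theorem argument on the fixed Clifford torus $\T$, using Weiner's non-degeneracy result \cite{WEINER} as the input that feeds invertibility of the linearized operator. First I would pull everything back onto $\T$ via the composition $R \circ T_\omega \circ \exp_P^{g_\e}$, so that normal perturbations of $\Sigma_{\e,P,R,\omega}$ become normal perturbations of $(\T, g_{\e,P,R,\omega})$. The system (a)-(b) then takes the form
\[
F_\e(P,R,\omega,\var,\beta_0,\ldots,\beta_7) = 0, \qquad G_\e(P,R,\omega,\var) = 0,
\]
where $F_\e$ encodes the Willmore-gradient balance and $G_\e := |\Sigma_{\e,P,R,\omega}[\var]|_{g_\e} - 4\sqrt{2}\pi^2$ encodes the area constraint.

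The linearization of $F_0$ in $\var$ at $\var=0$ is, by the conformal invariance in Proposition \ref{p:Mobinv} together with rotation invariance, conjugate to the Euclidean Jacobi operator $J_\T$ of the standard Clifford torus. Weiner's theorem asserts that the $L^2$-kernel of $J_\T$ on normal deformations coincides exactly with the $7$-dimensional space of area-preserving conformal variations, spanned by the $Z_{i,R,\omega}$. Hence $J_\T$ is an isomorphism from $C^{4,\gamma}(\T) \cap (\ker J_\T)^\perp$ onto $C^{0,\gamma}(\T) \cap (\ker J_\T)^\perp$, with norm bounds uniform in $\omega \in K$ thanks to the compactness of $K$ inside $\DD$.

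Combining this invertibility with the approximate-solution estimate $\|W'_{g_\e}(\Sigma_{\e,P,R,\omega})\|_{C^{\ell,\gamma}} \leq C_{K,\ell}\, \e^2$ from Lemma \ref{l:appsol}, and with the $\e^2$-smallness of the metric perturbation from \eqref{eq:ex-g-2}--\eqref{eq:met-deri}, I would set up a contraction mapping in a ball of radius $\bar{C}_K \e^2$ in $C^{4,\gamma}(\T)$. The seven multipliers $\beta_1,\ldots,\beta_7$ are recovered by pairing the equation with each $Z_{i,R,\omega}$ and inverting the resulting $7\times 7$ Gram matrix, while $\beta_0$ is recovered from the area constraint, whose $\var$-linearization is (up to a nonzero constant) integration against $H$ and therefore surjects onto $\R$. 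A standard Banach-space contraction then produces a unique $\var_\e$ with $\|\var_\e\|_{C^{4,\gamma}(\T)} \leq \bar{C}_K \e^2$, and elliptic regularity upgrades this to $C^{5,\gamma}$. The smooth dependence of $\var_\e$ on $(P,R,\omega)$ and the derivative bound $\sum_{k=0}^2 \|D^k_{P,R,\omega}\var_\e\|_{C^{5,\gamma}(\T)} \leq C_K \e^2$ then follow from the implicit function theorem applied to the smooth map $F_\e$, since $P$, $R$, $T_\omega$ and the pulled-back metric $g_{\e,P}$ all depend smoothly on their parameters.

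The main obstacle is controlling the linearized operator \emph{uniformly} in $\omega$: as $|\omega| \to 1$ the torus $\T_\omega$ degenerates to a sphere with a handle shrinking like $(1-|\omega|)$, and the Jacobi operator develops extra near-zero eigenvalues beyond the $7$-dimensional conformal kernel, which would destroy the uniform invertibility. Restricting to a compact subset $K \subset \DD$ is precisely what bypasses this issue and yields the constants $\bar{C}_K, \bar{\e}_K$ depending only on $K$; the genuinely delicate quantitative analysis of the degenerate regime $|\omega| \to 1$ is postponed to the sharp M\"obius-derivative expansion (Proposition \ref{p:varvar2}) carried out later in the paper.
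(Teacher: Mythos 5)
Your proposal is correct and follows essentially the same route that the cited work \cite{IMM1} (Section 3) takes: pull back to $(\T,g_{\e,P,R,\o})$, invoke Weiner's non-degeneracy to invert the linearized Willmore operator on the complement of the conformal/area kernel with bounds uniform over the compact set $K$, apply a contraction/implicit-function argument seeded by the $O(\e^2)$ approximate-solution estimate of Lemma \ref{l:appsol}, and upgrade regularity and smooth parameter dependence via elliptic theory and the IFT. Your closing remark about why compactness of $K\subset\subset\DD$ is needed (loss of uniform invertibility as $|\o|\to 1$) is exactly the right caveat, and is what defers the degenerate regime to Proposition \ref{p:varvar2}.
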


\

\noindent We can finally encode the variational structure of the problem by means for the following result. 

\begin{pro}\label{p:variational} (\cite{IMM1}, Section 3)
Let $K\subset \subset \DD$, $\bar{\e}_{K}$ and ${\var}_{\e}$ be as in Proposition \ref{p:lyap}. For $\e\in[0, \bar{\e}_K]$ define the function $\Phi_\e : \mathcal{T}_{\e, K}\to \R$ by
\[
  \Phi_\e(P,R,\o) := W_{g_\e}(\Sigma_{\e,P,R,\omega} [\var_{\e}(P,R,\omega)]).
\]
Then there exists $\bar{\e}_{K}'\in (0, \bar{\e}_{K}]$ such that, if $\e\in (0, \bar{\e}_{K}']$ we have  
\begin{equation}\label{eq:estC0Phie}
  \left| \Phi_\e(P,R,\o)- W_{g_\e}(\Sigma_{\e,P,R,\omega})\right| \leq C_K \e^4. 
\end{equation}
For such $\e$'s,  if $(P_\e,R_\e,\o_\e)\in M\times SO(3)\times K$  is critical for $\Phi_\e$, then the surface $\Sigma_{\e,P_\e,R_\e,\omega_\e} [\var_{\e}(P_\e,R_\e, \omega_\e)]$ satisfies the area-constrained Willmore equation. 
\end{pro}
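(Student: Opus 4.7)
The statement contains two assertions: the $C^0$ bound \eqref{eq:estC0Phie}, and the variational characterization of critical points. I would prove them in turn, relying on the bounds $\|W'_{g_\e}(\Sigma_{\e,P,R,\o})\|_{C^{0,\g}} \leq C_K \e^2$ from Lemma \ref{l:appsol} and $\|\var_\e\|_{C^{5,\g}} \leq C_K \e^2$ from Proposition \ref{p:lyap}.

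For the $C^0$ bound I would Taylor-expand $W_{g_\e}(\Sigma_{\e,P,R,\o}[\,\cdot\,])$ around $\var = 0$. Using Proposition \ref{p:1-2-var} for the first variation,
\[
\Phi_\e(P,R,\o) - W_{g_\e}(\Sigma_{\e,P,R,\o}) = \int_\T W'_{g_\e}(\Sigma_{\e,P,R,\o})\, \var_\e\, d\s + \mathcal{R},
\]
where $\mathcal R$ is the quadratic (and higher) remainder. The linear term is $O(\e^4)$ by multiplying the two $O(\e^2)$ estimates above; $\mathcal R$ is controlled by $\|\var_\e\|_{C^{4,\g}}^2$ times the operator norm of the second variation $W''_{g_\e}$ evaluated at nearby surfaces, which is uniformly bounded for $(P,R,\o) \in M \times SO(3) \times K$ thanks to the compactness $K \subset\subset \DD$ and the metric expansions \eqref{eq:ex-g-2}--\eqref{eq:met-deri}. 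This gives $|\mathcal R| \leq C_K \e^4$ and hence \eqref{eq:estC0Phie}.

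For the second claim, suppose $(P_\e,R_\e,\o_\e)$ is critical for $\Phi_\e$, and set $\Sigma^\ast := \Sigma_{\e,P_\e,R_\e,\o_\e}[\var_\e(P_\e,R_\e,\o_\e)]$. For every $\delta$ in the tangent space of $M \times SO(3) \times K$ the variation of $\Sigma^\ast$ along $\delta$ (including the differentiated correction $D_\delta \var_\e$) has, after pulling back to $\T$, a normal component $V_\delta$, and Proposition \ref{p:1-2-var} together with Proposition \ref{p:lyap} a) yields
\[
0 = d\Phi_\e(P_\e,R_\e,\o_\e)[\delta] = \b_0 \int_\T H^\ast V_\delta\, d\s + \sum_{i=1}^7 \b_i \int_\T Z_{i,R_\e,\o_\e}\, V_\delta\, d\s.
\]
The area constraint b) of Proposition \ref{p:lyap} says $|\Sigma_{\e,P,R,\o}[\var_\e]|_{g_\e} \equiv 4\sqrt{2}\pi^2$; differentiating it in $\delta$ gives $\int_\T H^\ast V_\delta\, d\s = 0$, killing the first summand. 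What remains is the linear system $\sum_i \b_i A_{i\delta} = 0$ with $A_{i\delta} := \int_\T Z_{i,R_\e,\o_\e}\, V_\delta\, d\s$. By the explicit construction in Section \ref{ss:not} and Proposition \ref{p:disk}, at $\e = 0$ the variations $V_\delta$ generate exactly the seven area-preserving conformal Jacobi fields on $\T$ (three from translations of $P$, two from the non-isotropic generators of $SO(3)$ after quotienting the Clifford $S^1$-symmetry around the $z$-axis, and two from the M\"obius family $T_\o$), so $A$ is an $O(\e^2)$-perturbation of the Gram matrix of $\{Z_i\}$, which is invertible by Weiner's non-degeneracy result \cite{WEINER}. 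For $\e \leq \bar\e_K'$ small enough, $A$ remains invertible, forcing $\b_i = 0$ for $1 \leq i \leq 7$; Proposition \ref{p:lyap} a) then reduces to $W'_{g_\e}(\Sigma^\ast) = \b_0 H^\ast$, which is the area-constrained Willmore equation with Lagrange multiplier $\b_0$.

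\textbf{Main obstacle.} The delicate step is the rank-$7$ claim: one has to verify that varying $(P,R,\o)$ really does span the seven Jacobi directions $Z_{i,R,\o}$, modulo the trivial $S^1$-isotropy direction in the parameter space (which is why the family has 7 rather than 8 effective dimensions) and modulo the $O(\e^2)$ contribution from $D_\delta \var_\e$. This matching is engineered into the approximate family of Section \ref{ss:not}, and reducing it to the Euclidean Gram matrix where Weiner's spectral non-degeneracy applies is the only genuinely non-routine ingredient in the argument.
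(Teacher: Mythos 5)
Your proof is correct and follows the same Lyapunov--Schmidt reduction argument that the paper attributes to \cite{IMM1}; the only place in the present paper where the estimate is actually re-derived is Step~3 of the proof of Lemma~\ref{l:gbdrycond}, where the authors write the difference as $\int_0^1 \int_\T W'_{g_{\e,P,R,\o}}(s)\,\psi_{\e,P,R,\o}\,d\s\,ds$ and bound both factors by $O(\e^2)$ along the whole interpolation path, whereas you Taylor-expand around $s=0$ and control the remainder with a uniform bound on the second variation --- a cosmetic rearrangement of the same estimate. One small imprecision: the invertibility of the matrix $A_{i\delta}=\int_\T Z_{i,R_\e,\o_\e}V_\delta\,d\s$ does not rely on Weiner's theorem; it follows simply from the linear independence of the $Z_i$'s and from the fact (built into Section~\ref{ss:not} and Proposition~\ref{p:disk}) that the parameterisation $(P,R,\o)$ spans the seven area-preserving conformal directions modulo the $S^1$-isotropy, while Weiner's non-degeneracy is consumed earlier, in Proposition~\ref{p:lyap}, to invert the second-variation operator on the $L^2$-orthogonal complement of $\mathrm{span}\{Z_i\}$.
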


\section{On degenerating Clifford tori} \label{s:degtori}

In this section we analyse M\"obius-degenerating tori. In particular we improve the accuracy 
of the estimate $(i)$ in Lemma \ref{104} and  derive the asymptotics of degenerate tori 
viewed as normal graphs on the limit sphere (except for the small handle), see $(ii)$ in 
Lemma \ref{104}.

\subsection{Precise asymptotics of $\xi_\eta$}\label{ss:pa}

The following estimate on  $\xi_\eta$ will be needed below.

\begin{lem}\label{l:xi-xi'}
In the notation of Lemma \ref{104}, as $\eta \to 0$, we have 
$$
  2 \xi_\eta - \xi'_\eta \eta = O(\eta^4). 
$$
\end{lem}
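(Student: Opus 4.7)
The plan is to combine implicit differentiation of the area constraint with a careful analysis of the singular integral in \eqref{2}. Rewrite \eqref{2} as $\eta^{4} I(\xi_\eta) = 4\sqrt{2}\pi^{2}$, where
\[
I(\xi) := \int_{-\pi}^{\pi}\!\int_{-\pi}^{\pi} \frac{\sqrt{2}+\cos\tvph}{|Y(\tvph,\tth,\eta)|^{4}}\, d\tvph\, d\tth
\]
is viewed as a function of $\xi$ through the translation $-(\sqrt{2}+1+\xi)\mathbf{e}_x$ in $Y$. Introduce the rescaled quantity $G(\xi) := \xi^{2} I(\xi)$, so that $I = G/\xi^{2}$ and $I' = (\xi G' - 2G)/\xi^{3}$. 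Differentiating the area identity in $\eta$ gives $4\eta^{3} I(\xi_\eta) + \eta^{4} I'(\xi_\eta) \xi_\eta' = 0$, which after a short algebraic simplification yields the clean identity
\begin{equation}\label{eq:idplan}
2\xi_\eta - \eta\,\xi_\eta' \;=\; \frac{2\,\xi_\eta^{2}\, G'(\xi_\eta)}{\xi_\eta G'(\xi_\eta) - 2G(\xi_\eta)}.
\end{equation}
Since $G(\xi_\eta) = \xi_\eta^{2}\cdot 4\sqrt{2}\pi^{2}/\eta^{4} \to \pi$ by Lemma \ref{104}(i), it suffices to prove that $G'$ is bounded on a right neighbourhood of $0$: then the denominator tends to $-2\pi \neq 0$, the numerator is $O(\xi_\eta^{2})$, and the bound $\xi_\eta = O(\eta^{2})$ from Lemma \ref{104}(i) gives $2\xi_\eta - \eta\xi_\eta' = O(\eta^{4})$.

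To analyse $G$ near $\xi = 0$, fix $\delta > 0$ small and split $G = G_{\mathrm{loc}} + G_{\mathrm{far}}$, with $G_{\mathrm{loc}}$ the contribution of the disc $D_\delta = \{\tvph^{2}+\tth^{2} < \delta^{2}\}$. On the complement $|Y|$ is uniformly bounded below for $\xi$ small, so $G_{\mathrm{far}}(\xi) = \xi^{2} \cdot C^{\infty}(\xi)$ is trivially $C^{1}$ with $G_{\mathrm{far}}'(0) = 0$. For $G_{\mathrm{loc}}$, a Taylor computation at the origin gives
\[
|Y|^{2} \;=\; \xi^{2} + (1+\xi)\tvph^{2} + (\sqrt{2}+1)(\sqrt{2}+1+\xi)\,\tth^{2} + O(\tvph^{4}+\tth^{4}),
\]
so the rescaling $\tvph = \xi s$, $\tth = \xi t$ produces $|Y|^{2} = \xi^{2} F(s,t,\xi)$ for a jointly smooth, bounded-below function $F$ with $F(s,t,0) = 1 + s^{2} + (\sqrt{2}+1)^{2} t^{2}$. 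Incorporating the jacobian $d\tvph\,d\tth = \xi^{2}\, ds\, dt$,
\[
G_{\mathrm{loc}}(\xi) \;=\; \int_{\xi^{2}(s^{2}+t^{2}) < \delta^{2}} \frac{\sqrt{2}+\cos(\xi s)}{F(s,t,\xi)^{2}}\, ds\, dt.
\]

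Both the integrand and its $\xi$-derivative are dominated, uniformly in $\xi$ small, by a constant multiple of $(1+s^{2}+t^{2})^{-2}$, which is integrable on $\R^{2}$; a dominated-convergence/Leibniz argument then yields $G_{\mathrm{loc}} \in C^{1}$ on a right neighbourhood of $0$, the moving boundary contributing only $O(\xi^{2})$ to the derivative (its length is $O(1/\xi)$ while the integrand there is $O(\xi^{4})$). Combined with \eqref{eq:idplan}, this finishes the proof. The principal technical point is the differentiation-under-the-integral-sign step in spite of the expanding domain after rescaling; once the Taylor expansion of $|Y|^{2}$ has been made explicit, the decay $\sim (1+s^{2}+t^{2})^{-2}$ of the relevant integrands is exactly what is needed to make the standard Leibniz argument go through.
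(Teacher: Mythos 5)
Your proposal is correct, and it takes a genuinely different route from the paper. The paper differentiates the area constraint directly and proves the refined asymptotic $\xi'_\eta/\eta = 1/\sqrt[4]{2\pi^2} + O(\eta^2)$ (equation \eqref{eq:deri-xi-eta}), which requires evaluating the singular integral $\eta^6\int\!\!\int (\sqrt 2+\cos\tvph)f\,|Y|^{-6}$ to the precision $2^{15/4}\pi^{5/2}+O(\eta^2)$ via a near/far ($I_\eta$/$J_\eta$) decomposition and a further splitting $\widehat I_1,\widehat I_2$. Your identity $2\xi_\eta-\eta\xi'_\eta = 2\xi_\eta^2 G'(\xi_\eta)/\bigl(\xi_\eta G'(\xi_\eta)-2G(\xi_\eta)\bigr)$ is a nice algebraic simplification that reduces the whole lemma to the mere \emph{boundedness} of $G'=(\,\xi^2 I(\xi)\,)'$ near $\xi=0$, with no precise constant needed — the constant is already absorbed by Lemma \ref{104}(i) through $G(\xi_\eta)\to\pi$. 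This is in my view cleaner, because the naive splitting $G'=2\xi I+\xi^2 I'$ contains two terms each of size $\sim 1/\xi$ whose cancellation is delicate, whereas your rescaled representation $G_{\rm loc}(\xi)=\int_{D(\xi)}(\sqrt 2+\cos\xi s)F(s,t,\xi)^{-2}\,ds\,dt$ makes the cancellation automatic. Two side remarks are slightly imprecise but do not affect the conclusion: (i) the boundary contribution to $G_{\rm loc}'$ is $O(\xi)$ rather than $O(\xi^2)$ — you appear to have forgotten the factor $|R'(\xi)|\sim\delta/\xi^2$ from the normal velocity of the expanding circle, which combined with length $O(1/\xi)$ and integrand $O(\xi^4)$ gives $O(\xi)$; (ii) the uniform dominating function for the $\xi$-derivative of the integrand is of order $(1+s^2+t^2)^{-3/2}$ rather than $(1+s^2+t^2)^{-2}$, since $|\partial_\xi F|\lesssim(1+s^2+t^2)^{3/2}$ while $F\gtrsim 1+s^2+t^2$; this is still integrable on $\R^2$ so the Leibniz argument goes through. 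Neither slip affects the fact that $G'$ is bounded on a right neighbourhood of $0$, which is all that \eqref{eq:idplan} requires.
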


\begin{proof}
Recall that $\Phi_{0,\eta}(\T_{\xi_{\eta}})$ has  fixed area 
$4 \sqrt{2} \pi^{2}$ (see \eqref{2} and Lemma \ref{104}): 
	\begin{equation}\label{eq:area}
		4 \sqrt{2} \pi^{2} = \eta^{4} \int_{-\pi}^{\pi} \int_{-\pi}^{\pi} 
		\frac{\sqrt{2}+\cos \tvph}{|Y(\tvph,\tth,\eta)|^{4}} 
		d \tvph d \tth. 
	\end{equation}
Next, we claim that 
		\begin{equation}\label{eq:deri-xi-eta}
			\frac{\xi'_{\eta}}{\eta} = \frac{1}{\sqrt[4]{2 \pi^{2}}} 
			+ O(\eta^{2}). 
	\end{equation}
Differentiating \eqref{eq:area} with respect to $\eta$, we have 
	\[
		0=4 \eta^{3} \int_{-\pi}^{\pi} \int_{-\pi}^{\pi} 
		\frac{\sqrt{2} + \cos \tvph}{|Y(\tvph,\tth,\eta)|^{4}} 
		d \tvph d \tth  
		- 2 \xi_{\eta}' \eta^{4} 
		\int_{-\pi}^{\pi} \int_{-\pi}^{\pi} 
		\frac{(\sqrt{2}+\cos \tvph) f(\tvph, \tth, \eta)}
		{|Y(\tvph,\tth,\eta)|^{6}} d \tvph d \tth 
	\]
where 
	\[
		f(\tvph,\tth,\eta) := 
		2 \left\{ (\sqrt{2}+1) 
		- ( \sqrt{2} + \cos \tvph) \cos \tth + \xi_{\eta} \right\}. 
	\]
Multiplying $\eta$ by the above equality, it follows from \eqref{eq:area} that  
	\[
		\frac{\xi_{\eta}'}{\eta} \eta^{6} 
		\int_{-\pi}^{\pi} \int_{-\pi}^{\pi} 
		\frac{(\sqrt{2}+\cos \tvph) f(\tvph, \tth, \eta)}
		{|Y(\tvph,\tth,\eta)|^{6}} d \tvph d \tth 
		= 8 \sqrt{2} \pi^{2}. 
	\]
Therefore, to prove \eqref{eq:deri-xi-eta}, it suffices to show 
	\begin{equation}\label{eq:eta6-Y6}
		\eta^{6} 
		\int_{-\pi}^{\pi} \int_{-\pi}^{\pi} 
		\frac{(\sqrt{2}+\cos \tvph) f(\tvph, \tth, \eta)}
		{|Y(\tvph,\tth,\eta)|^{6}} d \tvph d \tth 
		= 2^{15/4} \pi^{5/2} + O(\eta^{2}).
	\end{equation}

		To this end, we use the following decomposition: 
	\[
		I_{\eta} := \left\{ (\tvph, \tth ) \in [-\pi,\pi]^{2} \; : \;
		\tvph^{2} + \left( \sqrt{2} + 1 \right)^{2} \tth^{2} \leq \eta^{2} 
		\right\}, \qquad 
		J_{\eta} := [-\pi,\pi]^{2} \setminus I_{\eta}. 
	\]
First, we show 
	\begin{equation}\label{eq:eta6-Y6-J-eta}
		\eta^{6} \int_{J_{\eta}} 
		\frac{(\sqrt{2}+\cos \tvph) f(\tvph, \tth, \eta)}
		{|Y(\tvph,\tth,\eta)|^{6}} d \tvph d \tth 
		=O(\eta^{4}). 
	\end{equation}
By a Taylor expansion at the origin, 
we notice that  
	\begin{equation}\label{eq:exp-Y}
		\begin{aligned}
			|Y(\tvph,\tth,\eta)|^{2} 
			&= ( \sqrt{2}+1 )^{2} \tth^{2} + \tvph^{2} 
			+ \xi_{\eta}^{2} + \left( \tvph^{2} + ( \sqrt{2}+1 ) \tth^{2} \right) 
			\xi_{\eta} + O( \tvph^{4} + \tth^{4} ), 
			\\ 
			f(\tvph,\tth,\eta) 
			&= 
			\tvph^{2} + (\sqrt{2}+1) \tth^{2} + 2 \xi_{\eta} 
			+ O(\tvph^{4} + \tth^{4}). 
		\end{aligned}
	\end{equation}
Since  by Lemma \ref{104} $(i)$ it holds $\xi_{\eta} = A \eta^{2} + O(\eta^{4})$,  
where $A>0$, there exist $C_{0}, C_{1}>0$, 
which are independent of $\eta \in (0,1]$, such that 
	\[
		\begin{aligned}
			|Y(\tvph, \tth, \xi_{\eta})|^{2} 
			&\geq  C_{0} \left( \tvph^{2} + ( \sqrt{2}+1 )^{2} \tth^{2} \right),
			\\
			|f(\tvph,\tth,\xi_{\eta})| &\leq 
			C_{1} ( \tvph^{2} + (\sqrt{2}+1)^{2} \tth^{2} )
		\end{aligned}
	\]
for every $(\tvph,\tth) \in J_\eta$. 
Thus, using the change of variables 
$(\tvph,\tth) = \left(r \cos \Theta, (\sqrt{2}+1)^{-1} r \sin \Theta 
\right)$ and 
noting that $J_{\eta} \subset \{ (r,\Theta) \ |\  \eta \leq r \leq \pi, \ 
0 \leq \Theta \leq 2 \pi \}$, we have (by definition of $J_\eta$)
	\[
		\begin{aligned}
			\int_{J_{\eta}} 
			\frac{(\sqrt{2}+\cos \tvph) f(\tvph, \tth, \eta)}
			{|Y(\tvph,\tth,\eta)|^{6}} d \tvph d \tth 
			\leq C_{2} \int_{J_{\eta}} \frac{d \tvph d \tth} 
			{\left\{ \tvph^{2} + (\sqrt{2}+1)^{2} \tth^{2} \right\}^{2}} 
			\leq C_{2} 
			\int_{\eta}^{\pi} \int_{0}^{2\pi} \frac{1}{r^{3}} dr d \Theta
			\leq C_{3} \eta^{-2}. 
		\end{aligned}
	\]
Multiplying by $\eta^{6}$, we get \eqref{eq:eta6-Y6-J-eta}.

		For the integral on $I_{\eta}$, 
we consider the following two quantities:
	\[
		\widehat{I}_{1} := \eta^{6} \int_{I_{\eta}} 
		\frac{(\sqrt{2}+1) ( \tvph^{2} + (\sqrt{2}+1) \tth^{2})}
		{|Y(\tvph,\tth,\eta)|^{6}} d \tvph d \tth, \quad \qquad 
		\widehat{I}_{2} := \eta^{6} \int_{I_{\eta}} 
		\frac{(\sqrt{2}+1) 2 \xi_{\eta}}{|Y(\tvph,\tth,\eta)|^{6}}
		d \tvph d \tth. 
	\]
We first claim $\widehat{I}_{1}=O(\eta^{2})$. In fact, noting 
\eqref{eq:exp-Y} and $\xi_{\eta} = A\eta^{2} + O(\eta^{4})$, and 
recalling the above computations  together with 
$I_{\eta} = \{ (r, \Theta) \; : \; 0 \leq r \leq \eta, \ 0 \leq \Theta \leq 2 \pi\}$, 
one has 

	\[
		\widehat{I}_{1} \leq C_{4} \eta^{6} \int_{I_{\eta}} 
		\frac{d \tvph d \tth}
		{\left\{ \tvph^{2} + (\sqrt{2}+1)^{2} \tth^{2} + \xi_{\eta}^{2} 
		\right\}^{2}} 
		\leq C_{5} \eta^{6} \int_{0}^{\eta} \frac{r}{(r^{2} + \xi_{\eta}^{2})^{2}} d r
		\leq C_{6} \eta^{6} \frac{1}{\xi_{\eta}^{2}} \leq C_{7} \eta^{2}.
	\]

		Next, we compute $\widehat{I}_{2}$. 
First, it follows from \eqref{eq:exp-Y} and $\xi_\eta = O(\eta^2)$ that 
	\[
		\left\{ 1 - O (\eta^{2}) \right\}
		\left\{ \tvph^{2} + ( \sqrt{2} + 1 )^{2} \tth^{2} \right\} + \xi_\eta^2 
		\leq 
		|Y(\tvph,\tth,\xi_{\eta})|^{2} 
		\leq 
		\left\{ 1 + O(\eta^{2}) \right\} 
		 \left\{ \tvph^{2} + ( \sqrt{2} + 1 )^{2} \tth^{2} \right\} + \xi_\eta^2 
	\]
for all $ (\tvph,\tth) \in I_{\eta}$. 
Therefore, instead of $\widehat{I}_{2}$, it suffices to compute 
	\[
		\widehat{I}_{3} := \eta^{6} \int_{I_{\eta}} \frac{(\sqrt{2}+1) 2 \xi_{\eta}}
		{\left[ \left\{1 + O(\eta^{2}) \right\} 
		\left\{ \tvph^{2} + (\sqrt{2}+1)^{2} \tth^{2}   \right\} + \xi_\eta^2 \right]^{3}}
		d \tvph d \tth.
	\]
Using the same change of variables as above, we get 
	\[
		\begin{aligned}
			\widehat{I}_{3} 
			&= 
			\eta^{6} \int_{0}^{\eta} \int_{0}^{2\pi} 
			\frac{2 (\sqrt{2}+1) \xi_{\eta} r}
			{\left[ \left\{1+O(\eta^{2}) \right\} r^{2} + \xi_{\eta}^{2}\right]^{3}}
			\frac{d r d \Theta}{\sqrt{2}+1} 
			= \eta^{6} 4 \pi \xi_{\eta} 
			\int_{0}^{\eta} \frac{r}{ \left[ \{ 1 + O(\eta^{2}) \} r^{2} + \xi_{\eta}^{2} 
			\right]^{3} } dr 
			\\
			&= \frac{\pi}{1 + O(\eta^{2})} \eta^{6} \xi_{\eta}
			\left\{ \frac{1}{\xi_{\eta}^{4}} 
			- \frac{1}{ \left( \eta^{2} + \xi_{\eta}^{2} + O(\eta^{4}) \right)^{2} } 
			\right\}
			= \pi \left( 1 + O(\eta^2) \right) 
			\left\{ \frac{\eta^6}{\xi_\eta^3} - 
			\frac{\eta^6 \xi_\eta}{\left( \eta^2 + \xi_\eta^2 + O(\eta^4) \right)^2} \right\}. 
		\end{aligned}
	\]
Recalling Lemma \ref{104} $(i)$, there holds 
	\[
		\frac{\eta^{6}}{\xi_{\eta}^{3}} = 2^{15/4} \pi^{3/2} + O(\eta^{2}), 
		\quad 
		\frac{\eta^{6} \xi_{\eta}}
		{\left\{ \eta^{2} + \xi_{\eta}^{2} + O(\eta^{4}) \right\}^{2} } 
		= O(\eta^{4}). 
	\]
Hence, 
one observes that 
	\[
		\widehat{I}_{2} = \widehat{I}_{3} + O(\eta^{2}) 
		=  2^{15/4} \pi^{5/2} + O(\eta^{2}).   
	\]
Since we have 
	\[
		(\sqrt{2}+\cos \tvph) f(\tvph,\tth,\eta) 
		= (\sqrt{2} + 1 ) ( \tvph^{2} + (\sqrt{2}+1) \tth^{2} + 2 \xi_{\eta}) 
		+ O(\eta^{4}) \qquad \text{on}\ I_\eta,
	\]
noting $\xi_{\eta}=O(\eta^{2})$ 
and the estimates of $\widehat{I}_{1}$ and $\widehat{I}_{2}$, 
\eqref{eq:eta6-Y6} follows. 
Since \eqref{eq:eta6-Y6} implies \eqref{eq:deri-xi-eta}, Lemma \ref{l:xi-xi'} holds. 
\end{proof}

\subsection{Jacobi field generated by M\"obius inversions}\label{ss:32}

Here we analyse the variation of  M\"obius inversions on degenerating tori. In particular we 
derive  the asymptotics of the normal vector field induced by this variation. 
Define 
\[
		\Phi_{\eta}(x) := \frac{\eta^{2}}{|x|^{2}} x; \qquad \quad 
		\Psi_{\eta}(x) := (\Refx \circ \Phi_{\eta})(x), 
	\]
where $\Refx$ stands for the reflection 
$\Refx (y) := y - 2 \langle y, \mathbf{e}_{x} \rangle \mathbf{e}_{x}$. 
Recall that, for $\xi >0$, we have set 
	\[
		\T_{\xi} := \T - (\sqrt{2}+1 + \xi ) \mathbf{e}_{x}. 
	\]
Recall also that we used the following parametrizations of $\T$ and $\T_{\xi_{\eta}}$: 
for $( \tvph , \tth ) \in [0,2\pi]^2$, 
\begin{align} 
			X(\tvph ,\tth ) 
			&= \left( ( \sqrt{2} + \cos \tvph ) \cos \tth, \ 
			( \sqrt{2} + \cos \tvph ) \sin \tth, \ 
			\sin \tvph  \right), 
			\nonumber
			\\
			Y(\tvph,\tth, \eta)  \label{eq:YYY}
			&= X(\tvph , \tth ) 
			- \left( \sqrt{2} + 1 + \xi_{\eta} \right) \mathbf{e}_{x} \\ 
			& = \nonumber  
			\left( ( \sqrt{2} + \cos \tvph ) \cos \tth 
			- ( \sqrt{2} + 1 + \xi_{\eta} ) , \ 
			( \sqrt{2} + \cos \tvph ) \sin \tth, \ 
			\sin \tvph  \right). 
		\end{align}
As  unit normal to $\T_{\xi_{\eta}}$, we choose the outward one 
	\[
		n(\tvph, \tth ) 
		= ( \cos \tvph \cos \tth, \ 
		\cos \tvph \sin \tth, \ \sin \tvph ) .
	\]
We put also 
	\[
			\mathcal{Z}( \tvph, \tth, \eta ) 
			:= \Psi_{\eta} ( Y(\tvph , \tth, \eta) ), 
			\quad 
			n_{0,\eta} ( \tvph, \tth ) 
			:= \frac{ (D_{x} \Psi_\eta) (Y(\tvph, \tth, \eta)) [n]} 
			{ |(D_{x} \Psi_\eta) (Y( \tvph, \tth, \eta)) [n] |} 
			= \frac{\left(\Refx \circ (D_x\Phi_{\eta})(Y(\tvph, \tth, \eta))
			\right)[n]}{|(D_x\Phi_{\eta})(Y(\tvph, \tth, \eta))[n]|}.
	\]
Recalling Lemma \ref{104}, we easily see that 
$\mathcal{Z}(\eta^2 \bvph , \eta^2 \bth, \eta) 
\to \Refx \circ Z_0( \bvph , \bth )$ in $C^\infty_{\rm loc}(\R^2)$ as $\eta \to 0$ and 
$n_{0,\eta}$ is an outward unit normal to $\Psi_{\eta}(\T_{\xi_{\eta}})$ 
since $\Psi_{\eta}$ is a conformal map.
Finally, we define the normal component of the variation of $\Psi_{\eta}$ by 
\begin{equation}\label{eq:phieta}
\varphi_{\eta}(\tvph, \tth) := 
		\left\langle \frac{\partial \mathcal{Z}}{\partial \eta}( \tvph, \tth, \eta), 
		\ n_{0,\eta}( \tvph, \tth )  \right\rangle. 
\end{equation}
Our aim here is to prove the next proposition. 
	\begin{pro}\label{p:psi0}
		Set $2 \tilde A := \lim_{\eta \to 0} \eta^{2} / \xi_{\eta} > 0$ and 
			\[
				\psi_{\eta}( \bvph, \bth ) 
				:= \frac{ \varphi_{\eta} ( \eta^{2} \bvph, \eta^{2} \bth ) }
				{\eta} \quad 
				{\rm for}\ (\bvph, \bth) \in \R^2. 
			\]
		Then there holds 
			\[
				\begin{aligned}
					\psi_{\eta} ( \bvph, \bth ) 
					\to \psi_{0}(\bvph, \bth) 
					&= 
					 - \frac{1}{ \bvph^{2} + (\sqrt{2} + 1)^{2} \bth^{2} 
					+ 1/ ( 4 \tilde{A}^{2})} 
					\left\{ \bvph^{2} + (\sqrt{2}+1) \bth^{2} 
					- \frac{\sqrt{2}}{8 \tilde{A}^{2}} \right\} 
					\\
					&\quad {\rm in}\ C^{\infty}_{\rm loc}(\R^{2}).
				\end{aligned}
			\]
		Noting that as $\eta \to 0$, $\mathcal{Z}(\T_{\xi_{\eta}})$ 
		converges to the sphere of radius $\tilde{A}$ and 
		centred at $\tilde A \mathbf{e}_{x}$ (denoted by $S^{2}_{\tilde A}$), 
		we observe that $\psi_0 \in C^\infty (\SA \setminus \{0\}) 
		\cap L^\infty(\SA)$. 
		Moreover, using the following polar coordinates 
			\[
				x = \tilde{A} ( 1 + \cos \theta ), \quad 
				y = \tilde{A} \sin \theta \cos \varphi, \quad 
				z = \tilde{A} \sin \theta \sin \varphi, \quad 
				(\theta, \varphi) \in [0,\pi] \times [0,2\pi],
			\]
		$\psi_{0}$ is expressed as follows: 
\begin{equation}\label{eq:tranfsphere} \
\begin{aligned}
					\psi_{0} 
					&= 
					-\frac{1}{2 \tilde{A} x} 
					\left(
					z^{2} + y^{2} - (2 - \sqrt{2}) y^{2} \right) 
					+ \frac{\sqrt{2}}{4 \tilde{A}} x
					\\
					&= \frac{\sqrt{2}}{2} \cos \theta 
					+ \frac{2-\sqrt{2}}{4} ( 1 - \cos \theta ) \cos 2 \varphi 
					\\
					&= \frac{1}{2} (  \cos \theta - 1 ) + \frac{2 - \sqrt{2}}{2} 
					( 1 - \cos \theta ) \cos^{2} \varphi 
					+ \frac{\sqrt{2}}{4} ( 1 + \cos \theta ).
				\end{aligned}  
\end{equation}
	\end{pro}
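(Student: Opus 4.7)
The plan is to derive a closed-form expression for $\varphi_\eta$ defined in \eqref{eq:phieta}, then rescale $(\tvph,\tth)=(\eta^2\bvph,\eta^2\bth)$ and pass to the limit $\eta\to 0$ using Lemma \ref{l:xi-xi'} together with a small refinement of its proof.

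First, differentiate $\mathcal Z=\Psi_\eta\circ Y$ in $\eta$. Since $\pa_\eta Y=-\xi_\eta'\ex$, $\pa_\eta\Phi_\eta(x)=2\eta x/|x|^2$, and $D\Phi_\eta(x)=(\eta^2/|x|^2)(\mathrm{Id}-2\hat x\otimes\hat x)$ is a scalar multiple of a reflection through $\hat x^\perp$ (with $\hat x:=x/|x|$), composing with the linear isometry $\Refx$ gives
\[
\pa_\eta \mathcal Z = \frac{2\eta}{|Y|^2}\Refx(Y) - \frac{\xi_\eta'\eta^2}{|Y|^2}\Refx\bigl[\ex-2\la\ex,\hat Y\ra\hat Y\bigr].
\]
Pairing with $n_{0,\eta}=\Refx[n-2\la n,\hat Y\ra\hat Y]$ and using that both $\Refx$ and the reflection through $\hat Y^\perp$ are linear isometries, the identity $\la Y,\hat Y\ra=|Y|$ collapses the first term to $-(2\eta/|Y|^2)\la Y,n\ra$, while the second becomes $-(\xi_\eta'\eta^2/|Y|^2)\la\ex,n\ra$. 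This yields the compact formula
\[
\varphi_\eta(\tvph,\tth)=-\frac{1}{|Y|^2}\bigl[2\eta\la Y,n\ra+\xi_\eta'\eta^2\la\ex,n\ra\bigr].
\]

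Next, substituting the rescaling and Taylor expanding the parametrizations \eqref{eq:YYY}--\eqref{eq:exp-Y}, one gets $\la Y,n\ra=-\xi_\eta+\tfrac{\eta^4}{2}[\bvph^2+(\sqrt 2+1)\bth^2]+O(\eta^6)$, $\la\ex,n\ra=1+O(\eta^4)$, and $|Y|^2=\eta^4 D(\bvph,\bth)+O(\eta^6)$ with $D(\bvph,\bth):=\bvph^2+(\sqrt 2+1)^2\bth^2+1/(4\tilde A^2)$. Dividing by $\eta$ then gives
\[
\psi_\eta(\bvph,\bth)=-\frac{\eta^{-4}(\xi_\eta'\eta-2\xi_\eta)+\bvph^2+(\sqrt 2+1)\bth^2+O(\eta^2)}{D(\bvph,\bth)+O(\eta^2)}.
\]
The claim thus reduces to computing the limit $c:=\lim_{\eta\to 0}(\xi_\eta'\eta-2\xi_\eta)/\eta^4$. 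Smooth convergence on compact subsets of $\R^2$ is then automatic: the rescaled denominator $|Y|^2/\eta^4$ is a smooth function of $(\bvph,\bth,\eta)$ bounded below uniformly for small $\eta$, so $C^\infty_{\rm loc}$ convergence follows from term-by-term differentiation together with the smooth dependence of $\xi_\eta$ on $\eta$ stated in Lemma \ref{104}.

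The hard part will be determining the constant $c$. Lemma \ref{l:xi-xi'} only provides $\xi_\eta'\eta-2\xi_\eta=O(\eta^4)$; to pin down the leading coefficient, the plan is to sharpen its proof by one order. Using $(\xi_\eta'\eta-2\xi_\eta)/\eta^3=(d/d\eta)(\xi_\eta/\eta^2)$, writing $\xi_\eta/\eta^2=1/(2\tilde A)+c_2\eta^2+O(\eta^4)$ gives $c=2c_2$, so the task is to extract $c_2$ from the area identity \eqref{eq:area}. Concretely, I would revisit the integrals $\widehat I_1,\widehat I_2,\widehat I_3$ in the proof of Lemma \ref{l:xi-xi'}, keeping the next $O(\eta^2)$ corrections in the Taylor expansions of $|Y|^2$, $f(\tvph,\tth,\eta)$, and $\sqrt 2+\cos\tvph$ on the inner region $I_\eta$, and correspondingly upgrading the $J_\eta$ estimate. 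The expected outcome is $c_2=-\sqrt 2/(16\tilde A^2)$, producing exactly the constant $-\sqrt 2/(8\tilde A^2)$ in the numerator of $\psi_0$. Finally, the polar representation \eqref{eq:tranfsphere} is a routine verification: from Lemma \ref{104}(ii), $\Refx\circ Z_0$ parametrizes $\SA$ as $x=1/(2\tilde A D)$, $y=(\sqrt 2+1)\bth/D$, $z=\bvph/D$, so one rewrites $\bvph^2+(\sqrt 2+1)\bth^2=D^2[z^2+(\sqrt 2-1)y^2]$ and uses the sphere identity $y^2+z^2=x(2\tilde A-x)$ on $\SA$ to obtain the first stated expression; the trigonometric formula then follows by direct substitution into $(x,y,z)=\tilde A(1+\cos\theta,\sin\theta\cos\varphi,\sin\theta\sin\varphi)$.
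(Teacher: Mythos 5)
Your derivation of the compact formula $\varphi_\eta=-|Y|^{-2}\bigl[2\eta\langle Y,n\rangle+\xi_\eta'\eta^2\langle\ex,n\rangle\bigr]$, the rescaling, and the final translation to polar coordinates all agree with the paper's proof (these are exactly the contents of Lemma \ref{l:bdd-psi} and the last part of the proof of Proposition \ref{p:psi0}). The reduction to identifying $c:=\lim_{\eta\to 0}\eta^{-4}(\xi_\eta'\eta-2\xi_\eta)$ is also correct, as is the algebraic relation $c=2c_2$.

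The genuine gap is that you do not determine $c$; you only announce ``the expected outcome is $c_2=-\sqrt 2/(16\tilde A^2)$'' and propose, without executing, a second-order refinement of the singular-integral estimates in Lemma \ref{l:xi-xi'}. That refinement is the most delicate part of the argument: the existing proof of Lemma \ref{l:xi-xi'} only controls leading-order behaviour on $I_\eta$ and $J_\eta$, and extracting the next coefficient requires tracking cancellations between several near-singular contributions ($\widehat I_1$, $\widehat I_2$, and the correction terms in $|Y|^2$ and $f$) whose individual sizes are $O(\eta^2)$, i.e. comparable to the quantity sought. Asserting the value without carrying this out leaves the proof incomplete.

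The paper sidesteps this entirely with a different and cleaner device (Lemma \ref{l:int0S2}). Since $\Psi_\eta$ preserves the area of $\T_{\xi_\eta}$, the first variation of area in $\eta$ vanishes, i.e.\ $\int_{\Sigma_\eta}H_{\Sigma_\eta}\varphi_\eta\,d\sigma=0$. Splitting the torus into a handle neighbourhood (where the contribution is $O(\delta\eta)$) and the remainder (where everything converges to the round sphere $\SA$), and passing to the limit along a subsequence, yields the integral constraint $\int_{\SA}\psi_0\,d\sigma=0$. Plugging the one-parameter family of candidates $\psi_0$ (with $c_0$ unknown) into this constraint in polar coordinates determines $c_0=-\sqrt 2/(8\tilde A^2)$ by a one-line trigonometric integral, and in particular shows the limit is independent of the subsequence. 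This avoids any second-order asymptotics of $\xi_\eta$ and uses only the already-established first-order estimates. You should either adopt this mechanism or actually carry out the second-order expansion you propose; the result would then agree, but as written your argument is not a proof.
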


%
%
%
%

\

\noindent To prove the above proposition we need two preliminary lemmas.

\begin{lem}\label{l:bdd-psi}
For each $k \in \N$ and $R>0$, $(\psi_{\eta})$ is bounded in $C^{k}([-R,R]^{2})$ 
as $\eta \to 0$.  
\end{lem}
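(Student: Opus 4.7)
The plan is to recognise $\psi_\eta$ as a function that extends smoothly across $\eta = 0$ on the rescaled square $[-R,R]^2$. First, I would introduce the rescaled coordinate
\[
  \tilde{Y}(\bvph, \bth, \eta) := \eta^{-2}\, Y(\eta^2 \bvph,\, \eta^2 \bth,\, \eta).
\]
Since $\Phi_\eta(\eta^2 y) = \Phi_1(y)$ by a direct computation, one immediately obtains the scaling identity
\[
  \mathcal{Z}(\eta^2\bvph,\eta^2\bth,\eta) = \Psi_\eta(\eta^2 \tilde Y) = \Psi_1(\tilde Y(\bvph,\bth,\eta)).
\]
Expanding the analytic map $X$ in $(\tvph,\tth)$, the contribution $\eta^{-2}(X(\eta^2\bvph,\eta^2\bth) - (\sqrt{2}+1)\ex)$ becomes a polynomial in $\eta^2$ with coefficients polynomial in $(\bvph,\bth)$, so the only non-polynomial component of $\tilde Y$ is $-(\xi_\eta/\eta^2)\ex$. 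By Lemma \ref{104}\,(i) this piece also extends continuously to $\eta = 0$, and $\tilde Y$ takes values in a compact region of $\R^3 \setminus \{0\}$ where $\Psi_1$ is smooth.

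The central observation is the cancellation
\[
  \partial_\eta \tilde Y(\bvph,\bth,\eta) = O(\eta) \quad \text{in } C^k([-R,R]^2).
\]
Indeed, the polynomial-in-$\eta^2$ part of $\tilde Y$ starts at order $\eta^2$, while for the delicate piece one computes
\[
  \partial_\eta\!\left(\frac{\xi_\eta}{\eta^2}\right) = -\frac{2\xi_\eta - \xi'_\eta \eta}{\eta^3} = O(\eta),
\]
which is precisely the content of Lemma \ref{l:xi-xi'}. This bound survives after arbitrary $(\bvph,\bth)$-derivatives, because the polynomial dependence of $\tilde Y$ on $(\bvph,\bth)$ has uniformly bounded coefficients in $\eta$.

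The final step converts this into a bound on $\varphi_\eta$ via the chain rule. Differentiating the identity $\Psi_1(\tilde Y(\bvph,\bth,\eta)) = \mathcal{Z}(\eta^2\bvph,\eta^2\bth,\eta)$ in $\eta$ yields
\[
  \frac{d}{d\eta}\bigl[\Psi_1(\tilde Y(\bvph,\bth,\eta))\bigr] = 2\eta\,\bvph\, \partial_{\tvph}\mathcal{Z} + 2\eta\,\bth\, \partial_{\tth}\mathcal{Z} + \partial_\eta \mathcal{Z}|_{(\eta^2\bvph,\eta^2\bth,\eta)}.
\]
The vectors $\partial_{\tvph}\mathcal{Z}$ and $\partial_{\tth}\mathcal{Z}$ span the tangent plane of $\Psi_\eta(\T_{\xi_\eta})$ and are therefore orthogonal to $n_{0,\eta}$; taking the inner product with $n_{0,\eta}(\eta^2\bvph,\eta^2\bth)$ and using the definition \eqref{eq:phieta} gives
\[
  \varphi_\eta(\eta^2\bvph,\eta^2\bth) = \langle D\Psi_1(\tilde Y)\,\partial_\eta \tilde Y,\ n_{0,\eta}(\eta^2\bvph,\eta^2\bth)\rangle.
\]
Since $D\Psi_1(\tilde Y)$ and $n_{0,\eta}$ are $C^k$-bounded on $[-R,R]^2$ uniformly in $\eta$, the right-hand side is $O(\eta)$ in $C^k$, and dividing by $\eta$ gives the claimed $C^k$-boundedness of $\psi_\eta$.

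The main obstacle is the refined cancellation $\partial_\eta(\xi_\eta/\eta^2) = O(\eta)$. The leading-order estimate $\xi_\eta/\eta^2 \to 1/(2\tilde A)$ from Lemma \ref{104}\,(i) alone is not enough, since it controls only values and not derivatives; the boundedness of $\psi_\eta$ really hinges on the sharper comparison between $\xi_\eta$ and $\xi'_\eta \eta$ provided by Lemma \ref{l:xi-xi'}, whose proof in turn required a careful analysis of the area integral near the shrinking handle $I_\eta$. Without this improvement one would only conclude $\psi_\eta = O(1/\eta)$, which would be useless for the identification of the limit $\psi_0$ in Proposition \ref{p:psi0}.
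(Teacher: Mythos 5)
Your proof is correct, and it takes a genuinely different route from the paper's. The paper first derives the closed-form expression \eqref{eq:phi-eta} for $\varphi_\eta$ by expanding $\partial_\eta \mathcal{Z}$ and computing the projection onto $n_{0,\eta}$ term by term, then Taylor-expands $Y$ and $h$ near the origin, substitutes the rescaled variables, and reads off the explicit asymptotic \eqref{eq:psi-eta}; boundedness of $\psi_\eta$ drops out of that formula together with Lemma \ref{l:xi-xi'}. You instead exploit the scaling identity $\Psi_\eta(\eta^2 y)=\Psi_1(y)$ to rewrite $\mathcal{Z}(\eta^2\bvph,\eta^2\bth,\eta)=\Psi_1(\tilde Y)$, reduce the lemma to the single estimate $\partial_\eta\tilde Y = O(\eta)$ in $C^k([-R,R]^2)$, and recover $\varphi_\eta$ by projecting the chain-rule identity onto $n_{0,\eta}$ so that the tangential contributions drop out by orthogonality. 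Both arguments pivot on Lemma~\ref{l:xi-xi'} at exactly the same spot—to control $\partial_\eta(\xi_\eta/\eta^2)$—and your remark that $\xi_\eta/\eta^2 \to 1/(2\tilde A)$ alone would only yield $\psi_\eta=O(\eta^{-1})$ is precisely the right diagnosis. The trade-off is that your proof is shorter and conceptually cleaner (you never write down $\varphi_\eta$), whereas the paper's computation also yields the explicit expansion \eqref{eq:psi-eta}, which is used immediately after the lemma to extract the subsequential limit $\psi_0$ and ultimately to prove Proposition~\ref{p:psi0}; with your route one would still need an additional step to identify that limit.
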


	\begin{proof}
We first show that $\varphi_{\eta}$ is expressed as follows: 
	\begin{equation}\label{eq:phi-eta}
		\begin{aligned}
			\varphi_{\eta} &= - 
			\frac{\eta}{|Y(\tvph, \tth,\eta)|^{2}} 
			\left\{ 2 \left( \sqrt{2} \cos \tvph + 1 
			- ( \sqrt{2}+1) \cos \tvph \cos \tth \right) 
			+ ( \xi_{\eta}' \eta - 2 \xi_{\eta}) \cos \tvph \cos \tth 
			\right\}
			\\
			&=: - \frac{\eta}{|Y(\tvph, \tth,\eta)|^{2}} 
			\left( h(\tvph, \tth) 
			+ (\xi_{\eta}' \eta - 2 \xi_{\eta}) \cos \tvph \cos \tth 
			\right).
		\end{aligned} 
	\end{equation}

		To this end, from the definition of $\mathcal{Z}$, we have 
	\[
		\begin{aligned}
			\frac{\partial \mathcal{Z}}{\partial \eta} 
			&= 2 \eta \frac{\Refx(Y(\tvph, \tth,\eta))}
			{|Y(\tvph, \tth,\eta)|^{2}} 
			+ \left( \Refx \circ D_x \Phi_{\eta} 
			(Y(\tvph, \tth,\eta)) \right)
			 \left[ \frac{\partial Y}{\partial \eta} \right]
			 \\
			 &=2 \eta \frac{\Refx(Y(\tvph, \tth,\eta))}
			{|Y(\tvph, \tth,\eta)|^{2}} 
			+ \left( \Refx \circ D_x \Phi_{\eta} 
			(Y(\tvph, \tth,\eta)) \right)
			 \left[ - \xi_{\eta}' \mathbf{e}_{x} \right]. 
		\end{aligned}
	\]
From the fact that $\Refx \in O(3)$ and the formula
	\begin{equation}\label{eq:DPhi}
		D_x\Phi_{\eta}(x) = \frac{\eta^{2}}{|x|^{2}} 
		\left( {\rm Id}_{\R^{3}} - 2 \frac{x}{|x|} \otimes \frac{x}{|x|} \right), 
	\end{equation}
it follows that 
	\[
		| D_x \Psi_{\eta} (Y) [n] | = 
		| D_x \Phi_{\eta} (Y) [n] | = \frac{\eta^{2}}{|Y|^{2}}. 
	\]
Since $D_x\Phi_{\eta}$ is conformal (cf. \eqref{eq:DPhi}), we obtain 
	\[
		\begin{aligned}
			\varphi_{\eta} &= 
			\left\langle \frac{\partial \mathcal{Z}}{\partial \eta}, \ 
			n_{0,\eta} \right\rangle 
			= \left\langle  2 \eta \frac{\Refx(Y)}
			{|Y|^{2}} 
			+ 
			\left( \Refx \circ D \Phi_{\eta} (Y) \right)
			 \left[ - \xi_{\eta}' \mathbf{e}_{x} \right], \ 
			 n_{0,\eta}
			   \right\rangle 
			 \\
			 &=
			 \left\langle 
			 2 \eta \frac{\Refx(Y)}{|Y|^{2}}, \ 
			 n_{0,\eta} 
			 \right\rangle
			 - \xi_{\eta}' 
			 \left\langle D\Phi_{\eta}(Y)[\mathbf{e}_{x}], \ 
			 \frac{D\Phi_{\eta}(Y)[n]}{|D\Phi_{\eta}(Y)[n]|} 
			   \right\rangle 
			   \\
			  &= 2 \eta \left\langle  \frac{\Refx(Y)}{|Y|^{2}}, \ 
			 n_{0,\eta} \right\rangle
			 - \xi_{\eta}' \frac{\eta^{4}}{|Y|^{4}} 
			 \frac{1}{|D\Phi_{\eta}(Y)[n]|} 
			 \langle \mathbf{e}_{x}, n \rangle 
			 \\
			 &= 2 \eta \left\langle  \frac{\Refx(Y)}{|Y|^{2}}, \ 
			 n_{0,\eta} \right\rangle 
			 - \frac{\eta^{2}}{|Y|^{2}} \xi_{\eta}' \langle \mathbf{e}_{x},  n \rangle.
		\end{aligned}
	\]

		On the other hand, 
using \eqref{eq:DPhi}, one sees that  
	\[
		n_{0,\eta} = 
		\Refx \left( \frac{D\Phi_{\eta}(Y)[n]}{|D\Phi_{\eta}(Y)[n]|} \right) 
		= 
		\Refx \left( n - 2 \left\langle \frac{Y}{|Y|}, \ n \right\rangle 
		\frac{Y}{|Y|} \right).
	\]
Thus it follows that 
	\[
		\left\langle \frac{\Refx(Y)}{|Y|^{2}}, \ n_{0,\eta} \right\rangle 
		= \frac{1}{|Y|^{2}} 
		\left\langle Y, \ n - 2 \left\langle \frac{Y}{|Y|}, n \right\rangle 
		\frac{Y}{|Y|} \right\rangle 
		= - \frac{1}{|Y|^{2}} \langle Y, n \rangle,
	\]
which implies 
	\[
		\varphi_{\eta} 
		= \frac{1}{|Y|^{2}} 
		\left( -2 \eta \langle Y, n \rangle - \eta^{2} \xi_{\eta}' 
		\langle \mathbf{e}_{x} , n \rangle  \right)
		= -\frac{\eta}{|Y|^{2}} 
		\left( 2 \langle Y, n \rangle + \eta \xi_{\eta}' 
		\langle \mathbf{e}_{x} , n \rangle  \right). 
	\]
Noting that 
	\[
		\begin{aligned}
			\langle \mathbf{e}_{x}, n \rangle 
			= \cos \tvph 
			\cos \tth, \quad 
			\langle Y, n \rangle 
			&= \left\langle 
			\begin{pmatrix} (\sqrt{2}+\cos \tvph) \cos \tth 
			-(\sqrt{2}+1+\xi_{\eta}) \\
			(\sqrt{2}+\cos \tvph) \sin \tth \\
			\sin \tvph
			 \end{pmatrix}
			 ,
			 \begin{pmatrix}
			 	\cos \tvph \cos \tth \\
				\cos \tvph \sin \tth \\
				\sin \tvph
			 \end{pmatrix}
			 \right\rangle
			\\
			&= \sqrt{2} \cos \tvph + 1 
			- ( \sqrt{2} + 1) \cos \tvph \cos \tth 
			- \xi_{\eta} \cos \tvph \cos \tth,
		\end{aligned}
	\]
we get \eqref{eq:phi-eta}.

		Next, recalling the definition of $h$ in \eqref{eq:phi-eta} and using 
a Taylor expansion, one observes that 
	\begin{equation}\label{eq:Y-h}
		Y(\tvph, \tth, \eta )
		= 
		\begin{pmatrix} 
			- \xi_{\eta}  \\ 
			( \sqrt{2} + 1 ) \tth \\
			\tvph
		\end{pmatrix}
		+ R_{Y} ( \tvph, \tth ),
		\qquad 
		h(\tvph, \tth ) 
		= 
		\tvph^{2} + (\sqrt{2}+1) \tth^{2}  
		+ R_{h} ( \tvph, \tth )
	\end{equation}
where $R_{Y}, R_{h}$ are smooth functions satisfying
	\begin{equation}\label{eq:RYRh}
		\left| D^{\alpha} R_{Y} ( \tvph , \tth ) \right| 
		\leq
		C_k \left( \tvph^{(2- | \alpha |)_+ } 
		+ \tth^{(2 - |\alpha|)_+ } \right),
		\qquad 
		\left| D^{\beta} R_{h} ( \tvph, \tth ) \right| 
		\leq C_k \left( \tvph^{(4 - | \beta |)_+ } 
		+ \tth^{(4 - |\beta|)_+ } \right) 
	\end{equation}
for all $\alpha, \beta \in \Z_{+}^{2}$ with $|\alpha|, |\beta| \leq k$ 
in a neighbourhood of the origin. Therefore, for $(\bvph, \bth) \in \R^2$, we have 
	\[
		\begin{aligned}
			Y(\eta^{2} \bvph, \eta^{2} \bth, \eta ) 
			&= \eta^{2}
			\left( -\frac{1}{2 \tilde{A}}, \ 
				(\sqrt{2}+1) \bth, \ \bvph
			\right)
			+ R_{Y}(\eta^{2} \bvph, \eta^{2} \bth ) + 
			\left( \frac{\eta^{2}}{2 \tilde{A}} - \xi_{\eta} \right) {\bf e}_x,
			\\
			h( \eta^{2} \bvph, \eta^{2} \bth) 
			&= \eta^{4} ( \bvph^{2} + (\sqrt{2}+1) \bth^{2} ) 
			+ R_{h} ( \eta^{2} \bvph, \eta^{2} \bth ). 
		\end{aligned}
	\]
Notice that from \eqref{eq:RYRh} it follows that 
if $\eta \leq 1/R^4$, then 
	\[
		| D_{(\bvph, \bth)}^{\alpha} \left( R_{Y} 
		(\eta^{2} \bvph, \eta^{2} \bth ) \right) | 
		\leq C_{|\alpha|} \eta^{7/2} , 
		\qquad 
		| D_{(\bvph, \bth)}^{\alpha} \left( R_{h} 
		( \eta^{2} \bvph, \eta^{2} \bth ) \right) | 
		\leq C_{|\alpha|} \eta^{7}
	\]
for all $\alpha \in \Z_{+}^{2}$ and 
$ ( \bvph, \bth ) \in [- R, R]^{2}$ where $C_{|\a |}$ depends only on 
$|\alpha|$. 
By $ 2 \tilde{A} = \lim_{\eta \to 0} ( \eta^{2} / \xi_{\eta} )$ and 
Lemma \ref{104} (i), we have $ \eta^{2} / ( 2 \tilde{A} ) - \xi_{\eta} = O(\eta^{4})$. 
Hence, 
	\[
		\eta^{-2} Y( \eta^{2} \bvph, \eta^{2} \bth, \eta ) 
		= \left( -\frac{1}{2 \tilde{A}}, \ (\sqrt{2}+1) \bth, \ 
			\bvph\right) 
			+ \bar{R}_{Y} ( \bvph, \bth ), 
			\quad 
		\eta^{-4} h( \eta^{2} \bvph, \eta^{2} \bth ) 
		= \bvph^{2} + (\sqrt{2}+1) \bth^{2}   
			+ \bar{R}_{h} ( \bvph, \bth )
	\]
where $\bar{R}_{Y} = O_k(\eta^{3/2})$ and $\bar{R}_{h} = O_k(\eta^{3})$ 
in $C^{k}([-R,R]^{2})$ sense provided $\eta \leq 1/R^4$. 
Here $O_k(\eta^i)$ means $\| O_k (\eta^i) \|_{C^k( [-R,R]^2 )} \leq C_k \eta^i$ and 
$C_k$ does not depend on $R$. 
Hence, one observes that 
	\[
			\psi_{\eta}(\bth, \bvph ) 
			= 
			\frac{\varphi_{\eta}( \eta^{2} \bth, \eta^{2} \bvph )}{\eta} 
			= - \frac{\bvph^{2} + (\sqrt{2}+1) \bth^{2} 
			+\eta^{-4} (\xi_{\eta}' \eta - 2 \xi_{\eta}) 
			\cos ( \eta^{2} \bvph ) \cos ( \eta^{2} \bth ) }
			{(\sqrt{2}+1)^{2}\bth^{2} + \bvph^{2} 
			+ \frac{1}{4 \tilde{A}^{2}}}  + R_{\psi} ( \bvph, \bth )
	\]
where $R_{\psi} ( \bvph, \bth ) =O_k(\eta^{5/4})$. 
By Lemma \ref{l:xi-xi'}, there holds 
$\xi_{\eta}'\eta - 2 \xi_{\eta} = O(\eta^{4})$, hence, 
	\begin{equation}\label{eq:psi-eta}
		\psi_{\eta} ( \bvph, \bth ) 
		= - \frac{\bvph^{2} + (\sqrt{2}+1) \bth^{2} 
			+\eta^{-4} (\xi_{\eta}' \eta - 2 \xi_{\eta}) }
			{(\sqrt{2}+1)^{2}\bth^{2} + \bvph^{2} 
			+ \frac{1}{4 \tilde{A}^{2}}}  + O_k(\eta^{5/4}), 
	\end{equation}
which implies that $(\psi_{\eta})$ is bounded in $C^{k}([-R,R]^{2})$. 
\end{proof}

		From Lemmas \ref{l:xi-xi'} and \ref{l:bdd-psi}, and \eqref{eq:psi-eta}, 
taking a subsequence $(\eta_{k})$, we may assume 
	\begin{equation}\label{eq:psi0}
		\psi_{\eta_{k}} \to \psi_{0} = 
		- \frac{ \bvph^{2} + (\sqrt{2}+1) \bth^{2} + c_{0}}
		{(\sqrt{2}+1) \bth^{2} + \bvph^{2} + 
		\frac{1}{4 \tilde{A}^{2}}} 
		\qquad 
		{\rm in}\ C^{\ell}_{\rm loc}(\R^{2})
	\end{equation}
for every $\ell \in \N$, where 
	\[
		c_{0} = \lim_{k \to \infty} \eta_k^{-4} ( \xi_{\eta_{k}}' \eta_{k} 
		- 2 \xi_{\eta_{k}} ). 
	\]
Note that $\psi_0$ is a bounded function. 
Furthermore, using the map 
	\[
		( \bvph , \bth ) \mapsto 
		\Psi_1 \left( -\frac{1}{2 \tilde A} \mathbf{e}_x + (\sqrt{2} + 1) \bth \mathbf{e}_y 
		+ \bvph \mathbf{e}_z \right) : 
		\R^2 \to S^2_{\tilde A}
	\]
as the parametrization of $S^2_{\tilde A}$, 
by the conformality of $\Psi_1$ and \eqref{eq:DPhi}, we have 
	\[
		d \s = \frac{\sqrt{2} + 1}{\left\{ (\sqrt{2}+1)^2 \bth^2 
			+ \bvph^2 + \frac{1}{4 \tilde{A}^2} \right\}^2 } 
			d \bvph d \bth. 
	\]
Hence, $\psi_0$ is integrable on $S^2_{\tilde A}$. 

\

\noindent Next we prove that the function $\psi_{0}$  has null mean value on the limit sphere  $S^{2}_{\tilde A}$.

	\begin{lem} \label{l:int0S2}
	Viewed as a real function on the limit sphere (through the above parameterization), 
	the function $\psi_{0}$ in \eqref{eq:psi0} satisfies 
		\begin{equation}\label{eq:cond-psi0}
			\int_{S^{2}_{\tilde A}} \psi_{0} \, d \sigma = 0.
		\end{equation}
	\end{lem}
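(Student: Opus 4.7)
The plan is to exploit the area preservation established in Lemma \ref{104}: since $\mathrm{Area}(\Psi_\eta(\T_{\xi_\eta})) = 4\sqrt{2}\pi^{2}$ for every $\eta > 0$, the first variation of area formula yields
\[
  0 \;=\; \frac{d}{d\eta}\,\mathrm{Area}(\Psi_\eta(\T_{\xi_\eta})) \;=\; \int_{\Psi_\eta(\T_{\xi_\eta})} H_\eta\, \varphi_\eta\, d\sigma_\eta \qquad \text{for every } \eta > 0,
\]
where $H_\eta$ is the mean curvature of $\Psi_\eta(\T_{\xi_\eta})$ and $\varphi_\eta$ is the normal variation defined in \eqref{eq:phieta}. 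I would divide by $\eta$ and pass to the limit along the subsequence $\eta_k \to 0$ chosen in \eqref{eq:psi0}, aiming to recover
\[
  0 \;=\; H_0 \int_{\SA} \psi_0 \, d\sigma,
\]
with $H_0 = 2/\tilde{A}\neq 0$ the constant mean curvature of the limit sphere $\SA$. Since $H_0 \neq 0$, \eqref{eq:cond-psi0} follows.

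The passage to the limit is the main technical point, and I would split $\Psi_\eta(\T_{\xi_\eta})$ into a ``bulk'', namely the image of the region $\{(\tvph,\tth) : |(\tvph,\tth)| \leq \eta^2 R\}$ for a fixed $R>0$, and the complementary ``handle''. On the bulk, Lemma \ref{l:bdd-psi} and \eqref{eq:psi0} yield $\psi_\eta = \varphi_\eta/\eta \to \psi_0$ smoothly in the variables $(\bvph,\bth)$, while Lemma \ref{104} (ii) gives $C^k$-convergence of the surface (and hence of $H_\eta$ and of the area form written in these variables) to the corresponding portion of $\SA$. Consequently
\[
  \lim_{\eta \to 0} \frac{1}{\eta}\int_{\mathrm{bulk}} H_\eta \varphi_\eta\, d\sigma_\eta \;=\; H_0 \int_{\{|(\bvph,\bth)| \leq R\}} \psi_0\, d\sigma_{\SA}.
\]

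For the handle contribution, I would use the conformal invariance of the Willmore functional (Proposition \ref{p:Mobinv}) to get $\int_{\Psi_\eta(\T_{\xi_\eta})} H_\eta^2\, d\sigma = 2\pi^2$ uniformly in $\eta$. Combining the pointwise bound $|\varphi_\eta| \leq C\eta$, which follows from \eqref{eq:phi-eta} together with $|Y|^2 \geq c\,\xi_\eta^2 \gtrsim \eta^4$ and $h/|Y|^2 = O(1)$, with the elementary estimate $\mathrm{Area}(\mathrm{handle}) \leq C/R^2$ obtained by a change of variables in $\eta^{4}\int(\sqrt{2}+\cos\tvph)/|Y|^4 \,d\tvph\,d\tth$, Cauchy--Schwarz yields
\[
  \left| \frac{1}{\eta} \int_{\mathrm{handle}} H_\eta \varphi_\eta \, d\sigma_\eta \right|
  \;\leq\; \frac{1}{\eta}\Big(\int H_\eta^2\, d\sigma\Big)^{\!1/2}\Big(\int_{\mathrm{handle}} \varphi_\eta^2\, d\sigma\Big)^{\!1/2}
  \;\leq\; \frac{C}{R}.
\]
Letting first $\eta \to 0$ and then $R \to \infty$ combines the two estimates and produces \eqref{eq:cond-psi0}. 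The main obstacle I foresee is precisely this handle estimate: one needs a bound on $|\varphi_\eta|$ that is uniform across both the near-pinch regime (where $|Y|\sim \xi_\eta$) and the outer regime (where $|Y|\sim 1$), and a quantitative decay of $\mathrm{Area}(\mathrm{handle})$ in $R$ that is independent of $\eta$; both rely on the sharp asymptotics of $\xi_\eta$ provided by Lemmas \ref{104} and \ref{l:xi-xi'}.
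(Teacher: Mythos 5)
Your proposal follows essentially the same strategy as the paper: both differentiate the preserved area in $\eta$ to get $\int H_\eta\varphi_\eta\,d\sigma=0$, split the integral into a handle piece and a bulk piece (your parameter $R$ plays the role of the paper's $\delta^{-1}$), bound the handle by Cauchy--Schwarz using the conformal invariance of the Willmore energy together with the uniform bound $|\varphi_\eta|\leq C\eta$ and the handle area decay, and pass to the limit on the bulk via Lemma \ref{104}(ii) and \eqref{eq:psi0}. The only difference is presentational --- the paper runs a contradiction argument at fixed $\delta$ while you propose a direct double limit $\eta\to 0$, $R\to\infty$ --- and both are valid (the quoted value $\int H_\eta^2\,d\sigma=2\pi^2$ should be $8\pi^2$ with the paper's convention that $H$ is the sum of principal curvatures, but this constant is immaterial to the estimate).
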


	\begin{rem}\label{r:conv-psi-eta}
		From \eqref{eq:cond-psi0} we shall prove 
		$c_0 = - \sqrt{2}/ ( 8 \tilde{A}^2 )$ in the proof of Proposition \ref{p:psi0}. 
		Here we remark that since $c_{0}$ is independent of the choice of subsequence $(\eta_{k})$, 
		as $\eta \to 0$, we obtain 
		\[
		\eta^{-4} ( \xi_{\eta}' \eta - 2 \xi_{\eta} ) \to  
		- \frac{\sqrt{2}}{8 \tilde{A}^{2}}, \qquad 
		\psi_{\eta} \to \psi_{0} \quad 
		{\rm in}\ C^{\ell}_{\rm loc} (\R^{2}). 
		\]
	\end{rem}

	\begin{proof}[Proof of Lemma \ref{l:int0S2}]
We argue by contradiction and suppose that 
	\[
		\int_{S^{2}_{\tilde A}} \psi_{0} \, d \sigma = A \neq 0.
	\]
Set $\Sigma_{\eta} := \Psi_{\eta}(\T_{\xi_{\eta}})$. 
Since $\Psi_{\eta}$ preserves the area of $\T_{\xi_{\eta}}$, 
it follows from the definition of $\varphi_{\eta}$ that 
	\[
		0= \frac{d}{d \eta}  {\rm Area} (\Sigma_{\eta}) 
		= \int_{\Sigma_{\eta}} H_{\Sigma_{\eta}} \varphi_{\eta} d \sigma, 
	\]
where $H_{\Sigma_{\eta}}$ is the mean curvature of  $\Sigma_{\eta}$.

		Let $\delta > 0$ and decompose $\Sigma_{\eta}$ into 
two parts:
	\[
		\Sigma_{\eta} = ( \Sigma_{\eta} \cap B_{\delta}(0)) 
		\cup ( \Sigma_{\eta} \cap (B_{\delta}(0))^{c} ) 
		=: \Sigma_{\eta,1} + \Sigma_{\eta,2}. 
	\] 
First we prove that there exist $C>0$, independent of $\eta$ and $\delta$,  
and $\eta_{0},\delta_0>0$ such that 
	\begin{equation}\label{eq:sig-1}
		\left| 
		\int_{\Sigma_{\eta,1}} H_{\Sigma_{\eta}} \varphi_{\eta} d \sigma 
		\right|
		\leq C \delta \eta
	\end{equation}
for each $\eta \leq \eta_{0}$ and $\delta \leq \delta_0$. 
In fact, from  H\"older's inequality, one observes that 
	\[		
		\left| 
		\int_{\Sigma_{\eta,1}} H_{\Sigma_{\eta}} \varphi_{\eta} d \sigma 
		\right| 
		\leq \left( \int_{\Sigma_{\eta,1}}  H_{\Sigma_{\eta}}^{2} d \sigma \right)^{1/2} 
		\left( \int_{\Sigma_{\eta,1}}  \varphi_{\eta}^{2} d \sigma \right)^{1/2} 
		\leq \left( \int_{\Sigma_{\eta}}  H_{\Sigma_{\eta}}^{2} d \sigma \right)^{1/2} 
		\left( \int_{\Sigma_{\eta,1}}  \varphi_{\eta}^{2} d \sigma \right)^{1/2}. 
	\]
By the conformal invariance of the Willmore functional (see Proposition \ref{p:Mobinv}), we have 
	\[
		\int_{\Sigma_{\eta}}  H_{\Sigma_{\eta}}^{2} d \sigma 
		= W_{g_0}(\Sigma_{\eta}) = W_{g_0}(\T). 
	\]
On the other hand, we remark that 
$|\mathcal{Z}(\tvph, \tth,\eta)| \leq \delta$ is equivalent to 
$|Y(\tvph,\tth,\eta)| \geq \eta^{2} \delta^{-1}$. 
Since $\Sigma_\eta$ is parametrized by $\mathcal{Z}(\tvph, \tth,\eta)$ 
($ (\tvph, \tth) \in [-\pi, \pi]^2 $) and 
$\Phi_\eta$ is conformal, we observe that 
the area element of $\mathcal{Z}$ is given by 
	\[
		d \sigma = \left( \sqrt{2} + \cos \tvph   \right) 
		\frac{\eta^4}{|Y(\tvph, \tth, \eta) |^4 }
		d \tvph d \tth. 
	\]
Hence, we have 
	\[
		\begin{aligned}
			\int_{\Sigma_{\eta,1}} \varphi_{\eta}^{2} d \sigma 
			= \int_{\T_{\xi_{\eta}} \cap 
			\Psi_{\eta}^{-1} (B^{c}_{\eta^{2} \delta^{-1}})} 
			\varphi_{\eta}^{2} ( \tvph, \tth ) 
			( \sqrt{2} + \cos \tvph  ) \frac{\eta^{4}}
			{|Y(\tvph, \tth,\eta)|^{4}} 
			d \tth d \tvph. 
		\end{aligned}
	\]

		Next, recalling \eqref{eq:exp-Y} (or \eqref{eq:Y-h}), 
we may find $C_{0}, C_{1}>0$ and $\eta_{0}>0$ such that 
	\begin{equation}\label{eq:|Y|}
		C_{1} ( \tvph^{2} + (\sqrt{2}+1)^{2} \tth^{2} 
		+ \xi_{\eta}^{2} ) 
		\geq | Y( \tvph, \tth, \eta ) |^{2} 
		\geq C_{0} ( \tvph^{2} + (\sqrt{2}+1)^{2} \tth^{2} )
	\end{equation}
for all $\tvph, \tth \in [-\pi,\pi]$ and $\eta \leq \eta_{0}$.
Noting that $\xi_{\eta}= \eta^{2}/ (2\tilde{A}) + O(\eta^{4})$ 
by \eqref{eq:deri-xi-eta}, 
we may assume that there exists $C_{2}>0$ (independent of $\delta$) 
satisfying
	\[
		\eta \leq \eta_{0}, \quad 
		|Y(\tvph, \tth, \eta)|^{2} \geq \frac{\eta^{4}}{\delta^{2}} 
		\quad \Rightarrow \quad \tvph^{2} 
		+ (\sqrt{2}+1)^{2} \tth^{2} 
		\geq C_{2} \frac{\eta^{4}}{\delta^{2}}
	\]
for $0<\delta\leq \bar{\delta} (C_1)$.	
Moreover, we claim that 
	\begin{equation}\label{eq:est-var-eta}
		|\varphi_{\eta}( \tvph, \tth ) | 
		\leq C_{3} \eta \qquad 
		{\rm for\ every}\ (\tvph, \tth) \in [-\pi,\pi]^{2}.
	\end{equation}
In fact, recall \eqref{eq:phi-eta}:
	\[
		\varphi_{\eta} ( \tvph, \tth ) 
		= - \frac{\eta}{|Y(\tvph, \tth, \eta )|^{2}} 
		\left[ h( \tvph, \tth )
		+ ( \eta \xi_{\eta}' - 2 \xi_{\eta} ) \cos \tvph \cos \tth 
		\right]. 
	\]
From Lemmas \ref{104} and \ref{l:xi-xi'}, and 
$|Y(\tvph, \tth, \eta)| \geq 
|\langle Y, \mathbf{e}_{x} \rangle | \geq \xi_{\eta} \geq C_{4} \eta^{2}$ 
for each $(\tvph, \tth) \in [-\pi,\pi]^{2}$, 
there holds 
	\[
		\frac{|(\eta \xi_{\eta}' - 2 \xi_{\eta}) \cos \tvph \cos \tth |}
		{|Y(\tvph, \tth, \eta )|^{2}} 
		\leq C_{5} \qquad 
		{\rm for\ each}\ (\tvph, \tth) \in [-\pi,\pi]^{2} \ 
		{\rm and}\ \eta \leq \eta_{0}. 
	\]

		On the other hand, by \eqref{eq:Y-h}, we have  
	\[
		\left| h( \tvph, \tth )
		\right| \leq C_{6} \left( \tvph^{2} + 
		( \sqrt{2}+1 )^{2} \tth^{2} \right) \qquad 
		{\rm for\ all}\ ( \tvph, \tth ) \in [-\pi,\pi]^{2}.
	\]
Thus by \eqref{eq:|Y|}, we obtain 
	\[
		\frac{ \left| h( \tvph, \tth ) \right| }
		{|Y(\tvph, \tth, \eta )|^{2}} 
		\leq C_{7}
	\]
for all $(\tvph, \tth) \in [-\pi,\pi]^{2}$ and $\eta \leq \eta_{0}$. 
Combining the two estimates above, \eqref{eq:est-var-eta} holds.

		Now setting 
	\[
		A_{\eta,\delta} = \{ (\tvph, \tth) \; : \;  
		C_{2} \eta^{4} \delta^{-2} \leq 
		\tvph^{2} + (\sqrt{2}+1)^{2} \tth^{2}
		\leq 8 \pi^{2} \},
	\]
and using \eqref{eq:est-var-eta} and polar coordinates as in the proof of Lemma \ref{l:xi-xi'}, we get 
	\[
		\int_{\Sigma_{\eta,1}} \varphi_{\eta}^{2} d \sigma 
		\leq 
		C_{8} \eta^{6} \int_{A_{\eta,\delta}} 
		\frac{1}{|Y|^{4}} d \tvph d \tth 
		\leq 
		C_{9} \eta^{6}\int_{A_{\eta,\delta}} 
		\left\{\tvph^{2} + (\sqrt{2}+1)^{2} \tth^{2} \right\}^{-2} 
		d \tth d \tvph 
		\leq C_{10} \eta^{6} \left( \frac{\delta}{\eta^{2}} \right)^{2} 
		= C_{10} \eta^{2} \delta^{2}
	\]
for all $\eta \leq \eta_{0}$. Thus \eqref{eq:sig-1} holds 
for some $\eta_0>0$ and $\delta_0=\bar{\delta}(C_1)>0$.

		Next, we consider the integral on $\Sigma_{\eta,2}$. 
We first remark that $|\mathcal{Z}(\tvph, \tth, \eta)| \geq \delta$ 
is equivalent to $|Y(\tvph, \tth, \eta)| \leq \eta^{2} \delta^{-1}$ and 
the following holds: (see \eqref{eq:|Y|})
	\[
		|Y(\tvph, \tth,\eta)| \leq \eta^{2} \delta^{-1} 
		\quad \Rightarrow \quad 
		\tvph^{2} + (\sqrt{2}+1)^{2}\tth^{2}  
		\leq C_{11}^{2} \eta^{4} \delta^{-2}
	\]
for every $\eta \leq \eta_{0}$. 
Since 
	\[
		\mathcal{Z}(\eta_{k}^{2} \bvph, \eta^{2}_{k} \bth, \eta_k ) 
		\to \Psi_{1} \left( -\frac{1}{2 \tilde A}, 
		(\sqrt{2} + 1 )  \bth, \bvph \right), 
		\quad 
		\eta^{-1}_{k} \varphi_{\eta_{k}} ( \eta_{k}^{2} \bvph, 
		\eta_{k}^{2} \bth ) \to \psi_{0} (\bvph, \bth)
		\  {\rm in}\ C^{\ell}([-C_{11}\delta^{-1},C_{11}\delta^{-1}]^{2})
	\]
for any $\ell \in \N$ and noticing that the maps 
	\[
		\mathcal{Z}(\eta_{k}^{2} \bvph, \eta_{k}^{2} \bth, \eta_k) 
		=\Psi_{\eta_{k}} ( Y(\eta_{k}^{2} \bvph, \eta_{k}^{2} 
		\bth, \eta_{k}) ) 
		= \Psi_{1} \left( \eta_{k}^{-2} Y(\eta_{k}^{2} \bvph, 
		\eta_{k}^{2} \bth, \eta_{k}) \right), \quad 
		( \bvph, \bth ) \mapsto 
		\Psi_{1} \left( -\frac{1}{2 \tilde A}, 
		(\sqrt{2} + 1 )  \bth, \bvph \right)
	\]
are parametrizations of $\Sigma_{\eta_{k},2}$ and $S_{\tilde{A}}^{2}$, 
we obtain
	\[
		H_{\Sigma_{\eta_{k}}}
		( \mathcal{Z} ( \eta_{k}^{2} \bvph, \eta_{k}^{2} \bth, \eta_k ) )
		\to \frac{2}{\tilde{A}}  \qquad 
		{\rm in}\ C^{0} ( [-C_{11} \delta^{-1}, C_{11} \delta^{-1}]^{2} ) 
	\]
and 
	\[
		\begin{aligned}
			\lim_{k \to \infty} 
			\eta^{-1}_{k} \left| \int_{\Sigma_{\eta_{k},2}} 
			H_{\Sigma_{\eta_{k}}}\varphi_{\eta_{k}} d \sigma 
			\right| 
			& 
			= \frac{2}{\tilde{A}} \left| \int_{S^{2}_{\tilde{A}}\cap B_{\delta}^{c}(0)} 
			\psi_{0} d \sigma 
			\right|. 
		\end{aligned}
	\]
Since $\psi_0$ is integrable, we may find $0 < \delta_2 \leq \delta_0=\bar{\delta}_0(C_1)$ 
so that if $\delta \leq \delta_2$, then 
	\[
		\frac{2}{\tilde{A}} \left| \int_{S^{2}_{\tilde{A}}\cap B_{\delta}^{c}(0)} 
		\psi_{0} d \sigma 
		\right| \geq \frac{|A|}{\tilde{A}} > 0. 
	\]
Therefore, by \eqref{eq:sig-1}, for all $\eta_{k} \leq \eta_{0}$ 
and $\delta \leq \delta_2$, it follows that 
	\[
		0 = \eta_{k}^{-1} \int_{\Sigma_{\eta_{k}}} 
		H_{\Sigma_{\eta_{k}}} \varphi_{\eta_{k}} d \sigma 
		= \eta_{k}^{-1} \int_{\Sigma_{\eta_{k},2}} H_{\Sigma_{\eta_{k}}} 
		\varphi_{\eta_{k}} d \sigma 
		+ \eta_{k}^{-1} \int_{\Sigma_{\eta_{k},1}} H_{\Sigma_{\eta_{k}}} 
		\varphi_{\eta_{k}} d \sigma 
		\left\{ 
			\begin{aligned}
				& \geq \frac{A}{\tilde{A}} - C \delta & &{\rm if}\ A>0,\\
				& \leq -\frac{A}{\tilde{A}} + C \delta & &{\rm if}\ A< 0.
			\end{aligned}
		\right.
	\]
Noting that $C$ does not depend on $\delta$, 
choosing $\delta>0$ sufficiently small and $k$ sufficiently large, 
we get a contradiction and the Lemma holds. 
	\end{proof}

\

\noindent	We are	now ready to prove Proposition \ref{p:psi0}

\begin{proof}[Proof of Proposition \ref{p:psi0}]
Notice first that 
	\[
		S^{2}_{\tilde A} \backslash \{0\}= 
		\tilde{A} ( S^{2} + \mathbf{e}_{x} ) \backslash \{0\}
		= \bigcup \left\{ \Psi_{1} \left( -\frac{1}{2\tilde A}, (\sqrt{2} + 1) y , z \right) 
		\; : \; y, z \in \R \right\}
	\]
and set 
	\[
		B := \bvph^{2} + (\sqrt{2}+1)^{2} \bth^{2} 
		+ \frac{1}{4 \tilde{A}^{2}}, 
		\quad 
		(x,y,z) := \Psi_{1}\left( -\frac{1}{2 \tilde{A}}, (\sqrt{2}+1)\bth, 
		\bvph \right) 
		= \left( \frac{1}{B} \frac{1}{2 \tilde{A}}, 
		\frac{(\sqrt{2}+1)\bth}{B}, 
		\frac{\bvph}{B} \right).
	\]
Since 
	\[
		(x - \tilde{A})^{2} + y^{2} + z^{2} = \tilde{A}^{2} 
		\quad \Leftrightarrow \quad 
		x^{2} - 2 \tilde{A} x + y^{2} + z^{2} = 0, 
	\]
by the definition of $B$ and $(x,y,z)$, we have 
	\[
		B = (Bz)^{2} + (By)^{2} + (Bx)^{2}, \qquad 
		\hbox{ therefore } \qquad 
		B = \frac{1}{x^{2} + y^{2} + z^{2}} = \frac{1}{2 \tilde{A} x}.
	\]
Recalling $c_{0} = \lim_{\eta_k \to 0} \eta^{-4}_k (\xi_{\eta_k}' \eta_k - 2 \xi_{\eta_k})$, 
it follows from the above formulas and \eqref{eq:psi0} that 
	\begin{equation}\label{eq:psi0-2}
		\begin{aligned}
			\psi_{0} 
			&= - \frac{1}{B} 
			\left( B^{2} z^{2} + \frac{B^{2}}{\sqrt{2}+1} y^{2} 
			+ c_{0}  \right) 
			= - B \left( z^{2} + \frac{y^{2}}{\sqrt{2}+1} \right) 
			 - 2 \tilde{A} c_{0} x
			\\
			&= -\frac{1}{2 \tilde{A} x} 
			\left(
			z^{2} + y^{2} - (2 - \sqrt{2}) y^{2} \right) 
			- 2 \tilde{A} c_{0} x
			\\
			&= - \frac{1}{2 \tilde{A} x} 
			\left(  -x^{2} + 2 \tilde{A} x - 
			( 2 - \sqrt{2}) y^{2} + 4 \tilde{A}^{2} c_{0} x^{2}
			\right)
			\\
			&= - \frac{1}{2 \tilde{A} x} \left( ( 4 \tilde{A}^{2} c_{0} - 1 )x^{2}  
			+ 2 \tilde{A} x - ( 2 - \sqrt{2}) y^{2}\right). 
		\end{aligned}
	\end{equation}
Now substituting $x=\tilde{A}(\cos \theta + 1)$, 
$y = \tilde{A} \sin \theta \cos \varphi$ and 
$z = \tilde{A} \sin \theta \sin \varphi$, 
and using that  $\sin^{2} \theta = 1 - \cos^{2} \theta$, $\cos^{2}\varphi = (1+\cos 2 \varphi ) / 2$, we have 
	\begin{equation}\label{eq:psi0-3}
		\begin{aligned}
			\psi_{0} 
			&= - \frac{1}{2 (1 + \cos \theta)} 
			\left\{ ( 4 \tilde{A}^{2} c_{0} - 1) (1+\cos \theta)^{2} 
			+ 2  ( 1 + \cos \theta) - (2 - \sqrt{2}) \sin^{2}\theta 
			\cos^{2} \varphi \right\}
			\\
			& = - \frac{1}{2} 
			\left\{ ( 4 \tilde{A}^{2} c_{0} - 1) ( 1 + \cos \theta) 
			+ 2  - ( 2 - \sqrt{2}) ( 1 - \cos \theta ) \frac{1+\cos 2 \varphi}{2} 
			\right\}
			\\
			& = - \frac{1}{2} 
			\left\{ 4 \tilde{A}^{2} c_{0} + \frac{\sqrt{2}}{2} + 
			\left( 4 \tilde{A}^{2} c_{0} - \frac{\sqrt{2}}{2} \right) \cos \theta 
			- \frac{2 - \sqrt{2}}{2} ( 1 - \cos \theta ) \cos 2 \varphi
			\right\}. 
	 	\end{aligned}
	\end{equation}
Integrating this equality over $S^2_{\tilde A}$ and 
noting the area element in the above coordinate is given by 
$d \sigma = \tilde{A}^2 \sin \theta$, 
Lemma \ref{l:int0S2} yields 
	\begin{equation}\label{eq:c}
		c_{0} = - \frac{\sqrt{2}}{8 \tilde{A}^{2}}.
	\end{equation}
In particular, we may observe that $c_0$ is independent of choices of subsequence $(\eta_k)$. 
Hence, as $ \eta \to 0$,  we obtain 
	\[
		\eta^{-4} ( \xi_\eta' \eta - 2 \xi_\eta ) \to - \frac{\sqrt{2}}{8 \tilde{A}^2}, 
		\quad 
		\psi_\eta \to \psi_0 \quad {\rm in}\ C^\infty_{\rm loc} (\R^2).
	\]
Now substituting \eqref{eq:c} into \eqref{eq:psi0-2} and \eqref{eq:psi0-3}, 
we get 
	\[
		\begin{aligned}
		\psi_{0} &= 
		- \frac{1}{ \bvph^2 + (\sqrt{2} + 1 )^2 \bth^2 + 1/ ( 4 \tilde{A}^2 )    }
		\left\{ \bvph^2 + (\sqrt{2} + 1) \bth^2 - \frac{\sqrt{2}}{8 \tilde{A}^2} \right\}
		\\
		&= - \frac{1}{2 \tilde{A} x} 
		\left\{ z^2  + y^2 - (2 - \sqrt{2}) y^2  \right\} 
		+ \frac{\sqrt{2}}{4 \tilde{A}} x
		\\
		&=\frac{\sqrt{2}}{2} \cos \theta 
		+ \frac{2 - \sqrt{2}}{4} ( 1 - \cos \theta) \cos 2 \varphi 
		\\
		&= \frac{1}{2} ( \cos \theta - 1 ) + \frac{2 - \sqrt{2}}{2} 
			( 1 - \cos \theta ) \cos^{2} \varphi + \frac{\sqrt{2}}{4} ( 1 + \cos \theta ).
		\end{aligned}
	\]
This completes the proof. 
	\end{proof}

\section{Asymptotics of Willmore energy on degenerating tori}\label{s:comp}

In this section we consider inverted tori embedded in manifolds, which degenerate to a sphere 
joint to a small handle. We 
estimate then the derivative of the Willmore energy 
with respect to the variation of  the M\"obius parameter. We first recall some basic facts, and separate the 
handle contribution to the derivative from the spherical one. We then compute the leading 
order term arising from the curvature of the ambient metric,  postponing 
some explicit computations to an appendix.

\subsection{Basic material and handle decomposition}

The goal of this section is to estimate the derivative of the Willmore energy on degenerating tori 
with respect to the M\"obius parameter $\o$ for $|\o |$ close to $1$, namely to prove  
 Proposition \ref{p:varvar2} below.

\noindent Let us recall the following result from \cite{IMM1}, which regards the asymptotics of 
Willmore energy for degenerating tori of small area. In the degenerate limit, apart from the 
handle contribution, one recovers up to high order the energy of a small geodesic sphere 
(see \cite{Mon1}).

\begin{pro}\label{p:expdegtorus} (\cite{IMM1}, Proposition 4.6)
There exists $C_0>0$, which is independent of $\e $, such that 
$$
	\limsup_{r\uparrow 1} \; \sup_{P \in M, R\in SO(3), |\o|=r} \;
	\left| \frac{1}{\e^{2}} \left(  
	W_{g_\e}(\Sigma_{\e,P,R,\omega}) - 8\pi^{2} + \frac{8 \sqrt{2}}{3} \pi^{2} \e^{2} 
	{\rm Sc}_{P} \right) \right| \leq C_0 \e
$$
for all sufficiently small $\e >0$. 
%
\end{pro}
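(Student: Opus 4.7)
\textbf{Proof plan for Proposition \ref{p:expdegtorus}.} The strategy is to pull the Willmore functional back to Euclidean space via the exponential map and exploit the conformal invariance of $W_{g_0}$ to isolate the curvature contribution. Concretely, since $\Sigma_{\e,P,R,\omega}=\exp_P^{g_\e}(R\,T_\omega(\T))$, I rewrite
\[
W_{g_\e}(\Sigma_{\e,P,R,\omega}) = W_{g_{\e,P}}(R\,T_\omega(\T)),
\]
where $g_{\e,P}=g_0+\e^2 h^\e_P$ is the pull-back metric on $\R^3$ with the expansion \eqref{eq:ge=d+eh}--\eqref{eq:met-deri}. I then Taylor-expand $W_{g_{\e,P}}$ about the flat metric $g_0$ to second order in the perturbation, obtaining
\[
W_{g_{\e,P}}(R\,T_\omega(\T)) \;=\; W_{g_0}(R\,T_\omega(\T)) \;+\; \e^2\,\mathcal Q_P(\omega,R) \;+\; O(\e^3),
\]
where $\mathcal Q_P(\omega,R)$ is the standard first variation of the Willmore functional under a metric perturbation, evaluated on the Clifford image at the perturbation $h^\e_P$.

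By the M\"obius invariance in Proposition \ref{p:Mobinv}, the leading term equals $W_{g_0}(\T)=8\pi^2$ uniformly in $(P,R,\omega)$, so the estimate reduces to showing
\[
\limsup_{|\omega|\uparrow 1}\;\sup_{P,R}\Bigl|\mathcal Q_P(\omega,R) + \tfrac{8\sqrt{2}}{3}\pi^{2}\,\mathrm{Sc}_P\Bigr|\;\leq\;C\,\e.
\]
Substituting the curvature representation $h^\e_{P,\alpha\beta}(y)=\tfrac{1}{3}R_{\alpha\mu\nu\beta}(P)\,y^\mu y^\nu + \tilde R_{\alpha\beta}(\e,y)$ from \eqref{eq:ex-g-2} into the explicit metric-variation formula for $W$ reduces $\mathcal Q_P(\omega,R)$ to a curvature-weighted integral over $R\,T_\omega(\T)$, plus a remainder controlled by $\|\tilde R(\e,\cdot)\|_{C^0}=O(\e^3)$ which gives the overall $O(\e)$ slack after division by $\e^2$.

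The core task is thus to show that the curvature-weighted integral over the degenerate family converges, uniformly in $(P,R)$, to the claimed constant $-\tfrac{8\sqrt{2}}{3}\pi^2\mathrm{Sc}_P$. Here I use the decomposition from Section \ref{s:degtori}: split $T_\omega(\T)$ into its ``spherical'' part (the complement of a small neighbourhood of the degeneration point) and the shrinking ``handle.'' On the spherical part, the $C^k_{\mathrm{loc}}$-convergence in Lemma \ref{104}(ii) to the limit sphere $S^2_{\tilde A}$ of radius $\sqrt[4]{2\pi^2}$ lets me pass to the limit and evaluate the resulting integrals of $R_{\alpha\mu\nu\beta}y^\mu y^\nu$ against flat geometric data on the sphere; using the identity $\int_{S^2_{\tilde A}}y^\mu y^\nu d\sigma=\tfrac{4\pi\tilde A^{4}}{3}\delta_{\mu\nu}$ (up to translation of the center) these collapse onto the scalar curvature $\mathrm{Sc}_P=R^{\alpha}{}_{\mu\alpha}{}^\mu$ with the required numerical coefficient. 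On the handle, the parametrization bounds in \eqref{eq:|Y|} and $\xi_\eta=O(\eta^2)$ show that this region occupies a $\R^3$-bounded piece of vanishingly small area, while the curvature weights $y^\mu y^\nu$ are uniformly bounded there; a direct estimate (analogous to the handle estimates in the proof of Lemma \ref{l:int0S2}) gives $o(1)$ as $|\omega|\uparrow 1$.

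The main obstacle is precisely the uniform control of the handle contribution and the rate of convergence on the spherical part in a way that is uniform in the base point $P$ and frame rotation $R$. This requires the sharp area-element asymptotics from Lemma \ref{104}(i)--(ii) together with the bounds \eqref{eq:defh}--\eqref{eq:met-deri} on $h^\e_P$ and its derivatives in $P$; once these are combined, the curvature integral converges at a rate that, after summing with the $O(\e^3)$ remainder from the metric expansion, yields the claimed $C_0\e$ bound.
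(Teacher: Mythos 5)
Your approach is correct and essentially the same as the paper's (this proposition is cited from \cite{IMM1}, Proposition 4.6, whose strategy is summarized inside the proof of Proposition \ref{p:globest}: decompose the Willmore energy into a handle part and a sphere part after pulling back to Euclidean space). The one spot that needs care is the moment identity: $\int_{S^2_{\tilde A}}y^\mu y^\nu\,d\sigma=\frac{4\pi\tilde A^4}{3}\delta^{\mu\nu}$ only holds for a sphere centered at the origin of the normal coordinates, whereas by Proposition \ref{p:disk}(c) the limit sphere is tangent to the origin with center $c=\pm\tilde A\, R\mathbf e_x$; writing $y=c+\tilde A\, n_0$, the extra contribution $4\pi\tilde A^2\,c^\mu c^\nu$ is $O(1)$, not negligible, and contracted against the Riemann tensor it produces $\Ric_P(R\mathbf e_x,R\mathbf e_x)$-type terms rather than $\Sc_P$. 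Your parenthetical ``(up to translation of the center)'' therefore hides the step that makes these directional terms disappear. The clean way to close it --- and what the handle/sphere decomposition in \cite{IMM1} effectively does --- is not to correct the moments by hand but to use the small-geodesic-sphere expansion (as in \cite{Mon1}) in normal coordinates centered at the sphere's actual center $Q=\exp_P(\e\tilde A\, R\mathbf e_x)$, giving $16\pi-\frac{8\pi}{3}(\e\tilde A)^2\Sc_Q+O(\e^4)= 16\pi-\frac{8\sqrt2}{3}\pi^2\e^2\Sc_Q+O(\e^4)$ for the sphere part, and then absorb $\Sc_Q-\Sc_P=O(\e)$ into the $C_0\e$ slack; together with the vanishing handle contribution as $|\omega|\uparrow1$ and the flat-space energy $8\pi^2$, this yields the stated bound.
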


In the next proposition we state one of the main technical results of the paper;  to this aim recall the notation introduced in   \eqref{eq:defSw2} for the surfaces $\Sigma_{\e,P,Id,\omega}$. 

\begin{pro}\label{p:varvar2} 
Let $\d \in (0,1/2)$, $R = Id$ and $\o = |\omega| \mathbf{e}_x$ 
with $1 - |\omega| = \eta$. 
Then there exist $0 < \eta_{\delta}$, $C_0$ and $C_\delta$ 
such that for every $ \tilde{\eta} \in (0, \eta_{\delta})$, 
one may find $C_{\tilde{\eta}} > 0$ satisfying 
$$
	\left|
	  \frac{\pa }{\pa \o} W_{g_\e}(\Sigma_{\e,P,Id,\omega})  
	  - \eta \e^2 \frac{16}{3} \pi B \tilde{A} ( R_{22} - R_{33} ) 
	  \right| 
	  \leq 
	  \left[ C_0 \delta + C_\delta \left\{o_\eta(1) + \e \right\} \right] \eta \e^2
	  + C_{\tilde{\eta}} \e^4
$$
for all $\e \in (0,1/2]$ and $\eta \in [\tilde{\eta}, \eta_{\delta}]$ 
where $C_0$ is independent of $\delta,\eta,\e $, 
$C_\delta$ depends only on $\delta>0$, 
$o_\eta(1) := | \eta^{-4}  (\xi_\eta' \eta - 2 \xi_\eta) - c_0) | +\eta^{3/2}$, 
$c_0 := -\sqrt{2}/(8\tilde{A}^2)$ (See Remark \ref{r:conv-psi-eta} and \eqref{eq:c}), 
$o_\eta(1) \to 0$ as $\eta \to 0$, 
$\tilde A = \sqrt[4]{2\pi^2} $, $B=(2-\sqrt{2})/4$ and  
$R_{ij}$ are the components of the Ricci tensor $\Ric_P$. 
\end{pro}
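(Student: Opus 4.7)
The plan is to differentiate the Willmore energy via the variational formula of Proposition \ref{p:1-2-var} and to exploit the M\"obius invariance of $W_{g_0}$ (Proposition \ref{p:Mobinv}) to extract a clean $\e^2$-factor from the Riemannian curvature. Letting $\nu_\omega$ denote the component of $\partial_\omega \Sigma_{\e,P,Id,\omega}$ along the $g_\e$-unit normal and pulling back to $\T_\omega\subset\R^3$ through $\exp_P^{g_\e}$, I would write
$$
  \frac{\partial}{\partial\omega} W_{g_\e}(\Sigma_{\e,P,Id,\omega})
  = \int_{\T_\omega} W'_{g_{\e,P}}(\T_\omega)\,\nu_\omega \, d\sigma_{g_{\e,P}}.
$$
Since $\omega\mapsto W_{g_0}(\T_\omega)$ is constant by M\"obius invariance, the analogous Euclidean identity vanishes. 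Subtracting and using the expansion $g_{\e,P}=\delta+\e^2 h_P^\e$ from \eqref{eq:ge=d+eh}, each of $W'$, $n$, $H$, $|\mathring{A}|^2$ and $d\sigma$ differs from its flat counterpart by a term of order $\e^2$; the dominant contribution then comes from the ambient Ricci term $-H\,\Ric_{g_{\e,P}}(n,n)$ inside $W'_{g_{\e,P}}$, the remaining tangential and metric corrections being of higher order or $\varphi$-symmetric.

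Next I would split $\T_\omega = \T^{\mathrm{hdl}}_\delta \cup \T^{\mathrm{sph}}_\delta$, where $\T^{\mathrm{hdl}}_\delta$ corresponds to the image points at Euclidean distance $\leq \delta$ from the pinching vertex (equivalently $|Y|\leq \eta^2/\delta$, exactly as in Lemma \ref{l:int0S2}). On the handle, the uniform bound $\int W'_{g_0}{}^{2}\,d\sigma = O(1)$ that follows from the conformal invariance of the Willmore energy combines with the $L^2$-estimate
$$
  \int_{\T^{\mathrm{hdl}}_\delta} \varphi_\omega^2\,d\sigma \leq C\,\eta^2\delta^2,
$$
which is obtained reproducing verbatim the argument leading to \eqref{eq:sig-1}, and a Cauchy--Schwarz bound then delivers the claimed handle error $C_0\,\delta\,\eta\,\e^2$.

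On the spherical part, I would invoke the $C^k_{\mathrm{loc}}$-convergence $\psi_\eta\to\psi_0$ of Proposition \ref{p:psi0} together with the sharpened convergence of $\Phi_{0,\eta}(\T_{\xi_\eta})\cap B_\delta^c$ to $S^2_{\tilde A}\cap B_\delta^c$ provided by Lemma \ref{104}(ii) and Remark \ref{r:conv-psi-eta}, and the mean curvature limit $H\to 2/\tilde A$. This reduces the spherical-part integral to
$$
  -\e^2\eta \int_{S^2_{\tilde A}\cap B_\delta^c} \frac{2}{\tilde A}\,\Ric_P(n,n)\,\psi_0\, d\sigma
  + [C_\delta\,(o_\eta(1)+\e)]\,\eta\e^2 + C_{\tilde\eta}\,\e^4,
$$
where the $C_\delta(o_\eta(1)+\e)$-bracket absorbs the deviations of $\xi_\eta$, $\psi_\eta$ and $g_{\e,P}$ from their limits (using Lemma \ref{l:xi-xi'} and the higher-order remainder in \eqref{eq:ex-g-2}), while the term $C_{\tilde\eta}\,\e^4$ collects the non-uniform $\e^4$-contributions that are singular as $\eta\to 0$ but harmless once $\eta\geq\tilde\eta$.

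Finally, using the explicit decomposition \eqref{eq:tranfsphere} of $\psi_0$ as $\frac{\sqrt 2}{2}\cos\theta + B(1-\cos\theta)\cos 2\varphi$ (plus a constant), I would diagonalize $\Ric_P$ so that $\Ric_P(n,n) = R_{11}\cos^2\theta + R_{22}\sin^2\theta\cos^2\varphi + R_{33}\sin^2\theta\sin^2\varphi$ on $S^2_{\tilde A}$. Orthogonality in $\varphi$ kills every pairing except $B(1-\cos\theta)\cos 2\varphi$ against $\tfrac{R_{22}-R_{33}}{2}\sin^2\theta\cos 2\varphi$; an elementary polar integration then produces exactly $\tfrac{16}{3}\pi B\tilde A\,(R_{22}-R_{33})\cdot\eta\e^2$. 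The constant and $\cos\theta$-modes of $\psi_0$ pair with $\varphi$-independent combinations of the $R_{ii}$ that either cancel by the area-preservation constraint $\int_{S^2_{\tilde A}}\psi_0\,d\sigma=0$ of Lemma \ref{l:int0S2} or get absorbed into the $C_\delta$-dependent errors. The chief obstacle is the non-uniformity of the degeneration: $\varphi_\omega$ is not small in $L^\infty$ on the handle, while the Willmore integrand concentrates there, so the splitting must be balanced very carefully between the Cauchy--Schwarz handle estimate and the $\eta$-dependent spherical asymptotics; the latter in turn relies crucially on the sharp bound $2\xi_\eta - \xi'_\eta \eta = O(\eta^4)$ of Lemma \ref{l:xi-xi'} to make $o_\eta(1)\to 0$.
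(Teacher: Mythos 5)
Your proposal captures the overall architecture — differentiate via Proposition \ref{p:1-2-var}, kill the Euclidean part by M\"obius invariance, cut off around the degenerating handle with $\chi_\delta$, send $\eta\to 0$ on the spherical part using Proposition \ref{p:psi0} and Lemma \ref{l:xi-xi'}, and pick up the $C_{\tilde\eta}\e^4$ correction from replacing $\var_{\e,\eta}$ by $\var_\eta$. But there is a substantive gap in the spherical-part computation that makes your numerical conclusion unjustified.

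You claim that, after subtracting the Euclidean variation, ``the dominant contribution comes from the ambient Ricci term $-H\,\Ric_{g_{\e,P}}(n,n)$ inside $W'_{g_{\e,P}}$'' and that everything else is higher order or killed by $\varphi$-symmetry. This is false. Linearising $W'_{g_{\e,P}} = \Delta_{\bar g} H + (|\mathring A|^2 + \Ric(n,n))H$ in $t = \e^2$ on the limit sphere $\SA$, the term $\Delta_{\bar g}H$ contributes $\e^2\,\Delta_{\SA}\!\left(\frac{d}{dt}H_{g_{t,P},0}\big|_{t=0}\right) = \e^2\,\Delta_{\SA}F$ at leading order, and this is \emph{not} negligible: it is neither higher order nor a pure $\varphi$-independent mode. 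The paper's Proposition \ref{p:new} and Lemma \ref{l:dot-W'} keep precisely this term, and Proposition \ref{p:PV} evaluates the two contributions separately, yielding
\[
\int_{\SA} F\,\Delta_{\SA}\psi_0\,d\sigma = 4\pi\tilde A B (R_{22}-R_{33}),
\qquad
\int_{\SA} \Ric_P(n_0,n_0)\,H_{\SA}\,\psi_0\,d\sigma = \tfrac{4}{3}\pi\tilde A B (R_{22}-R_{33}),
\]
so the $\Delta H$-piece is in fact three times larger than the Ricci piece. Your reduction to ``$-\e^2\eta\int_{\SA}\frac{2}{\tilde A}\Ric_P(n,n)\psi_0\,d\sigma$'' alone integrates to $\frac{4}{3}\pi\tilde A B(R_{22}-R_{33})\eta\e^2$, not the claimed $\frac{16}{3}\pi\tilde A B(R_{22}-R_{33})\eta\e^2$; the remaining factor $4$ cannot be recovered without the metric-derivative-of-$H$ term.

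A secondary issue: your handle estimate as written does not go through. You invoke ``$\int W_{g_0}'{}^2\,d\sigma = O(1)$ from conformal invariance,'' but $W_{g_0}'$ vanishes identically on the inverted Clifford torus (it is Willmore in $\R^3$), so Cauchy--Schwarz against $\int\varphi^2 \leq C\eta^2\delta^2$ is vacuous. The quantity to be estimated on the handle is $dW_{g_\e}[\var_{1,\delta,\eta}] - dW_{g_0}[\var_{1,\delta,\eta}]$, and one needs the factor $\e^2$ to emerge from the metric difference $g_{\e,P}-g_0=\e^2 h$ acting on each geometric quantity ($d\sigma$, $g^{ij}$, $\Gamma$, $A$, $\Ric$, $n$) near the pinching region, where those quantities are singular; a naive Cauchy--Schwarz in the style of Lemma \ref{l:int0S2} does not supply this $\e^2$. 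The paper handles this by the explicit decomposition \eqref{eq:diff-W'} together with the weighted pointwise bounds of Lemma \ref{l:basic-esti}, which is an essentially term-by-term argument, not a duality one.
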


\

\noindent The proof of this proposition is quite involved and 
will be worked out in the present and the next   subsection and in  the Appendix. 
After scaling the metric as in \eqref{eq:ge=d+eh}, 
we will apply  formula \eqref{eq:varWvarW} to the case $\S \in {\mathcal{T}}_{\e,K}$, for a surface corresponding 
to a value $\o_0$ of the parameter $\o$ which is very close to $1$ in modulus. 
We shall write 
	\[
		W_{g_\e}'(\Sigma_{\e ,P,R,\o}) 
		:= L H + \frac{1}{2} H^3, \qquad 
		d W_{g_\e}(\Sigma_{\e ,P,R,\o}) [\varphi] 
		:= \int_{\Sigma_{\e,P,R,\o}} W_{g_\e}'(\Sigma_{\e ,P,R,\o}) \varphi d \sigma. 
	\]
Using the notation 
$$
  \o = (1 - \eta) {\bf e}_x; \qquad \quad \eta \simeq 0, 
$$
we will take the function $\var_\eta$ in \eqref{eq:phieta} as normal variation $\var$.

\

\noindent It is again convenient to exploit the conformal invariance of the Euclidean Willmore 
functional $W_{g_0}$ in order to write that 
\[
    d W_{g_\e}[\var] 
    = d W_{g_0}[\var] + \left( d W_{g_\e}[\var] - d W_{g_0}[\var] \right) 
    =  \left( d W_{g_\e}[\var] - d W_{g_0}[\var] \right). 
\]
The right-hand side is easier to deal with because some cancellations will occur, but on the 
other hand we will pick up terms of order $\e^2$ from the curvature of the ambient metric $g_\e $, see \eqref{eq:ex-g-2}. 

As already seen in Lemma \ref{104}, degenerating tori geometrically look like spheres with small handles attached near 
the origin of geodesic normal coordinates. In order to evaluate the above derivative it is convenient to 
localize the normal variation near the handle and away from it. 
%
%
%
%
%
%
%
%
%
%
For a small but fixed $\d >0$ we then choose  a radial cut-off function $\chi_\d$ on the 
degenerate torus such that 

$$
  \chi_\d(x) = \begin{cases}
  1 & \hbox{ for } |x| \leq \d; \\ 
  0 & \hbox{ for } |x| \geq 2 \d, 
  \end{cases}
$$
and write 
\begin{equation}\label{eq:var1var2}
  {\var}_\eta = {\var}_{1,\delta,\eta} + {\var}_{2,\delta,\eta} 
  := \chi_\d {\var}_\eta + (1 - \chi_\d) {\var}_\eta.  
\end{equation}
We then have 
\begin{equation}\label{eq:differencediff}
\left( d W_{g_\e}[\var_\eta] - d W_{g_0}[\var_\eta] \right) 
= \left( d W_{g_\e}[\var_{1,\d,\eta}] - d W_{g_0}[\var_{1,\d, \eta}] \right) + 
   \left( d W_{g_\e}[\var_{2,\d,\eta}] - d W_{g_0}[\var_{2,\d,\eta}] \right). 
\end{equation}
Next we compute the contribution of the handle region to the derivative.

\begin{pro}\label{p:varvar1}
There exists  $C_{0}>0$ such that 
for any $\delta, \e , \eta \in (0,1/2)$ one has
$$
   \left| d W_{g_\e}[{\var}_{1,\d, \eta}] - d W_{g_0}[{\var}_{1,\d, \eta}] \right| 
   \leq C_{0} \, \e^{2} \, \eta \, \delta.
$$
\end{pro}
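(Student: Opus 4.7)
The key observation is that $\Sigma := \Sigma_{\e,P,Id,\omega}$ is a Willmore surface in the Euclidean metric $g_0$ on the normal chart, by conformal invariance of the Euclidean Willmore functional (Proposition \ref{p:Mobinv}) applied to the M\"obius-transformed Clifford torus. Consequently $W'_{g_0}(\Sigma)\equiv 0$ pointwise on $\Sigma$, and hence $dW_{g_0}[\varphi_{1,\delta,\eta}] = 0$. The proposition thus reduces to bounding $|dW_{g_\e}[\varphi_{1,\delta,\eta}]|$ by $C_0\e^2\eta\delta$.

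Writing $g_\e = g_0 + \e^2 h^\e$ with the pointwise control of $h^\e$ and its derivatives given by \eqref{eq:defh}, a Taylor expansion of the metric dependence of $W'_g$ at $g_0$ yields
$$W'_{g_\e}(\Sigma) = \e^2\,\mathcal{L}_\Sigma[h^\e] + O(\e^4),$$
where $\mathcal L_\Sigma[h]$ is an expression involving $h$, $\nabla h$ and $\nabla^2 h$ coupled with intrinsic curvature quantities of $\Sigma$ in the flat metric. On the support $\{|y|\leq 2\delta\}$ of $\varphi_{1,\delta,\eta}$ the bounds \eqref{eq:defh} give $|h^\e|\leq C\delta^2$, $|\nabla h^\e|\leq C\delta$, $|\nabla^2 h^\e|\leq C$. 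On the other hand \eqref{eq:est-var-eta} provides $\|\varphi_{1,\delta,\eta}\|_\infty\leq C\eta$, and a direct computation in the parametrisation $\mathcal Z$ based on the area element $\e^4|Y|^{-4}(\sqrt 2 + \cos\tilde\varphi)\,d\tilde\varphi\,d\tilde\theta$ shows that the Euclidean surface area of $\{|\mathcal{Z}|\leq 2\delta\}\cap\Sigma$ is $O(\delta^2)$; in particular $\|\varphi_{1,\delta,\eta}\|_{L^2(\Sigma)}\leq C\eta\delta$.

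I would then integrate $\int_\Sigma \e^2\mathcal L_\Sigma[h^\e]\,\varphi_{1,\delta,\eta}\,d\sigma$ term by term and apply Cauchy--Schwarz, exploiting the uniform-in-$\eta$ $L^2$-bounds $\int_\Sigma H_{g_0}^2\,d\sigma_{g_0} = 2\pi^2$ and $\int_\Sigma|\mathring A_{g_0}|^2\,d\sigma_{g_0}=2\pi^2$ (from conformal invariance of $\int H^2$ and Gauss--Bonnet for a torus). The simplest contribution is the Ricci term: from \eqref{eq:defh} one has $|\mathrm{Ric}_{g_\e}(n,n)|\leq C\e^2$, so
$$\Big|\int_\Sigma H_{g_\e}\mathrm{Ric}_{g_\e}(n,n)\,\varphi_{1,\delta,\eta}\,d\sigma_{g_\e}\Big|\leq C\e^2\|H\|_{L^2}\|\varphi_{1,\delta,\eta}\|_{L^2}\leq C\e^2\eta\delta.$$
The remaining contributions, coming from the variations of $H|\mathring A|^2$ and $\Delta H$, are treated similarly: the extra factors of $\delta$ or $\delta^2$ coming from the pointwise bounds on $\nabla h^\e$ or $h^\e$ are absorbed against the $L^2$-norm $\eta\delta$ of $\varphi_{1,\delta,\eta}$ (or of its gradient after integration by parts), each piece being again of order $\e^2\eta\delta$.

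The main technical obstacle is the $\Delta H$-term: both the pointwise values and the $L^2$-norms on the handle of $H$, $\nabla H$, $\nabla^2 H$ blow up as $\eta\to 0$, so neither $L^\infty$ nor naive $L^2$ estimates directly give the claim. The resolution is to substitute the flat Willmore identity $\Delta_{g_0}H_{g_0} = -H_{g_0}|\mathring A_{g_0}|^2$ (valid pointwise thanks to $W'_{g_0}=0$) to trade every occurrence of $\Delta H$ for the $L^2$-bounded product $H|\mathring A|^2$, after suitable integrations by parts on the closed surface $\Sigma$. This structural cancellation, combined with the sharp pointwise smallness of $h^\e$ and its first derivative on the handle, yields the desired bound with $C_0$ independent of $\delta,\e,\eta$.
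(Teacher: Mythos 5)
Your proposal correctly identifies the key simplification $dW_{g_0}[\varphi_{1,\delta,\eta}]=0$ by conformal invariance (which the paper also uses, implicitly, in the display preceding \eqref{eq:differencediff}), and the $L^2$-bound $\|\varphi_{1,\delta,\eta}\|_{L^2(\Sigma)}\lesssim\eta\delta$ is indeed correct and is essentially extracted in the paper's proof of Lemma \ref{l:int0S2}. The Cauchy--Schwarz argument you give for the Ricci term is sound, and the $H|\mathring A|^2$ contribution could be treated similarly. However, there is a genuine gap in your treatment of the term you yourself flag as the main obstacle.

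Your proposed resolution --- trading $\Delta H$ for $H|\mathring A|^2$ via the flat Willmore identity $\Delta_{g_0}H_{g_0}=-H_{g_0}|\mathring A_{g_0}|^2$ --- only removes the $\Delta_{g_0}H_{g_0}$ piece, which cancels anyway because $dW_{g_0}[\varphi]=0$. It does nothing for $\Delta_{g_\e}H_{g_\e}-\Delta_{g_0}H_{g_0}$, which is the quantity that actually carries the $O(\e^2)$ curvature information, since $\Sigma$ is \emph{not} Willmore in $g_\e$ and so $\Delta_{g_\e}H_{g_\e}\neq -H_{g_\e}|\mathring A_{g_\e}|^2$. After integrating by parts to move the Laplacian onto $\varphi_{1,\delta,\eta}$ (as the paper does in \eqref{eq:diff-W'}), one must estimate the piece
\[
\int_{\Sigma}\bigl(H_{g_\e}-H_{g_0}\bigr)\,\Delta_{g_\e}\varphi_{1,\delta,\eta}\,d\sigma ,
\]
and here an $L^2$--$L^2$ Cauchy--Schwarz fails quantitatively. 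Using the pointwise bounds of Lemma \ref{l:basic-esti} one gets $\|H_{g_\e}-H_{g_0}\|_{L^2(\mathrm{supp}\,\varphi_{1,\delta,\eta})}\sim\e^2\delta^2$, while $\|\Delta_{g_\e}\varphi_{1,\delta,\eta}\|_{L^2}\sim\eta^{-1}$ (the dominant contribution $|\Delta\varphi_{1,\delta,\eta}|\sim\eta|Y|^2/\eta^4$ near $|Y|\sim1$ yields $\int(|Y|^2/\eta^3)^2\,\eta^4|Y|^{-4}\,|Y|\,d|Y|\sim\eta^{-2}$). Cauchy--Schwarz then gives $\e^2\delta^2\eta^{-1}$, which is not $\lesssim\e^2\eta\delta$ in the relevant regime $\eta^2\ll\delta$; it even blows up as $\eta\to0$. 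The paper's estimate \eqref{eq:Laplacian} instead does a pointwise computation that exploits the anticorrelation of the two factors: $|H_{g_\e}-H_{g_0}|\lesssim\e^2\eta^2/|Y|$ is small exactly where $\Delta_{g_\e}\varphi_{1,\delta,\eta}$ is large, so the pointwise product, weighted by $\eta^4/|Y|^4$, integrates to $C\e^2\eta\delta$. This decay-versus-growth cancellation is destroyed by Cauchy--Schwarz, and the Willmore identity you invoke does not restore it. You would need to reproduce the paper's term-by-term pointwise estimates (or a first-order integration by parts with a sharp pointwise bound on $\nabla(H_{g_\e}-H_{g_0})$) to close the argument.
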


To prove the above proposition, we first prepare the notation, recalling  Section \ref{s:degtori}. 
Let us denote by $(g_{\e ,P,\eta})_{ij}$ the induced metric on 
$\Psi_{\eta}(\T_{\xi_{\eta}})$ from $g_{\e,P}$ in the coordinate 
$\mathcal{Z}(\tvph, \tth,\eta)$ where 
$\partial_{1} = \partial_{\tvph}$ and 
$\partial_{2} = \partial_{\tth}$. 
Furthermore, we  write 
$d \sigma_{\e,P,\eta}$, 
$(\Gamma_{\e ,P,\eta})_{ij}^k$, $(A_{\e ,P,\eta})_{i}^{j}$, 
$\Delta_{\e,P,\eta}$ and $n_{\e ,P,\eta}$ 
for the area element of $\Psi_{\eta}(\T_{\xi_{\eta}})$, 
the Christoffel symbols, the second fundamental form, 
the Laplace-Beltrami operator and 
unit outer normal, respectively. 
Finally, let us denote by $\Ric_{g_{\e,P}}$ the Ricci tensor 
for the ambient space $(B_{10},g_{\e,P})$. 
For these quantities, we have

	\begin{lem}\label{l:basic-esti} Recalling \eqref{eq:YYY}, 
		there exists $C_{0}>0$ such that 
		for all $P \in M$ and $\e,\eta \in (0,1/2)$, 	
		\begin{enumerate}
		\item[{\rm (i)}] 
			 The area elements satisfy 
			\[
				d \sigma_{0,\eta} = 
				( \sqrt{2} \cos \tvph + 1 ) \frac{\eta^{4}}{|Y|^{4}} 
				d \tvph d \tth, 
				\qquad 
				| d \sigma_{\e,P,\eta} - d \sigma_{0,\eta} | 
				\leq C_{0} \e^{2} \frac{\eta^{8}}{|Y|^{6}} d \tvph 
				d \tth.
			\]
		\item[{\rm (ii)} ]
			$| (g_{\e ,P,\eta})^{ij} | \leq C_{0} |Y|^{4}/ \eta^{4} $ and 
			$ | (g_{\e ,P,\eta})^{ij} - (g_{0,\eta})^{ij} | \leq C_{0}\e^{2} | Y |^{2}$.
		\item[{\rm (iii)}] 
			$| (\Gamma_{\e ,P,\eta})^k_{ij} | \leq C_{0}/ |Y|$ and 
			$| (\Gamma_{\e ,P,\eta})^k_{ij} - (\Gamma_{0 ,\eta})^k_{ij} | 
			\leq C_{0} \e^{2} \eta^{4} / |Y|^{3}$. 
		\item[{\rm (iv)}] 
			$| (A_{\e ,P,\eta})^i_j | \leq C_{0} |Y| / \eta^{2}$ and 
			$| (A_{\e ,P,\eta})^i_j - (A_{0 ,\eta})^i_j | 
			\leq C_{0} \e^{2} \eta^{2} / |Y|$. 
		\item[{\rm (v)}] 
			$ | \Ric_{g_{\e,P}}(x) |_{g_{0}} \leq C_{0} \e^{2}$. 
		\item[{\rm (vi)}] 
			$| n_{\e ,P,\eta} - n_{0,\eta} | \leq C_0 \e^2 \eta^4 / |Y|^2$, 
			$| \partial_i (n_{\e ,P,\eta} - n_{0,\eta})  | \leq C_0 \e^2 \eta^4 / |Y|^3$ 
			and $| \partial_i \partial_j ( n_{\e ,P, \eta} - n_{0,\eta} ) | 
			\leq C_0 \e^2 \eta^4 / |Y|^4$. 
		\end{enumerate}
	\end{lem}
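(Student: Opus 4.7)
The plan is to reduce every estimate to the interplay between two structural facts. The first is the conformal geometry of the inversion $\Phi_\eta$: since $D_x\Phi_\eta(x)=(\eta^2/|x|^2)(\mathrm{Id}-2\hat x\otimes\hat x)$ and $\Refx$ is an isometry, one has $|\mathcal{Z}|=\eta^2/|Y|$ and, by conformality, $\partial_i\mathcal{Z}\cdot\partial_j\mathcal{Z}=(\eta^4/|Y|^4)\,\partial_iY\cdot\partial_jY$, while $\partial_i Y$, $\partial_i\partial_jY$ and the metric $\partial_iY\cdot\partial_jY$ on $\T_{\xi_\eta}$ are bounded uniformly in $\eta$. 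Consequently $|\partial_i\mathcal{Z}|\sim \eta^2/|Y|^2$ and $|\partial_i\partial_j\mathcal{Z}|\sim \eta^2/|Y|^3$. The second is the metric expansion $g_{\e ,P,\a \b}=\delta_{\a \b}+\e^2 h^\e_{P,\a \b}$ with, by \eqref{eq:defh}, $|h^\e(y)|\lesssim |y|^2$, $|\nabla h^\e(y)|\lesssim |y|$ and $|\nabla^k h^\e|\lesssim 1$ for $k\geq 2$; evaluated at $\mathcal{Z}$ this gives $|h^\e(\mathcal{Z})|\lesssim \eta^4/|Y|^2$ and $|(\nabla h^\e)(\mathcal{Z})|\lesssim \eta^2/|Y|$.

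With these in hand I would establish (i)--(ii) first. The Euclidean area formula for $d\s_{0,\eta}$ follows directly from conformality and the area form of $\T_{\xi_\eta}$. Writing $(g_{\e,P,\eta})_{ij}-(g_{0,\eta})_{ij}=\e^2 h^\e_{\a\b}(\mathcal{Z})\partial_i\mathcal{Z}^\a\partial_j\mathcal{Z}^\b$, the scaling observations give a pointwise bound of order $\e^2\eta^8/|Y|^6$ on this difference. Since $\det g_{0,\eta}\sim \eta^8/|Y|^8$ and therefore $(g_{0,\eta})^{ij}\sim |Y|^4/\eta^4$, the claim on $(g_{\e,P,\eta})^{ij}$ in (ii) follows from the matrix identity $B^{-1}-A^{-1}=-A^{-1}(B-A)B^{-1}$, while the area bound in (i) follows from $\sqrt{\det(I+E)}=1+\tfrac12\mathrm{tr}E+O(|E|^2)$ applied to $E=(g_{0,\eta})^{-1}(g_{\e,P,\eta}-g_{0,\eta})$.

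For (iii) I differentiate the above expression for $(g_{\e,P,\eta})_{ij}$. The two contributions come from $\partial(\nabla h^\e(\mathcal{Z}))$ and from $\partial^2\mathcal{Z}$, both of which yield $|\partial(g_{\e,P,\eta}-g_{0,\eta})|\lesssim \e^2\eta^8/|Y|^7$; using the Koszul formula and splitting $\Gamma_\e -\Gamma_0=\tfrac12 g_\e^{kl}\partial(g_\e -g_0)+\tfrac12(g_\e^{kl}-g_0^{kl})\partial g_0$, both summands give $\e^2\eta^4/|Y|^3$. For (iv) I write the second fundamental form as $A_{ij}=-\langle \partial_i\partial_j\mathcal{Z}, n\rangle_{g_\e}+$ ambient Christoffel correction; the correction is bounded by $\e^2|\mathcal{Z}|\,|\partial\mathcal{Z}|^2\sim\e^2\eta^6/|Y|^5$, and combined with the normal-vector perturbation from (vi) it produces $|A_{\e,P,\eta}-A_{0,\eta}|_{ij}\lesssim \e^2\eta^6/|Y|^5$, which upon raising an index via (ii) yields the claimed $\e^2\eta^2/|Y|$. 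Item (v) is immediate from \eqref{eq:ge=d+eh} since $\Ric$ is a second-order expression in the metric and $|\nabla^2 h^\e|\lesssim 1$. Finally for (vi) I run Gram--Schmidt for $\partial_1\mathcal{Z},\partial_2\mathcal{Z}$ with respect to $\langle\cdot,\cdot\rangle_{g_\e}$, starting from $n_{0,\eta}$; the correction is a linear combination of $\partial_i\mathcal{Z}$ with coefficients of order $\e^2 h^\e(\mathcal{Z})\,(g_{0,\eta})^{ij}\sim \e^2\eta^4/|Y|^2\cdot |Y|^4/\eta^4 = \e^2|Y|^2$, times $|\partial_i\mathcal{Z}|\sim \eta^2/|Y|^2$, giving exactly $\e^2\eta^4/|Y|^2$ once normalised; the derivative estimates follow by differentiating once or twice and using the corresponding bounds $|\partial^k\mathcal{Z}|\sim \eta^2/|Y|^{k+1}$ and the chain-rule bounds on $h^\e(\mathcal{Z})$.

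The step I expect to be most delicate is (iv): the $g_\e$-second fundamental form involves both the ambient Christoffel symbols (which contribute at scale $\e^2\eta^2/|Y|$) and the shift of the normal vector given by (vi), and one must check that neither contribution is larger than the stated bound when $|Y|$ approaches its maximal value. The remaining estimates are routine once the scaling rules $|\mathcal{Z}|\sim \eta^2/|Y|$, $|\partial^k\mathcal{Z}|\sim \eta^2/|Y|^{k+1}$, and $|h^\e(\mathcal{Z})|\sim \eta^4/|Y|^2$ are systematically applied.
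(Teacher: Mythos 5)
Your strategy matches the paper's exactly: the paper's own proof just records the scaling rules $|\mathcal Z|\sim\eta^2/|Y|$, $|\partial_i\mathcal Z|\sim\eta^2/|Y|^2$, $|\partial_i\partial_j\mathcal Z|\lesssim\eta^2/|Y|^3$, the conformality identity $g_0(\partial_i\mathcal Z,\partial_j\mathcal Z)=|\partial_i\mathcal Z||\partial_j\mathcal Z|\delta_{ij}$, and the bound $|h^\e(\mathcal Z)|\lesssim|\mathcal Z|^2=\eta^4/|Y|^2$ coming from \eqref{eq:defh}, and then states that (i)--(vi) ``follow from direct computations''; what you have done is spell out those direct computations, and your exponent counting for (i)--(v) is right, including the two-term decompositions for $\Gamma_\e-\Gamma_0$ and for the raised-index $A_\e-A_0$.

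There is one bookkeeping slip in (vi). The coefficient of $\partial_i\mathcal Z$ in the $g_\e$-tangential projection of $n_{0,\eta}$ is $c^i=g_\e^{ij}\,\langle n_0,\partial_j\mathcal Z\rangle_{g_\e}=g_\e^{ij}\,\e^2 h^\e(\mathcal Z)\bigl(n_0,\partial_j\mathcal Z\bigr)$. You wrote this as ``order $\e^2 h^\e(\mathcal Z)\,(g_{0,\eta})^{ij}\sim\e^2|Y|^2$'', omitting the factor $|\partial_j\mathcal Z|\sim\eta^2/|Y|^2$ that the contraction with $\partial_j\mathcal Z$ costs; the correct magnitude is $|Y|^4/\eta^4\cdot\e^2\eta^4/|Y|^2\cdot\eta^2/|Y|^2=\e^2\eta^2$. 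With the coefficient $\e^2\eta^2$, multiplying by $|\partial_i\mathcal Z|\sim\eta^2/|Y|^2$ gives $\e^2\eta^4/|Y|^2$, as claimed. As you have written it, $\e^2|Y|^2\cdot\eta^2/|Y|^2=\e^2\eta^2\neq\e^2\eta^4/|Y|^2$, so the arithmetic on that line does not produce the bound you state, even though the final bound is correct and the derivative estimates in (vi) then follow by the same counting.
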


	\begin{proof}
Since $\mathcal{Z}(\tvph, \tth,\eta) 
= \Psi_{\eta} ( Y(\tvph, \tth,\eta))$ and 
$|D_{(\tilde \var , \tth)}^\alpha Y|$ are uniformly bounded with respect to $\e$ and $\eta$ 
for any $\alpha \in \Z_+^2$, we have 
	\begin{equation}\label{eq:deri-Z}
			| \mathcal{Z} | = \frac{\eta^{2}}{|Y|}, \quad 
			| \partial_{i} \mathcal{Z} | = \frac{\eta^{2}}{|Y|^{2}} |\partial_{i} Y| 
			\sim C_{0} \frac{\eta^{2}}{|Y|^{2}}, \quad 
			| \partial_{i} \partial_{j} \mathcal{Z} | 
			\leq C_{0} \left( \frac{\eta^{2}}{|Y|^{3}} 
			+ \frac{\eta^{2}}{|Y|^{2}} \right) \leq C_{0} \frac{\eta^{2}}{|Y|^{3}}. 
	\end{equation}
Furthermore, by the conformality of $\Psi_{\eta}$ and 
$g_0( \partial_i Y , \partial_j Y  ) = |\partial_i Y| |\partial_j Y| \delta_{ij}$, 
one also sees that 
	\begin{equation}\label{eq:Z-orth}
		g_{0}[ \partial_{i} \mathcal{Z}, \partial_{j} \mathcal{Z}] = |\partial_{i} \mathcal{Z}|_{g_{0}} 
		| \partial_{j} \mathcal{Z} |_{g_{0}} \delta_{ij}. 
	\end{equation}
Notice also that $f_{0,i} = \partial_i \mathcal{Z} / | \partial_i \mathcal{Z} |_{g_0}$ ($i=1,2$) 
form an orthonormal basis of $T_{\mathcal{Z}}\Psi_{\eta} (\T_{\xi_\eta})$ and 
$n_{0,\eta}$ is given by  (see Subsection \ref{ss:32} for the definition of $n$) 
	\[
		n_{0,\eta}(\tvph, \tth) = 
		\frac{(D_x\Psi_\eta) ( Y(\tvph, \tth, \eta) ) [n (\tvph, \tth ) ]}
		{ | (D_x\Psi_\eta) ( Y(\tvph, \tth, \eta) ) 
		[n (\tvph, \tth ) ]|_{g_0} }. 
	\]

Since $g_{\e, P,\alpha \beta} (x) 
= \delta_{\alpha \beta} + \e^{2} h^{\e}_{P,\alpha \beta}(x)$ and 
$h^{\e}_{P,\alpha \beta}$ satisfies \eqref{eq:defh} 
uniformly with respect to $\e$ and $P$, 
using \eqref{eq:deri-Z} and \eqref{eq:Z-orth}
the above claims follow from direct computations. 
	\end{proof}

\

\noindent		 We now prove Proposition \ref{p:varvar1}.

\begin{proof}[Proof of Proposition \ref{p:varvar1}]
We first recall \eqref{eq:phi-eta}:
	\[
		\begin{aligned}
			\varphi_{\eta} &= - 
			\frac{\eta}{|Y(\tvph, \tth,\eta)|^{2}} 
			\left\{ 2 \left( \sqrt{2} \cos \tvph + 1 
			- ( \sqrt{2}+1) \cos \tvph \cos \tth \right) 
			+ ( \xi_{\eta}' \eta - 2 \xi_{\eta}) \cos \tvph \cos \tth 
			\right\}
			\\
			&= - \frac{\eta}{|Y(\tvph, \tth,\eta)|^{2}} 
			\left( h(\tvph, \tth) 
			+ (\xi_{\eta}' \eta - 2 \xi_{\eta}) \cos \tvph \cos \tth 
			\right).
		\end{aligned} 
	\]
From Lemma \ref{l:xi-xi'} and a Taylor expansion of $Y$ and $h$ 
(see \eqref{eq:Y-h}, \eqref{eq:RYRh} and \eqref{eq:|Y|}), 
one may find $C_{0}>0$ such that 
	\begin{equation}\label{eq:esti-deri-phi}
		| \varphi_{\eta} | \leq C_{0} \eta, \qquad 
		| \partial_{i} \varphi_{\eta} | \leq C_{0} \frac{\eta}{|Y|}, \qquad 
		| \partial_{i} \partial_{j} \varphi_{\eta} | 
		\leq C_{0} \frac{\eta}{|Y|^{2}}
	\end{equation}
for all $(\tvph, \tth) \in [-\pi,\pi]^{2}$ and 
$\eta \in (0,1/2)$.

		Next, denote by $H_{\e,P,\eta}$ the mean curvature of $\Psi_\eta (\T_{\xi_\eta})$ 
with the ambient metric $g_{\e ,P}$. Recalling \eqref{eq:varWvarW} and noting 
	\begin{equation}\label{eq:diff-W'}
		\begin{aligned}
			& dW_{g_\e}[\varphi_{1,\delta,\eta}] - dW_{g_0}[\varphi_{1,\delta,\eta}] 
			\\
			= &\int_{\Psi_{\eta}(\T_{\xi_{\eta}})} 
			\left\{ - H_{\e,P,\eta} \Delta_{\e ,P,\eta} \varphi_{1,\delta,\eta} 
			- H_{\e,P,\eta} \left( |A_{\e ,P,\eta}|^{2} + 
			\Ric_{g_{\e,P}} ( n_{\e,P,\eta}, n_{\e,P,\eta} ) 
			- \frac{1}{2} H_{\e,P,\eta}^{2} \right) \varphi_{1,\delta,\eta} 
			\right\} 
			\\
			& \qquad \times 
			( d \sigma_{\e,P,\eta} - d \sigma_{0,\eta} ) 
			\\
			& - \int_{\Psi_{\eta} (\T_{\xi_{\eta}}) } 
			\left( H_{\e,P,\eta} \Delta_{\e ,P,\eta} \varphi_{1,\delta,\eta} 
			- H_{0,\eta} \Delta_{0,\eta} \varphi_{1,\delta,\eta} 
			\right) d \sigma_{0,\eta}  
			\\
			&- \int_{\Psi_{\eta}(\T_{\xi_{\eta}})} 
			H_{\e ,P,\eta}\Ric_{g_{\e,P}} ( n_{\e,P,\eta}, n_{\e,P,\eta} ) \varphi_{1,\delta,\eta} 
			d \sigma_{0,\eta} 
			\\
			& - \int_{\Psi_{\eta}(\T_{\xi_{\eta}})} 
			\left\{ H_{\e,P,\eta} |A_{\e ,P,\eta}|^{2} 
			- H_{0,\eta} |A_{0,\eta}|^{2} \right\}
			\varphi_{1,\delta,\eta} d \sigma_{0,\eta} 
			+ \frac{1}{2} \int_{\Psi_{\eta}(\T_{\xi_{\eta}})} 
			\left( H_{\e,P,\eta}^{3} - H_{0,\eta}^{3}  \right) 
			\varphi_{1,\delta,\eta} d \sigma_{0,\eta},
 		\end{aligned}
	\end{equation}
we estimate each term in the above using Lemma \ref{l:basic-esti}.
For this purpose, we first remark that 
	\[
		|\mathcal{Z}| \leq 2 \delta \qquad \Leftrightarrow \qquad 
		|Y| \geq \frac{\eta^{2}}{2\delta}. 
	\]
Moreover, as in the proof of Lemma \ref{l:int0S2} 
(see \eqref{eq:|Y|}), by $\xi_\eta = O(\eta^2)$, 
we may find a $C_{1}>0$, 
which is independent of $\delta$ and $\eta$, such that 
	\[
		I_{\delta}:= \left\{ (\tvph, \tth) \in [-\pi,\pi]^{2} \; : \;
		\left( \tvph^{2} + ( \sqrt{2} + 1)^{2} \tth^{2} 
		\right) \geq C_{1} \frac{\eta^{4}}{\delta^{2}}
		\right\}  \supset 
		\left\{ (\tvph, \tth) \in [-\pi,\pi]^{2} \; : \;
		|Y(\tvph, \tth,\eta)| \geq \frac{\eta^{2}}{2\delta}  \right\}
	\]
for all $\delta , \eta \in (0,1/2)$.

		First, we estimate the last two terms in \eqref{eq:diff-W'}. 
Since $H_{\e,P,\eta} = (A_{\e ,P,\eta})^{i}_{i}$ and 
$|A_{\e ,P,\eta}|^{2} = (A_{\e,P,\eta})^{i}_{j}  (A_{\e ,P,\eta})^{j}_{i}$, 
by Lemma \ref{l:basic-esti}, it is easily seen that 
	\[
		\left| H_{\e,P,\eta} |A_{\e ,P,\eta}|^{2} 
		- H_{0,\eta} |A_{0,\eta}|^{2} \right| 
		+ \left| H_{\e,P,\eta}^{3} - H_{0,\eta}^{3}  \right| 
		\leq C_{0} \e^{2} \frac{|Y|}{\eta^{2}}. 
	\]
Hence, from \eqref{eq:esti-deri-phi}, \eqref{eq:|Y|}, 
${\rm supp}\, \varphi_{1,\delta,\eta} \subset \overline{B_{2\delta}(0)}$ 
and a change of variables, it follows that 
	\begin{equation}\label{eq:AandH}
		\begin{aligned}
			&\int_{\Psi_{\eta}(\T_{\xi_{\eta}})} 
			\left| H_{\e,P,\eta} \left|A_{\e ,P,\eta}\right|^{2} 
			- H_{0,\eta} \left|A_{0,\eta}\right|^{2} \right|
			|\varphi_{1,\delta,\eta} | d \sigma_{0,\eta} 
			+ \frac{1}{2} \int_{\Psi_{\eta}(\T_{\xi_{\eta}})} 
			\left| H_{\e,P,\eta}^{3} - H_{0,\eta}^{3} \right|
			| \varphi_{1,\delta,\eta} | d \sigma_{0,\eta}
			\\
			\leq & C_{0} \e^{2} \int_{I_{\delta}} 
			\frac{\eta^{3}}{|Y|^{3}} d \tilde \var d \tth 
			\leq C_{0} \e^{2} \int_{I_{\delta}} 
			\frac{\eta^{3}}{(\tvph^{2} + (\sqrt{2}+1)^{2} 
			\tth^{2})^{3/2}} 
			d \tvph d \tth 
			\leq C_{0} \e^{2} \eta^{3} \int_{C_{1} \eta^{2} / \delta}^{10} 
			r^{-2} d r 
			\leq C_{2} \e^{2} \eta \delta, 
		\end{aligned}
	\end{equation}
where $C_{2}$ is independent of $\e$, $\eta$ and $\delta$. 
Similarly, for the Ricci tensor, we have 
	\begin{equation}\label{eq:Ric}
		\begin{aligned}
			\int_{\Psi_{\eta}(\T_{\xi_{\eta}})} 
			| H_{\e ,P,\eta} | 
			| \Ric_{g_{\e,P}}(n_{\e,P,\eta},n_{\e,P,\eta}) | 
			| \varphi_{1,\delta,\eta} | d \sigma_{0,\eta} 
			\leq C_{0} \e^{2} \int_{I_{\delta}} \frac{\eta^{3}}{|Y|^{3}} 
			d \tvph d \tth 
			\leq C_{2} \e^{2} \eta \delta.
		\end{aligned}
	\end{equation}

		In order to deal with the first two terms in \eqref{eq:diff-W'}, 
we estimate 
	\[
		\Delta_{\e ,P,\eta} \varphi_{1,\delta,\eta} 
		\qquad 
		{\rm and} \qquad 
		\left( \Delta_{\e ,P,\eta} - \Delta_{0,\eta} 
		\right) \varphi_{1,\delta,\eta}. 
	\]
First, by \eqref{eq:deri-Z} and the definition of $\chi_{\delta}$, 
there holds 
	\[
		\left| \partial_{i} \left( \chi_{\delta} \left( \mathcal{Z}(\tvph, \tth,\eta) 
		\right) \right) \right| 
		\leq C_{0} \frac{1}{\delta} \frac{\eta^{2}}{|Y|^{2}}, 
		\qquad 
		\left| \partial_{i} \partial_{j} \left( \chi_{\delta} \left( 
		\mathcal{Z}(\tvph, \tth ,\eta) \right) \right) \right| 
		\leq C_{0} 
		\left( \frac{1}{\delta^{2}} \frac{\eta^{4}}{|Y|^{4}} 
		+ \frac{1}{\delta} \frac{\eta^{2}}{|Y|^{3}} \right).
	\]
Write ${\rm Hess}_{\e ,P,\eta}$ for the Hessian 
of $(\Psi_\eta (\T_{\xi_\eta}), g_{\e ,P,\eta} )$. 
From \eqref{eq:esti-deri-phi} and Lemma \ref{l:basic-esti}, it follows that 
	\[
		\begin{aligned}
			\left| \left( {\rm Hess}_{\e ,P,\eta} ( \varphi_{1,\delta,\eta} ) \right)_{ij} \right|  
			&= \left| \partial_{i} \partial_{j} (\varphi_{1,\delta,\eta} ) 
			- (\Gamma_{\e ,P,\eta})^{k}_{ij} \partial_{k} \varphi_{1,\delta,\eta} 
			 \right|
			 \\
			 & \leq 
			 C_{0} \left\{ \left( \frac{1}{\delta^{2}} \frac{\eta^{4}}{|Y|^{4}} 
			 + \frac{1}{\delta} \frac{\eta^{2}}{|Y|^{3}} \right) \eta
			 + \frac{1}{\delta} \frac{\eta^{2}}{|Y|^{2}} \frac{\eta}{|Y|} 
			 + \frac{\eta}{|Y|^{2}} 
			 \right\}  
			 + \frac{C_{0}}{|Y|} \left( \frac{1}{\delta} \frac{\eta^{2}}{|Y|^{2}} \eta 
			 + \frac{\eta}{|Y|} \right) 
			 \\
			 & \leq C_{0} \eta \left( \frac{1}{\delta^{2}} \frac{\eta^{4}}{|Y|^{4}} 
			 + \frac{1}{\delta} \frac{\eta^{2}}{|Y|^{3}} 
			 + \frac{1}{|Y|^{2}} \right) 
		\end{aligned}
	\]
and 
	\[
		\begin{aligned}
			\left| \left( {\rm Hess}_{\e ,P,\eta} - {\rm Hess}_{0,\eta} 
			\right) \varphi_{1,\delta,\eta} \right|  
			&= 
			\left| (\Gamma_{\e ,P,\eta})^{k}_{ij} - (\Gamma_{0,\eta})^{k}_{ij} 
			\right| | \partial_{k} \varphi_{1,\delta,\eta} | 
			\leq C_{0} \e^{2} \frac{\eta^{4}}{|Y|^{3}}  
			\left( \frac{\eta^{3}}{\delta |Y|^{2}} + \frac{\eta}{|Y|} \right)
			\\
			&= C_{0} \e^{2} \eta 
			\left( \frac{\eta^{6}}{ \delta |Y|^{5}} + \frac{\eta^{4}}{|Y|^{4}} \right). 
		\end{aligned}
	\]
Recalling $\Delta_{\e ,P,\eta} f = (g_{\e ,P,\eta})^{ij} ({\rm Hess}_{\e ,P,\eta} f)_{ij} $, 
we get 
	\[
		\left| \Delta_{\e ,P,\eta} \varphi_{1,\delta,\eta} \right| 
		\leq C_{0} \eta \left( \frac{1}{\delta^{2}} + \frac{|Y|}{\delta \eta^{2}} 
		+ \frac{|Y|^{2}}{\eta^{4}} \right)
	\]
and 
	\[
		\begin{aligned}
			\left| \left( \Delta_{\e,P,\eta} - \Delta_{0,\eta} 
			\right) \varphi_{1,\delta,\eta} \right|  
			& \leq \left| (g_{\e ,P,\eta})^{ij} - (g_{0,\eta})^{ij} \right| 
			\left| {\rm Hess}_{\e ,P,\eta} \varphi_{1,\delta,\eta} \right| 
			+ \left| (g_{\e ,P,\eta})^{ij} \right| 
			\left| \left( {\rm Hess}_{\e ,P,\eta} - {\rm Hess}_{0,\eta} 
			\right) \varphi_{1,\delta,\eta} \right|  
			\\
			& \leq C_{0} \e^{2} \eta 
			\left( \frac{1}{\delta^{2}} \frac{\eta^{4}}{|Y|^{2}} 
			+ \frac{1}{\delta} \frac{\eta^{2}}{|Y|} + 1 \right)
			+ C_{0} \e^{2} \eta \left( \frac{\eta^{2}}{\delta |Y|} + 1 \right) 
			\\
			& \leq C_{0} \e^{2} \eta 
			\left( \frac{1}{\delta^{2}} \frac{\eta^{4}}{|Y|^{2}} 
			+ \frac{1}{\delta} \frac{\eta^{2}}{|Y|} + 1  \right).
		\end{aligned}
	\]
From these estimates and Lemma \ref{l:basic-esti}, one may observe that 
	\begin{equation}\label{eq:diff-area}
		\begin{aligned}
			&\int_{\Psi_{\eta}(\T_{\xi_{\eta}})} 
			\left| H_{\e,P,\eta} \Delta_{\e ,P,\eta} \varphi_{1,\delta,\eta} 
			+ H_{\e,P,\eta} \left( |A_{\e ,P,\eta}|^{2} + 
			\Ric_{g_{\e,P}} ( n_{\e,P,\eta}, n_{\e,P,\eta} ) 
			- \frac{1}{2} H_{\e,P,\eta}^{2} \right) \varphi_{1,\delta,\eta} 
			\right| 
			\\
			& \qquad \times 
			\left| d \sigma_{\e,P,\eta} - d \sigma_{0,\eta} \right|
			\\
			\leq & C_{0} \e^{2} \eta \int_{I_{\delta}}\left\{ 
			\frac{|Y|}{\eta^{2}}
			\left( \frac{1}{\delta^{2}} + \frac{|Y|}{\delta \eta^{2}} 
			+ \frac{|Y|^{2}}{\eta^{4}} \right)  
			+ \frac{|Y|^{3}}{\eta^{6}} + \e^2 \frac{|Y|}{\eta^2}  \right\} \frac{\eta^{8}}{|Y|^{6}} 
			d \tvph d \tth 
			\\
			\leq & C_{0} \e^{2} \eta \int_{C_{1}\eta^{2}/\delta}^{10} 
			\left\{ \frac{\eta^{6}}{\delta^{2} r^{4}} 
			+ \frac{\eta^{4}}{\delta r^{3}} + \frac{\eta^{2}}{r^{2}} 
		    + \e^2 \frac{\eta^{6}}{r^{4}} \right\} dr 
			\leq C_{2} \e^{2} \eta \delta
		\end{aligned}
	\end{equation}
and 
	\begin{equation}\label{eq:Laplacian}
		\begin{aligned}
			 &\int_{\Psi_{\eta} (\T_{\xi_{\eta}}) } 
			\left| H_{\e,P,\eta} \Delta_{\e ,P,\eta} \varphi_{1,\delta,\eta} 
			- H_{0,\eta} \Delta_{0,\eta} \varphi_{1,\delta,\eta} \right|
			d \sigma_{0,\eta}  
			\\
			\leq & \int_{\Psi_{\eta}(\T_{\xi_{\eta}})} 
			\left\{ \left| H_{\e,P,\eta} - H_{0,\eta} \right| 
			\left| \Delta_{\e ,P,\eta} \varphi_{1,\delta,\eta} \right| 
			+ | H_{0,\eta} | \left| \left( \Delta_{\e ,P,\eta} - 
			\Delta_{0,\eta} \right) \varphi_{1,\delta,\eta} \right|
			\right\} d \sigma_{0,\eta}
			\\
			\leq & C_{0} \e^{2} \eta \int_{I_{\delta}} 
			\left\{ \frac{\eta^{2}}{\delta^{2}|Y|} + \frac{1}{\delta} 
			+ \frac{|Y|}{\eta^2} 
			\right\}\frac{\eta^{4}}{|Y|^{4}} 
			d \tvph d \tth \leq C_{2} \e^{2} \eta \delta.
		\end{aligned}
	\end{equation}
The conclusion of Proposition easily follows from 
\eqref{eq:diff-W'}, \eqref{eq:AandH}, \eqref{eq:Ric}, 
\eqref{eq:diff-area} and \eqref{eq:Laplacian}.
\end{proof}

\subsection{Metric dependence}

The  goal of this subsection is to estimate the contribution from $\var_{2,\delta,\eta}$ (see \eqref{eq:var1var2}) to the derivative of the 
Willmore energy. $\var_{2,\delta,\eta}$ is supported in a region of the degenerating torus where the curvature stays bounded.  
The main contribution of $\var_{2,\delta,\eta}$ will be due to the curvature of $M$ and to the deviation of the 
tori from a purely spherical shape. Our aim is to prove the following result, which quantifies both  effects.

\begin{pro}\label{p:new}
Let the limit sphere $S^2_{\tilde{A}}$ and  $\psi_0$ be as in Proposition \ref{p:psi0}. 
For $\delta \in (0,1/2]$ and $\varphi_{\eta, \delta, 2}$ as in \eqref{eq:var1var2}, 
there exist $C_0>0$, $C_\delta >0$ and $\eta_\delta>0$ such that 
	\[
		\begin{aligned}
		&\, 
		\left| dW_{g_\e}\left[ \frac{\var_{\eta,\delta,2}}{\eta} \right] - dW_{g_0} 
		\left[ \frac{\varphi_{\eta,\delta,2}}{\eta} \right] 
		+ \e^2 \left[  \int_{\SA} (1-\chi_\delta) 
		\left( F\Delta_{\SA} \psi_0 + \Ric_{P}(n_{0},n_{0}) 
		H_{\SA} \psi_0 \right) d \sigma_0 \right] \right|
		\\
		\leq &\, C_0 \delta \e^2 
		+ C_\delta \left( o_\eta(1) \e^2 + \e^3 \right)
		\end{aligned}
	\]
holds for any $\eta \in (0,\eta_\delta]$ and $\e \in (0,1/2]$ 
where $C_0$ is independent of $\delta$, $C_\delta$ depends only on $\delta$,   
 $o_\eta (1)$ is as in Proposition \ref{p:varvar2}, and 
$F$ is given by 
\[
\begin{aligned}
F &:= -\sum_{i=1}^{2} e_{i}(h_{ni}) + \sum_{i,j=1}^{2} 
h_{nj} \langle \nabla^{\R^{3}}_{e_{i}} e_{i}, e_{j} \rangle 
- \frac{1}{2} h_{nn} H_{\SA} 
+ \frac{1}{2} \sum_{i=1}^{2} \frac{\partial h}{\partial n_0} ( e_{i}, e_{i} ), 
\\
H_{\SA} &:= \text{the mean curvature of $\SA$ in $(\R^3,g_0)$,}
\\
\mathcal{X}(\theta, \varphi) 
&:= \tilde{A} \left( \cos \theta + 1, \sin \theta \cos \varphi , \sin \theta \sin \varphi \right)
\qquad (\theta, \varphi) \in (0,\pi) \times [0,2\pi], 
\\
e_1 &:= \tilde{A}^{-1} \partial_\theta \mathcal{X} 
= (-\sin \theta , \cos \theta \cos \varphi, \cos \theta \sin \varphi ),
\\
e_2 &:= (\tilde A \sin \theta)^{-1} \partial_\varphi \mathcal{X} 
= (0, - \sin \varphi, \cos \varphi),
\\
n_0 &:= (\cos \theta, \sin \theta \cos \varphi, \sin \theta \sin \varphi),
\\
(h(x))_{\a \b} &:= \frac{1}{3} R_{\a \mu \nu \b} x^\mu x^\nu, 
\quad 
h_{ni} := h(x)(n_0,e_i), \quad h_{nn} := h(x) (n_0,n_0). 
\end{aligned}
\]
\end{pro}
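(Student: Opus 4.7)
The plan is to isolate the leading $O(\e^2)$ contribution coming from the ambient metric perturbation, exploiting two facts: (a) by the conformal invariance of the Euclidean Willmore functional (Proposition \ref{p:Mobinv}), the inverted Clifford torus $\Sigma:=\Psi_{\eta}(\T_{\xi_{\eta}})$ is Willmore in $(\R^{3},g_{0})$, so $W_{g_{0}}'(\Sigma)\equiv 0$; (b) by Lemma \ref{104}(ii), on the support $\{|\mathcal Z|\ge\delta\}$ of $\varphi_{2,\delta,\eta}$ the torus converges in $C^{k}_{\rm loc}$ to $\SA\setminus\{0\}$. From Proposition \ref{p:1-2-var} and (a),
\[
\bigl(dW_{g_{\e}}-dW_{g_{0}}\bigr)\!\left[\frac{\varphi_{2,\delta,\eta}}{\eta}\right]
=\int_{\Sigma} W_{g_{\e}}'(\Sigma)\cdot \frac{\varphi_{2,\delta,\eta}}{\eta}\, d\sigma_{g_{\e}}.
\]
The task is thus to expand $W_{g_{\e}}'(\Sigma)$ to leading order in $\e^{2}$ and integrate.

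Using $g_{\e,P}=\delta+\e^{2}h+\tilde R$ from \eqref{eq:ex-g-2}--\eqref{eq:met-deri}, a direct linearisation (relegated to the Appendix) gives
\[
W_{g_{\e}}'(\Sigma)=-\e^{2}\bigl[\Delta_{g_{0}}F_{\eta}+|\mathring A|^{2}_{g_{0}}F_{\eta}+H_{g_{0}}\Ric_{P}(n_{0,\eta},n_{0,\eta})\bigr]+\e^{2}\mathcal R_{\eta}+O(\e^{3}),
\]
where $F_{\eta}$ is the first-order variation of the mean curvature under $h$ (whose restriction to $\SA$ equals the function $F$ of the statement), and $\mathcal R_{\eta}$ collects the contributions $(\delta_{h}\Delta)H_{g_{0}}$ and $H_{g_{0}}\,\delta_{h}|\mathring A|^{2}$. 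Two cancellations enter: the Willmore equation $\Delta_{g_{0}}H_{g_{0}}+|\mathring A|^{2}_{g_{0}}H_{g_{0}}=0$ on $\Sigma$, and the identity $\Ric_{g_{\e}}=\e^{2}\Ric_{P}+O(\e^{3})$ in rescaled normal coordinates. Because $\SA$ is umbilic and has constant mean curvature, on the limit sphere both $|\mathring A|^{2}_{g_{0}}$ and $(\delta_{h}\Delta)H_{g_{0}}$ vanish identically; combined with Lemma \ref{104}(ii) this shows that $|\mathring A|^{2}_{g_{0}}F_{\eta}$ and $\mathcal R_{\eta}$ converge to $0$ uniformly on the support of $\varphi_{2,\delta,\eta}$, contributing an error of order $C_{\delta}\, o_{\eta}(1)\,\e^{2}$.

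Integrating by parts on the closed surface $\Sigma$, $\int_{\Sigma}\Delta_{g_{0}}F_{\eta}\cdot \psi\, d\sigma_{g_{0}}=\int_{\Sigma} F_{\eta}\,\Delta_{g_{0}}\psi\, d\sigma_{g_{0}}$ with $\psi:=\varphi_{2,\delta,\eta}/\eta$; passing to the limit $\eta\to 0$ using Proposition \ref{p:psi0} and Remark \ref{r:conv-psi-eta} (which yield $\psi\to (1-\chi_{\delta})\psi_{0}$ in $C^{k}_{\rm loc}(\SA\setminus\{0\})$ and $F_{\eta}\to F$), one arrives at the claimed leading term
\[
-\e^{2} \int_{\SA}(1-\chi_{\delta})\bigl[F\,\Delta_{\SA}\psi_{0}+\Ric_{P}(n_{0},n_{0})H_{\SA}\psi_{0}\bigr]d\sigma_{0}.
\]
The $C_{0}\delta\,\e^{2}$ error absorbs (i) the discrepancy between the sphere integral over $\SA\cap B_{\delta}^{c}(0)$ and the torus integral on $\{|\mathcal Z|\ge\delta\}$, since the transition annulus $\{\delta\le|\mathcal Z|\le 2\delta\}$ has area $O(\delta^{2})$ on $\SA$ and the relevant integrands are bounded; and (ii) the contribution of $|\mathring A|^{2}_{g_{0}}F_{\eta}\cdot \psi$ on this same annulus, where $|\mathring A|_{g_{0}}$ is uniformly bounded. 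The $C_{\delta}\e^{3}$ term is produced by the cubic remainder $\tilde R$ in \eqref{eq:ex-g-2}, together with the $O(\e^{2})$ deviation between $d\sigma_{g_{\e}}$ and $d\sigma_{g_{0}}$ multiplied by the $O(\e)$ metric-induced remainders.

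The main obstacle lies in the linearisation step: every geometric ingredient (induced Laplacian, second fundamental form, unit normal, Ricci contribution, area element) must be expanded to $O(\e^{2})$ under the perturbation $h$, and the resulting residual terms $\mathcal R_{\eta}$ must be shown to vanish in the sphere limit. Both features rest crucially on the umbilicity and constant mean curvature of $\SA$, but the bookkeeping has to be performed on the degenerating torus and then passed to the limit; in particular, justifying the interchange of limit and integration by parts in $\int_{\Sigma}F_{\eta}\Delta_{g_{0}}\psi\, d\sigma_{g_{0}}$ requires not only the $C^{k}_{\rm loc}$ convergence from Proposition \ref{p:psi0} but also a quantitative control on $F_{\eta}-F$ on $\{|\mathcal Z|\ge\delta\}$. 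These explicit calculations, together with the identification of $F_{\eta}|_{\SA}$ with the function $F$ given in the statement, are where the Appendix will do the heavy lifting.
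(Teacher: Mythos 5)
Your overall strategy is the same as the paper's: write $dW_{g_\e}-dW_{g_0}=dW_{g_\e}$ using $W_{g_0}'(\Sigma)=0$, linearise $W_{g_\e}'$ in $t=\e^2$, exploit that on the sphere limit the trace-free second fundamental form and $\Delta_{g_0}H_{g_0}$ vanish (so the problematic linearisation residues contribute $C_\delta\,o_\eta(1)\e^2$), identify the first-order variation of mean curvature with $F$, and verify $\frac{d}{dt}\Ric_{g_{t,P}}|_{t=0}(n,n)=\Ric_P(n,n)$; the paper packages this as Lemma~\ref{l:dot-W'} (whose remainder $O(t^{3/2})$ comes from the $C^{1,1/2}$ regularity of $t\mapsto g_{t,P}$) followed by a cutoff analysis.

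There is, however, a genuine gap in your treatment of the cutoff commutator, and it is precisely the step that produces the $C_0\delta\e^2$ term. After integrating $\int_\Sigma\Delta_{g_0}F_\eta\,\psi\,d\sigma=\int_\Sigma F_\eta\,\Delta_{g_0}\psi\,d\sigma$ by parts with $\psi=(1-\chi_\delta)\varphi_\eta/\eta$ and passing to the sphere limit, you land on $\int_{\SA}F\,\Delta_{\SA}\bigl[(1-\chi_\delta)\psi_0\bigr]\,d\sigma$, not on $\int_{\SA}(1-\chi_\delta)F\,\Delta_{\SA}\psi_0\,d\sigma$. The discrepancy is the commutator
\[
-\int_{\SA}F\bigl(\Delta_{\SA}\chi_\delta\bigr)\psi_0\,d\sigma
-2\int_{\SA}F\,g_{\SA}(\nabla_{\SA}\chi_\delta,\nabla_{\SA}\psi_0)\,d\sigma,
\]
supported in the annulus $|\theta-\pi|\lesssim\delta$. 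You dismiss this by claiming the annulus has area $O(\delta^2)$ and ``the relevant integrands are bounded,'' but they are not: $|\Delta_{\SA}\chi_\delta|\sim\delta^{-2}+\delta^{-1}/\sin\theta$ and $|\nabla_{\SA}\chi_\delta|\sim\delta^{-1}$ there (cf.~\eqref{eq:deri-chi-d-2}), so the naive estimate gives $O(1)$, not $O(\delta)$. The paper closes this via a two-part trick (\eqref{eq:new-est-chi-d}--\eqref{eq:est-chi-d}): first replace $F$ by $F-F(0)$ inside the Laplacian (constants drop out), which is legitimate because one {\em begins} from $\int(\Delta_{\SA}F)(1-\chi_\delta)\psi_0$; then use the Lipschitz bound $|F(\mathcal X(\theta,\varphi))-F(0)|\lesssim\delta$ on the annulus to gain the missing factor of $\delta$, together with an explicit computation of $\Delta_{\SA}\psi_0$ in spherical coordinates to show $\int_{\SA}F(0)(1-\chi_\delta)\Delta_{\SA}\psi_0\,d\sigma=O(\delta^2)$. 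Without subtracting $F(0)$ and exploiting that $0\in\SA$, the singular weights from $\chi_\delta$ are not controllable, and the $O(\delta)$ error bound does not follow from an area estimate alone.
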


%
%
%
%
%

\

\begin{rem}
The term $F$  above will turn out to be the metric derivative of  the
mean curvature of $(\SA, g_t)$ at $t = 0$ where 
$g_{t , \a \b} (x) := \delta_{\a \b} + t h_{\a \b}(x)$. 
Hence, $F$ is smooth on $\SA$. 
 	\end{rem}

\

%
%
%
%
%
%
%
%
%
%
%
%
%

\noindent Before proving Proposition \ref{p:new} we collect some useful preliminary material and lemmas. 
Recalling the expansion of the metric $g$ in the normal coordinates and 
setting $t = \e^2$, we observe that 
	\[
		g_{t,P,\a \b} (x) := g_{\e ,P,\a \b} (x)  
		= \delta_{\alpha \beta} + t h_{P,\alpha \beta} (t, x)
	\]
and $t \mapsto g_{t,P,\a \b}(x) : [0,t_0] \to C^k(B_{10})$ 
is of class $C^{1,1/2}$ for each $k \in \N$. Moreover, 
	\begin{equation}\label{eq:defh-2}
		\frac{\partial}{\partial t} g_{t,P,\a \b} (x) \big|_{t=0} 
		= h_{P,\alpha \beta} (0,x) = \frac{1}{3} R_{\alpha \mu \nu \beta} x^\mu x^\nu .
	\end{equation}

\noindent 
Next, we denote by $\Delta_{g_{t,P},\eta}$, $A_{g_{t,P},\eta}$, 
$\mathring{A}_{g_{t,P} ,\eta}$, 
$H_{g_{t,P},\eta}$ and $n_{g_{t,P},\eta}$ the Laplace-Beltrami operator, 
the second fundamental form, its traceless part, 
the mean curvature and the unit outer normal of $(\Psi_\eta (\T), g_{t,P} )$. 
We also write $\Ric_{g_{t,P}}$ and $dW(t,P,\eta)$ 
for the Ricci tensor of $(B_{10},g_{t,P})$ and 
the derivative of the Willmore functional at $(\Psi_\eta (\T), g_{t,P} )$.

	\begin{lem}\label{l:dot-W'}
		For each  $\delta \in (0,1/2)$, one may find 
		$\eta_\delta > 0$ and $C_\delta$ so that 
		if $0 < \eta \leq \eta_\delta$ and $0 < t \leq 1/2$, then 
			\begin{equation}\label{eq:dot-W'}
				\begin{aligned}
					&
					\left| dW (t,P,\eta)[\psi_{2,\delta,\eta}] - dW(0,\eta)[\psi_{2,\delta,\eta}]
					+ t \tilde{W}_P[\psi_{2,\delta,0}] 
					\right|
					\leq C_\delta ( o_\eta (1) t + t^{3/2}), 
				\end{aligned}
		\end{equation}
where $C_\delta$ depends only on $\delta$, $o_\eta(1)$ is as in Proposition \ref{p:varvar2} and 
	\[
		\begin{aligned}
			\psi_{2,\delta,\eta} &:= \frac{\varphi_{2,\delta,\eta}}{\eta} 
			= \frac{(1-\chi_{\delta}) \varphi_{\eta}}{\eta},
			\quad \psi_{2,\delta,0} := 
			(1 - \chi_{\delta}) \psi_0,
			\\
			\tilde{W}_P[\psi] 
			&:= \int_{\SA} \left\{  
			\left( \Delta_{\SA} \frac{d H_{g_{t,P},0}}{d t} \Big|_{t=0}  \right) \psi 
			+ \Ric_{P}(n_{0,0}, n_{0,0}) H_{\SA}  \psi \right\} d \sigma_{g_0}. 
		\end{aligned}
	\]
	\end{lem}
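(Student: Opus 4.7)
The plan is to Taylor-expand $dW(t,P,\eta)[\psi_{2,\delta,\eta}]$ in $t$ around $t=0$, identify the leading coefficient with $-\tilde W_P[\psi_{2,\delta,0}]$ by passing to the limit $\eta \to 0$, and then control both the Taylor remainder and the $\eta$-error separately. The crucial preliminary observation is that $\Psi_\eta(\T_{\xi_\eta})$ is the M\"obius image of the (translated) Clifford torus; by conformal invariance of $W_{g_0}$ (Proposition \ref{p:Mobinv}) and the Willmore property of $\T$ in $\R^3$, one has $W'_{g_0}(\Psi_\eta(\T_{\xi_\eta})) \equiv 0$, so $dW(0,\eta)[\psi] = 0$ for every $\psi$, and \eqref{eq:dot-W'} reduces to an estimate on $|dW(t,P,\eta)[\psi_{2,\delta,\eta}] + t\,\tilde W_P[\psi_{2,\delta,0}]|$.

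For the Taylor step, by \eqref{eq:ex-g-2}--\eqref{eq:met-deri} (with $t = \e^2$) the family $s \mapsto g_{s,P}$ is $C^{1,1/2}$ into $C^k(B_{10})$, and this regularity lifts uniformly to the induced geometric quantities on $\Psi_\eta(\T_{\xi_\eta})$ over the support of $\psi_{2,\delta,\eta}$ (where $|Y|$ stays bounded below by a $\delta$-dependent constant, thanks to Lemma \ref{l:basic-esti}). Setting $\dot D(P,\eta)[\psi] := \tfrac{d}{ds}|_{s=0}\,dW(s,P,\eta)[\psi]$, this gives the expansion
\[
  dW(t,P,\eta)[\psi_{2,\delta,\eta}] = t\,\dot D(P,\eta)[\psi_{2,\delta,\eta}] + O_\delta(t^{3/2}).
\]
Since $W'_{g_s} = L_{g_s}H_{g_s} + \tfrac12 H_{g_s}^3$ and $W'_{g_0}(\Psi_\eta(\T_{\xi_\eta})) = 0$, the variation of $d\sigma_{g_s}$ contributes nothing to $\dot D$, so
\[
  \dot D(P,\eta)[\psi_{2,\delta,\eta}] = \int_{\Psi_\eta(\T_{\xi_\eta})} \left.\tfrac{\partial}{\partial s}\right|_{s=0} W'_{g_s}\cdot \psi_{2,\delta,\eta}\,d\sigma_{g_0}.
\]

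To pass to the limit $\eta \to 0$ I will invoke the smooth convergence results of Section \ref{s:degtori}: by Lemma \ref{104}(ii), Lemma \ref{l:xi-xi'} and Proposition \ref{p:psi0} (with Remark \ref{r:conv-psi-eta}), on $\{|x| \geq \delta\}$ the surfaces $\Psi_\eta(\T_{\xi_\eta})$ and their outward normals converge in $C^k$ to $\SA$ and $n_0$ at rate $O(\eta^{3/2})$, while $\psi_{2,\delta,\eta} \to \psi_{2,\delta,0}$ in $C^k$ at rate $o_\eta(1)$. Coupled with the regularity of $s \mapsto g_{s,P}$, this yields
\[
  \dot D(P,\eta)[\psi_{2,\delta,\eta}] = \int_{\SA}\left.\tfrac{\partial}{\partial s}\right|_{s=0} W'_{g_s}\Big|_{\SA}\cdot \psi_{2,\delta,0}\,d\sigma_{g_0} + O_\delta(o_\eta(1)).
\]

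It remains to show that the surviving integrand equals $-\Delta_{\SA} F - \Ric_P(n_0,n_0)H_{\SA}$. At $s=0$ on $\SA$ one has $\Ric_0 \equiv 0$, $H_0 = 2/\tilde A$ is constant (so $\Delta_{\SA}H_0 = 0$), and $|\mathring A_0|^2 = 0$. Differentiating $-\Delta H - (|A|^2 + \Ric(n,n))H + \tfrac12 H^3$ in $s$ at $s=0$, the minimality of $|\mathring A|^2$ at $s=0$ forces $(|\mathring A|^2)'|_{s=0} = 0$, hence $(|A|^2)'|_{s=0} = H_0 H'|_{s=0}$, which cancels the contribution $(-|A|^2 + \tfrac32 H^2)|_{s=0}\,H'$. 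The explicit frame expression for $F = (dH_{g_s}/ds)|_{s=0}$ stated in Proposition \ref{p:new} is then the standard first-variation formula for the mean curvature under the perturbation $g_s = \delta + s\,h(0,\cdot) + O(s^{3/2})$ applied to the spherical frame $(e_1,e_2,n_0)$. Integrating against $\psi_{2,\delta,0}$ produces exactly $-\tilde W_P[\psi_{2,\delta,0}]$, which combined with the Taylor step yields \eqref{eq:dot-W'}. The main obstacle is the quantitative coupling of the two limits $t \to 0$ and $\eta \to 0$ with sharp rates: one must produce the error $C_\delta\,o_\eta(1)\,t$ with the precise $\eta$-dependence dictated by $o_\eta(1) = |\eta^{-4}(\xi'_\eta \eta - 2\xi_\eta) - c_0| + \eta^{3/2}$, all while accepting that $C_\delta$ blows up as $\delta \downarrow 0$ because $\psi_{2,\delta,\eta}$ develops large derivatives near $\partial B_\delta$.
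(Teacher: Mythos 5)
Your proposal is correct and follows essentially the same route as the paper: expand all induced geometric quantities in $t$ using the $C^{1,1/2}$ dependence of $g_{t,P}$, absorb the residuals into an $O_\delta(t^{3/2})$ term, pass to the $\eta\to 0$ limit over the spherical region at rate $o_\eta(1)$ via Lemma \ref{104}(ii) and Remark \ref{r:conv-psi-eta}, and compute $\tfrac{d}{dt}\Ric_{g_{t,P}}|_{t=0}=\Ric_P$ from the explicit form of $h_{P,\alpha\beta}(0,\cdot)$. Your observation that $dW(0,\eta)\equiv 0$ and hence the $d\sigma$ variation drops out is a small clean-up relative to the paper, which instead bounds that term by $O(\eta^{3/2})$; the only slips are that on $\mathrm{supp}\,\psi_{2,\delta,\eta}$ it is $|\mathcal{Z}|$ (not $|Y|$) that is bounded below by $\delta$, with $|Y|\leq \eta^2/\delta$, and the uniform $C^k$ regularity on that region comes from Lemma \ref{104}(ii), not Lemma \ref{l:basic-esti} (which concerns the handle).
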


	\begin{proof} 
We first fix a $\delta \in (0,1/2)$. 
Recall from Proposition \ref{p:1-2-var} that 
	\begin{equation}\label{eq:wdot}
		dW(t,P,\eta) [\psi] = 
		- \int_{\Psi_\eta(\T_{\xi_\eta})} 
		\left\{ \Delta_{g_{t,P},\eta} H_{g_{t,P},\eta} 
		+ \left(|\mathring{A}_{g_{t,P},\eta}|^{2} 
		+ \Ric_{g_{t,P}} (n_{g_{t,P},\eta},n_{g_{t,P},\eta}) 
		\right)H_{g_{t,P},\eta}
		\right\} \psi d \sigma_{g_{t,P},\eta}. 
	\end{equation}
Since $|\mathcal{Z}| \geq \delta$ is equivalent to $|Y| \leq \eta^2 / \delta$, 
from the parameterization of 
$\mathcal{Z}(\eta^2 \bvph, \eta^2 \bth,\eta)$ 
for $\Psi_\eta(\T_{\xi_\eta})$ and \eqref{eq:|Y|}, it is easily seen that 
there exist $C_1>0$, which is independent of $\delta$ and $\eta$, 
such that if $ 0< \eta \leq 1/\delta$, then  
	\[
		\Psi_\eta(\T_{\xi_\eta}) \cap (B_{\delta} (0) )^c 
		\subset \left\{ \mathcal{Z}(\eta^2 \bvph, \eta^2 \bth, \eta) 
		\ |\ (\bvph, \bth ) \in I_\delta
		\right\} \qquad 
		\text{where } I_\delta := 
		\left[ - \frac{C_1}{\delta}, \frac{C_1}{\delta}  \right]^2. 
	\]
We apply Lemma \ref{104} (ii) for $R = C_1/\delta$. 
Then one may find a $\eta_\delta >0$ such that 
for every $k \in \N$, there exists a $C_{k}>0$ satisfying
	\begin{equation}\label{eq:covZ}
		\left\| \mathcal{Z}( \eta^2 \cdot, \eta^2 \cdot ) - 
		\Refx \circ Z_0 \right\|_{C^k([-R,R]^2)} 
		\leq C_{k} \eta^{3/2}
	\end{equation}
provided $\eta \in (0, \eta_\delta]$.

		Now, due to the cut-off function $\chi_\delta$, 
it is sufficient to consider the quantities on $I_\delta$. 
We also suppose $ 0 < \eta \leq \eta_\delta$. 
Since $t \mapsto g_{t,P,\a \b}(x)$ is of class $C^{1,1/2}$ and 
the convergence \eqref{eq:covZ} holds, we observe that 
	\[
		\begin{aligned}
			\Delta_{g_{t,P},\eta} f 
			&= \Delta_{g_0,\eta} f + \frac{d}{dt} \Delta_{g_{t,P},\eta} f \big|_{t=0}  t
			+ O_{\delta,1}(t^{3/2} \| f \|_{C^2(I_\delta)}), 
			&
			H_{g_{t,P},\eta} &= H_{g_0,\eta} + \frac{d}{d t} H_{g_{t,P},\eta} \big|_{t=0} t + 
			O_{\delta,2}(t^{3/2}), 
			\\
			|\mathring{A}_{g_{t,P},\eta}|^2 
			&= |\mathring{A}_{g_0,\eta}|^2 + \frac{d}{d t} 
			|\mathring{A}_{g_{t,P},\eta}|^2 \big|_{t=0} t + O_{\delta,2}(t^{3/2}),
			&
			\Ric_t &= \Ric_{g_0} +  \frac{d}{d t} \Ric_{g_{t,P}} \big|_{t=0} t 
			+ O_{\delta,2}(t^{3/2}), 
			\\
			d \sigma_{g_{t,P},\eta} &= d \sigma_{g_0,\eta} 
			+ \frac{d}{d t} d \sigma_{g_{t,P},\eta} \big|_{t=0} t 
			+ O_{\delta,2}(t^{3/2}),
			&
			n_{g_{t,P},\eta} &= n_{g_0,\eta} + \frac{d}{dt} n_{g_{t,P},\eta} \big|_{t=0} t 
			+ O_{\delta,2}(t^{3/2})
		\end{aligned}
	\]
where 
$|O_{\delta,1}(t^{3/2} \|f \|_{C^2} )| \leq C_{1,\delta} \| f \|_{C^2} t^{3/2}$ and 
$\|O_{\delta,2}(t^{3/2})\|_{C^2(I_\delta)}  \leq C_{2,\delta} t^{3/2}$, and 
$C_{\delta,i}$ depend only on $\delta$. 
Substituting these formula into \eqref{eq:wdot} and 
noting $\Ric_0 = 0$, we obtain 
	\begin{equation}\label{eq:W-dot-2}
		\begin{aligned}
			& dW(t,P,\eta) [\psi_{2,\delta,\eta}]
			\\
			= & dW(0,\eta) [\psi_{2,\delta,\eta}]
			- t \int_{ \Psi_\eta ( \T_{\xi_\eta} ) } 
			\left\{ 
			\Delta_{0,\eta} \frac{d}{d t} H_{g_{t,P},\eta} \big|_{t=0} 
			+ \frac{d}{d t} \Delta_{g_{t,P},\eta}  H_{g_0,\eta} \big|_{t=0} 
			\right\} \psi_{2,\delta,\eta} d \sigma_{g_0,\eta}
			\\
			&- t\int_{\Psi_\eta (\T_{\xi_\eta})} 
			\left\{ 
			\frac{d}{d t} |\mathring{A}_{g_{t,P},\eta}|^2 \big|_{t=0}  H_{g_0,\eta}
			+ | \mathring{A}_{g_0,\eta} |^2 \frac{d}{dt} H_{g_{t,P},\eta} \big|_{t=0} + 
			\frac{d}{d t} \Ric_{g_{t,P}} \big|_{t=0} (n_{g_0,\eta},n_{g_0,\eta}) 
			H_{g_0,\eta}
			\right\} \psi_{2,\delta,\eta} d \sigma_{g_0,\eta}
			\\
			& - t \int_{ \Psi_\eta (\T_{\xi_\eta}) } 
			\left\{ \Delta_{g_0,\eta} H_{g_0,\eta} 
			+ |\mathring{A}_{g_0,\eta}|^2 H_{g_0,\eta}
			\right\} \psi_{2,\delta,\eta}\frac{d}{d t} d\sigma_{g_{t,P},\eta} \big|_{t=0} 
			+ O_\delta(t^{3/2})
		\end{aligned}
	\end{equation}
where $|O_\delta(t^{3/2})| \leq C_{\delta,3} t^{3/2}$.

		Next, we observe the behaviours of the above quantities as $\eta \to 0$. 
By \eqref{eq:covZ} and the fact that $\Refx \circ Z_0$ is a position vector of $\SA$, 
it follows from $H_{g_0,0} = 2 / \tilde{A}$ and $(A_{g_0,0})^i_j = \delta^i_j / \tilde{A}$ 
that 
	\[
		\left\| H_{g_0,\eta} - \frac{2}{\tilde A} \right\|_{C^2(I_\delta)} 
		+ \left\| (A_{g_0,\eta})^i_j - \frac{\delta^i_j}{\tilde A} \right\|_{C^2(I_\delta)} 
		+ \left\| \mathring{A}_{g_0,\eta} \right\|_{C^0(I_\delta)}
		\leq C_{\delta,4} \eta^{3/2}. 
	\]
Hence, 
	\[
		\begin{aligned}
			& \left\| \frac{d}{d t} \Delta_{g_{t,P},\eta}  H_{g_0,\eta} \big|_{t=0} 
			 \right\|_{C^0(I_\delta)} 
			+ \left\| \frac{d}{d t} |\mathring{A}_{g_{t,P},\eta}|^2 \big|_{t=0} 
			 \right\|_{C^0(I_\delta)}
			 \\
			 & \qquad \qquad 
			+ \left\| \Delta_{g_0,\eta} H_{g_0,\eta} \right\|_{C^0(I_\delta)} 
			+ \left\| |\mathring{A}_{g_0,\eta}|^2 H_{g_0,\eta} \right\|_{C^0(I_\delta)} 
			\leq C_{\delta,4} \eta^{3/2}.
		\end{aligned}
	\]
Recalling \eqref{eq:psi-eta}, we also observe that 
	\[
		\begin{aligned}
		&\left\| \psi_{2,\delta,\eta} - (1 - \chi_\delta) \psi_0 \right\|_{C^0(I_\delta)} 
		+ \left\| \Delta_{g_0,\eta}\frac{d}{d t} H_{g_{t,P},\eta} \big|_{t=0} 
		- \Delta_{S^2_{\tilde A}} \frac{d}{dt} H_{g_{t,P},0} \big|_{t=0} 
		 \right\|_{C^0(I_\delta)} 
		\\
		& \qquad \qquad
		+ \left\| \frac{d}{dt} \Ric_{g_{t,P}} |_{t=0}(n_{g_0,\eta},n_{g_0,\eta}) 
		- \frac{d}{dt} \Ric_{g_{t,P}} |_{t=0} (n_{g_0,0},n_{g_0,0}) \right\|_{C^0(I_\delta)} 
		\leq C_{\delta,4} o_\eta(1) 
		\end{aligned}
	\]
where $o_\eta (1) = |\eta^{-4}  (\xi_\eta' \eta - 2 \xi_\eta) - c_0| + \eta^{3/2} \to 0 $ 
as $\eta \to 0$ by  Lemma \ref{l:xi-xi'} and Remark \ref{r:conv-psi-eta}. 
Therefore, by \eqref{eq:W-dot-2}, 
in order to show \eqref{eq:dot-W'}, it is sufficient to prove 
	\begin{equation}\label{eq:Ric-dot}
		\frac{d}{dt} \Ric_{g_{t,P}} \big|_{t=0} (n_{g_0,0},n_{g_0,0}) 
		= \Ric_P(n_{g_0,0},n_{g_0,0}).
	\end{equation}

To this end, let $(x^1,x^2,x^3)$ denote 
the coordinates of $(B_{10}, g_{t,P})$ 
with $g_0( (\partial_{\a})_x, ( \partial_{\b} )_x) = \d_{\a \b}$ and 
define $R_{t,\a \b}$ as the component of the Ricci tensor in these coordinates:  
	\[
		R_{g_{t,P},\a \b} (x) = \Ric_{g_{t,P}}(x) 
		\left( (\partial_{\a})_x, (\partial_{\b})_x  \right). 
	\]
We also write $\Gamma^{\gamma}_{g_{t,P}, \lambda \nu}$ 
for the Christoffel symbol in the above coordinates. 
Then arguing as in the proof of Lemma 4.2 in \cite{IMM1}, we obtain 
	\begin{equation}\label{eq:Gamma-dot}
		\frac{d}{d t} \Gamma^{\kappa}_{g_{t,P}, \lambda \mu} \Big|_{t=0} 
		= \frac{1}{2} \delta^{\kappa \xi} 
		\left( \partial_{\lambda} h_{P,\xi \mu} + \partial_{\mu} h_{P,\xi \lambda} 
		- \partial_{\xi} h_{P,\lambda \mu} \right). 
	\end{equation}
We also remark that $\Gamma^{\kappa}_{g_0, \lambda \nu} \equiv 0$. 
Hence, from the formula 
	\[
		R_{g_{t,P}, \alpha \beta} 
		= \partial_{\rho} \Gamma^{\rho}_{g_{t,P}, \beta \alpha} 
		- \partial_{\beta} \Gamma^{\rho}_{g_{t,P}, \rho \alpha} 
		+ \Gamma^{\rho}_{g_{t,P}, \rho \lambda} 
		\Gamma^{\lambda}_{g_{t,P}, \beta \alpha} 
		- \Gamma^{\rho}_{g_{t,P}, \beta \lambda} 
		\Gamma^{\lambda}_{g_{t,P}, \rho \alpha}
	\]
it follows that 
	\[
		\frac{d}{dt} R_{g_{t,P}, \alpha \beta} \Big|_{t=0} 
		= \partial_{\rho} 
		\frac{d}{d t} \Gamma^{\rho}_{g_{t,P}, \beta \alpha} \Big|_{t=0} 
		- \partial_{\beta} 
		\frac{d}{d t} \Gamma^{\rho}_{g_{t,P}, \rho \alpha} \Big|_{t=0}.
	\]

		Now, by\eqref{eq:defh-2}, one observes that 
	\[
		\partial_{\zeta} \partial_{\xi} h_{P,\alpha \beta} 
		= \frac{1}{3} R_{\alpha \mu \nu \beta} 
		( \delta^{\mu}_{\xi} \delta^{\nu}_{\zeta} 
		+ \delta^{\nu}_{\xi} \delta^{\mu}_{\zeta} ) 
		= \frac{1}{3} ( R_{\alpha \xi \zeta \beta} + R_{\alpha \zeta \xi \beta}).
	\]
Thus from \eqref{eq:Gamma-dot}, we see that 
	\[
		\begin{aligned}
			\partial_{\rho} 
		\frac{d}{d t} \Gamma^{\rho}_{g_{t,P}, \beta \alpha} \Big|_{t=0}
		&= \frac{1}{2} \delta^{\rho \xi} 
		(\partial_{\rho} \partial_{\beta} h_{P,\xi \alpha} 
		+ \partial_{\rho} \partial_{\alpha} h_{P,\xi \beta} 
		- \partial_{\rho} \partial_{\xi} h_{P,\alpha \beta} ) 
		\\
		&= \frac{1}{2} \sum_{\rho = 1}^{3} 
		( \partial_{\rho} \partial_{\beta} h_{P,\rho \alpha} 
		+ \partial_{\alpha} \partial_{\rho} h_{P,\rho \beta} 
		- \partial_{\rho} \partial_{\rho} h_{P,\alpha \beta} )
		\\
		&= \frac{1}{6} \sum_{\rho = 1}^{3} 
		( R_{\rho \rho \beta \alpha} + R_{\rho \beta \rho \alpha} 
		+ R_{\rho \rho \alpha \beta} + R_{\rho \alpha \rho \beta} 
		+ 2 R_{\rho \alpha \rho \beta} ) 
		= \frac{2}{3} R_{\alpha \beta}, 
		\\
		\partial_{\beta} 
		\frac{d}{d t} \Gamma^{\rho}_{g_{t}, \rho \alpha} \Big|_{t=0}
		&= \frac{1}{2} \delta^{\rho \kappa} 
		( \partial_{\beta} \partial_{\rho} h_{\kappa \alpha} 
		+ \partial_{\beta} \partial_{\alpha} h_{\kappa \rho} 
		- \partial_{\beta} \partial_{\kappa} h_{\rho \alpha} ) 
		= \frac{1}{2} \partial_{\beta} \partial_{\alpha} 
		\sum_{\rho=1}^{3} h_{\rho \rho} 
		= - \frac{1}{3} R_{\alpha \beta}.
		\end{aligned}
	\]
Hence, we have 
	\[
		\frac{d}{dt} R_{g_{t,P}, \alpha \beta} \Big|_{t=0} 
		= R_{\alpha \beta},
	\]
which yields \eqref{eq:Ric-dot}, and we complete the proof. 
\end{proof}

\

\begin{proof}[Proof of Proposition \ref{p:new}] 
By Lemma \ref{l:dot-W'} and $t=\e^2$, for every $\delta\in (0,1/2]$, 
we find $\eta_\delta > 0$ and $C_\delta $ such that 
	\[
		\left| d W_{g_\e}[\psi_{2,\delta,\eta}] 
		- d W_{g_0}[\psi_{2,\delta,\eta}] + \e^2 
		\tilde{W}_P[ \psi_{2,\delta,0}] \right| 
		\leq C_\delta \left( o_\eta(1) \e^2 + \e^3  \right)
	\]
for all $0 < \eta \leq \eta_\delta$ and $ 0 < \e \leq 1/2$. 
We remark that $d H_{g_{t,P},0} / dt |_{t=0}$ is smooth on $\SA$ and 
it follows from the proof of \cite[Lemma 4.2]{IMM1} that 
	\[
		F(q) = \frac{d }{dt} H_{g_{t,P},0} \big|_{t=0} (q) 
		\qquad (q \in \SA). 
	\]
Noting that $ 0 \in \SA$ and $\psi_{2,\delta,0} = (1-\chi_\delta) \psi_0$ 
is also smooth on $\SA$, one has 
	\begin{equation}\label{eq:new-est-chi-d}
		\int_{\SA} (\Delta_{\SA} F) \psi_{2,\delta,0} d \sigma 
		= \int_{\SA} \left\{\Delta_{\SA} (F - F(0)) \right\} (1-\chi_\delta) \psi_0 
		d \sigma 
		= \int_{\SA} (F - F(0)) \Delta_{\SA} 
		\left\{ (1-\chi_\delta) \psi_0 \right\}.
	\end{equation}
Therefore, to prove Proposition \ref{p:new}, it is enough to show that 
	\[
		\int_{\SA} (F -F(0)) \Delta_{\SA} \left\{ (1-\chi_\delta) \psi_0 \right\} 
		d \sigma 
		= \int_{\SA} (1 - \chi_\delta) F \Delta_{\SA} \psi_0 d \sigma
		+ O(\delta). 
	\]
Since
	\[
		\Delta_{\SA} \left( ( 1 - \chi_{\delta} ) \psi_{0} \right) 
		= - (\Delta_{\SA} \chi_{\delta} ) \psi_{0} 
		- 2 g_{\SA} ( {\nabla}_{\SA} \chi_{\delta} , {\nabla}_{\SA} \psi_{0} ) 
		+ ( 1 - \chi_{\delta} ) \Delta_{\SA} \psi_{0}, 
	\]
it suffices to prove that 
	\begin{equation}\label{eq:est-chi-d}
		\int_{\SA} |F-F(0)| 
		 \left\{ \left|\Delta_{\SA} \chi_{\delta} \right| | \psi_0 | + 
		\left| g_{\SA} (\nabla_{\SA} \chi_\delta, \nabla_{\SA} \psi_0) \right| 
		\right\} d \sigma 
		= O(\delta) 
		= \int_{\SA} F(0) (1-\chi_\delta) 
		\Delta_{\SA} \psi_0 d \sigma. 
	\end{equation}

Since we may suppose that $\chi_{\delta}$ is radially symmetric, i.e. 
$\chi_{\delta}(x)=\chi_{\delta}(|x|)$, 
we observe that $\chi_{\delta}(|\mathcal{X}(\theta,\varphi)|)$ depends only on $\theta$. 
For the definition of $\mathcal{X}(\theta,\varphi)$, see Proposition \ref{p:new}. 
Furthermore, we may also assume 
	\begin{equation}\label{eq:deri-chi-d}
		\begin{aligned}
			&|\chi_{\delta}'(|x|)| \leq C_{0} \delta^{-1}, \qquad 
			|\chi_{\delta}''(|x|)| \leq C_{0} \delta^{-2}, 
			\\
			&
			{\rm supp}\, ( \chi_{\delta} ) \cap S^{2}_{\tilde A} 
			\subset 
			\{ (\theta, \varphi ) \in [0,\pi] \times [0,2\pi] 
			\;:\; | \theta - \pi | \leq C_{0} \delta \} 
			=:I_\delta
		\end{aligned}
	\end{equation}
for all $\delta \in (0,1/2)$ where $C_{0}>0$ is independent of $\delta$. 
Using $\mathcal{X}(\theta,\varphi)$ as a coordinate of $\SA$ and 
writing $\psi_0 = A \cos \theta + B (1-\cos \theta) \cos 2 \varphi$ 
where $(A,B) = (\sqrt{2}/2, (2-\sqrt{2}) / 4)$ by \eqref{eq:tranfsphere}, 
it is easily seen that 
	\[
		\begin{aligned}
			\Delta_{\SA} \psi_0 &= 
			\frac{1}{\tilde{A}^2} \left\{ 2 \cos \theta (- A + B \cos 2 \varphi ) 
			- \frac{4B  (1-\cos \theta) \cos 2 \varphi}{\sin^2 \theta} \right\}
			\\
			&= \frac{1}{\tilde{A}^2} \left[ - 2 A \cos \theta 
			+ 2 B \cos 2 \varphi 
			\left\{ \cos \theta - \frac{2 (1-\cos\theta)}{\sin^2 \theta} \right\} \right]. 
		\end{aligned}
	\]
Thus, by \eqref{eq:deri-chi-d} and 
the fact that $\chi_\delta(|\mathcal{X}(\theta,\varphi)|)$ depends only on $\theta$, 
we have 
	\[
		\begin{aligned}
			&\int_{\SA} F(0) (1-\chi_\delta) \Delta_{\SA} \psi_0 d \sigma 
			\\
			= \,& 
			F(0) \int_0^\pi \int_0^{2\pi} 
			(1-\chi_\delta) 
			 \left[ - 2 A \cos \theta 
			 + 2 B \cos 2 \varphi 
			 \left\{ \cos \theta - \frac{2 (1-\cos\theta)}{\sin^2 \theta} \right\} \right] 
			 \sin \theta 
			 d \varphi d \theta 
			 \\
			 = \, &
			 - 4\pi A F(0) \int_0^\pi 
			 (1-\chi_\delta) \sin \theta \cos \theta d \theta = 
			 4 \pi A F(0) \int_{0}^{\pi} \chi_\delta \sin \theta \cos \theta d \theta 
			 =  O(\delta^2). 
		\end{aligned}
	\]

		On the other hand, 
since $\mathcal{X}= \tilde{A} (n_0 + \ex)$ where 
$n_0$ is the outer unit normal to $\SA$, 
$|\mathcal{X}(\theta,\varphi)|^2 = 2 \tilde{A}^2 ( 1 + \cos \theta)$ 
and $ \sin \theta \sim \pi - \theta \sim \sqrt{1+\cos \theta}$ 
for $| \theta - \pi | \leq C_0 \delta$, 
one observes that for $(\theta,\varphi) \in I_\delta$, 
	\begin{equation}\label{eq:deri-chi-d-2}
		\begin{aligned}
			\left| 
			\partial_{\theta} \left\{ \chi_{\delta} ( |\mathcal{X}(\theta,\varphi)| ) \right\} 
			\right|
			&= 
			\left| \chi_{\delta}'(|\mathcal{X}|) \left\la \frac{\mathcal{X}}{|\mathcal{X}|}, 
			\partial_{\theta} \mathcal{X} \right\ra \right| 
			= \frac{\tilde{A}}{\sqrt{2}} \left| \chi_\delta'(|\mathcal{X}|) 
			\frac{\sin \theta}{\sqrt{1+\cos \theta}} \right|
			\leq C_1 \delta^{-1},
			\\
			\left| 
			\partial_{\theta}^{2} \left\{ \chi_{\delta} ( |\mathcal{X}(\theta,\varphi)| ) \right\}
			\right|
			&\leq \left| \chi_{\delta}''(|\mathcal{X}|) 
			\left(\frac{ \la \mathcal{X}, \partial_{\theta} \mathcal{X} \ra }
			{|\mathcal{X}|} \right)^2 \right| 
			+ \left| \chi_{\delta}'(|\mathcal{X}|) 
			\partial_\theta \left\la \frac{\mathcal{X}}{|\mathcal{X}|}, 
			\partial_{\theta} \mathcal{X} \right\ra \right| 
			\\
			& \leq C_1 \left( \frac{1}{\delta^2} 
			+ \frac{1}{\delta \sin \theta } \right) .
		\end{aligned}
	\end{equation}
By \eqref{eq:tranfsphere}, we have 
$\partial_\theta \psi_0 = - A \sin \theta + B \sin \theta \cos 2 \varphi$. 
Therefore, we obtain
	\[
		\begin{aligned}
		\left|\Delta_{\SA} 
		\left\{ \chi_{\delta} \left( |\mathcal{X}(\theta,\varphi)| \right) \right\} \right|
		&= 
		\left| \frac{1}{\tilde{A}^{2} \sin \theta} 
		\partial_{\theta} \left[ \sin \theta \partial_{\theta} 
		\left\{ \chi_{\delta} \left( |\mathcal{X}| \right) \right\} \right] \right| 
		\leq C_{2} \left( \frac{1}{\delta^{2}} + \frac{1}{\delta \sin \theta}  \right),
		\\ 
		\left| g_{\SA}( {\nabla}_{\SA} \chi_{\delta} , 
		{\nabla}_{\SA} \psi_{0} ) \right| 
		&= \left| \frac{1}{\tilde{A}^2 } 
		 \partial_{\theta} \chi_{\delta} \partial_{\theta} \psi_{0} \right| 
		\leq C_{2} \delta^{-1}. 
		\end{aligned}
	\]
Finally, by the continuity of $F$ at the origin, one sees 
$|F(\mathcal{X}(\theta,\varphi)) - F(0)| \leq C_2 \delta$ in $I_\delta$. 
Hence, noting $\psi_0 \in L^\infty(\SA)$, we get 
	\[
		\begin{aligned}
			\int_{\SA}
			\left|F(\mathcal{X}(\theta,\varphi)) - F(0) \right| 
			\left|\Delta_{\SA} 
			\left\{ \chi_{\delta} \left( \mathcal{X}(\theta,\varphi) \right) \right\} \right| 
			| \psi_0 | 	d \sigma 
			&\leq
			C_3  \int_{I_{\delta}} 
			\left( \frac{1}{\delta} + \frac{1}{\sin \theta} \right)
			\tilde{A}^{2} \sin \theta d \theta d \varphi 
			\\
			&\leq C \int_{0}^{C_{0} \delta} 
			( \delta^{-1} \theta + 1 ) d \theta 
			\leq C \delta.
		\end{aligned}
	\]
Similarly, we obtain
	\[
		\int_{\SA} 
		\left| F(\mathcal{X}(\theta,\varphi)) - F(0) \right|
		\left| g_{\SA} ( {\nabla}_{\SA} \chi_{\delta} , 
		{\nabla}_{\SA} \psi_{0} )  \right| d \sigma 
		\leq C \int_{0}^{C_{0} \delta} \theta d \theta 
		\leq C \delta.
	\]
Thus \eqref{eq:est-chi-d} holds and we complete the proof. 
\end{proof}

\subsection{Proof of Proposition \ref{p:varvar2}}

By \eqref{eq:differencediff}, $W_{g_0}' = 0$ and 
Propositions \ref{p:varvar1} and \ref{p:new}, 
for each $\delta \in (0,1/2)$, there exists $\eta_\delta > 0$ such that 
$$
  dW_{g_\e}[\var_\eta] 
  = -  \eta \e^2  \int_{\SA} (1 - \chi_\delta) 
   \left( F\Delta_{\SA} \psi_0 + \Ric_{P}(n_{0},n_{0}) H_{\SA} \psi_0 \right) d \sigma 
    + O(\delta \eta \e^2) + 
    O_\delta \left( o_\eta(1) \eta \e^2 + \eta \e^3 \right)
$$
holds for all $\e \in (0,1/2)$ and $\eta \in (0,\eta_\delta)$. 
The next proposition evaluates the first term in the right-hand side of the above formula 
and will be proved in an appendix as it consists of long explicit computations.

\begin{pro}\label{p:PV} One has 
	\begin{equation}\label{eq:PV}
		 \int_{\SA} (1-\chi_\delta) \left( F\Delta_{\SA} \psi_0 
  		+ \Ric_{P}(n_{0},n_{0}) 
  		H_{\SA} \psi_0 \right) d \sigma 
		= \frac{16}{3} \pi B \tilde{A} ( R_{22} - R_{33} ) 
		+ O(\delta^2). 
	\end{equation}
where $\tilde{A} = \sqrt[4]{2} \sqrt{\pi}$ and $B=\frac{2-\sqrt{2}}{4}$. 
\end{pro}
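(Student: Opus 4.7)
Since the result is essentially an explicit computation, the first move is to dispose of the cutoff. The functions $F$ and $\mathrm{Ric}_P(n_0,n_0) H_{\SA}$ are smooth on $\SA$, $\psi_0 \in L^\infty(\SA)$, and the spherical Laplacian $\Delta_{\SA}\psi_0$ is bounded (from the explicit formula \eqref{eq:tranfsphere}); on the other hand $\mathrm{supp}\,\chi_\delta \cap \SA \subset \{|\theta-\pi|\leq C_0\delta\}$ has area $O(\delta^2)$. Therefore replacing $(1-\chi_\delta)$ by $1$ costs at most $O(\delta^2)$, and it suffices to evaluate $\int_{\SA}(F\Delta_{\SA}\psi_0 + \Ric_P(n_0,n_0)H_{\SA}\psi_0)\, d\sigma$ exactly.

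The next step is to make every factor explicit as a function of the spherical coordinates $(\theta,\varphi)$. Writing $\mathcal{X} = \tilde{A}(n_0 + \ex)$ and $h_{\alpha\beta}(\mathcal{X}) = \tfrac{1}{3}R_{\alpha\mu\nu\beta}\mathcal{X}^\mu \mathcal{X}^\nu$, each component $h_{ab}$ is a quadratic polynomial in $n_0$ with coefficients depending linearly on the Riemann tensor at $P$. Substituting the given expressions for $e_1, e_2, n_0$ into the four terms of $F$ and computing the connection terms $\langle \nabla^{\R^3}_{e_i} e_i, e_j\rangle$ from the standard round-sphere geometry reduces $F$ to a polynomial of bounded degree in $(\sin\theta,\cos\theta,\sin\varphi,\cos\varphi)$. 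In parallel I would compute $\Delta_{\SA}\psi_0$ directly from the last expression in \eqref{eq:tranfsphere} via the sphere Laplacian of radius $\tilde{A}$; since $\psi_0$ is a combination of the spherical harmonics represented by $\cos\theta$ and $(1-\cos\theta)\cos 2\varphi$, one gets a similar low-order trigonometric expression. Finally, $\Ric_P(n_0,n_0) = R_{ij} n_0^i n_0^j$ and $H_{\SA}\psi_0 = (2/\tilde A)\psi_0$ are immediate.

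After substitution, the integrand is a finite sum of monomials in $(\sin\theta,\cos\theta,\sin\varphi,\cos\varphi)$ multiplied by scalar contractions of $R_{\alpha\mu\nu\beta}$ and $R_{ij}$. Integration against the measure $\tilde{A}^2\sin\theta\, d\theta\, d\varphi$ annihilates most terms by parity in $\varphi$ and by the vanishing of $\int_0^\pi \sin^{2k+1}\theta\cos^m\theta\,d\theta$ for $m$ odd. The surviving contributions pair the $\cos 2\varphi$ factor of $\psi_0$ with the $\varphi$-dependent parts of $F$ and $\Ric_P(n_0,n_0)$; reorganising these through the symmetries of the curvature tensor and the identity $R_{ij} = \sum_k R_{ikkj}$, one obtains a linear combination of $R_{ij}$. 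Because $\psi_0$ contains only the $(\ell,m)=(2,\pm 2)$ and $(2,0)$ tesseral harmonics (and a constant), the combination collapses exactly to $R_{22}-R_{33}$ with the prefactor $\tfrac{16}{3}\pi B \tilde A$.

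The main obstacle is not conceptual but bookkeeping: the explicit form of $F$ involves tangential derivatives of $h_{ni}$ together with the second fundamental form of $\SA\subset\R^3$, and a careless expansion yields a long polynomial in the components of $R_{\alpha\mu\nu\beta}$. The first Bianchi identity must be used at the right moments to collapse contractions of the type $R_{\alpha\mu\nu\beta} n_0^\alpha n_0^\mu n_0^\nu n_0^\beta$ into Ricci traces, and the restricted spherical harmonic content of $\psi_0$ is what ultimately forces the cancellation of every Ricci component except the difference $R_{22}-R_{33}$. This is precisely the long explicit calculation that the authors relegate to the appendix.
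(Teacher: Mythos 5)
Your opening step contains a concrete error. You assert that $\Delta_{\SA}\psi_0$ is bounded and use this to remove the cutoff at cost $O(\delta^2)$. In fact, from \eqref{eq:tranfsphere} one computes
\[
\Delta_{\SA}\psi_0
= \frac{1}{\tilde{A}^2}\left[-2A\cos\theta + 2B\cos 2\varphi\left\{\cos\theta - \frac{2(1-\cos\theta)}{\sin^2\theta}\right\}\right],
\]
and the last term behaves like $-\frac{4B\cos 2\varphi}{\tilde{A}^2\sin^2\theta}\sim -\frac{2B\cos 2\varphi}{\tilde{A}^2(1+\cos\theta)}$, which blows up as $\theta\to\pi$ (i.e., at the origin of $\SA$, which is precisely where $\chi_\delta$ is supported). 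Worse, $\Delta_{\SA}\psi_0\notin L^1(\SA)$: near $\theta=\pi$ one has $|\Delta_{\SA}\psi_0|\,d\sigma\sim |\cos 2\varphi|/\sin\theta\,d\theta\,d\varphi$, which diverges logarithmically. The integral $\int_{\SA}F\Delta_{\SA}\psi_0\,d\sigma$ is therefore not absolutely convergent in a naive sense; it is rescued only because the $\cos 2\varphi$-component of the smooth function $F$ vanishes like $\sin^2\theta$ at the pole, and because $\int_0^{2\pi}\cos 2\varphi\,d\varphi=0$ kills the remaining $\theta$-singular piece when paired with the $\varphi$-independent part of $F$. Justifying the $O(\delta^2)$ error from the cutoff requires making this cancellation explicit, not merely noting that $\mathrm{supp}\,\chi_\delta\cap\SA$ has area $O(\delta^2)$. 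This is exactly why the paper keeps $(1-\chi_\delta)$ throughout the term-by-term computation in Lemma \ref{l:H-dot-psi0}, and why the cutoff-free statement in Remark \ref{r:pv} is derived via the integration-by-parts identity $\int(\Delta_{\SA}F)\psi_0$ (which is harmless since $F$ is smooth and $\psi_0\in L^\infty$), rather than by crude removal.

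Aside from this gap, the remainder of your outline — reducing $F$ and $\Ric_P(n_0,n_0)$ to trigonometric polynomials via \eqref{eq:expr-h}, integrating against $\tilde{A}^2\sin\theta\,d\theta\,d\varphi$, using parity in $\varphi$ and the vanishing of odd-$\cos\theta$ integrals, and pairing the $(2,\pm2)$ harmonic content of $\psi_0$ with the curvature contractions to isolate $R_{22}-R_{33}$ — matches the structure of the paper's appendix. But the cutoff step as written would fail, and a correct version of it must either retain the cutoff and verify $O(\delta^2)$ term by term (as the paper does), or integrate by parts onto $F$ first and only then use the vanishing of $F$'s quadrupole content at the pole.
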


\begin{rem}\label{r:pv}
	From \eqref{eq:new-est-chi-d} and \eqref{eq:est-chi-d} 
in the proof of Proposition \ref{p:new}, Proposition \ref{p:PV} yields 
	\[
		\int_{\SA} (\Delta_{\SA} F) \psi_0 + \Ric_P(n_0,n_0) H_{\SA} \psi_0 d \sigma 
		= \frac{16}{3} \pi B \tilde{A} (R_{22} - R_{33}). 
	\]
\end{rem}

\

\noindent When considering the variation in $\eta$ of the surface $\Sigma_{\e,P,Id,\omega}$ with $\eta = 1 - |\omega|$, the 
normal component of the variation vector field will be given by 
$$
  \var_{\e ,\eta} := g_{\e,P} \left( \mathcal{Z} \right)
  \left[ \frac{\pa \mathcal{Z}}{\pa \eta}, n_{\e ,P,\eta} \right], 
$$
where $\mathcal{Z}$ is defined in Subsection \ref{ss:32} and 
where $n_{\e ,P,\eta}$ stands for the unit outer normal to $\Sigma_{\e,P,Id,\omega}$ 
in $(\R^3, g_{\e ,P})$.  
By \eqref{eq:ge=d+eh}, for any compact set $K \subset \DD$, 
one finds that 
$$
  \var_{\e ,\eta}  = g_{0}\left[ \frac{\pa \mathcal{Z}}{\pa \eta}, n_{0,\eta} \right] 
  + \kappa_{\e, \eta} 
  = \var_\eta + \kappa_{\e, \eta}, 
$$
where $\kappa_{\e, \eta}$ is a smooth function satisfying  
$$
  \|\kappa_{\e, \eta} \|_{C^4(\Sigma_{\e,P,Id,\omega})} \leq C_K \e ^2 \qquad 
  \quad   \hbox{ for } \o \in K. 
$$
Using Lemma \ref{l:appsol} and $ | \omega | = 1 - \eta$, we find that 
\begin{eqnarray*}  
 \frac{\pa }{\pa \o} W_{g_\e}(\Sigma_{\e,P,Id,\omega})  & = & 
 - dW_{g_\e}[\var_{\e ,\eta}] = - d W_{g_\e} [\var_{\eta} + \kappa_{\e, \eta}] 
 \\ 
 &  = & - dW_{g_\e}[\var_{\eta} ]  - dW_{g_\e}[\kappa_{\e ,\eta} ] 
 =  - dW_{g_\e} [\var_{\eta} ]  + O_K(\e ^4).   
\end{eqnarray*}
By Proposition \ref{p:PV} and the formula before it, the conclusion follows.

\begin{rem}\label{r:varvar2}
Let $R \in SO(3)$ and $r \in [0,1)$. As in Proposition \ref{p:varvar2}, 
we have the following estimate: ($\eta := 1 - r$, $\tilde{\eta} \in (0,\eta_\delta )$)
	\begin{equation}\label{eq:varvar2}
		\left| \frac{\partial}{\partial r} W_{g_\e} 
		( \Sigma_{\e ,P,R,r \ex} ) - \eta \e^2 
		\frac{16}{3} \pi B \tilde{A} \mathcal{F}(P,R) \right|
		\leq \left[ C_0 \delta + C_\delta \left\{o_\eta(1) + \e \right\} \right] \eta \e^2
		+ C_{\tilde{\eta}} \e^4
	\end{equation}
for all $\eta \in (\tilde{\eta}, \eta_\delta]$ and $\e \in (0,1/2)$ 
where $\mathcal{F}(P,R) := \Ric_P ( R \mathbf{e}_y, R \mathbf{e}_y ) 
- \Ric_P ( R \mathbf{e}_z, R \mathbf{e}_z )$.

		To see that \eqref{eq:varvar2} holds, 
we first remark that from the definitions of $\Sigma_{\e ,P,R,r \ex}$ and 
the map $T_{r {\bf e}_x}$ in Proposition \ref{p:disk}, we have 
	\[
	\Sigma_{\e ,P,R,r \ex} 
	= \exp^{g_\e}_P( R \T_{r \ex}  ).  
	\]
Notice that $(R \T_{r \ex} , g_{\e ,P})$ is isometric to $(\T_{r \mathbf{e}_x}, 
R^\ast g_{\e ,P})$. Putting $t = \e^2$ and 
$g_{t,P,R} = R^\ast g_{\e ,P}$, 
it follows from the proof of \eqref{eq:Ric-dot} that 
	\[
		\frac{d}{dt} \Ric_{g_{t,P,R}} (x) \Big|_{t=0} (n_{0,0}, n_{0,0}) 
		= \frac{d}{d t} \Ric_{g_{t,P}} (R x) 
		\Big|_{t=0} ( R n_{0,0}, R n_{0,0} ) 
		= \Ric_P( R  n_{0,0}, R  n_{0,0} ). 
	\]
Moreover, from the proof of Proposition \ref{p:PV} in Appendix I, 
we also observe that 
	\[
		\int_{\SA} (1 - \chi_\delta) 
		\left( F_{P,R} \Delta_{\SA} \psi_0 + \Ric_P(R n_{0,0}, R n_{0,0}) \right) 
		\rd \sigma 
		= \frac{16}{3} \pi B \tilde A \mathcal{F}(P,R ) + O(\delta^2)
	\]
where $F_{P,R} = d  / d t |_{t=0} H_{g_{t,P,R}, \SA}$ 
the metric derivative of the mean curvature of $\SA$. 
Thus, we obtain 
	\[
		\frac{\partial}{\partial r} W_{g_\e}(\Sigma_{\e ,P,R,r \ex}) 
		= \frac{\partial}{\partial r} W_{g_{\e ,P,R}}
		 ( \T_{r \mathbf{e}_x} ) 
		= - \frac{\partial}{\partial \eta} W_{g_{\e ,P,R}}
		( \T_{ \xi_\eta } )
	\]
where $\eta := 1 - |\omega|$. 
Combining these facts, arguing as in the proof of Proposition \ref{p:varvar2}, 
one can check that \eqref{eq:varvar2} holds. 
\end{rem}

\section{Proof of the main theorems}\label{s:pf}
In this section we collect all the estimates and expansions established so far in order to prove our main results, namely Theorems \ref{thm:Exixtence} and \ref{thm:Multiplicity}.
\\

For $r \in (0,1)$ , we consider the compact set of the unit disk $\DD$
$$
K_r = \left\{ |\o | \leq r  \right\}. 
$$
Then by the definition of \eqref{eq:tildetKe} one sees that 
	\[
		\partial \mathcal{T}_{\e , K_r} 
		= \left\{ \exp_P^{g_\e} ( R \T_\o )  \; : \; P \in M, \ 
		R \in SO(3), \ |\omega | = r  \right\} 
		= \left\{  
		\exp_{P}^{g_\e} ( R \T_{r \mathbf{e}_x} ) \;:\; 
		P \in M, \ R \in SO(3) \right\}
	\]
and $\partial \mathcal{T}_{\e ,K_r}$ is parametrised by 
$ M \times SO(3)$ through the map 
$(P,R) \mapsto \exp_{P}^{g_\e} ( R \T_{r \mathbf{e}_x} )$.

\begin{rem}\label{r:5new}
Notice that, from the geometric point of view, the above parametrization  of $\partial \mathcal{T}_{\e,K_r}$ is counting twice each torus: indeed, due to planar symmetry, for every $r<1$ there exists a nontrivial rotation $R\in SO(3)$ such that $R T_{r {\bf e}_x}$ and $T_{r {\bf e}_x}$ are just different parametrizations of the same torus. This is the reason for the appearance of the factor $\frac 12$ in the definition of $\tilde{C}_q$ in \eqref{eq:10-nov}. 
\end{rem}

\noindent 
Using this map, we have the following estimate for 
$W_{g_\e}$ on $\partial\mathcal{T}_{\e ,K_r}$:

\begin{pro}\label{p:globest}
	Fix $\delta \in (0,1/2)$. 
	Then there exist $C_0>0$, $r_\delta \in (0,1)$ and 
	$C_\delta>0$ satisfying the following property: 
	for every $ r_\delta \leq r < \tilde{r} < 1$, 
	one may find $C_{\tilde{r}}>0$ such that  
	$$
		\begin{aligned}
			& \left\| 
			W_{g_\e} (\Sigma_{\e,P,R,r \ex  })  - 8 \pi^2 
			+ \frac{8 \sqrt{2} \pi^2 \e^2}{3} \left( \Sc_P 
			+ \frac{B \tilde{A}}{\sqrt{2} \pi}  (1 - r)^2 \mathcal{F}(P,R) \right)  
			\right\|_{C^2(M \times SO(3))} 
			\\
			\leq & \,
			\e^2 
			\left[  C_0 \e +  o_{\tilde{r}}(1) + \left[ C_0 \delta + C_\delta 
			\left\{o_r(1) + \e \right\} \right] 
			\left( 1 - r \right)^2 + C_{\tilde{r}} \e^2 (\tilde{r}-r)
			\right]
		\end{aligned}
	$$
for all $\e \in (0,1/2)$. 
Here $o_{\tilde{r}}, o_r(1) \to 0$ as $\tilde{r}, r \uparrow 1$. 
\end{pro}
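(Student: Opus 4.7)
The strategy is to integrate the pointwise derivative estimate of Remark \ref{r:varvar2} with respect to $r$, anchored at the limit $r \uparrow 1$ supplied by a $C^2$-version of Proposition \ref{p:expdegtorus}, and then propagate the identity to the $C^2(M\times SO(3))$-norm by differentiating everything in $(P,R)$.

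For the pointwise ($C^0$) expansion, fix $(P,R)$ and $r_\delta\le r<\tilde r<1$. The fundamental theorem of calculus yields
\[
W_{g_\e}(\Sigma_{\e,P,R,r\ex}) \;=\; W_{g_\e}(\Sigma_{\e,P,R,\tilde r\ex}) \;-\;\int_{r}^{\tilde r}\frac{\partial}{\partial s}W_{g_\e}(\Sigma_{\e,P,R,s\ex})\,ds.
\]
Substituting \eqref{eq:varvar2} (with $\eta=1-s$) into the integrand and using $\int_{r}^{\tilde r}(1-s)\,ds=\tfrac12[(1-r)^2-(1-\tilde r)^2]$, I obtain
\[
W_{g_\e}(\Sigma_{\e,P,R,r\ex}) \;=\; W_{g_\e}(\Sigma_{\e,P,R,\tilde r\ex})-\tfrac{8}{3}\pi B\tilde A\,\e^2\mathcal F(P,R)\bigl[(1-r)^2-(1-\tilde r)^2\bigr]+E_1,
\]
with $|E_1|\le\bigl[C_0\delta+C_\delta(o_r(1)+\e)\bigr](1-r)^2\e^2+C_{\tilde r}(\tilde r-r)\e^4$. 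Letting $\tilde r\uparrow 1$, Proposition \ref{p:expdegtorus} replaces the base term by $8\pi^2-\tfrac{8\sqrt 2}{3}\pi^2\e^2\Sc_P$ modulo an error absorbed into $C_0\e^3+o_{\tilde r}(1)\e^2$, and the identity $\tfrac{8}{3}\pi B\tilde A=\tfrac{8\sqrt 2}{3}\pi^2\cdot\tfrac{B\tilde A}{\sqrt 2\,\pi}$ produces exactly the claimed coefficient.

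To upgrade to $C^2(M\times SO(3))$, I would differentiate this identity up to order two in $(P,R)$. The right-hand side of \eqref{eq:varvar2} depends on $(P,R)$ solely through the smooth metric expansion $g_{\e,P,\alpha\beta}$ (whose $P$-derivatives are controlled in \eqref{eq:met-deri}), the smooth rotation action of $R$ on the base torus and its normal frame, and the explicit smooth function $\mathcal F$. Hence differentiating the proofs of Propositions \ref{p:varvar1} and \ref{p:new} in $(P,R)$ produces estimates of the same structure on $\partial^\alpha_{P,R}\,\partial_s W_{g_\e}(\Sigma_{\e,P,R,s\ex})$ for $|\alpha|\le 2$, with leading term $(1-s)\e^2\tfrac{16}{3}\pi B\tilde A\,\partial^\alpha_{P,R}\mathcal F(P,R)$ and error of the same shape (absorbed into larger $C_\delta,C_{\tilde r}$). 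Integrating, and combining with a $C^2$-analog of Proposition \ref{p:expdegtorus} obtained by differentiating the proof of \cite[Proposition 4.6]{IMM1} in $(P,R)$, delivers the asserted $C^2$-inequality.

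The main obstacle is exactly the $C^2$-upgrade: one must check that after applying $\partial^\alpha_{P,R}$, $|\alpha|\le 2$, the handle-versus-exterior decomposition and singular-integral bounds of Section \ref{s:comp} (in particular Lemmas \ref{l:basic-esti}--\ref{l:dot-W'} and the handle estimate of Proposition \ref{p:varvar1}) remain uniform in $(r,\e)$. This reduces to verifying that the $(P,R)$-derivatives of the integrands inherit the same $|Y|^{-k}$ type singularities as the undifferentiated ones; this is the case because the only $(P,R)$-dependent factors in those bounds are the smooth coefficients $h^\e_{P,\alpha\beta}$ and the linear $R$-action, whose derivatives up to order two are bounded by \eqref{eq:met-deri} and by compactness of $SO(3)$ respectively. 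Once this uniformity is verified, the error terms collect in the exact form $\e^2\!\left[C_0\e+o_{\tilde r}(1)+[C_0\delta+C_\delta(o_r(1)+\e)](1-r)^2+C_{\tilde r}\e^2(\tilde r-r)\right]$ appearing in the statement.
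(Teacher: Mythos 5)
Your proposal is correct and follows essentially the same route as the paper: establish the base expansion at $\tilde r\uparrow 1$ from Proposition \ref{p:expdegtorus}, integrate the derivative estimate of Remark \ref{r:varvar2} on $[r,\tilde r]$, and upgrade to $C^2(M\times SO(3))$ by using the bound $|D^k_{P,R}\,g_{\e,P,R}(x)|\leq C_k\e^2|x|^2$ from \eqref{eq:met-deri} together with the fact that $\T$ and its conformal images are Euclidean-Willmore-critical (so the $(P,R)$-differentiated handle contributions keep the same $|Y|^{-k}$-type singularities and remain $O(\e^2)$). The paper merely reverses the order of operations — stating the $C^2$ versions of \eqref{eq:wto1} and \eqref{eq:Wr-Wom} first and then integrating in $r$ — but, since differentiation in $(P,R)$ and integration in $r$ commute for these smooth families, this is the same argument.
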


\begin{proof}
Fix $\delta \in (0,1/2)$. 
We claim that there exist $C_0>0$, $r_\delta \in (0,1)$ and $C_\delta > 0$ 
satisfying the following properties: 
for every $ r_\delta \leq r < \tilde{r} < 1$ one may find $C_{\tilde{r}} > 0$ such that 
	\begin{align}
		&\limsup_{r \uparrow 1}   \frac{1}{\e^2} \left\|W_{g_\e}(\Sigma_{\e,P,R,r \ex}) 
		- 8 \pi^2  + \frac{8 \sqrt{2}}{3} \e^2  \pi^2 \Sc_P  
		\right\|_{C^2(M \times SO(3))} 
		\leq C_0 \, \e,
		\label{eq:wto1}
		\\
		& 
		\begin{aligned}
		&\left\|
		\frac{\partial}{\partial r} W_{g_\e} \left( \Sigma_{\e ,P,R, r \ex} \right)
		- \frac{16}{3} \pi B \tilde{A}  (1-r) \e^2 \mathcal{F}(P,R) 
		\right\|_{C^2(M \times SO(3))} 
		\\
		\leq &\, 
		\left[ C_0 \delta + C_\delta \left\{o_r(1) + \e \right\} \right] (1-r) \e^2
		+ C_{\tilde{r}} \e^4
		\end{aligned}
		\label{eq:Wr-Wom}
	\end{align}
for each $\e \in (0,1/2)$.

		We remark that in \cite[Proposition 4.6]{IMM1} (see also Proposition \ref{p:expdegtorus} above) 
and in Proposition \ref{p:varvar2} (Remark \ref{r:varvar2}) 
we have shown \eqref{eq:wto1} and \eqref{eq:Wr-Wom} in $C^0$-sense. 
To prove \eqref{eq:wto1} and \eqref{eq:Wr-Wom} in $C^2$-sense 
with respect to $(P,R)$, 
put $g_{\e ,P,R}(x) := ( \exp_P^{g_\e} \circ R )^\ast g_\e$. 
Then $(\T_{r \mathbf{e}_x}, g_{\e ,P,R})$ is isometric 
to $(\Sigma_{\e ,P,R,r \ex}, g_{\e})$. Moreover, 
it follows from \eqref{eq:ge=d+eh} and \eqref{eq:met-deri} that 
	\begin{equation}\label{eq:x-squared}
		\left| D_{P,R}^k \, g_{\e ,P,R} (x) \right| 
		\leq C_k \e^2 |x|^2. 
	\end{equation}

		For \eqref{eq:wto1}, we argue as in the proof of Proposition 4.6 
in \cite{IMM1}, namely, using the Willmore functional in the Euclidean 
space and decomposing the functional into the handle part and 
the sphere part. By \eqref{eq:x-squared}, we may observe that 
the contribution of the handle part is negligible in $C^2$-sense. 
On the other hand, the sphere part depends smoothly on 
$P$ and $R$. Combining these facts, we see that \eqref{eq:wto1} holds.

		For \eqref{eq:Wr-Wom}, we also proceed 
as in the proof of Proposition \ref{p:varvar2} and 
use the cut-off function $\chi_\delta$. 
By \eqref{eq:x-squared} and the fact that $ \T$ and its conformal deformations 
are critical points for the Euclidean Willmore functional, 
one sees that the handle part is negligible in $C^2$-sense. 
Then since the sphere part depends smoothly on $P$ and $R$, 
it follows from the expressions in Remarks \ref{r:pv} and \ref{r:varvar2} that 
a counterpart of Proposition \ref{p:PV} in $C^2$-sense holds: 
	\[
		\begin{aligned}
			& \left\| 
			\int_{\SA} 
			\left\{ \left( \Delta_{\SA}
			 \frac{d H_{g_{t,P,R}}}{d t} \Big|_{t=0} \right) 
			 + \Ric_P(R n_{0,0},R n_{0,0}) 
			 \right\} (1-\chi_\delta) \psi_0 d \sigma 
			 - \frac{16}{3} \pi B \tilde{A} \mathcal{F}(P,R)
			\right\|_{C^2}
			\leq  C_0 \delta^2. 
		\end{aligned}
	\]
Thus, arguing as in the proof of Proposition \ref{p:varvar2}, 
we can show the estimate \eqref{eq:Wr-Wom}.

		Now integrate \eqref{eq:Wr-Wom} on $[r, \tilde{r}]$ to obtain 
	\[
		\begin{aligned}
			& \left\| 
			W_{g_\e} ( \Sigma_{\e ,P,R, \tilde{r} \ex}  ) 
			- W_{g_\e}  ( \Sigma_{\e ,P,R, r \ex} ) 
			+ \frac{8}{3} \pi B \tilde{A} \e^2 \mathcal{F}(P,R) 
			\left\{ (1-\tilde{r})^2 - (1-r)^2 \right\}
			\right\|_{C^2(M \times SO(3))}
			\\
			\leq &\, 
			\left[ C_0 \delta + C_\delta \left\{ o_{r}(1) + \e \right\} \right] 
			\e^2 \left\{ (1-r)^2 - (1-\tilde{r})^2 \right\} 
			+ C_{\tilde{r}} \e^4 (\tilde{r}-r).
		\end{aligned}
	\]
From \eqref{eq:wto1} it follows that 
	\[
		\left\| W_{g_\e} ( \Sigma_{\e ,P,R, \tilde{r} \ex}  ) 
		- 8 \pi^2 + \frac{8 \sqrt{2}}{3} \pi^2 \e^2 \Sc_P \right\|_{C^2(M\times SO(3))} 
		\leq C_0 \e^2 \left\{\e + o_{\tilde{r}}(1)\right\}
	\]
Therefore, we have 
	\[
		\begin{aligned}
			& \left\| 
			W_{g_\e} ( \Sigma_{\e ,P,R, r \ex}  ) 
			- 8 \pi^2 
			+ \frac{8\sqrt{2}\pi^2}{3}\e^2 
			\left( \Sc_P +  \frac{B \tilde{A}}{\sqrt{2} \pi} 
			\mathcal{F} (P, R) (1-r)^2 \right)
			\right\|_{C^2(M\times SO(3))}
			\\
			\leq &\, 
			\e^2 \left\{ C_0 \e + o_{\tilde{r}}(1)  \right\}
			+ \left[ C_0 \delta + C_\delta \left\{ o_{r}(1) + \e \right\} \right] 
			\e^2 (1-r)^2 
			+ C_{\tilde{r}} \e^4 (\tilde{r}-r)
		\end{aligned}
	\]
and Proposition \ref{p:globest} follows. 
\end{proof}

\

\noindent Recall that, in the spirit of \cite{MVS}, a $C^2$ function defined on a manifold with boundary is said to satisfy the 
{\em general boundary conditions} if its gradient never vanishes at the boundary and if its 
restriction to the boundary is a Morse function. We have then the following result. 

\begin{lem}\label{l:gbdrycond}
For $\e > 0$ small,  let $\Phi_\e$ be defined as in Proposition \ref{p:variational}. 
Then, under the assumptions of Theorem \ref{thm:Exixtence}, 
there exists  $r_0 \in (0,1)$ satisfying the following property: 
for all $r \in [r_0,1)$ one may find $\e_r > 0 $ such that 
if $\e \in (0,\e _r]$, 
$\Phi_\e$ satisfies the general boundary conditions on $\partial \mathcal{T}_{\e ,K_{r}}$ 
\end{lem}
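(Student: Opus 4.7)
The approach is to combine the sharp $C^{2}$ expansion of the Willmore energy on the boundary (Proposition \ref{p:globest}) with the almost--criticality estimate $\Phi_{\e} = W_{g_{\e}}(\Sigma_{\e,P,R,\omega}) + O(\e^{4})$ from Proposition \ref{p:variational}, and to read off the critical points of $\Phi_{\e}|_{\partial \mathcal{T}_{\e,K_{r}}}$ via a two--scale reduction exploiting the non--degeneracy conditions $(ND1)$ and $(ND2)$.

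First I would parametrize $\partial \mathcal{T}_{\e,K_{r}}$ by $M \times SO(3)$ through $(P,R) \mapsto \exp_{P}^{g_{\e}}(R \T_{r\ex})$. Inserting Proposition \ref{p:variational} into Proposition \ref{p:globest}, one obtains, in $C^{2}$--norm on $M \times SO(3)$,
\[
  \Phi_{\e}|_{\partial}(P,R) = 8\pi^{2} - \frac{8\sqrt{2}\,\pi^{2}\e^{2}}{3}\Sc_{P} - \frac{8 B \tilde{A}\pi\e^{2}(1-r)^{2}}{3}\mathcal{F}(P,R) + \mathcal{R}_{\e,r}(P,R),
\]
where $\mathcal{R}_{\e,r}$ is $C^{2}$--small compared to \emph{both} $\e^{2}$ and $\e^{2}(1-r)^{2}$ provided the parameter $\delta$ of Proposition \ref{p:globest} is first chosen small, then $r$ close to $1$, and finally $\e$ small (in that order).

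Next I would perform a two--scale analysis. The $P$--gradient of $\Sc$ governs $\Phi_{\e}|_{\partial}$ at order $\e^{2}$, while the $R$--dependence only enters at order $\e^{2}(1-r)^{2}$. Outside small neighbourhoods of the finitely many critical points $P_{1},\ldots,P_{k}$ of $\Sc$, a uniform bound $|\nabla \Sc_{P}| \geq c > 0$ dominates the remainder and forces $\nabla_{(P,R)}\Phi_{\e}|_{\partial} \neq 0$. Near a critical $P_{i}$, the Hessian $\nabla^{2}\Sc_{P_{i}}$ is invertible by $(ND1)$, so the implicit function theorem applied to $\nabla_{P}\Phi_{\e} = 0$ produces a unique solution $P = P(R,r,\e)$ close to $P_{i}$. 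Substituting back yields a reduced function on $SO(3)$ whose leading $R$--dependent part is $-\tfrac{8 B \tilde{A}\pi \e^{2}(1-r)^{2}}{3}\mathcal{F}_{i}(R)$, up to a $C^{2}$--perturbation of order $o(\e^{2}(1-r)^{2})$. By $(ND2)$, $\mathcal{F}_{i}$ is a Morse function, so the reduced function is Morse as well; its critical points lie near the $R_{i,\ell}$ and their Hessians have size $\e^{2}(1-r)^{2}$. This shows that $\Phi_{\e}|_{\partial}$ is a Morse function.

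Finally, at each critical point $(P_{*},R_{*})$ of $\Phi_{\e}|_{\partial}$ (close to some $(P_{i},R_{i,\ell})$), Remark \ref{r:varvar2} gives
\[
  \frac{\partial}{\partial r}\Phi_{\e}(P_{*},R_{*},r\ex) = (1-r)\e^{2}\cdot \frac{16 \pi B \tilde{A}}{3}\mathcal{F}(P_{*},R_{*}) + \text{error},
\]
and since $\mathcal{F}_{i}(R_{i,\ell}) \neq 0$ by $(ND2)$, continuity in $(P,R,\e)$ bounds this normal derivative below by a positive multiple of $\e^{2}(1-r)$. Hence $\nabla \Phi_{\e}$ is transverse to $\partial \mathcal{T}_{\e,K_{r}}$ at every critical point of the restriction, which together with the Morse property of $\Phi_{\e}|_{\partial}$ gives the general boundary conditions. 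The main obstacle throughout is the scale mismatch between the $P$--direction ($\sim \e^{2}$) and the $R$--direction ($\sim \e^{2}(1-r)^{2}$): the $C^{2}$--sharpness of Proposition \ref{p:globest} (which forces the ordering ``fix $\delta$, then $r$, then $\e$'') is exactly what is needed to make the remainder negligible on \emph{both} scales simultaneously.
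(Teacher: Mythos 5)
Your overall strategy (expand $\Phi_\e|_{\partial\mathcal{T}_{\e,K_r}}$ on $M\times SO(3)$ via Proposition~\ref{p:globest}, do a two-scale reduction exploiting $(ND1)$ and $(ND2)$ to get Morseness, and use Remark~\ref{r:varvar2} together with the condition $\mathcal{F}_i(R_{i,\ell})\neq 0$ from $(ND2)$ to get a nonvanishing normal derivative at each boundary critical point) is essentially the same as the paper's; the paper's Step~1/Step~2 implement your two-scale argument in a slightly different packaging (a direct critical-point count plus a determinant factorization of the Hessian) rather than via the implicit function theorem, but the content is identical.

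There is, however, one genuine gap. You assert that Proposition~\ref{p:variational} gives $\Phi_\e(P,R,\o) - W_{g_\e}(\Sigma_{\e,P,R,\o}) = O(\e^4)$ \emph{in $C^2$-norm} on $M\times SO(3)$, and you then plug this directly into the $C^2$-estimate of Proposition~\ref{p:globest}. But Proposition~\ref{p:variational} (equation~\eqref{eq:estC0Phie}) is only a \emph{$C^0$} statement. The $C^2$ upgrade
\[
\left\| \Phi_\e(P,R,\o) - W_{g_\e}(\Sigma_{\e,P,R,\o})\right\|_{C^2(M\times SO(3)\times \overline{B_r(0)})}\leq C_r\,\e^4
\]
does not come for free; it requires differentiating the integral representation
\[
\Phi_\e (P,R,\o) - W_{g_\e}(\Sigma_{\e ,P,R,\o})
= \int_0^1 \int_{\T} W_{g_{\e,P,R,\o}}'(s)\,\psi_{\e,P,R,\o}\,d\sigma\,ds
\]
twice in $(P,R,\o)$, using the $C^2$ bounds on $\varphi_\e(P,R,\o)$ from Proposition~\ref{p:lyap}, the metric derivative bounds~\eqref{eq:met-deri}, and the fact that $W'_{g_{0,R,\o}}(\T)=0$ (and hence all of its $D_{R,\o}$-derivatives vanish) by conformal invariance. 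This is exactly Step~3 of the paper's proof, and it is the point where the smallness $O(\e^4)$ (rather than merely $O(\e^2)$) of the second derivatives is actually established. Without it, your claimed $C^2$-smallness of the remainder $\mathcal{R}_{\e,r}$ is not justified, and in particular the IFT step and the Morseness of the reduced function on $SO(3)$ — both of which live at second-derivative level and at the finer scale $\e^2(1-r)^2$ — do not go through. You should insert a separate lemma establishing~\eqref{eq:estC2Phie} before invoking the two-scale analysis.
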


\begin{proof}
For $ 0 < r < 1$, 
set 
	\[
		\mathcal{G}_{r} (P,R) 
		:= - \Sc_P - \frac{B \tilde{A}}{\sqrt{2} \pi} \mathcal{F}(P,R) 
		(1-r)^2. 
	\]
We divide our arguments into several steps: 

\medskip

\noindent
\textbf{Step 1:} {\sl There exist $r_1 \in (0,1)$ and $\zeta_0>0$ such that if 
$ r_1 \leq r < 1$ and a function $\mathcal{H}(P,R) \in C^2(M\times SO(3))$ satisfies
	\begin{equation}\label{eq:err-esti}
		\begin{aligned}
		& \frac{1}{\e^2} 
		\left\| 
		D_P^\ell \left( \mathcal{H}(P,R)
		- \frac{8\sqrt{2}\pi^2}{3} \e^2 \mathcal{G}_{r}(P,R) \right)
		\right\|_{L^\infty (M \times SO(3)) } 
		\\
		& + 
		\frac{1}{\e^2 (1-r)^2} 
		\left\| 
		D_R D_{P,R}^{m} \left( \mathcal{H}(P,R) 
		- \frac{8\sqrt{2}\pi^2}{3} \e^2 \mathcal{G}_{r}(P,R) \right)
		\right\|_{L^\infty (M \times SO(3)) } \leq \zeta_0
		\end{aligned}
	\end{equation}
for $ 1 \leq \ell \leq 2$ and $ 0 \leq m \leq 1$, then 
$\mathcal{H}$ is a Morse function on $M \times SO(3)$ and 
	\begin{equation}\label{eq:num-index}
		\begin{aligned}
			& \sharp \{ (P,R) \in M \times SO(3)  \;:\; 
			\nabla  \mathcal{G}_r(P,R) = 0, \quad 
			\mathrm{index} ( \nabla^2 \mathcal{G}_r(P,R) ) = q \}
			\\
			= &\, 
			\sharp \{ (P,R) \in M \times SO(3)  \;:\; 
			\nabla \mathcal{H}(P,R) = 0, \quad 
			\mathrm{index} ( \nabla^2 \mathcal{H}(P,R) ) = q \}
		\end{aligned}
	\end{equation}
for each $ 0 \leq q \leq 6$. 
Moreover, when $\zeta_0 \to 0$, 
any critical point $(Q,R)$ of $\mathcal{H}(P,R)$ satisfies 
$\min_{1 \leq i \leq k} |Q - P_i|_g \to 0$ where 
$P_i$ are the  critical points of $\Sc_P$ which by assumption satisfy  $(ND1)$ of the Introduction.  
}

\medskip

In fact, since $\Sc_P$ is a Morse function by  assumption and 
	\[
		\left\| 
		D_P^\ell \left\{ \frac{\mathcal{H}(P,R)}{\e^2}
		+ \frac{8\sqrt{2}\pi^2}{3} 
		\left( \Sc_P + \frac{B \tilde{A}}{\sqrt{2} \pi} \mathcal{F}(P,R) 
		(1-r)^2 \right) \right\}
		\right\|_{L^\infty (M \times SO(3)) } \leq \zeta_0,
	\]
holds, we may find some $r_1 \in (0,1)$ and $\zeta_0>0$ 
so that when $r \in [r_1,1)$, 
the function $P \mapsto \mathcal{H}(P,R)$ is a Morse function on $M$, 
the number of critical points  is the same as that of $ P \mapsto \Sc_P$ and 
if $D_P \mathcal{H}(Q,R) = 0$ for some $Q \in M$, then 
$Q$ must be close to one of $P_i$ ($1 \leq i \leq k$) by ($ND1$). 
Therefore, enlarging $r_1 \in (0,1)$ and shrinking $\zeta_0>0$ if necessary, 
($ND2$) of the Introduction implies that 
the function $R \mapsto \mathcal{F}(Q,R)$ is a Morse function 
provided $D_{P} \mathcal{F}(Q,R_0)=0$. 
Since it follows from \eqref{eq:err-esti} that 
	\[
		\left\| 
		D_R D_{P,R}^{m} \left( \frac{\mathcal{H}(P,R)}{\e^2 (1-r)^2 }
		- \frac{8\pi}{3} B \tilde{A} \mathcal{F}(P,R) \right)
		\right\|_{L^\infty (M \times SO(3)) } 
		\leq \zeta_0 \quad 
		(0 \leq m \leq 1),
	\]
one sees that 
$R \mapsto \mathcal{H}(Q,R)$ is a Morse function on $SO(3)$ and 
the number of critical points is the same as of 
$R \mapsto \mathcal{F}(P_i,R)$ 
where $|Q-P_i|_g = \min\{|Q-P_j|_g \; : \; 1 \leq j \leq k\}$ 
provided 
$D_{P} \mathcal{H}(Q,R) = 0$ and $\zeta_0>0$ is sufficiently small. 
Therefore, if $r_1 \leq r < 1$ and $\mathcal{H}$ satisfies \eqref{eq:err-esti}, 
then the number of critical points of $\mathcal{H}$ are the same as that of 
$\mathcal{G}_r(P,R)$.

		Next, let $\nabla \mathcal{H}(P,R) = 0$ and observe 
from \eqref{eq:err-esti} that 
	\[
		\begin{aligned}
			&\mathrm{det} \left(\nabla^2 \mathcal{H}(P,R) \right)
			\\
			=\,& \e^{12} (1-r)^6 \left\{ 
			\mathrm{det} \left( D_P^2 \frac{\mathcal{H}(P,R)}{\e^2} \right) 
			\mathrm{det} \left( D_R^2 \frac{\mathcal{H}(P,R)}{\e^2(1-r)^2} \right) 
			+ O((1-r)^2)
			 \right\}
			 \\
			 =\, & \e^{12} (1-r)^6 \left( \frac{8\sqrt{2}\pi^2}{3} \right)^6
			 \left\{ 
			 \mathrm{det} \left( -D_P^2 \ScP \right) 
			 \mathrm{det} \left( D_R^2 \frac{B \tilde{A}}{\sqrt{2} \pi} 
			 \mathcal{F}(P,R) \right) 
			 + O((1-r)^2) + O(\zeta_0)
			 \right\}
		\end{aligned}
	\]
Thus replacing $r_1$ and $\zeta_0$ by larger and smaller one respectively, 
we observe that $\mathcal{H}$ is a Morse function on $M \times SO(3)$ and 
the indices of $\mathcal{H}$ and $\mathcal{G}_r(P,R)$ coincide 
if $r \in [r_1,1)$ and $\mathcal{H}$ satisfies \eqref{eq:err-esti}.

Finally, it is easily seen that $\min_{1 \leq i \leq k} | Q - P_i |_g \to 0$ 
for any $Q \in M$ satisfying $D_P \mathcal{H}(Q,R) = 0$ as $\zeta_0 \to 0$.

\medskip

\noindent
\textbf{Step 2:} 
{\sl One may find $r_0 \in [r_1,1)$ and $\e_1>0$ such that 
for all $r \in [r_0,1)$, there exists $\tilde{\e}_r > 0$ so that 
if $ 0< \e \leq \tilde{\e}_r$, then 
the function $W_{g_\e }(\Sigma_{\e ,P,R, r \ex} )$ satisfies 
\eqref{eq:err-esti} with $\zeta_0$ replaced by $\zeta_0/2$ .  }

\medskip

We first recall Proposition \ref{p:globest}: 
if $r_\delta \leq r < s < 1$, then 
	\begin{equation}\label{eq:base}
		\begin{aligned}
			& \frac{1}{\e^2} 
			\left\| 
			D_P^\ell \left( W_{g_\e} (\Sigma_{\e ,P,R,r \ex } ) 
			- \frac{8\sqrt{2}\pi^2}{3} \e^2 \mathcal{G}_{r}(P,R) \right)
			\right\|_{L^\infty (M \times SO(3)) } 
			\\
			& + 
			\frac{1}{\e^2 (1-r)^2} 
			\left\| 
			D_R D_{P,R}^{m} \left( W_{g_\e} (\Sigma_{\e ,P,R,r \ex } ) 
			- \frac{8\sqrt{2}\pi^2}{3} \e^2 \mathcal{G}_{r}(P,R) \right)
			\right\|_{L^\infty (M \times SO(3)) } 
			\\
			\leq & \,  C_0 (1-r)^{-2} \e + (1-r)^{-2}o_{s}(1) + 
			\left[ C_0 \delta + C_\delta \left\{ o_{r}(1) + \e \right\} \right] 
			+ C_{s} (1-r)^{-2} (s-r) \e^2
		\end{aligned}
	\end{equation}
for $1 \leq \ell \leq 2$ and $0 \leq m \leq 1$. 
We first fix  $\delta_0>0$ so that 
$C_0 \delta_0 \leq \zeta_0 / 8$. 
Next we select  $r_0 \geq \max\{r_\delta, r_1\}$ so that 
$C_{\delta_0} o_{r}(1) \leq \zeta_0 / 8$ for each $ r \in [r_0,1)$. 
Choose  $s_r$ sufficiently close to 1 to hold 
$(1-r)^{-2} o_{s_r}(1) \leq \zeta_0 / 8$. 
Finally, find  $\tilde{\e}_r > 0$ so that 
$C_0(1-r)^{-2} \e + C_{\delta_0} \e + C_{s_r}(1-r)^{-2} (s_r -r ) \e^2 
\leq \zeta_0 / 8$ for all $\e \leq \tilde{\e}_r$. 
Then for $r \in [r_0,1)$, it follows from \eqref{eq:base} that 
	\[
		\begin{aligned}
			& \frac{1}{\e^2} 
			\left\| 
			D_P^\ell \left( W_{g_\e} (\Sigma_{\e ,P,R,r \ex } ) 
			- \frac{8\sqrt{2}\pi^2}{3} \e^2 \mathcal{G}_{r}(P,R) \right)
			\right\|_{L^\infty (M \times SO(3)) } 
			\\
			& + 
			\frac{1}{\e^2 (1-r)^2} 
			\left\| 
			D_R D_{P,R}^{m} \left( W_{g_\e} (\Sigma_{\e ,P,R,r \ex } ) 
			- \frac{8\sqrt{2}\pi^2}{3} \e^2 \mathcal{G}_{r}(P,R) \right)
			\right\|_{L^\infty (M \times SO(3)) } 
			\leq \frac{\zeta_0}{2}
		\end{aligned}
	\]
for all $\e \in (0,\tilde{\e}_r]$. Thus Step 2 holds.

\medskip

\noindent
\textbf{Step 3:} 
{\sl 
Let $r \in [r_0,1)$ where $r_0$ is the  constant appearing in Step 2. Then we have 
the following estimate: 
	\begin{equation}\label{eq:estC2Phie}
		\left\| \Phi_\e(P,R, \o ) - W_{g_\e}( \Sigma_{\e ,P,R,\o} ) 
		\right\|_{C^2(M \times SO(3) \times \overline{B_{r}(0)})} 
		\leq C_{r} \e^4
	\end{equation}
for all $\e$ where $C_{r}$ depends only on $r$. 
}

\medskip

		Put $A_{r} := M \times SO(3) \times \overline{B_{r}}$.  
For $(P,R,\o) \in A_{r}$, 
we define 
	\[
		\begin{aligned}
			& 
			g_{0,R,\o} := \left( R T_\o \right)^\ast g_0, 
			\quad 
			g_{\e ,P,R,\o} := 
			\left( \exp_P^{g_\e} \circ R \circ T_\o \right)^\ast g_\e 
			= ( R T_\o )^\ast g_{\e ,P}, 
			\\
			& Z_{\e ,P,R,\o}(s;p) := p + s \varphi_\e (P,R,\o ; p) n_{\e ,P,R,\o} (p),
			\quad 
			\T[s \varphi_{\e} (P,R,\o) ] 
			:= \{  Z_{\e ,P,R,\o} (s;p) \; : \; p \in \T  \}
		\end{aligned}
	\]
where $p \in \T$, $ s \in [0,1]$ and $n_{\e ,P,R,\o}$ denotes 
the unit outer normal to $(\T, g_{\e ,P,R,\o})$. 
Remark that  
$(\Sigma_{\e ,P,R,\o}[s \varphi_\e (P,R,\o)],g_\e)$ 
is isometric to $(\T[s \varphi_\e(P,R,\o)], g_{\e ,P,R,\o})$. 
Then it follows from Proposition \ref{p:1-2-var} that 
	\begin{equation}\label{eq:diff}
		\begin{aligned}
			\Phi_\e (P,R,\o) - W_{g_\e}(\Sigma_{\e ,P,R,\o}) 
			&= \int_0^1 \frac{d}{d s} 
			W_{g_\e} ( \Sigma_{\e ,P,R,\o} [s \varphi_\e (P,R,\o)]  ) d s 
			\\
			&= \int_0^1 \int_{\T} 
			W_{g_{\e ,P,R,\o}}'(s) \psi_{\e ,P,R,\o} d \sigma ds 
		\end{aligned}
	\end{equation}
where $W_{g_{\e ,P,R,\o}}'(s)$ stands for the derivative of 
the Willmore functional for $(\T[s \varphi_\e(P,R,\o)], g_{\e ,P,R,\o})$, 
	\[
		\begin{aligned}
			\psi_{\e ,P,R,\o} (s;p) 
			&:= g_{\e} (Z_{\e ,P,R,\o}(s;p)) 
			\left[ \frac{d }{d s} Z_{\e ,P,R,\o} (s;p) , n_{\e ,P,R,\o} (s;p) \right] 
			\\
			&= \varphi_{\e}(P,R,\o ;p ) 
			g_{\e,P,R,\o} (Z_{\e ,P,R,\o}(s;p)) [n_{\e ,P,R,\o}(p) , n_{\e ,P,R,\o} (s;p) ]
		\end{aligned}
	\]
and $n_{\e ,P,R,\o}(s;p)$ is the unit outer normal 
for $(\T[s \varphi_\e (P,R,\o)] , g_{\e ,P,R,\o}  )$. 
Remark that since $\varphi_\e(P,R,\o)$ is small, $\T[s \varphi_\e (P,R,\o)]$ and 
$\T$ are diffeomorphic and we pull back all geometric quantities of 
$\T[s \varphi_\e (P,R,\o)]$ on $\T$.

		Now by Proposition \ref{p:lyap}, one can easily check that 
	\begin{equation}\label{eq:esti-psi}
		\sup_{(P,R,\o) \in A_{r}} 
		\left\|  D_{P,R,\o}^k \psi_{\e,P,R,\o}(s;\cdot) \right\|_{C^{4,\gamma}(\T)} 
		\leq C_{r} \e^2
	\end{equation}
for $k = 0,1,2$. On the other hand, 
$W_{g_{0,R,\o}}'(\T) = 0$ holds for all $(R,\o) \in SO(3) \times \overline{B_{r}}$ 
thanks to Proposition \ref{p:Mobinv}. 
In particular, $D_{R,\o}^k W_{g_{0,R,\o}}'(\T) = 0$ for $k=0,1,2$. 
Since \eqref{eq:ge=d+eh} and \eqref{eq:met-deri} yield
	\[
			\sup_{(P,R,\o) \in A_{r} } 
			\left\|  D_{P,R,\o}^k (g_{\e ,P,R,\o}  - g_{0,R,\o} ) 
			\right\|_{C^\ell (\overline{B_{10}(0)})} 
			\leq  C_{r,\ell} \e^2
	\]
for each $\ell \in \N$, combining the estimates 
of $\varphi_\e (P,R,\o)$ in Proposition \ref{p:lyap}, 
we obtain 
	\begin{equation}\label{eq:esti-W'}
		\sup_{(P,R,\o) \in A_{r}, s \in [0,1] } 
		\left\|  D_{P,R,\o}^k W_{g_{\e ,P,R,\o}}' (s)
		\right\|_{C^{0,\gamma}(\T)} \leq C_{r} \e^2
	\end{equation}
for $k=0,1,2$. 
Thus by \eqref{eq:diff}, \eqref{eq:esti-psi} and \eqref{eq:esti-W'}, 
we have 
	\[
		\sup_{(P,R,\o) \in A_{r} } 
		\left| D_{P,R,\o}^k 
		\left( \Phi_\e (P,R,\o) - W_{g_\e}(\Sigma_{\e ,P,R,\o}) \right) \right| 
		\leq C_{r} \e^4
	\]
for $k=0,1,2$. Thus Step 3 holds.

\medskip

\noindent
\textbf{Step 4:} 
{\sl Conclusion}

\medskip

Fix $r_0 \in (0,1)$ and $\tilde{\e}_r >0 $ from Step 2 and let $r_0 \leq r < 1$. 
By \eqref{eq:estC2Phie}, we choose $\e_r \in (0,\tilde{\e}_r]$ so that 
	\[
		\frac{1}{\e^2(1-r)^2}\left\| \Phi_\e (P,R,r \ex) - W_{g_\e}(\Sigma_{\e ,P,R,r \ex}) 
		\right\|_{C^2(M \times SO(3))} 
		\leq \frac{\zeta_0}{2}
	\]
for all $\e \in (0,\e_r]$. Hence, by Step 2, we observe that 
$\Phi_\e (P,R,r \ex )$ satisfies \eqref{eq:err-esti} with $r$. 
Thus if $\e \in (0,\e_r]$, then 
$\Phi_\e (P,R,r \ex )$ satisfies the general boundary condition 
on $\partial \mathcal{T}_{\e , K_{r}}$ and we complete the proof. 
\end{proof}

\

\begin{proof}[Proof of Theorem \ref{thm:Exixtence}]  
	We  apply the finite-dimensional reduction  
	as described in Subsection \ref{ss:22}.  By Proposition \ref{p:variational} it is sufficient to 
	find critical points of the reduced energy $\Phi_\e$.  For doing this we employ the Morse inequalities for 
	manifolds with boundary from \cite{MVS}: these relate the $q$-th Betti numbers (we choose here 
	$\Z_2$ coefficients) of the underlying manifold  to the number of critical points with  index $q$ of Morse functions 
	satisfying the general boundary conditions. Concerning the latter critical points, one has to count those at 
	the interior, plus the ones (still, of index $q$) for the function restricted to the boundary such that 
	the gradient (which is non-zero by the general boundary conditions) is pointing inwards.

	For our purpose we choose to work with the manifold $\mathcal{T}_{\e ,K_{r}}$ 
	(see \eqref{eq:tildetKe} and the beginning of this section) 
	where $r \in [r_0,1)$ and $r_0$ appears in Lemma \ref{l:gbdrycond}, 
	whose homology can be described as follows. 
	By deforming $r$ to $0$ 
	one can see that $\mathcal{T}_{\e ,K_{r}}$ retracts to the family of (exponentiated) rotated (but not M\"obius inverted) small Clifford tori 
	centred at arbitrary points of $M$. By the invariances of the Clifford torus, this set is homeomorphic 
	to the family of lines (axes of the symmetric tori) in $TM$ passing through  the base points of the tangent spaces, namely an 
	$\R \P^2$-bundle $E$ over $M$. By Remark \ref{r:betti}, since $M$ is parallelizable, we can compute the 
	homology of $E$ using K\"unneth's formula 
	$$
	H_k(E;\Z_2) = \bigoplus_{i+j=k} H_i(M;\Z_2) \oplus H_j(\R \P^2; \Z_2).  
	$$
As $H_k(\R\P^2, \Z_2)=\Z_2$ for $0\leq k \leq 2$ and zero otherwise, 
it follows that the Betti numbers (with $\Z_2$ coefficients) of $\mathcal{T}_{\e ,K_{r}}$ 
are given be the $\tilde{\beta}$'s as in \eqref{eq:defbq}.

	To prove the existence result, we set 
	$$
	  \Psi_{\e ,r}= \Phi_{\e} \big|_{\partial \mathcal{T}_{\e ,K_{r}}}, 
$$	  
	and define   
		\[
			\begin{aligned}
			\tilde{C}_q := \frac 12 \sharp \{ (P,R) \in M \times SO(3) \; : \;
			\nabla \Psi_{\e ,r} (P,R)= 0,  \  \nabla \Phi_\e (P,R, r \ex ) \hbox{ is inward, }
			\mathrm{index}
			\left(\nabla^2 \Psi_{\e,r} (P,R) 
			\right) 
			= q
			\}.
			\end{aligned}
		\]
Notice that due to Lemma \ref{l:gbdrycond}, 
for any $r \in [r_0,1)$ and $\e \in (0,\e_r]$, $\Phi_\e $ satisfies the general boundary condition 
on $\partial \mathcal{T}_{\e ,K_{r}}$, so $\tilde{C}_q$ is well-defined. 
Now we claim that $\tilde{C}_q$'s in the above formula coincide with the numbers in \eqref{eq:deftCq} 
when we fix $r \in (0,1)$ sufficiently close to $1$ and $\e \in (0,\e _r]$.

		First we remark that when $r \in [r_0,1)$ and $\e \in (0,\e_r]$, 
it follows from the proof of Lemma \ref{l:gbdrycond} and \eqref{eq:num-index} that 
	\[
		\begin{aligned}
			&
			\sharp \{ (P,R) \in M \times SO(3) \;: \; \nabla \Psi_{\e ,r} (P,R)= 0, \ 
			\mathrm{index} ( \nabla^2 \Psi_{\e ,r}(P,R) ) = q \} 
			\\
			= &\, 
			\sharp \{ (P,R) \in M \times SO(3) \;:\; \nabla \mathcal{G}_r(P,R) = 0, \ 
			\mathrm{index} ( \nabla^2 \mathcal{G}_r(P,R) ) = q  \}
			\\
			= & \, \sharp \{ (P_i, R_{i,\ell}) \in M \times SO(3) \;:\; 
			1 \leq i \leq k, \ 1 \leq \ell \leq \ell_i , \ 
			\mathrm{index} ( - D_P^2 \Sc (P_i) ) 
			+ \mathrm{index} ( - D_R^2 \mathcal{F}_i(R_{i,\ell})) = q  \}
		\end{aligned}
	\]
since $1-r$ is small. Moreover, we may also observe that 
if $\nabla \Psi_{\e ,r} (P,R) = 0$, then 
$(P,R)$ must be close to $(P_i,R_{i,\ell})$, 
$(i,\ell)$ is uniquely determined and 
the correspondence is one-to-one. 
Hence, ($ND2$) implies that 
$\mathcal{F}(P,R) \neq 0$ provided 
$\nabla  \Psi_{\e ,r} (P,R) = 0$.  
Combining this fact with \eqref{eq:varvar2} 
(or \eqref{eq:Wr-Wom}) and \eqref{eq:estC2Phie}, 
enlarging $r$ and shrinking $\e_r$, 
if $\e \in (0,\e_r]$ and $\nabla \Psi_{\e ,r} (P,R) = 0$, 
then $\nabla \Phi_\e (P,R,r \ex)$ points inward if and only if 
$\mathcal{F}(P,R) < 0$. 
Therefore, noting Remark \ref{r:5new}, our claim holds.

	In order to prove the existence of critical points of $\Phi_\e $, 
let us assume by contradiction that there is no critical point of $\Phi_\e $ in the interior 
to $\mathcal{T}_{\e ,K_{r}}$: then the Morse inequalities in 
\cite{MVS} $\tilde{C}_q \geq \tilde{\beta}_q$  would be violated for $q = 0, 1$, see \eqref{eq:defbq}. 
	This concludes the proof. 
	
Notice also that the index of the constructed  area-constrained Willmore torus coincides with the index of the corresponding  critical point of the reduced functional, since the second variation of the Willmore functional is positive definite on the orthogonal of the Kernel thanks to the work of Weiner \cite{WEINER}.
\end{proof}

\

\begin{rem}\label{r:minmax}
This remark contains a shorter proof of Theorem \ref{thm:Exixtence} under milder assumptions: however, in view of Theorem \ref{thm:Multiplicity}, it was convenient for us  to use the above Morse-theoretical 
general framework. 

Suppose $(M,g)$ is as in Remark \ref{r:mild} $(ii)$. Then, by Proposition \ref{p:globest} and 
Lemma \ref{l:gbdrycond},  for $r \in (r_0,1)$ where $r_0$ from Lemma \ref{l:gbdrycond} and 
$\e > 0$ sufficiently small,  the 
maximum (resp. minimum) of $\Phi_\e$ restricted to $\partial \mathcal{T}_{\e ,K_{r}}$ is attained 
for some $(P,R)$ such that $P$ is close to a global minimum (resp. maximum) point for the scalar curvature on $M$. 
At each of these points $P$ the Ricci tensor is not a multiple of the identity, 
so we may find $R_{P,+}, R_{P,-} \in SO(3)$ satisfying 
$\mathcal{F}(P,R_{P,-}) < 0 < \mathcal{F}(P,R_{P,+})$. 
By Proposition \ref{p:varvar2} or Remark \ref{r:varvar2}, 
the inward derivative of $\Phi_\e$ is positive (resp. negative) for $R_{P,-}$ 
(resp. $R_{P,+}$). 
Therefore the global maximum (resp. minimum) of $\Phi_\e$ on the closure of $\mathcal{T}_{\e ,K_{r}}$ 
is attained at the interior. 
\end{rem}	
	
	\
	
\noindent To prove Theorem \ref{thm:Multiplicity} we  need the following transversality result, 
see  	Theorem 1.1 in \cite{ST}.

\begin{thm}\label{t:st}
Let $X, Y,Z$ be  Banach spaces and let $U \subset X$, $V \subset Y$ be
open subsets. Let $F : V \times U \to Z$ be a $C^k$ map with $k \geq 1$ such that

(i) for any $y \in V$, $F(y, \cdot) : x \mapsto F(y, x)$ is a Fredholm map of index $l$ with
$l \leq k$;

(ii)  the operator $F'(y_0, x_0) : Y \times X \to Z$ is
onto at any point $(y_0, x_0)$ such that $F(y_0, x_0) = z_0$;

(iii) the set of $x \in U$ such that $F(y, x) = z_0$ with $y$ in a compact set of $Y$ is
relatively compact in $U$.

\noindent Then the set $\{y \in V : z_0 \hbox{ is a regular value of } F(y, \cdot)\}$ is a dense open subset of $V$.	
\end{thm}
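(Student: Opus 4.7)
The plan is to reduce the statement to an application of the Sard--Smale theorem for Fredholm maps between Banach manifolds, following the classical parametric transversality argument. First I would use hypothesis $(ii)$ together with the implicit function theorem to give $\mathcal{M} := \{(y,x) \in V \times U : F(y,x) = z_0\}$ the structure of a $C^k$ Banach submanifold of $V \times U$. Indeed, at any $(y_0,x_0) \in \mathcal{M}$ the derivative $F'(y_0,x_0): Y \times X \to Z$ is surjective by $(ii)$, and since $F(y_0,\cdot)$ is Fredholm by $(i)$ its kernel has a topological complement; together these imply that $\ker F'(y_0,x_0)$ admits a complement in $Y \times X$, so the hypotheses of the implicit function theorem are satisfied.

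The second step is to introduce the natural projection $\pi : \mathcal{M} \to V$, $\pi(y,x) = y$, and verify that it is Fredholm of index $l$. A direct diagram chase shows that at $(y_0,x_0)$ one has
\[
\ker d\pi(y_0,x_0) \cong \ker F_x(y_0,x_0), \qquad \operatorname{coker} d\pi(y_0,x_0) \cong \operatorname{coker} F_x(y_0,x_0),
\]
where $F_x(y_0,x_0) := F'(y_0,x_0)|_{\{0\}\times X}$; both are therefore finite-dimensional with difference of dimensions equal to $l$. Since $k \geq l$, the Sard--Smale theorem applies to $\pi$, and the set of regular values of $\pi$ is residual in $V$.

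The third step is the standard characterization: $y \in V$ is a regular value of $\pi$ if and only if $z_0$ is a regular value of $F(y,\cdot)$. In one direction, if $d\pi(y,x)$ is onto for every $(y,x) \in \pi^{-1}(y)$, one deduces surjectivity of $F_x(y,x)$ from the surjectivity of $F'(y,x)$ guaranteed by $(ii)$ and a short linear-algebra argument; conversely, if $F_x(y,x)$ is onto then $d\pi(y,x)$ is trivially onto. Combining with the previous step yields the density part of the conclusion. To upgrade density to openness I would invoke hypothesis $(iii)$: suppose $y_n \to y$ with $z_0$ a regular value of $F(y,\cdot)$ but not of $F(y_n,\cdot)$. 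Then there exist $x_n \in U$ with $F(y_n,x_n) = z_0$ and $F_x(y_n,x_n)$ not surjective; by $(iii)$ a subsequence $x_n \to x_\infty \in U$, hence $F(y,x_\infty) = z_0$, and by continuity of $F'$ together with the openness, inside the Fredholm operators of index $l$, of the subset of surjective ones, we would conclude that $F_x(y,x_\infty)$ is not surjective, contradicting regularity.

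The main obstacle will be the openness argument of the last paragraph, since it uses $(iii)$ in a non-trivial way to promote a compactness property into stability of the regularity condition along converging parameters; all the other pieces are essentially bookkeeping with the implicit function theorem, Sard--Smale, and standard Fredholm stability. The density alone would follow without $(iii)$, but openness genuinely requires ruling out escape of preimages under a sequence of nearby parameters, which is precisely what the relative compactness assumption provides.
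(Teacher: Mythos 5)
The paper does not prove Theorem \ref{t:st}: it quotes it from Saut--Temam \cite{ST}, Theorem 1.1, and applies it as a black box, so there is no in-paper proof to compare you against. Judged on its own, your reconstruction is correct and is in fact the classical Smale--Abraham--Robbin parametric transversality scheme, which is also the route of the source: endow $F^{-1}(z_0)$ with a $C^k$ Banach manifold structure by the implicit function theorem using (ii); check that the projection $\pi$ onto $V$ is Fredholm of the same index $l$ as $F_x$; identify regular values of $\pi$ with parameters $y$ for which $z_0$ is a regular value of $F(y,\cdot)$; invoke Sard--Smale for density; and use the compactness hypothesis (iii) together with the openness of surjectivity among Fredholm operators of fixed index to upgrade density to openness.

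Two details deserve to be pinned down. First, the assertion that Fredholmness of $F_x$ plus surjectivity of $F'$ forces $\ker F'(y_0,x_0)$ to be complemented in $Y\times X$ is correct but not immediate from the two facts separately; one should write $Z=\mathrm{im}\,F_x\oplus Z_1$ with $\dim Z_1<\infty$, observe that the induced map $B\colon Y\to Z_1$, $B\xi=\mathrm{proj}_{Z_1}F'(\xi,0)$, is onto with closed finite-codimensional kernel, decompose $Y=\ker B\oplus Y_1$ and $X=\ker F_x\oplus X_1$, and then verify directly that $Y\times X=\ker F'\oplus(Y_1\times X_1)$. Second, Sard--Smale requires the differentiability order of $\pi$ to be strictly larger than $\max(l,0)$, whereas the quoted hypothesis $l\leq k$ allows $l=k\geq 1$, which is not quite sufficient; this is an imprecision inherited from the cited statement (one should read $l<k$, or note that in the paper's application $X$ and $Z$ are finite-dimensional of the same dimension so $l=0$ and the issue is moot), not a gap you introduced.
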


\	
	
	\begin{proof}[Proof of Theorem \ref{thm:Multiplicity}] 
By the  Morse inequalities in  \cite{MVS} it will be sufficient to show that for generic metrics the reduced 
functional $\Phi_\e$ is a Morse function.  We will apply Theorem \ref{t:st} with $X = \R^7$ being a 
local coordinate system for $\mathcal{T}_{\e ,K_{r}}$, $Z = \R^7$, $Y$ the set of $C^2$-symmetric 
$(2,0)$ tensors $h$ on $M$ and 
$$
  F(h,x) = \nabla \Phi_{\e,g_\e+h_\e }(x), 
$$
where we highlighted the metric dependence in $\Phi_\e$, and where we are scaling coordinates as in Subsection 
\ref{ss:22}. Given any torus in $\mathcal{T}_{\e ,K_{r}}$, one can use formula \eqref{eq:wdot} (where 
$t = \e^2$) to compute the gradient of $W_{g_\e+h_\e}|_{\mathcal{T}_{\e ,K_{r}}}$. 
Localizing the 
metric variation $h_\e$ near finitely-many points of the ($1/\e$-dilated) torus,   one can arbitrarily vary 
$\nabla W_{g_\e+h_\e}|_{\mathcal{T}_{\e ,K_{r}}}$ by vectors of order $\e^2$. By \eqref{eq:estC2Phie}, 
this property will hold true also for $\nabla \Phi_{\e,g_\e+h_\e }$, so $(ii)$ in Theorem \ref{t:st} will be satisfied. 
$(i)$ and $(iii)$ are trivially satisfied as $X$ is finite-dimensional. 
\end{proof}

%
%
%
%
%
%
%
%
%
%
%
%
%
%

\section{Appendices} 
\subsection{Appendix I: proof of Proposition \ref{p:PV}}

		In this appendix we compute each term in \eqref{eq:PV}. 
We first recall our notation here:  
	\[
		\begin{aligned}
			F &= -\sum_{i=1}^{2} e_{i}(h_{ni}) + \sum_{i,j=1}^{2} 
			h_{nj} \langle \nabla^{\R^{3}}_{e_{i}} e_{i}, e_{j} \rangle 
			- \frac{1}{2} h_{nn} H_{\SA} 
			+ \frac{1}{2} \sum_{i=1}^{2} \frac{\partial h}{\partial n_0} ( e_{i}, e_{i} ), 
			\\
			\mathcal{X}(\theta, \varphi) 
			&= \tilde{A} \left( \cos \theta + 1, \sin \theta \cos \varphi , \sin \theta \sin \varphi \right)
			\qquad (\theta, \varphi) \in (0,\pi) \times [0,2\pi], 
			\\
			e_1 &= \tilde{A}^{-1} \partial_\theta \mathcal{X} 
			= (-\sin \theta , \cos \theta \cos \varphi, \cos \theta \sin \varphi ),
			\\
			e_2 &= (\tilde A \sin \theta)^{-1} \partial_\varphi X 
			= (0, - \sin \varphi, \cos \varphi),
			\\
			n_0 &= (\cos \theta, \sin \theta \cos \varphi, \sin \theta \sin \varphi),
			\\
			(h(x))_{\a \b} &= \frac{1}{3} R_{\a \mu \nu \b} x^\mu x^\nu, 
			\quad 
			h_{ni} = h(x)(n_0,e_i), \quad h_{nn} = h(x) (n_0,n_0),
			\\
			\psi_0 &= A \cos \theta + B ( 1 - \cos \theta ) \cos 2 \varphi,
			\quad 
			(A, B) = \left( \frac{\sqrt{2}}{2}, \ \frac{2 - \sqrt{2}}{4} \right).
		\end{aligned}
	\]
We remark that the Riemann curvature tensor can be expressed 
by the Ricci curvature and the scalar curvature as follows: 
(see \cite{LP}): 
	\begin{equation}\label{eq:expr-h}
	h_{\alpha \beta}(x) := 
	\frac{\textrm{Sc}_{P}}{6} (|x|^{2} \delta_{\alpha \beta} - x_{\alpha} x_{\beta} ) 
	- \frac{1}{3} \delta_{\alpha \beta} \Ric_{P}(x,x) 
	- \frac{1}{3} |x|^{2} R_{\alpha \beta} 
	+ \frac{1}{3} (x_{\alpha} R_{\beta \mu} x^{\mu} 
	+ x_{\beta} R_{\alpha \mu} x^{\mu} ).
	\end{equation}

	We first collect some facts 
which will be useful in the proof of the computations below:

	\begin{lem}\label{l:basic-facts}
		The following hold:
		\begin{enumerate}
			\item[\rm (i)] 
			$\langle \nabla^{\R^{3}}_{e_{i}} e_{i}, e_{j} \rangle = 0$ 
			except for $(i,j)=(2,1)$ and 
			$\langle \nabla^{\R^{3}}_{e_{2}} e_{2}, e_{1} \rangle = - \cos \theta/ 
			(\tilde A \sin \theta)$. 
			\item[\rm (ii)] For $\mathcal{X}(\theta,\varphi)$, 
			\[
			\begin{aligned}
			\mathcal{X} &= \tilde{A}(n_0 + \ex), \quad 
			\la \mathcal{X}, n_0 \ra = \tilde A (1 + \la n_0, \ex \ra), \quad  
			\la \mathcal{X}, n_0 \ra^{2} = 
			\tilde{A}^{2} ( 1 + 2 \la n_0, \ex \ra + \la n_0, \ex \ra^{2})
			\\
			|\mathcal{X}|^{2} &= \tilde{A}^{2} \la n_0 + \ex, n_0 + \ex \ra 
			=  \tilde{A}^{2} (2 +  2 \la n_0, \ex \ra), 
			\end{aligned}
			\]
			\item[\rm (iii)] 
			\[
			\begin{aligned}
			0&= \int_{0}^{\pi} \cos^{3} \theta \sin \theta d \theta 
			= 	\int_{0}^{\pi} \sin \theta \cos \theta d \theta
			= \int_0^\pi \cos \theta \sin^3 \theta d \theta, 
			\\
			\frac{\pi}{2} 
			&= \int_{0}^{2\pi} \cos^{2} \varphi \cos 2 \varphi d \varphi 
			= - \int_{0}^{2\pi} \cos 2 \varphi \sin^{2} \varphi d \varphi,
			\\
			0 &= \int_{0}^{2\pi} \cos 2 \varphi \sin \varphi \cos \varphi d\varphi 
			= \int_{0}^{2\pi} \cos 2 \varphi \cos \varphi d \varphi 
			= \int_{0}^{2\pi} \cos 2 \varphi \sin \varphi d \varphi,
			\\
			\frac{4}{3} &= \int_0^\pi \sin^3 \theta d \theta,
			\quad
			\frac{2}{5} = \int_{0}^{\pi} \cos^{4} \theta \sin \theta d \theta, 
			\quad 
			\frac{4}{15} = \int_{0}^{\pi} \sin^{3} \theta \cos^{2} \theta d \theta,
			\quad 
			\frac{2}{3} = \int_{0}^{\pi} \sin \theta \cos^{2} \theta d \theta.
			\\ 
			\end{aligned}
			\]
		\end{enumerate}
	\end{lem}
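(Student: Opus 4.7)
The plan is to verify each of the three parts by direct computation, since Lemma \ref{l:basic-facts} is a collection of elementary facts about the standard spherical frame, the position vector on $\SA$, and trigonometric integrals on intervals of length $\pi$ or $2\pi$. No deep ideas are needed; the only care required is consistent bookkeeping of the normalizations (the factors $\tilde{A}$ and $\tilde{A}\sin\theta$ that arise because $\mathcal{X}$ is a sphere of radius $\tilde{A}$, not $1$).

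For part (i), I would exploit that $e_1$ corresponds to the directional derivative $\tilde{A}^{-1}\partial_\theta$ and $e_2$ to $(\tilde{A}\sin\theta)^{-1}\partial_\varphi$, since these are the unit tangent fields. Differentiating the explicit formulas for $e_1$ and $e_2$ gives
\[
\nabla^{\R^3}_{e_1} e_1 = \tilde{A}^{-1}\partial_\theta e_1 = -\tilde{A}^{-1}(\cos\theta,\sin\theta\cos\varphi,\sin\theta\sin\varphi) = -\tilde{A}^{-1} n_0,
\]
which is purely normal and so has zero projection onto both $e_1$ and $e_2$. Similarly
\[
\nabla^{\R^3}_{e_2} e_2 = (\tilde{A}\sin\theta)^{-1}\partial_\varphi e_2 = (\tilde{A}\sin\theta)^{-1}(0,-\cos\varphi,-\sin\varphi),
\]
whose inner product with $e_2=(0,-\sin\varphi,\cos\varphi)$ vanishes, while its inner product with $e_1 = (-\sin\theta,\cos\theta\cos\varphi,\cos\theta\sin\varphi)$ equals $-\cos\theta/(\tilde{A}\sin\theta)$, giving the stated formula.

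For part (ii), the identity $\mathcal{X} = \tilde{A}(n_0 + \mathbf{e}_x)$ follows by reading off the explicit components of $\mathcal{X}$ and $n_0$. The remaining formulas are pure algebra using $|n_0|^2 = 1$: for example
\[
|\mathcal{X}|^2 = \tilde{A}^2\langle n_0+\mathbf{e}_x,n_0+\mathbf{e}_x\rangle = \tilde{A}^2(1 + 2\langle n_0,\mathbf{e}_x\rangle + 1),
\]
and the expressions for $\langle \mathcal{X},n_0\rangle$ and its square are analogous.

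For part (iii), all integrals reduce to antiderivatives. The $\theta$-integrals are handled by the substitution $u=\cos\theta$ (for $\int_0^\pi \cos^k\theta\sin\theta\,d\theta$) or by expanding $\sin^3\theta = \sin\theta(1-\cos^2\theta)$; for instance $\int_0^\pi\cos^3\theta\sin\theta\,d\theta = -\tfrac14[\cos^4\theta]_0^\pi = 0$ and $\int_0^\pi\cos^4\theta\sin\theta\,d\theta = -\tfrac15[\cos^5\theta]_0^\pi = 2/5$. The $\varphi$-integrals follow from the double-angle identities $\cos^2\varphi = (1+\cos 2\varphi)/2$, $\sin^2\varphi = (1-\cos 2\varphi)/2$, $2\sin\varphi\cos\varphi = \sin 2\varphi$, together with the orthogonality relations $\int_0^{2\pi}\cos(k\varphi)\cos(m\varphi)\,d\varphi = \pi\delta_{km}$ (for $k,m\geq 1$) and $\int_0^{2\pi}\cos(k\varphi)\sin(m\varphi)\,d\varphi = 0$. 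There is no substantive obstacle; the only thing to watch is signs and normalizing constants.
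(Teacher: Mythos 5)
Your proposal is correct and follows essentially the same route as the paper: part (i) by computing $\nabla^{\R^3}_{e_i}e_i$ via $e_1 = \tilde{A}^{-1}\partial_\theta$ and $e_2 = (\tilde{A}\sin\theta)^{-1}\partial_\varphi$ (obtaining $-\tilde{A}^{-1}n_0$ and $(\tilde{A}\sin\theta)^{-1}(0,-\cos\varphi,-\sin\varphi)$ respectively), and parts (ii) and (iii) by direct algebra and elementary antiderivatives. The paper's proof is equally brief, giving details only for (i) and dismissing (ii)--(iii) as direct computations, so there is nothing to add.
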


	\begin{proof} 
		Noting that 
		\[
		\begin{aligned}
		\nabla_{e_{1}} e_{1} = \frac{1}{\tilde{A}} \nabla_{\partial_{\theta} \mathcal{X}} 
		e_{1} = \frac{1}{\tilde{A}} \partial_{\theta} e_{1} 
		= - \frac{n_0}{\tilde{A}}, \quad 
		\nabla_{e_{2}} e_{2} 
		= \frac{1}{\tilde{A} \sin \theta} \nabla_{\partial_{\varphi} \mathcal{X}} e_{2} 
		= \frac{1}{\tilde{A} \sin \theta} \partial_{\varphi} e_{2} 
		= \frac{1}{\tilde{A} \sin \theta} 
		\begin{pmatrix}
		0 \\ - \cos \varphi \\ -\sin \varphi,
		\end{pmatrix}
		\end{aligned}
		\]
		we have 
		\[
		\langle \nabla_{e_{1}} e_{1}, e_{j} \rangle = 0 
		= \langle \nabla_{e_{2}} e_{2}, e_{2} \rangle, \quad 
		\langle \nabla_{e_{2}} e_{2}, e_{1} \rangle 
		= - \frac{\cos \theta}{\tilde{A} \sin \theta}. 
		\]
		Thus (i) holds. (ii) and (iii) can be proven by direct computations.  	\end{proof}

Next we compute the second term in the left hand side of  \eqref{eq:PV}.

	\begin{lem}\label{l:H-Ric}
		The following holds:
		\[
			\int_{\SA} H_{\SA} \Ric_{P} ( n_0, n_0 ) ( 1 - \chi_{\delta} ) \psi_{0} 
			d \sigma 
			= \frac{4}{3} \pi \tilde{A} B ( R_{22} - R_{33} ) + O(\delta^{2}).
		\]
	\end{lem}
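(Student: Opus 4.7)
The strategy is entirely computational: split off the cut-off contribution, then evaluate the full integral on $\SA$ by writing the integrand in the polar coordinates $(\theta,\varphi)$ and applying the orthogonality identities of Lemma \ref{l:basic-facts}(iii). I expect no genuine analytic difficulty; the work is simply bookkeeping.

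First I will dispose of the cut-off. Since $\chi_\delta$ is radial around the origin of $\R^3$ and its support on $\SA$ is contained in $\{|\theta-\pi|\leq C_0\delta\}$ (as recalled in \eqref{eq:deri-chi-d}), and since $H_{\SA}\Ric_P(n_0,n_0)\psi_0$ is a bounded smooth function of $(\theta,\varphi)$, the area element $d\sigma=\tilde{A}^2\sin\theta\,d\theta\,d\varphi$ combined with $\sin\theta\sim \pi-\theta$ near $\theta=\pi$ gives
\[
\left|\int_{\SA} H_{\SA}\Ric_P(n_0,n_0)\psi_0\,\chi_\delta\,d\sigma\right|\leq C\int_0^{C_0\delta} \eta\,d\eta = O(\delta^2).
\]
So it suffices to compute the integral over all of $\SA$ without the cut-off, modulo $O(\delta^2)$.

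Next, using $n_0=(\cos\theta,\sin\theta\cos\varphi,\sin\theta\sin\varphi)$ I expand
\[
\Ric_P(n_0,n_0)=R_{11}\cos^2\theta+R_{22}\sin^2\theta\cos^2\varphi+R_{33}\sin^2\theta\sin^2\varphi+(\text{off-diagonal})
\]
with off-diagonal terms proportional to $\cos\theta\sin\theta\cos\varphi$, $\cos\theta\sin\theta\sin\varphi$, and $\sin^2\theta\sin\varphi\cos\varphi$. Multiplying by $\psi_0=A\cos\theta+B(1-\cos\theta)\cos 2\varphi$ and integrating against $\sin\theta\,d\theta\,d\varphi$, the off-diagonal terms vanish: those involving $\cos\varphi$ or $\sin\varphi$ kill the $A\cos\theta$-piece directly, while their product with $\cos 2\varphi$ reduces (using a product-to-sum identity in the $R_{23}$ case) to integrals of $\cos\varphi$, $\sin\varphi$ or $\sin 4\varphi$ over $[0,2\pi]$, all zero. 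The $R_{11}\cos^2\theta$-term likewise vanishes: against $A\cos\theta$ it gives $\int_0^\pi\cos^3\theta\sin\theta\,d\theta=0$, and against $B(1-\cos\theta)\cos 2\varphi$ it gives $\int_0^{2\pi}\cos 2\varphi\,d\varphi=0$.

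What remains are the $R_{22}$ and $R_{33}$ pieces paired with $B(1-\cos\theta)\cos 2\varphi$ (the pairings with $A\cos\theta$ vanish by $\int_0^\pi\sin^3\theta\cos\theta\,d\theta=0$). Using
\[
\int_0^\pi\sin^3\theta(1-\cos\theta)\,d\theta=\tfrac{4}{3},\qquad \int_0^{2\pi}\cos^2\varphi\cos 2\varphi\,d\varphi=\tfrac{\pi}{2},\qquad \int_0^{2\pi}\sin^2\varphi\cos 2\varphi\,d\varphi=-\tfrac{\pi}{2},
\]
these two contributions yield $\tfrac{2\pi B}{3}(R_{22}-R_{33})$. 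Multiplying by $H_{\SA}=2/\tilde{A}$ and the overall $\tilde{A}^2$ from the area element gives the factor $2\tilde{A}\cdot\tfrac{2\pi B}{3}(R_{22}-R_{33})=\tfrac{4\pi\tilde{A} B}{3}(R_{22}-R_{33})$, matching the claim up to the already-controlled $O(\delta^2)$ error from the cut-off.
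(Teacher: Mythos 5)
Your proof is correct and follows essentially the same route as the paper's: expand $\Ric_P(n_0,n_0)$ in the polar coordinates, pair against $\psi_0$, and use the orthogonality identities to isolate the $R_{22}-R_{33}$ term; the paper dismisses the cut-off more tersely ("since $\psi_0$ is bounded"), relying implicitly on the same $\int_0^{C_0\delta}\eta\,d\eta = O(\delta^2)$ estimate you make explicit.
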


	\begin{proof}
Since $\psi_0$ is bounded, it is enough to show 
	\[
		\int_{\SA} H_{\SA} \Ric_{P} ( n_0, n_0 ) \psi_{0} d \sigma 
		= \frac{4}{3} \pi \tilde{A} B ( R_{22} - R_{33} ). 
	\]
First we expand ${\rm Ric}_{P}(n_0,n_0)$ as follows: 
	\[
		\begin{aligned}
			{\rm Ric}_{P} ( n_0, n_0 ) 
			=& ( R_{11} \cos^{2} \theta + R_{22} \sin^{2} \theta \cos^{2} \varphi 
			+ R_{33} \sin^{2} \theta \sin^{2} \varphi ) 
			\\
			& + ( 2 R_{12} \sin \theta \cos \theta \cos \varphi 
			+ 2 R_{23} \sin^{2} \theta \sin \varphi \cos \varphi 
			+ 2 R_{31} \sin \theta \cos \theta \sin \varphi  ).
		\end{aligned} 
	\]
From $H_{\SA} \equiv 2 / \tilde{A}$ and Lemma \ref{l:basic-facts} (iii), 
it follows that 
	\[
		\begin{aligned}
			& \int_{\SA} H_{\SA} {\rm Ric}_{P} ( n_0, n_0 ) \psi_{0} d \sigma 
			\\
			=& 2 \tilde{A} B \int_{0}^{\pi} \int_{0}^{2\pi} 
			( 1 - \cos \theta ) \cos 2 \varphi \sin \theta \Ric_{P} (n_0,n_0) 
			d \varphi d \theta 
			\\
			=& 
			2 \tilde{A} B \int_{0}^{\pi} \int_{0}^{2\pi} 
			( 1 - \cos \theta ) \sin \theta \cos 2 \varphi 
			( R_{22} \sin^{2} \theta \cos^{2} \varphi 
			+ R_{33} \sin^{2} \theta \sin^{2} \varphi ) d \varphi d \theta
			\\
			=& \pi \tilde{A} B ( R_{22} - R_{33}) 
			\int_0^\pi (1 - \cos \theta) \sin^3 \theta d \theta
			= \frac{4}{3} \pi \tilde{A} B (R_{22} - R_{33}),
 		\end{aligned}
	\]
which completes the proof.  
\end{proof}

		Next, we show 
	\begin{lem}\label{l:H-dot-psi0}
		There holds 
		\[
			\int_{S^{2}_{\tilde A}} ( 1 - \chi_{\delta}) 
			F \Delta_{S^{2}_{\tilde A}} \psi_{0} 
			d \sigma 
			= 4 \pi \tilde{A} B  (R_{22} - R_{33} ) + O(\delta^{2}).
		\] 
	\end{lem}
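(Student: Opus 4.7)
The plan is to reduce the integral to a direct computation and show that all terms not involving $R_{22}-R_{33}$ cancel by parity in $\varphi$. As a preliminary, I will verify that the cut-off can be dropped modulo $O(\delta^2)$. From \eqref{eq:expr-h} one has $h=O(|x|^2)$ and $\partial h=O(|x|)$ near the origin, and a term-by-term inspection of $F$ gives $F=O(\pi-\theta)$ on $\SA$ near $x=0$ in the spherical coordinates of Proposition \ref{p:new}; the apparent singularity coming from $\langle \nabla^{\R^{3}}_{e_{2}} e_{2}, e_{1} \rangle=-\cot\theta/\tilde A$ of Lemma \ref{l:basic-facts}(i) is tamed by the vanishing of $h_{n1}$ to order $|x|^2$. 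Meanwhile a direct computation, using the explicit form of $\psi_0$ from Proposition \ref{p:psi0}, yields
\[
\Delta_\SA \psi_0 = \frac{1}{\tilde A^{2}}\left[-2A\cos\theta + 2B\cos 2\varphi\left(\cos\theta - \frac{2}{1+\cos\theta}\right)\right],
\]
which is $O((\pi-\theta)^{-2})$ near $\theta=\pi$. Combined with $\sin\theta\sim\pi-\theta$ in the area element, the product $F\,\Delta_\SA\psi_0\,d\sigma$ is bounded near the pole, and since $\{\chi_\delta\ne 0\}\cap\SA$ has area $O(\delta^2)$, dropping the cut-off costs only $O(\delta^2)$.

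Next I will substitute \eqref{eq:expr-h} into each of the four summands defining $F$, namely $-e_i(h_{ni})$, $h_{n1}\langle\nabla^{\R^{3}}_{e_{2}} e_{2},e_{1}\rangle$, $-\tfrac{1}{2}h_{nn}H_\SA$, and $\tfrac{1}{2}\sum_{i=1}^{2}\partial_{n_0}h(e_i,e_i)$, expanding everything in $(\theta,\varphi)$ by means of $\mathcal{X}=\tilde A(n_0+\ex)$ and Lemma \ref{l:basic-facts}(i),(ii). The result is a trigonometric polynomial in $\theta$ and $\varphi$ whose coefficients are linear in $\Sc_P$ and the $R_{ij}$. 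Multiplying by $\Delta_\SA\psi_0$, whose $\varphi$-spectrum contains only the constant and $\cos 2\varphi$ modes, and integrating against $\tilde A^2\sin\theta\,d\theta\,d\varphi$, the $\varphi$-integral identities of Lemma \ref{l:basic-facts}(iii) annihilate every term in $F$ whose $\varphi$-dependence involves $\cos\varphi$, $\sin\varphi$, $\sin 2\varphi$, or higher harmonics. This kills at once all contributions from $R_{12}$, $R_{13}$, $R_{23}$, and reduces the remaining $R_{11}$, $R_{22}$, $R_{33}$, and $\Sc_P$ terms to one-dimensional $\theta$-integrals. After collecting, the $R_{11}$ and $\Sc_P$ parts should cancel (as they must, by comparison with Proposition \ref{p:PV} and Lemma \ref{l:H-Ric}), and the $R_{22}$ and $R_{33}$ pieces combine into a constant multiple of $R_{22}-R_{33}$.

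The hard part will not be the conceptual setup but the bookkeeping: keeping track of the many cross-terms and, in particular, verifying that the potentially singular $\theta$-integrals of the form $\int_0^\pi P(\cos\theta)\sin\theta/(1+\cos\theta)\,d\theta$ are finite. This finiteness relies on the algebraic identity $\sin^2\theta=(1-\cos\theta)(1+\cos\theta)$, which converts the singular kernel into a polynomial matched by the elementary integrals of Lemma \ref{l:basic-facts}(iii); without this cancellation one would see a spurious logarithm. Summing the contributions of the four summands of $F$ should yield exactly $4\pi B\tilde A(R_{22}-R_{33})$, which together with the $O(\delta^2)$ cut-off error gives the claim.
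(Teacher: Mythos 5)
Your overall plan is essentially the paper's: expand $F$ via \eqref{eq:expr-h}, exploit $\varphi$-parity to kill $R_{12}$, $R_{13}$, $R_{23}$, reduce to one-dimensional $\theta$-integrals in which the kernel $(1-\cos\theta)/\sin^2\theta$ is tamed by the identity $\sin^2\theta=(1-\cos\theta)(1+\cos\theta)$, and then combine. This part of the plan is sound, and you correctly identify the $\cot\theta$ singularity and its cancellation against the $O(|x|^2)$ vanishing of $h_{n1}$.

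However, there is a genuine gap in the preliminary step. You claim $F=O(\pi-\theta)$, $\Delta_{\SA}\psi_0=O((\pi-\theta)^{-2})$, and $d\sigma\sim\tilde A^2(\pi-\theta)\,d\theta\,d\varphi$, which combine to give $F\,\Delta_{\SA}\psi_0\,d\sigma=O(1)\,d\theta\,d\varphi$ near the pole; all true. But it does not follow that "dropping the cut-off costs only $O(\delta^2)$." The cut-off region $\{|\theta-\pi|\le C_0\delta\}$ has $d\theta\,d\varphi$-measure $O(\delta)$ (not $O(\delta^2)$), so a density bound with respect to $d\theta\,d\varphi$ gives only $O(\delta)$; and since $|F\,\Delta_{\SA}\psi_0|$ itself behaves like $(\pi-\theta)^{-1}$ and is thus \emph{unbounded} on that region, a "sup times $\sigma$-area" bound fails outright. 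To actually recover $O(\delta^2)$ you must use the further observation that the singular $(\pi-\theta)^{-2}$ part of $\Delta_{\SA}\psi_0$ sits purely in the $\cos 2\varphi$ Fourier mode, while the linear coefficient of $F$ at the pole — being $\langle\nabla_{\SA}F|_{0},\,\cdot\,\rangle$ evaluated on the unit tangent $e_1(\pi,\varphi)=(0,-\cos\varphi,-\sin\varphi)$ — contributes only $\cos\varphi$ and $\sin\varphi$ modes, so the leading-order cut-off contribution integrates to zero in $\varphi$. Equivalently (and this is what the paper does) carry the factor $1-\chi_\delta$ through the term-by-term computation and check that each resulting $\theta$-integrand, after $\varphi$-integration, vanishes to first order at $\theta=\pi$; the one slightly delicate term is $e_1(h_{n1})$, where the $\theta$-integration by parts produces a $\partial_\theta\chi_\delta=O(\delta^{-1})$ factor that must be offset by the $O(\sin^3\theta)$ vanishing recorded in \eqref{eq:est-R-pi}.
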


To show Lemma \ref{l:H-dot-psi0}, we first rewrite $F$ as follows:

	\begin{lem}
	One has 
	\begin{equation}\label{eq:H-dot-1}
		\begin{aligned}
			F 
			&= - \sum_{i=1}^{2} e_{i} (h_{ni}) 
			+ \tilde{A} \bigg\{  
			- \frac{\ScP}{6} (1 + \cos \theta)  
			- \frac{1}{3} \Ric_{P}(\mathbf{f}_{1}, e_{1} ) \cos \theta 
			- \frac{1}{3} ( 1 + \cos \theta ) \Ric_{P} ( n_0, \ex ) 
			\\
			& \qquad \qquad \qquad \qquad \quad 
			+ \frac{1}{3} \Ric_{P} ( n_0, n_0 ) \cos \theta  
			+ \frac{1}{3} \Ric_{P} ( \ex, \ex ) 
			 \bigg\}
		\end{aligned}
	\end{equation}
where 
	\[
		\mathbf{f}_{1} = \big( \sin \theta, \ -( 1 + \cos \theta) \cos \varphi, \ 
			- ( 1 + \cos \theta ) \sin \varphi \big).
	\]
	\end{lem}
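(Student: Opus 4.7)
The plan is to compute each of the four summands of
\[
F = -\sum_{i=1}^{2} e_{i}(h_{ni}) + \sum_{i,j=1}^{2} h_{nj} \langle \nabla^{\R^{3}}_{e_{i}} e_{i}, e_{j} \rangle - \frac{1}{2} h_{nn} H_{\SA} + \frac{1}{2} \sum_{i=1}^{2} \frac{\partial h}{\partial n_0} ( e_{i}, e_{i} )
\]
by plugging in the explicit expression \eqref{eq:expr-h} for $h_{\alpha\beta}$ at the point $x = \mathcal{X}=\tilde A(n_0+\ex)$, then collecting terms. The first summand is left as $-\sum_i e_i(h_{ni})$. By Lemma \ref{l:basic-facts}(i), the second summand collapses to a single contribution with $(i,j)=(2,1)$, namely $-\frac{\cos\theta}{\tilde A \sin\theta}\,h_{n1}$; the singular factor $1/\sin\theta$ is what forces the vector $\mathbf{f}_1$ to appear in the final answer.

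To handle $h_{n1}$, $h_{nn}$ and $\frac{\partial h}{\partial n_0}(e_i,e_i)$, I would first record the scalar products from Lemma \ref{l:basic-facts}(ii): $\langle n_0,e_i\rangle=0$, $\langle \mathcal{X},n_0\rangle=\tilde A(1+\cos\theta)$, $\langle \mathcal{X},e_1\rangle=-\tilde A\sin\theta$, $\langle \mathcal{X},e_2\rangle=0$, and $|\mathcal{X}|^2=2\tilde A^2(1+\cos\theta)$. Substituting $x=\mathcal{X}$ into \eqref{eq:expr-h} and applying these identities, a direct computation yields
\[
h_{nn} = \tfrac{\Sc_P\tilde A^2 \sin^2\theta}{6} - \tfrac{\tilde A^2}{3}\bigl[\Ric_P(n_0,n_0)+\Ric_P(\ex,\ex)\bigr] + \tfrac{2\tilde A^2\cos\theta}{3}\Ric_P(n_0,\ex),
\]
\[
h_{n1} = \tilde A^2 \sin\theta\Bigl[\tfrac{\Sc_P(1+\cos\theta)}{6} + \tfrac{1}{3}\Ric_P(\mathbf{f}_1,e_1) - \tfrac{1}{3}\Ric_P(n_0,n_0) - \tfrac{1}{3}\Ric_P(n_0,\ex)\Bigr],
\]
where the crucial step producing $\mathbf{f}_1$ is the elementary identity $\sin\theta\,\mathbf{f}_1 = (1+\cos\theta)(\ex-n_0)$, which one verifies by direct comparison of components. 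Indeed, this identity rewrites the combination $\tfrac{\tilde A^2(1+\cos\theta)}{3}[\Ric_P(\ex,e_1)-\Ric_P(n_0,e_1)]$ arising from $h_{n1}$ as $\tfrac{\tilde A^2 \sin\theta}{3}\Ric_P(\mathbf{f}_1,e_1)$, making the $1/\sin\theta$ factor harmless after division. For the normal derivative term, differentiation of \eqref{eq:expr-h} in the direction $n_0$ and evaluation at $x=\mathcal{X}$, combined with the trace identity $\sum_{i=1}^{2}\Ric_P(e_i,e_i) = \Sc_P - \Ric_P(n_0,n_0)$ (which follows from $\{e_1,e_2,n_0\}$ being an orthonormal basis), leads to
\[
\tfrac{1}{2}\sum_i \tfrac{\partial h}{\partial n_0}(e_i,e_i) = \tfrac{\tilde A(\cos\theta-1)}{3}\Ric_P(n_0,n_0) - \tfrac{2\tilde A}{3}\Ric_P(n_0,\ex) - \tfrac{\tilde A \sin\theta}{3}\Ric_P(e_1,n_0),
\]
the scalar curvature contributions cancelling identically.

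Summing the three contributions, the scalar curvature coefficients combine via $\cos\theta(1+\cos\theta)+\sin^2\theta = 1+\cos\theta$ to produce exactly $-\tfrac{\Sc_P \tilde A}{6}(1+\cos\theta)$, while the $\Ric_P(\mathbf{f}_1,e_1)\cos\theta$ term comes only from the $h_{n1}$ piece and has the required coefficient $-\tilde A/3$. The only remaining task is to reconcile the raw coefficients of $\Ric_P(n_0,n_0)$, $\Ric_P(n_0,\ex)$ and $\Ric_P(e_1,n_0)$ with those in the target formula \eqref{eq:H-dot-1}, which at first glance do not match. Here I would invoke the decomposition $\ex = \cos\theta\,n_0 - \sin\theta\,e_1$ (a direct consequence of the orthogonality relations and the polar parametrization); substituting this relation into $\Ric_P(n_0,\ex)$ precisely absorbs the surplus $\Ric_P(e_1,n_0)$ contribution from $\frac{\partial h}{\partial n_0}(e_i,e_i)$ and converts the collected $\Ric_P(n_0,n_0)$ and $\Ric_P(n_0,\ex)$ coefficients into the required $\tfrac{\tilde A \cos\theta}{3}\Ric_P(n_0,n_0) - \tfrac{\tilde A(1+\cos\theta)}{3}\Ric_P(n_0,\ex)$; the $\Ric_P(\ex,\ex)$ coefficient of $\tfrac{\tilde A}{3}$ is already correct. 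The main obstacle is purely bookkeeping: carefully tracking the many cancellations coming from $\langle n_0,e_i\rangle=0$, the trace identity, and the $\mathbf{f}_1$-identity, since a single sign or coefficient slip would break the final agreement. Once these are handled, \eqref{eq:H-dot-1} follows at once.
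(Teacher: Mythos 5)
Your proof is correct, and it takes essentially the same route as the paper's: plug the explicit tensor \eqref{eq:expr-h} into each piece of $F$, use Lemma \ref{l:basic-facts}(i)--(ii) to evaluate the inner products, factor $(1+\cos\theta)(\ex-n_0)=\sin\theta\,\mathbf{f}_1$ to tame the $1/\sin\theta$ from $\la\nabla_{e_2}e_2,e_1\ra$, and collect. Your intermediate formulas for $h_{nn}$ and $h_{n1}$ coincide with the paper's \eqref{eq:h-nn} and \eqref{eq:h-ni} (with $\Ric_P(n_0+\ex,n_0)$ split into $\Ric_P(n_0,n_0)+\Ric_P(n_0,\ex)$), and the scalar-curvature collection via $\cos\theta(1+\cos\theta)+\sin^2\theta=1+\cos\theta$ is exactly the paper's.

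The one genuine (though minor) divergence is in the treatment of $\sum_i\frac{\partial h}{\partial n_0}(e_i,e_i)$. You substitute $(e_i,e_i)$ directly into the differentiated tensor and sum, which produces an expression carrying a $\Ric_P(e_1,n_0)$ term that you then absorb at the end via $\ex=\cos\theta\,n_0-\sin\theta\,e_1$. The paper instead computes $\operatorname{tr}_{\R^3}\frac{\partial h}{\partial n_0}$ and $\frac{\partial h}{\partial n_0}(n_0,n_0)$ and subtracts, landing immediately on the invariant form $-\frac{2\tilde A}{3}\Ric_P(n_0+\ex,n_0)$; consequently no $\Ric_P(e_1,n_0)$ term ever appears and the final reconciliation is purely algebraic. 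Your route is more pedestrian but requires the extra identity $\ex=\cos\theta\,n_0-\sin\theta\,e_1$ (true, but one more thing to verify); the paper's trace trick is slicker and keeps everything manifestly frame-independent until the very end. I verified that your form of $\frac12\sum_i\frac{\partial h}{\partial n_0}(e_i,e_i)$ is equal to $-\frac{\tilde A}{3}\Ric_P(n_0+\ex,n_0)$ under that decomposition, and that your final reconciliation does make the surplus coefficients vanish, so the argument closes.
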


\begin{proof}
First, we express $h_{nn}(\mathcal{X})$ in terms of $n_0$ and $\ex$. 
By \eqref{eq:expr-h}, notice that 
	\[
		h_{nn}(\mathcal{X}) = \frac{\textrm{Sc}_{P}}{6} 
		( |\mathcal{X}|^{2} - \la \mathcal{X}, n_0 \ra^{2} ) 
		- \frac{1}{3} \Ric_{P}(\mathcal{X},\mathcal{X}) 
		- \frac{1}{3} |\mathcal{X}|^{2} \Ric_{P}(n_0,n_0) 
		+ \frac{2}{3} \la \mathcal{X}, n_0 \ra \Ric_{P} ( \mathcal{X}, n_0).
	\]
Using Lemma \ref{l:basic-facts}, one gets 
	\begin{equation}\label{eq:h-nn}
		\begin{aligned}
			h_{nn}(\mathcal{X}) &= \tilde{A}^{2} 
			\bigg[ \frac{\textrm{Sc}_{P}}{6} ( 1 - \la n_0, \ex \ra^{2}) - 
			\frac{1}{3} \left\{ \Ric_{P} (n_0,n_0) + 2 \Ric_{P}(n_0,\ex) 
			+ \Ric_{P} ( \ex , \ex ) \right\} \\
			& \qquad \quad 
			- \frac{2}{3} ( 1 + \la n_0 , \ex \ra ) \Ric_{P}(n_0,n_0) 
			+ \frac{2}{3} ( 1 + \la n_0, \ex \ra ) 
			\left\{ \Ric_{P}(n_0,n_0) + \Ric_{P}(\ex, n_0) \right\} 
			\bigg]
			\\
			&= 
			\tilde{A}^{2} \left\{ \frac{\textrm{Sc}_{P}}{6}( 1 - \la n_0, \ex \ra^{2}) 
			- \frac{1}{3} \Ric_{P}(n_0,n_0) + \frac{2}{3} \la n_0, \ex \ra 
			\Ric_{P} ( n_0, \ex ) - \frac{1}{3} \Ric_{P}(\ex, \ex) 
			\right\}.
		\end{aligned}
	\end{equation}

		Next, we show 
	\begin{equation}\label{eq:del-h-del-nu}
		\sum_{i=1}^{2} \frac{\partial h}{\partial n_0} (e_{i},e_{i}) 
		= - \frac{2\tilde A}{3} \Ric_{P} ( n_0 + \ex, n_0). 
	\end{equation}
Since 
	\[
		\begin{aligned}
			\partial_{\eta} h_{\alpha \beta}(x) 
			&= \frac{\textrm{Sc}_{P}}{6} ( 2 x_{\eta} \delta_{\alpha \beta} 
			- \delta_{\alpha \eta} x_{\beta} - \delta_{\beta \eta} x_{\alpha} ) 
			- \frac{2}{3} \delta_{\alpha \beta} R_{\eta \mu} x^{\mu} 
			- \frac{2}{3} x_{\eta} R_{\alpha \beta} 
			\\
			&\quad 
			+ \frac{1}{3} 
			( \delta_{\alpha \eta} R_{\beta \mu} x^{\mu} + x_{\alpha} R_{\beta \eta} 
			+ \delta_{\beta \eta} R_{\alpha \mu} x^{\mu} 
			+ x_{\beta} R_{\alpha \eta} ) ,
		\end{aligned}
	\]
we observe that 
	\[
		\begin{aligned}
			\frac{\partial h_{\alpha \beta}}{\partial n_0} (X) 
			&= \la \nabla h_{\alpha \beta}(\mathcal{X}), n_0 \ra 
			\\
			&= \frac{\textrm{Sc}_{P}}{6} ( 2 \la \mathcal{X}, n_0 \ra \delta_{\alpha \beta} 
			- n_{0,\alpha} \mathcal{X}_{\beta} - n_{0,\beta} \mathcal{X}_{\alpha} ) 
			- \frac{2}{3} \delta_{\alpha \beta} \Ric_{P}(n_0,\mathcal{X}) 
			- \frac{2}{3} \la \mathcal{X}, n_0 \ra R_{\alpha \beta} 
			\\
			& \quad 
			+ \frac{1}{3} 
			( n_{0,\alpha} R_{\beta \mu } \mathcal{X}^{\mu} 
			+ \mathcal{X}_{\alpha } R_{\beta \eta} n_0^{\eta} 
			+ n_{0,\beta} R_{\alpha \mu} \mathcal{X}^{\mu} 
			+ \mathcal{X}_{\beta} R_{\alpha \eta} n_0^{\eta}).
		\end{aligned}
	\]
Thus, there holds 
	\[
		{\rm tr}_{\R^3}\, \left( \frac{\partial h}{\partial n_0} \right) 
		= \frac{2}{3} \textrm{Sc}_{P} \la \mathcal{X}, n_0 \ra 
		- 2 \Ric_{P}(n_0,\mathcal{X}) 
		- \frac{2}{3} \la \mathcal{X}, n_0 \ra \textrm{Sc}_{P} 
		+ \frac{4}{3} \Ric_{P}(\mathcal{X},n_0)
		= - \frac{2}{3} \Ric_{P} (\mathcal{X}, n_0).
	\]
We also note 
	\[
		\begin{aligned}
			\frac{\partial h}{\partial n_0}(n_0,n_0)  
			&= \frac{\textrm{Sc}_{P}}{6} ( 2 \la \mathcal{X}, n_0 \ra 
			- 2 \la \mathcal{X}, n_0 \ra ) 
			- \frac{2}{3} \Ric_{P} (\mathcal{X},n_0) 
			- \frac{2}{3} \la \mathcal{X}, n_0 \ra \Ric_{P} (n_0,n_0) 
			\\
			& \quad + \frac{2}{3} 
			\{ \Ric_{P}(n_0,\mathcal{X}) + \la \mathcal{X}, n_0 \ra \Ric_{P} (n_0,n_0)  \}
			\\
			&=0.
		\end{aligned}
	\]
Since $\{e_{1},e_{2},n_0\}$ forms an orthonormal basis of $\R^{3}$, 
we conclude 
	\[
		\sum_{i=1}^{2} \frac{\partial h}{\partial n_0}(e_{i},e_{i}) 
		= {\rm tr}_{\R^3}\, \left( \frac{\partial h}{\partial n_0} \right) 
		- \frac{\partial h}{\partial n_0} (n_0, n_0) 
		= - \frac{2}{3} \Ric_{P} (\mathcal{X},n_0) 
		= - \frac{2 \tilde{A} }{3} \Ric_{P} ( n_0 + \ex , n_0 )
	\]
and \eqref{eq:del-h-del-nu} holds.

		By \eqref{eq:expr-h} and Lemma \ref{l:basic-facts}, we also have 
	\begin{equation}\label{eq:h-ni}
		\begin{aligned}
			h_{ni}(\mathcal{X}) 
			&= \frac{\textrm{Sc}_P}{6} \left( - \la \mathcal{X}, n_0 \ra 
			\la \mathcal{X}, e_i \ra \right)
			- \frac{1}{3} |\mathcal{X}|^2 \Ric_P(n_0,e_i) 
			+ \frac{1}{3} 
			\left\{ \la \mathcal{X}, n_0 \ra \Ric_P(\mathcal{X},e_i) 
			+ \la \mathcal{X}, e_i \ra \Ric(\mathcal{X}, n_0) \right\}
			\\
			&= \tilde{A}^2 
			\bigg[ - \frac{\textrm{Sc}_P}{6} \left( 1 + \la n_0, \ex \ra \right) \la \ex, e_i \ra
			- \frac{2}{3} \left( 1 + \la n_0, \ex \ra \right) \Ric_P(n_0,e_i) 
			\\
			&\qquad\qquad  + \frac{1}{3} \left( 1 + \la n_0, \ex \ra \right) \Ric_P(n_0 + \ex, e_i) 
			+ \frac{1}{3} \la \ex , e_i \ra \Ric_P (n_0 + \ex, n_0) \bigg]
			\\
			&= \tilde{A}^2 
			\left[ -\frac{\textrm{Sc}_P}{6} \left( 1 + \la n_0, \ex \ra \right) \la \ex, e_i \ra
			 + \frac{1}{3} \left( 1 + \la n_0, \ex \ra \right) \Ric_P (\ex - n_0, e_i) 
			 + \frac{1}{3} \la \ex, e_i \ra \Ric_{P} (n_0 + \ex, n_0) 
			\right]. 
		\end{aligned}
	\end{equation}
Now using Lemma \ref{l:basic-facts}, \eqref{eq:h-nn} and 
\eqref{eq:del-h-del-nu}, we have 
	\begin{equation}\label{eq:cal-H-dot}
		\begin{aligned}
			F &= - \sum_{i=1}^{2} e_{i}(h_{ni}) + \sum_{i,j=1}^{2} h_{nj} 
			\la \nabla^{\R^{3}}_{e_{i}} e_{i} , e_{j} \ra 
			- \frac{1}{2} h_{nn} H_{\SA} 
			+ \frac{1}{2} \sum_{i=1}^{2} 
			\frac{\partial h}{\partial n_0} ( e_{i} , e_{i} )
			\\
			&= - \sum_{i=1}^{2} e_{i} (h_{ni}) 
			+ h_{n1} \la \nabla^{\R^{3}}_{e_{2}} e_{2} , e_{1} \ra 
			- \frac{1}{2} h_{nn} H_{\SA} 
			- \frac{\tilde{A}}{3} \Ric_{P} ( n_0 + \ex, n_0 )
			\\
			&= - \sum_{i=1}^{2} e_{i} ( h_{ni} ) 
			- \tilde{A} \frac{\cos \theta}{\sin \theta} 
			\bigg\{ - \frac{\textrm{Sc}_{P}}{6} ( 1 + \la n_0, \ex \ra) \la \ex, e_{1} \ra 
			+ \frac{1}{3} ( 1 + \la n_0, \ex \ra ) \Ric_{P}(\ex - n_0, e_{1}) 
			\\
			& \quad 
			+ \frac{1}{3} \la \ex , e_{1} \ra \Ric_{P} (n_0 + \ex, n_0 ) \bigg\} 
			\\
			& \quad 
			- \tilde A \left\{ \frac{\textrm{Sc}_{P}}{6}( 1 - \la n_0, \ex \ra^{2}) 
			- \frac{1}{3} \Ric_{P}(n_0,n_0) + \frac{2}{3} \la n_0, \ex \ra 
			\Ric_{P} ( n_0, \ex ) - \frac{1}{3} \Ric_{P}(\ex, \ex) 
			\right\}
			\\
			&
			\quad - \frac{\tilde{A}}{3} \Ric_{P}( n_0 + \ex, n_0 ) 
			\\
			&= - \sum_{i=1}^{2} e_{i} ( h_{ni} ) 
			+ \tilde{A} \frac{\cos \theta}{\sin \theta} 
			\bigg\{  \frac{\textrm{Sc}_{P}}{6} ( 1 + \la n_0, \ex \ra) \la \ex, e_{1} \ra 
			- \frac{1}{3} ( 1 + \la n_0, \ex \ra ) \Ric_{P}(\ex - n_0, e_{1}) 
			\\
			& \quad 
			- \frac{1}{3} \la \ex , e_{1} \ra \Ric_{P} ( n_0 + \ex, n_0 ) \bigg\} 
			\\
			& \quad - 
			\tilde A \left\{ \frac{\textrm{Sc}_{P}}{6}( 1 - \la n_0, \ex \ra^{2}) 
			+ \frac{1}{3} ( 1 + 2 \la n_0, \ex \ra )
			\Ric_{P} ( n_0, \ex ) - \frac{1}{3} \Ric_{P}(\ex, \ex) 
			\right\}.
		\end{aligned}
	\end{equation}
Noting that 
	\[
		\la n_0, \ex \ra = \cos \theta, \quad 
		\la \ex, e_{1} \ra = - \sin \theta,
	\]
one obtains 
	\[
		\begin{aligned}
			& \frac{\cos \theta }{\sin \theta} \frac{\textrm{Sc}_{P}}{6} 
			( 1 + \la n_0, \ex \ra ) \la \ex, e_{1} \ra 
			- \frac{\textrm{Sc}_{P}}{6} ( 1 - \la n_0, \ex \ra^2 ) 
			\\
			=&\frac{\textrm{Sc}_{P}}{6} ( 1 + \la n_0, \ex \ra ) 
			\left\{ \frac{\cos \theta}{\sin \theta} \la \ex, e_{1} \ra 
			- 1 + \la n_0, \ex \ra
			\right\} 
			= - \frac{\textrm{Sc}_{P}}{6} ( 1 + \cos \theta ).
		\end{aligned}
	\]
Similarly, since 
	\[
		\begin{aligned}
			(1+ \la n_0, \ex \ra ) ( \ex - n_0 ) 
			&=
			 \big( 1 - \cos^{2} \theta, \ - ( 1 + \cos \theta) \sin \theta \cos \varphi, \ 
			- ( 1 + \cos \theta) \sin \theta \sin \varphi \big)
			\\
			&= \sin \theta \big( \sin \theta, \ -( 1 + \cos \theta) \cos \varphi, \ 
			- ( 1 + \cos \theta ) \sin \varphi \big) \\
			&=: \mathbf{f}_{1} \sin \theta
		\end{aligned}
	\]
where 
	\[
		\mathbf{f}_{1} = \big( \sin \theta, \ -( 1 + \cos \theta) \cos \varphi, \ 
			- ( 1 + \cos \theta ) \sin \varphi \big),
	\]
we have 
	\[
		-\frac{1}{3} \frac{\cos \theta}{\sin \theta} ( 1 + \la n_0, \ex \ra ) 
		\Ric_{P} ( \ex - n_0, e_{1} ) 
		= - \frac{1}{3}  \Ric_{P} ( \mathbf{f}_{1}, e_{1} ) \cos \theta.
	\]
Finally, from 
	\[
		-\frac{1}{3}\frac{\cos \theta}{\sin \theta} \la \ex, e_{1} \ra 
		\Ric_{P} ( n_0 + \ex , n_0 ) 
		=  \frac{1}{3}  \Ric_{P}( n_0 + \ex , n_0 ) \cos \theta, 
	\]
it follows that 
	\[
		\begin{aligned}
			& - \frac{1}{3} \frac{\cos \theta}{\sin \theta}  \la \ex, e_{1} \ra 
			\Ric_{P} ( n_0 + \ex , n_0 ) 
			-
			\frac{1}{3} ( 1 + 2 \la n_0, \ex \ra ) \Ric_{P} ( n_0, \ex ) 
			\\
			= & \frac{1}{3} \Ric_P (n_0,n_0) \cos \theta 
			- \frac{1}{3} ( 1 + \cos \theta ) \Ric_{P} (n_0,\ex) . 
		\end{aligned}
	\]
Substituting these formulas into \eqref{eq:cal-H-dot}, 
we have \eqref{eq:H-dot-1}. 
\end{proof}

Now we complete the proof of Lemma \ref{l:H-dot-psi0}.

\begin{proof}[Proof of Lemma \ref{l:H-dot-psi0}]
Due to \eqref{eq:H-dot-1}, 
we first compute the following quantities one by one:
	\[
		\begin{aligned}
			\int_{S^{2}_{\tilde A}} ( 1 - \chi_{\delta} ) 
			(\Delta_{S^{2}_{\tilde A}} \psi_{0} ) 
			\tilde{A} \bigg\{ 
			&
			-\frac{\ScP}{6} ( 1 + \cos \theta ) 
			- \frac{1}{3} \Ric_{P} ( \mathbf{f}_{1}, e_{1} ) \cos \theta 
			- \frac{1}{3} ( 1 + \cos \theta ) \Ric_{P} ( n_0, \ex )
			\\
			& + \frac{1}{3} \Ric_{P}(n_0,n_0) \cos \theta 
			 +\frac{1}{3} \Ric_{P} ( \ex, \ex ) 
			\bigg\} 
			d \sigma. 
		\end{aligned}
	\]
Here we recall that $\chi_{\delta}(\mathcal{X}(\theta,\varphi))$ does not depend on 
$\varphi$ and we use this fact repeatedly in the following computations. 
We also recall that 
	\[
		\begin{aligned}
			\Delta_{\SA} \psi_0 
			&= \frac{1}{\tilde{A}^2} \left[ - 2 A \cos \theta 
			+ 2 B \cos 2 \varphi 
			\left\{ \cos \theta - \frac{2 (1-\cos\theta)}{\sin^2 \theta} \right\} \right].
		\end{aligned}
	\]

\bigskip

\noindent
$\bullet$ $ \displaystyle \int_{S^{2}_{\tilde A}}  \frac{1}{3} 
( 1 - \chi_{\delta} ) \Ric_{P}( \ex, \ex ) \Delta_{S^{2}_{\tilde A}} \psi_{0}  
d \sigma = O(\delta^{2})$.

\bigskip

		Since $\Ric_{P} ( \ex, \ex ) = R_{11}$, 
it follows from Lemma \ref{l:basic-facts} that 
	\[
		\begin{aligned}
			&\int_{S^{2}_{\tilde A}} \frac{1}{3} (1 - \chi_{\delta}) \Ric_{P} ( \ex, \ex ) 
			\Delta_{S^{2}_{\tilde A}} \psi_{0} d \sigma 
			\\
			= &
			\frac{R_{11}}{3} \int_{0}^{\pi} ( 1 - \chi_{\delta}) \int_{0}^{2 \pi} 
			 \sin \theta 
			\left[ - 2 A  \cos \theta + 2 B \cos 2 \varphi 
			\left\{ \cos \theta 
			- \frac{2 ( 1 - \cos \theta)}{\sin^{2} \theta} 
			\right\}
			\right] 
			d \varphi d \theta 
			\\
			=& 
			-\frac{4 \pi R_{11}A}{3}  \int_{0}^{\pi} 
			( 1 - \chi_{\delta} ) \cos \theta \sin \theta 
			d \theta =
			\frac{4 \pi R_{11} A}{3} \int_0^\pi \chi_\delta \cos \theta 
			\sin \theta d \theta =  O(\delta^{2}).
		\end{aligned}
	\]

\bigskip

\noindent
$\bullet$ 
$\displaystyle 
\int_{S^{2}_{\tilde A}} \frac{1}{3} (1 - \chi_{\delta}) 
\Ric_{P}(n_0,n_0) \cos \theta 
\Delta_{S^{2}_{\tilde A}} \psi_{0} d \sigma 
= - \frac{8}{15} \pi A R_{11} - \frac{8}{45} \pi A (R_{22} + R_{33})
+ \frac{8}{15} \pi B (R_{22} - R_{33} ) + O(\delta^{2}).
$

\bigskip

		We first note that 
	\[
		\begin{aligned}
			\Ric_{P}(n_0,n_0) &= (R_{11} \cos^{2} \theta 
			+ R_{22} \sin^{2} \theta \cos^{2} \varphi
			+ R_{33} \sin^{2} \theta \sin^{2} \varphi ) 
			\\
			& \quad + 
			(2 R_{12} \cos \theta \sin \theta \cos \varphi 
			+ 2R_{23} \sin^{2} \theta \cos \varphi \sin \varphi 
			+ 2 R_{13} \cos \theta \sin \theta \sin \varphi )
			\\
			&=: R_{I} + R_{II}.
		\end{aligned}
	\]
Using Lemma \ref{l:basic-facts}, we get 
	\[
		\begin{aligned}
			&\int_{S^{2}_{\tilde A}} ( 1 - \chi_{\delta}) \Ric_{P}(n_0,n_0) \cos \theta 
			 \Delta_{S^{2}_{\tilde A}} \psi_0 
			d \sigma 
			\\
			=&
			\int_{0}^{\pi} (1 - \chi_{\delta}) \int_{0}^{2\pi} 
			R_{I} \sin \theta \cos \theta 
			\left[ -2 A \cos \theta 
			+ 2 B \cos 2 \varphi \left\{ \cos \theta 
			- \frac{2(1-\cos \theta)}{\sin^{2} \theta}
			\right\} \right]
			d \varphi d \theta 
			\\
			=& 
			- 2 A \int_{0}^{\pi} (1 - \chi_{\delta}) \sin \theta \cos^{2} \theta 
			\left\{ 2 \pi R_{11} \cos^{2} \theta  
			+ \pi (R_{22} + R_{33}) \sin^{2} \theta \right\} d \theta 
			\\
			&+ 2 B \int_{0}^{\pi} 
			( 1 - \chi_{\delta} ) \sin \theta \cos \theta 
			\left\{ \cos \theta - \frac{2(1-\cos \theta)}{\sin^{2} \theta}  \right\} 
			\frac{ \pi } {2} (R_{22} - R_{33} ) \sin^{2} \theta 
			d \theta
			\\
			=& - \frac{8}{5} \pi A R_{11} - \frac{8}{15} \pi A (R_{22} + R_{33}) 
			+ \pi B ( R_{22} - R_{33} )  \left\{ \frac{4}{15} + \frac{4}{3} \right\} 
			+ O(\delta^{2}).
		\end{aligned}
	\]
Hence, 
	\[
		\begin{aligned}
			\int_{S^{2}_{\tilde A}} \frac{1}{3} ( 1 - \chi_{\delta}) 
			\Ric_{P}(n_0,n_0) \cos \theta 
			\Delta_{S^{2}_{\tilde A}} \psi d \sigma 
			= - \frac{8}{15} \pi A R_{11} - \frac{8}{45} \pi A (R_{22} + R_{33}) 
			+ \frac{8}{15} \pi B ( R_{22} - R_{33} ) + O(\delta^{2}).
		\end{aligned}
	\]

\bigskip

\noindent
$\bullet$ 
$\displaystyle 
\int_{S^{2}_{\tilde A}} - \frac{1}{3} ( 1 - \chi_{\delta}) ( 1 + \cos \theta ) 
\Ric_{P} ( n_0, \ex ) \Delta_{S^{2}_{\tilde A}} \psi_{0} d \sigma 
=  \frac{8}{9} \pi A R_{11} + O(\delta^{2}).
$

\bigskip

		From 
	\[
		\Ric_{P} ( n_0, \ex ) 
		= R_{11} \cos \theta + R_{12} \sin \theta \cos \varphi 
		+ R_{13} \sin \theta \sin \varphi, 
	\]
it follows that 
	\[
		\begin{aligned}
			&\int_{S^{2}_{\tilde A}} (1 - \chi_{\delta}) ( 1 + \cos \theta ) 
			\Ric_{P} ( n_0, \ex ) \Delta_{S^{2}_{\tilde A}} \psi_{0} d \sigma 
			\\
			=& \int_{0}^{\pi} ( 1 - \chi_{\delta}) \int_{0}^{2\pi} 
			( 1 + \cos \theta ) R_{11} \cos \theta  \sin \theta 
			( - 2 A \cos \theta ) d \varphi d \theta 
			\\
			=& - 4 \pi A R_{11}\int_{0}^{\pi} (1 - \chi_{\delta}) 
			\cos^{2} \theta \sin \theta (1+\cos \theta) d \theta  + O(\delta^2)
			= - \frac{8}{3} \pi A R_{11} + O(\delta^{2}). 
		\end{aligned}
	\]

\bigskip

\noindent
$\bullet$ 
$\displaystyle 
	\int_{S^{2}_{\tilde A}} 
	- \frac{1}{3} ( 1 - \chi_{\delta} ) 
	\Ric_{P} ( \mathbf{f}_{1} , e_{1} ) 
	\cos \theta \Delta_{S^{2}_{\tilde A}} \psi_{0} d \sigma 
	= -\frac{16}{45} \pi A R_{11} - \frac{4}{15} \pi A (R_{22} + R_{33}) 
	- \frac{14}{45} \pi B (R_{22} - R_{33} ) + O(\delta^{2}).
$

\bigskip

		From 
$\mathbf{f}_{1} = ( \sin \theta, -( 1 + \cos \theta ) \cos \varphi, 
- ( 1 + \cos \theta ) \sin \varphi )$ and 
	\begin{equation}\label{eq:ric-fe}
		\begin{aligned}
			\Ric_{P} ( \mathbf{f}_{1} , e_{1} ) 
			&= \Ric_{P} \left( 
				\begin{pmatrix} 
					\sin \theta \\ -(1+\cos \theta) \cos \varphi \\
					- ( 1 + \cos \theta) \sin \varphi
			 	\end{pmatrix}
				,
				\begin{pmatrix}
					- \sin \theta \\ \cos \theta \cos \varphi \\ 
					\cos \theta \sin \varphi
				\end{pmatrix}
			\right)
			\\
			&= 
			\left\{ - R_{11} \sin^{2} \theta - R_{22} ( 1 + \cos \theta ) \cos \theta 
			\cos^{2} \varphi - R_{33} ( 1 + \cos \theta ) \cos \theta \sin^{2} \varphi 
			\right\} 
			\\
			& \quad + 
			\left\{(R_{12} \cos \varphi + R_{13} \sin \varphi )
			 ( 1 + 2 \cos \theta ) \sin \theta 
			- 2 R_{23} ( 1 + \cos \theta ) \cos \theta \sin \varphi \cos \varphi 
			\right\}, 
		\end{aligned}
	\end{equation}
one observes that 
	\[
		\begin{aligned}
			&\int_{S^{2}_{\tilde A}} (1 - \chi_{\delta}) 
			\Ric_{P} ( \mathbf{f}_{1} , e_{1} ) \cos \theta 
			\Delta_{S^{2}_{\tilde A}} \psi_{0} d \sigma 
			\\
			=& \int_{0}^{\pi} (1 - \chi_{\delta}) \int_{0}^{2\pi} 
			\left\{ - R_{11} \sin^{2} \theta - R_{22} ( 1 + \cos \theta ) \cos \theta 
			\cos^{2} \varphi - R_{33} ( 1 + \cos \theta ) \cos \theta \sin^{2} \varphi 
			\right\} 
			\\
			& \qquad \times 
			\cos \theta \sin \theta 
			\left[ - 2 A \cos \theta 
			+ 2 B \cos 2 \varphi \left\{ \cos \theta - \frac{2(1-\cos \theta)}
			{\sin^{2} \theta} \right\} \right] d \varphi d \theta 
			\\
			=& 2\pi A \int_{0}^{\pi} (1 - \chi_{\delta}) \left\{  2 R_{11} \sin^{2} \theta 
			+ (R_{22} + R_{33}) ( 1 + \cos \theta) \cos \theta
			\right\} \cos^{2} \theta \sin \theta 
			d \theta 
			\\
			&+ 2B \int_{0}^{\pi} (1 - \chi_{\delta}) \frac{\pi}{2} ( - R_{22} + R_{33} ) 
			 ( 1 + \cos \theta ) \cos^{2} \theta \sin \theta 
			 \left\{ \cos \theta - \frac{2(1-\cos \theta)}
			{\sin^{2} \theta} \right\} d \theta 
			\\
			=& 2 \pi A \left\{ \frac{8}{15} R_{11} 
			+ \frac{2}{5} (R_{22} + R_{33})  \right\} 
			\\
			&+ \pi B ( -R_{22} + R_{33}) \int_{0}^{\pi}  (1 - \chi_{\delta})
			\cos^{2} \theta \sin \theta 
			\left\{ ( 1 + \cos \theta ) \cos \theta - 2 \right\} d \theta 
			+ O(\delta^{2})
			\\
			=& \frac{16}{15} \pi A R_{11} 
			+ \frac{4}{5} \pi A (R_{22} + R_{33} ) 
			 + \pi B (- R_{22} + R_{33} ) 
			\left( \frac{2}{5} - \frac{4}{3} \right) + O(\delta^{2})
			\\
			=& \frac{16}{15} \pi A R_{11} + \frac{4}{5} \pi A (R_{22} + R_{33} ) 
			+ \frac{14}{15}  \pi B ( R_{22} - R_{33}) + O(\delta^{2}).
		\end{aligned}
	\]

\bigskip

\noindent
$\bullet$ 
$\displaystyle 
\int_{S^{2}_{\tilde A}} - (1 - \chi_{\delta}) \frac{\ScP}{6} ( 1 + \cos \theta ) 
\Delta_{S^{2}_{\tilde A}} \psi_{0} d \sigma 
=  \frac{4}{9} \pi A \ScP + O(\delta^{2}).
$

\bigskip

By 
	\[
		\tilde{A}^{2} ( 1 + \cos \theta ) \Delta_{S^{2}_{\tilde A}} \psi_0
		= \left[ - 2 A ( 1 + \cos \theta ) \cos \theta 
		+ 2 B \cos 2 \varphi \left\{ ( 1 + \cos \theta ) \cos \theta 
		- 2  \right\} \right],
	\]
we obtain 
	\[
		\begin{aligned}
			&\int_{S^{2}_{\tilde A}} (1 - \chi_{\delta})
			( 1 + \cos \theta ) \Delta_{S^{2}_{\tilde A}} \psi_0
			d \sigma 
			\\
			=& 
			\int_{0}^{\pi} (1 - \chi_{\delta}) \int_{0}^{2\pi} 
			\left[ - 2 A ( 1 + \cos \theta ) \cos \theta 
			+ 2 B \cos 2 \varphi \left\{ ( 1 + \cos \theta ) \cos \theta 
			- 2  \right\} \right] \sin \theta d \varphi d \theta 
			\\
			=& 
			- 4 \pi A \int_{0}^{\pi} (1 - \chi_{\delta}) ( 1 + \cos \theta ) 
			\cos \theta \sin \theta d \theta
			= - \frac{8}{3} \pi A + O(\delta^{2}).  
		\end{aligned}
	\]

\ 

\

Collecting the above results, we have 
	\begin{equation}\label{eq:H-dot-2}
		\begin{aligned}
			&\int_{S^{2}_{\tilde A}} ( 1 - \chi_{\delta} ) 
			(\Delta_{S^{2}_{\tilde A}} \psi_{0} ) 
			\tilde{A} \bigg\{ 
				- \frac{\ScP}{6} ( 1 + \cos \theta ) 
				- \frac{1}{3} \Ric_{P} ( \mathbf{f}_{1}, e_{1} ) \cos \theta 
				- \frac{1}{3} ( 1 + \cos \theta ) \Ric_{P} ( n_0, \ex )
			\\
			& \hspace{10mm}
			+ \frac{1}{3} \Ric_{P}(n_0,n_0) \cos \theta 
			 + \frac{1}{3} \Ric_{P} ( \ex, \ex ) 
			\bigg\} 
			d \sigma
			\\
			=& \tilde{A}\left\{  \frac{4}{9} \pi A \ScP
			- \frac{4}{9} \pi A ( R_{22} + R_{33} ) 
			+ \frac{2}{9} \pi B ( R_{22} - R_{33} ) 
			\right\} + O(\delta^{2}).
		\end{aligned}
	\end{equation}

\ 

\

		Next, we compute 
$-\sum_{i=1}^2  \int_{\SA} e_{i} (h_{ni}) (1 - \chi_{\delta}) \Delta_{\SA} \psi_{0} d \sigma$. 
We recall the following expressions of $h_{ni}$ from \eqref{eq:h-ni}: 
	\[
		h_{ni}(X) = 
		\tilde{A}^{2} \left\{ 
		- \frac{\ScP}{6} ( 1 + \la n_0, \ex \ra) \la \ex, e_{i} \ra 
		+ \frac{1}{3} ( 1 + \la n_0, \ex \ra ) \Ric_{P}(\ex - n_0, e_{i}) 
		+ \frac{1}{3} \la \ex , e_{i} \ra \Ric_{P} (n_0 + \ex, n_0 )
		\right\}.
	\]

\bigskip

\noindent
$\bullet$ 
$\displaystyle 
\int_{S^{2}_{\tilde A}} - (1 - \chi_{\delta} ) 
e_{2} ( h_{n2} ) \Delta_{S^{2}_{\tilde A}} \psi_{0} d \sigma 
=  \frac{20}{9} \pi \tilde{A} B ( R_{22} - R_{33} ) + O(\delta^{2}). 
$

\

		By $\la \ex, e_{2} \ra = 0$ and 
$(1+\la n_0, \ex \ra ) (\ex - n_0 ) = \mathbf{f}_{1} \sin \theta$, we have 
	\[
		h_{n2} = \frac{\tilde{A}^{2}}{3} \Ric_{P} ( \mathbf{f}_{1} , e_{2} ) \sin \theta.
	\]
By simple calculations, one may see 
	\[
		\begin{aligned}
			\Ric_{P} ( \mathbf{f}_{1}, e_{2} ) 
			= &\frac{1}{2} ( R_{22} - R_{33} ) ( 1 + \cos \theta ) \sin 2 \varphi 
			- R_{23} ( 1 + \cos \theta ) \cos 2 \varphi 
			- R_{12} \sin \theta \sin \varphi + R_{13} \sin \theta 
			\cos \varphi
			,
			\\
			\frac{ \partial }{\partial \varphi } 
			\Ric_{P} ( \mathbf{f}_{1} , e_{2} ) 
			=& 
			( R_{22} - R_{33} ) ( 1 + \cos \theta ) \cos 2 \varphi 
			+ 2 R_{23} ( 1 + \cos \theta ) \sin 2 \varphi
			- R_{12} \sin \theta \cos \varphi - R_{13} \sin \theta \sin \varphi.
		\end{aligned}
	\]
Since $e_{2}(f)=( \tilde{A} \sin \theta)^{-1} \partial_{\varphi} f$, 
we get 
	\[
		\begin{aligned}
			&\int_{S^{2}_{\tilde A}} (1 - \chi_{\delta} )
			 e_{2} ( h_{n2} ) \Delta_{S^{2}_{\tilde A}} \psi_{0} 
			d \sigma 
			\\
			=& \int_{0}^{\pi} (1 - \chi_{\delta} ) \int_{0}^{2\pi} 
			\frac{\tilde{A}^{2}}{3} \frac{1}{\tilde{A} \sin \theta} 
			\frac{\partial}{\partial \varphi } 
			\left( \Ric_{P} ( \mathbf{f}_{1} , e_{2} ) \sin \theta \right) 
			\\
			& \qquad \quad \times
			\left[ - 2 A \cos \theta + 2 B \cos 2 \varphi 
			\left\{ \cos \theta - \frac{2(1-\cos \theta)}{ \sin^{2} \theta } 
			\right\} \right] \sin \theta d \varphi d \theta 
			\\
			=& \frac{2}{3}  \tilde A B \int_{0}^{\pi} (1 - \chi_{\delta} ) \int_{0}^{2\pi} 
			(R_{22} - R_{33} ) ( 1 + \cos \theta ) \sin \theta 
			\cos^{2} 2 \varphi \left\{ \cos \theta - 
			\frac{2(1-\cos \theta)}{ \sin^{2} \theta } \right\} d \varphi d \theta
			\\
			=& \frac{2}{3} \pi \tilde A B (R_{22} - R_{33} ) 
			\int_{0}^{\pi} (1 - \chi_{\delta} ) \sin \theta 
			\left\{ \cos \theta ( 1 + \cos \theta) - 2  \right\} d \theta 
			\\
			=& \frac{2}{3} \pi \tilde A B (R_{22} - R_{33} ) 
			\left( \frac{2}{3} - 4 \right) + O(\delta^{2})
			= - \frac{20}{9} \pi \tilde A B (R_{22} - R_{33} ) 
			+ O(\delta^{2}). 
		\end{aligned}
	\]

\bigskip

\noindent
$\bullet$ 
$\displaystyle 
\int_{S^{2}_{\tilde A}} - ( 1 - \chi_{\delta} ) e_{1} ( h_{n1} ) 
\Delta_{S^{2}_{\tilde A}} \psi_{0} d \sigma 
= \tilde{A} \left\{ - \frac{4}{9} \pi A R_{11} 
+ \frac{14}{9} \pi B ( R_{22} - R_{33} ) 
\right\} + O(\delta^{2}).
$

\bigskip

		First, we notice that 
	\[
		h_{n1} = 
		\tilde{A}^{2} \left\{ 
		\frac{\ScP}{6} ( 1 + \cos \theta ) \sin \theta 
		+ \frac{1}{3} \Ric_{P} ( \mathbf{f}_{1}, e_{1} ) \sin \theta 
		- \frac{1}{3} \Ric_{P} ( n_0 + \ex, n_0 ) \sin \theta
		\right\}
	\]
and that by \eqref{eq:ric-fe}, 
	\[
		\begin{aligned}
			\Ric_{P}(\mathbf{f}_{1}, e_{1} ) \sin \theta 
			&= - \left\{ R_{11} \sin^{2} \theta 
			+ ( R_{22} \cos^{2} \varphi + R_{33} \sin^{2} \varphi ) 
			(1+\cos \theta ) \cos \theta
			\right\} \sin \theta
			\\
			&\qquad + R_{1}(\theta,\varphi) =: R_{\mathbf{f}_{1}e_{1}} 
			+ R_{1} ( \theta, \varphi ),
			\\
			\Ric_{P}(n_0 + \ex, n_0 ) \sin \theta 
			&= \left\{ R_{11} \cos \theta ( 1 + \cos \theta )
			+ R_{22} \sin^{2} \theta \cos^{2} \varphi 
			+ R_{33} \sin^{2} \theta \sin^{2} \varphi  
			\right\} \sin \theta 
			\\& \qquad + R_{2}(\theta, \varphi ) 
			=: R_{n \ex}(\theta,\varphi ) + R_{2} ( \theta, \varphi ) 
		\end{aligned}
	\]
where 
	\[
		0=\int_{0}^{2\pi} \partial_\theta R_{1} ( \theta, \varphi ) d \varphi 
		= \int_{0}^{2\pi} \partial_\theta R_{1} ( \theta, \varphi ) \cos 2 \varphi d \varphi 
		= \int_{0}^{2\pi} \partial_\theta R_{2} ( \theta, \varphi ) d \varphi 
		= \int_{0}^{2\pi} \partial_\theta R_{2} ( \theta, \varphi ) \cos 2 \varphi d \varphi.
	\]
We also remark that in a neighbourhood of $\theta = -\pi$, one sees that 
	\begin{equation}\label{eq:est-R-pi}
		\begin{aligned}
			\left|\frac{\ScP}{6} ( 1 + \cos \theta ) \sin \theta \right| 
			+ | R_{\mathbf{f}_{1} e_{1}} ( \theta, \varphi ) | 
			+ | R_{n \ex} ( \theta, \varphi ) | 
			\leq C_{0} \sin^{3} \theta.
		\end{aligned}
	\end{equation}
Thus since $e_{1}(f) = \tilde{A}^{-1} \partial_{\theta} f$ and 
$| \partial_{\theta} ( \chi_{\delta}(\mathcal{X}) ) | \leq C_{0} \delta^{-1}$ by 
\eqref{eq:deri-chi-d-2}, 
it follows from \eqref{eq:est-R-pi} and integration by parts in $\theta$ that 
	\[
		\begin{aligned}
			&\int_{S^{2}_{\tilde A}} 
			( 1 - \chi_{\delta} ) e_{1} ( h_{n1} ) \Delta_{S^{2}_{\tilde A}} \psi_{0} 
			d \sigma 
			\\
			=& \tilde{A} 
			\int_{0}^{\pi} \int_{0}^{2\pi} 
			( 1 - \chi_{\delta}) 
			\partial_{\theta} \left\{ \frac{\ScP}{6} ( 1 + \cos \theta ) 
			\sin \theta + \frac{1}{3} \left( R_{\mathbf{f}_{1} e_{1}}  + R_{1} \right) 
			- \frac{1}{3}  \left( R_{n \ex} + R_{2}  \right) \right\}
			\\
			& \qquad \qquad 
			\times \left[ -2 A \cos \theta 
			+ 2 B \cos 2 \varphi \left\{ \cos \theta 
			- \frac{2(1-\cos \theta)}{\sin^{2} \theta} \right\} \right] 
			\sin \theta d \varphi d \theta 
			\\
			=& \tilde{A} 
			\int_{0}^{\pi} \int_{0}^{2\pi} 
			( 1 - \chi_{\delta}) 
			\partial_{\theta} \left\{ \frac{\ScP}{6} ( 1 + \cos \theta ) 
			\sin \theta + \frac{1}{3}  R_{\mathbf{f}_{1} e_{1}}  
			- \frac{1}{3}  R_{n \ex}  \right\}
			\\
			& \qquad \qquad 
			\times \left[ - A \sin 2 \theta  
			+ 2 B \cos 2 \varphi \left\{ \frac{\sin 2 \theta}{2}  
			- \frac{2(1-\cos \theta)}{\sin \theta} \right\} \right]  d \varphi d \theta
			\\
			=&  - \tilde A \int_{0}^{\pi} \int_{0}^{2\pi} 
			( 1 - \chi_\delta) 
			\left\{ \frac{\ScP}{6} ( 1 + \cos \theta ) 
			\sin \theta + \frac{1}{3}  R_{\mathbf{f}_{1} e_{1}}  
			- \frac{1}{3}  R_{n \ex}  \right\} 
			\\
			& \qquad \qquad 
			\times \left[ 
				- 2 A \cos 2 \theta  + 2 B \cos 2 \varphi 
				\left\{ \cos 2 \theta - 2 
				+ 2 \frac{(1-\cos \theta) \cos \theta}{\sin^{2} \theta} 
				\right\} 
			\right] d \varphi d \theta + O (\delta^{2}). 
		\end{aligned}
	\]

		Next, from 
	\[
		\begin{aligned}
			\int_{0}^{2 \pi} R_{\mathbf{f}_{1} e_{1}}  d \varphi 
			&= - \pi \left\{ 2 R_{11} \sin^{2} \theta 
			+ (R_{22} + R_{33}) ( 1 + \cos \theta ) \cos \theta \right\} 
			\sin \theta,
			\\
			\int_{0}^{2\pi} R_{n \ex} d \varphi 
			&= \pi \left\{ 2 R_{11} ( 1 + \cos \theta ) \cos \theta 
			+ (R_{22} + R_{33}) \sin^{2} \theta  \right\} \sin \theta,
			\\
			\int_{0}^{2\pi} R_{\mathbf{f}_{1} e_{1}} \cos 2 \varphi d \varphi 
			&= - \frac{\pi}{2} ( R_{22} - R_{33} ) ( 1 + \cos \theta ) \cos \theta 
			\sin \theta, 
			\\
			\int_{0}^{2\pi} R_{n \ex} \cos 2 \varphi d \varphi 
			&= \frac{\pi}{2} ( R_{22} - R_{33} ) \sin^{3} \theta, 
		\end{aligned}
 	\]
it follows that 
	\[
		\begin{aligned}
			\int_{0}^{2\pi} (R_{\mathbf{f}_{1} e_{1}} - R_{n \ex} ) d \varphi 
			&= - \pi \left\{ 
				2 R_{11} ( 1 + \cos \theta ) + (R_{22} + R_{33} ) ( 1 + \cos \theta ) 
			\right\} \sin \theta 
			\\
			&= - \pi \left\{\ScP +  R_{11} \right\} ( 1 + \cos \theta ) \sin \theta
			,\\
			\int_{0}^{2\pi} (R_{\mathbf{f}_{1} e_{1}} - R_{n \ex} ) \cos 2 \varphi 
			d \varphi 
			&= - \frac{\pi}{2} (R_{22} - R_{33} ) ( 1 + \cos \theta ) \sin \theta. 
		\end{aligned}
	\]
Hence, we have 
	\[
		\begin{aligned}
			\int_{0}^{2\pi} 
			\left\{ \frac{\ScP}{6} ( 1 + \cos \theta ) \sin \theta 
			+ \frac{1}{3} (R_{\mathbf{f}_{1} e_{1}} - R_{n \ex} ) \right\} 
			d \varphi
			&= \frac{\pi}{3} \left\{ \ScP - (\ScP + R_{11}) \right\}
			( 1 + \cos \theta ) \sin \theta  
			\\
			&= - \frac{\pi}{3} R_{11} ( 1 + \cos \theta ) \sin \theta,
			\\
			 \int_{0}^{2\pi}
			\left\{ \frac{\ScP}{6} ( 1 + \cos \theta ) \sin \theta 
			+ \frac{1}{3} (R_{\mathbf{f}_{1} e_{1}} - R_{n \ex} ) \right\} 
			\cos 2 \varphi d \varphi 
			&= - \frac{\pi}{6} (R_{22} - R_{33}) ( 1 + \cos \theta ) \sin \theta .
		\end{aligned}
	\]
Thus 
	\[
		\begin{aligned}
			& - \tilde{A} \int_{0}^{\pi} \int_{0}^{2\pi} 
			(1 - \chi_\delta) 
			\left\{ \frac{\ScP}{6} ( 1 + \cos \theta ) \sin \theta 
			+ \frac{1}{3}(R_{\mathbf{f}_{1} e_{1}} - R_{n \ex}) \right\} 
			\\
			& \qquad \qquad \times 
			\left[ - 2 A \cos 2 \theta + 2 B \cos 2 \varphi 
			\left\{ \cos 2 \theta - 2 
			+ 2 \frac{(1-\cos \theta) \cos \theta}{ \sin^{2} \theta }
			\right\} \right] d \varphi d \theta 
			\\
			=& - \frac{2}{3} \pi  \tilde{A} A R_{11}
			\int_{0}^{\pi} (1 - \chi_\delta) 
			( 1 + \cos \theta ) \sin \theta \cos 2 \theta d \theta 
			\\
			& + \frac{\pi}{3} \tilde{A} B (R_{22} - R_{33}) 
			\int_{0}^{\pi} (1 - \chi_\delta) ( 1 + \cos \theta ) \sin \theta 
			\left\{ \cos 2 \theta - 2 
			+ 2 \frac{(1-\cos \theta) \cos \theta}{ \sin^{2} \theta }
			\right\} d \theta 
			\\
			=& - \frac{2}{3} \pi  \tilde{A} A R_{11}
			\int_{0}^{\pi} (1 - \chi_\delta) ( 1 + \cos \theta ) 
			( 2 \cos^{2} \theta - 1 ) \sin \theta  d \theta 
			\\
			& + \frac{\pi}{3}  \tilde{A}  B (R_{22} - R_{33})  \int_{0}^{\pi} 
			(1 - \chi_\delta) \sin \theta \left\{ 
			( 2 \cos^{2} \theta - 1 ) ( 1 + \cos \theta) 
			- 2 ( 1 + \cos \theta) + 2 \cos \theta 
			\right\} d \theta 
			\\
			=& - \frac{2}{3} \pi  \tilde{A} R_{11} A
			\int_{0}^{\pi} ( 2 \cos^{2} \theta - 1 ) \sin \theta d \theta 
			+ \frac{\pi}{3} \tilde{A} B (R_{22} - R_{33})  \int_{0}^{\pi} 
			\sin \theta ( 2 \cos^{2} \theta - 3 ) d \theta  + O(\delta^2)
			\\
			=& - \frac{2}{3} \pi  \tilde{A} A R_{11} 
			\left( \frac{4}{3} - 2 \right) + \frac{\pi}{3} \tilde{A} B
			(R_{22} - R_{33} ) \left( \frac{4}{3} - 6 \right) + O(\delta^2)
			\\ 
			=& \frac{4}{9} \pi \tilde{A} A R_{11} 
			- \frac{14}{9} \pi \tilde{A} B (R_{22} - R_{33} ) + O(\delta^2). 
		\end{aligned}
	\]
Thus we get 
	\[
		- \int_{\SA} (1 - \chi_\delta) e_1(h_{n1}) \Delta_{\SA} \psi_0 d \sigma 
		= \tilde{A} 
		\left\{ - \frac{4}{9} \pi A R_{11} + \frac{14}{9} \pi B (R_{22} - R_{33}) \right\} 
		+ O(\delta^2). 
	\]

\bigskip

Combining \eqref{eq:H-dot-2}, we have 
	\[
		\int_{S^{2}_{\tilde A}} ( 1 - \chi_{\delta}) 
		F  \Delta_{S^{2}_{\tilde A}} \psi_0 d \sigma 
		= 4 \pi  \tilde{A} B (R_{22} - R_{33} ) + O(\delta^{2})
	\]
and we complete the proof. 
\end{proof}

\begin{proof}[Proof of Proposition \ref{p:PV}]
From Lemmas \ref{l:H-Ric} and \ref{l:H-dot-psi0}, we have 
	\[
		\begin{aligned}
			\int_{S^{2}_{\tilde A}} 
			( 1 - \chi_{\delta} ) \left\{
			F \Delta_{S^{2}_{\tilde A}} \psi_{0}  
			+ H_{S^{2}_{\tilde A}}\Ric_{P} ( n_0, n_0 ) \psi_{0} \right\} d \sigma
			&=  4 \pi  \tilde{A} B (R_{22} - R_{33} ) + \frac{4}{3} \pi \tilde{A} B
			(R_{22} - R_{33} ) + O(\delta^{2}) 
			\\
			&= \frac{16}{3} \pi \tilde{A} B (R_{22} - R_{33}) 
			+ O(\delta^{2}),
		\end{aligned}
	\]
which gives \eqref{eq:PV}. 
Thus we completed the proof. 
\end{proof}

\subsection{Appendix II: study of  $\mathcal{F}(P,R)$}
In this appendix we prove the following  result, which guarantees condition $(ND2)$ in the introduction.
\begin{pro}\label{p:FNonDeg} Let $S$ be a symmetric bilinear form on $\R^3$, and denote by $\a_1,\a_2,\a_3$ the eigenvalues  corresponding to the eigenvectors $\bbe_1, \bbe_2, \bbe_3$. Consider the function $F:SO(3)\to \R$ defined by 
\begin{equation}\label{eq:defFS}
F(R):=S(R\bbe_2, R \bbe_2)-S(R \bbe_3, R \bbe_3).
\end{equation}
 Then $F$ is a Morse function if and only if the eigenvalues are distinct: $\a_i \neq \a_j$ for $i\neq j$.
 \\ In this case $F$ has exactly $24$ critical points $\{R_{(ij)}\}$ satisfying  $R\bbe_2=\pm \bbe_i$, $R \bbe_3=\pm \bbe_j$ for $i,j=1,2,3$ with $i\neq j$ and the eigenvalues of the Hessian  $\nabla^2 F(R_{(ij)})$  are $\a_k-\a_i, 2(\a_j-\a_i),\a_j-\a_k$, where $\{k\}=\{1,2,3\}\setminus\{i,j\}$, and $F(R_{(ij)})=\a_i-\a_j$.
 In particular
 \begin{itemize}
 \item $F$ has exactly 4 critical points of index 3 given by $\{R_{(ij)}\}$ with $i=\pm 3$ and $j=\pm 1$. They all satisfy $F(R_{(ij)})>0$.
 \item $F$ has exactly 8 critical points of index 2 given by $\{R_{(ij)}\}$ with $i=\pm 3$ and $j=\pm 2$, or $i=\pm 2$ and $j=\pm 1$. They all satisfy $F(R_{(ij)})>0$.
 \item $F$ has exactly 8 critical points of index 1 given by $\{R_{(ij)}\}$ with $i=\pm 2$ and $j=\pm 3$, or $i=\pm 1$ and $j=\pm 2$. They all satisfy $F(R_{(ij)})<0$.
 \item $F$ has exactly 4 critical points of index 0 given by $\{R_{(ij)}\}$ with $i=\pm 1$ and $j=\pm 3$. They all satisfy $F(R_{(ij)})<0$.
 \end{itemize}
\end{pro}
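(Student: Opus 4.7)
Because $S$ is symmetric, one can assume without loss of generality that $\{\bbe_1,\bbe_2,\bbe_3\}$ is itself an orthonormal basis of eigenvectors of $S$, so that in these coordinates $S=\mathrm{diag}(\alpha_1,\alpha_2,\alpha_3)$. Writing the columns $v_k:=R\bbe_k$ of $R\in SO(3)$, one has the clean expression
\[
F(R)=S(v_2,v_2)-S(v_3,v_3)=\sum_{l=1}^{3}\alpha_l\bigl(\langle v_2,\bbe_l\rangle^2-\langle v_3,\bbe_l\rangle^2\bigr).
\]

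\textbf{Critical point analysis.} The plan is to differentiate along left-invariant vector fields. For $X\in so(3)$, setting $R(t):=R\exp(tX)$ gives $\dot v_k=RX\bbe_k$ at $t=0$, and hence
\[
\tfrac{d}{dt}\bigl|_{t=0}F(R\exp(tX))=2S(RX\bbe_2,v_2)-2S(RX\bbe_3,v_3).
\]
Testing against the standard basis $\{X_{12},X_{13},X_{23}\}$ of $so(3)$ (with $X_{ab}\bbe_a=\bbe_b$, $X_{ab}\bbe_b=-\bbe_a$, $X_{ab}\bbe_c=0$ for $c\neq a,b$) produces three gradient equations equivalent to
\[
S(v_1,v_2)=S(v_1,v_3)=S(v_2,v_3)=0.
\]
Since $\{v_1,v_2,v_3\}$ is orthonormal and $S$ is diagonal with distinct eigenvalues in the eigenbasis $\{\bbe_l\}$, these vanishings force each $v_k$ to be an eigenvector of $S$; equivalently $v_k=\varepsilon_k\bbe_{\pi(k)}$ for a permutation $\pi$ of $\{1,2,3\}$ and signs $\varepsilon_k\in\{\pm1\}$. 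Requiring $\det R=1$ selects exactly $24$ out of the $48$ signed permutations. Parametrising by the pair $(i,j)$ with $v_2=\pm\bbe_i$, $v_3=\pm\bbe_j$ (and $v_1=v_2\times v_3$), one immediately reads off $F(R_{(ij)})=\alpha_i-\alpha_j$.

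\textbf{Hessian computation.} By a left-translation I reduce to computing the Hessian at $R_0=I$ after replacing $S$ with its pullback $\widetilde S(\cdot,\cdot):=S(R_0\,\cdot\,,R_0\,\cdot\,)$, which is still diagonal in $\{\bbe_l\}$ but with entries permuted to $(\alpha_k,\alpha_i,\alpha_j)$, where $k$ is the remaining index. Substituting $R=\exp(t_1X_{12}+t_2X_{13}+t_3X_{23})$, using the expansion $\exp(A)\bbe=\bbe+A\bbe+\tfrac12 A^2\bbe+O(|A|^3)$ and collecting quadratic terms yields, after a direct but bookkeeping-heavy calculation,
\[
F=(\alpha_i-\alpha_j)+(\alpha_k-\alpha_i)t_1^{\,2}+(\alpha_j-\alpha_k)t_2^{\,2}+2(\alpha_j-\alpha_i)t_3^{\,2}+O(|t|^3).
\]
Thus the Hessian at $R_{(ij)}$ is diagonal with eigenvalues $\alpha_k-\alpha_i$, $\alpha_j-\alpha_k$ and $2(\alpha_j-\alpha_i)$, exactly as claimed. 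Ordering $\alpha_1<\alpha_2<\alpha_3$, the indices and signs of $F$ at each of the six possible pairs $(i,j)$ are read off from the signs of these three numbers; multiplying by the $4$ independent sign choices of $v_2,v_3$ gives precisely the $4+8+8+4$ distribution and the sign alternation of $F$ asserted in the proposition.

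\textbf{Necessity of distinct eigenvalues.} Conversely, if $\alpha_i=\alpha_j$ for some $i\neq j$, then any rotation in the corresponding $2$-dimensional eigenspace of $S$ leaves $F$ invariant; this generates a positive-dimensional critical set on which one of the Hessian eigenvalues above vanishes, so $F$ cannot be Morse. The only genuinely technical step is the second-order expansion producing the quadratic form above: it is routine but must be carried out with care because the $\tfrac12 A^2\bbe_k$ terms contribute nontrivially to the coefficient of $t_3^{\,2}$, which is the mechanism responsible for the distinguished factor of $2$ in the third Hessian eigenvalue.
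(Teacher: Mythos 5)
Your argument is correct, but it follows a genuinely different route from the paper. The paper works \emph{extrinsically}: it parametrizes $SO(3)$ by the six coordinates of $R\bbe_2$ and $R\bbe_3$ in the eigenbasis of $S$, regards $SO(3)$ as the submanifold of $\R^6$ cut out by three quadratic constraints, and applies the Lagrange-multiplier method --- first showing the cross-multiplier $\nu$ vanishes, then solving the resulting nine-equation system for the $24$ critical points, and finally computing the constrained Hessian of the Lagrangian at the model point $R_{(23)}=\mathrm{Id}$ before relabelling. You work \emph{intrinsically} with the Lie-group structure: differentiating along $t\mapsto R\exp(tX)$ for $X\in\mathfrak{so}(3)$ yields the coordinate-free first-order condition $S(v_a,v_b)=0$ for $a\neq b$, i.e.\ that the orthonormal frame $\{v_k\}=\{R\bbe_k\}$ diagonalizes $S$, which under the distinctness hypothesis immediately identifies the critical points as the $24$ signed permutation rotations; the Hessian is read off from the second-order Taylor expansion of $\exp$, where, as you note, the $\tfrac12 A^2$ terms generate the factor of $2$ in the third eigenvalue. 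I checked the expansion and both computations yield the same quadratic form $(\a_k-\a_i)t_1^2+(\a_j-\a_k)t_2^2+2(\a_j-\a_i)t_3^2$. Your approach is cleaner and avoids multipliers altogether; the paper's is more elementary multivariable calculus. One small caveat on the converse direction: when $\a_i=\a_j$, the clean logic (which the paper also uses implicitly) is that the stabilizer $G\subset SO(3)$ of $S$ acts by left translations preserving $F$, so it moves any critical point (which exists by compactness, or which can be exhibited explicitly as a signed permutation) along a one-parameter family of critical points --- already incompatible with Morse. Your phrase ``one of the Hessian eigenvalues above vanishes'' is a red herring, since that Hessian formula was derived only at the discrete critical points of the non-degenerate case; the positive-dimensional critical set is what actually ends the argument.
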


\begin{proof}
First of all let us show that if $S$ has multiple eigenvalues then $F$ cannot be Morse. Up to relabelling we can assume $\a_1=\a_2$. Let $\bar{R}\in SO(3)$ be the rotation such that $\bar{R} \bbe_2=\bbe_1$ and $\bar{R} \bbe_3=\bbe_2$; then for every rotation $R_\theta$, $\theta \in S^1$, with axis $\bbe_3$ (i.e. a rotation of the plane spanned by $\bbe_1$ and $\bbe_2$) we have
$$F(R_\theta \circ \bar{R})=S(R_\theta \circ \bar{R} \bbe_2)-S(R_\theta \circ \bar{R} \bbe_3)=S(R_\theta \bbe_1)-S(R_\theta \bbe_2)=\a_1-\a_1=0.$$ 
Since $F$ is constant on a one-dimensional submanifold it cannot be Morse. 

From now on we therefore assume $\a_i\neq \a_j$ for $i\neq j$.
Throughout the proof all  the vectors and matrices of $\R^3$ will be expressed in coordinates with respect to  the basis $(\bbe_1, \bbe_2, \bbe_3)$ of eigenvectors of $S$.
Notice that a rotation $R\in SO(3)$ is uniquely  determined by the coordinates $(x_1,x_2,x_3)$ of $R \bbe_2$ and by the coordinates $(x_4,x_5,x_6)$ of  $R \bbe_3$, note also that such coordinates satisfy the following  non degenerate system of three constraints:
\begin{equation}\label{eq:constr}
\left\{\sum_{i=1}^3 x_i^2=1, \; \sum_{j=4}^6 x_j^2=1, \; \sum_{i=1}^3 x_i x_{i+3}=0 \right\} .
\end{equation}
Therefore finding a critical point of $F:SO(3)\to \R$ is equivalent to finding critical points of the corresponding function defined on $\R^6$ under the constraints \eqref{eq:constr} which, by the Lagrange multipliers rule, is in turn equivalent to look for free  critical points, in $x\in \R^6$, of the Lagrange function
\begin{equation}\label{eq:defLagrange}
L(x_1, \ldots, x_6, \l, \mu, \nu):= \sum_{i=1}^3 \a_i(x_i^2-x_{i+3}^2)-\l\sum_{i=1}^3 x_i^2- \mu \sum_{i=4}^6 x_i^2-\nu\sum_{i=1}^3 x_i x_{i+3}.
 \end{equation}
This corresponds to solving the following system of nine equations in $(x_1, \ldots, x_6, \l, \mu, \nu)$. 
Notice that the first six equations are linear and correspond to the optimization of $L$ in $x$, 
the last three equations are quadratic and correspond to the constraints \eqref{eq:constr}:
\begin{equation}\label{eq:CSys}
\left\{
\begin{array}{l}
2(\a_i -\l) x_i - \nu x_{i+3}=0, \quad i=1,2,3 \\[1mm] 
\nu x_i +2 (\mu+\a_i) x_{i+3}=0 \quad i=1,2,3 \\ [1mm] 
\sum_{i=1}^3 x_i^2=1, \; \sum_{j=4}^6 x_j^2=1, \; \sum_{i=1}^3 x_i x_{i+3}=0 .
\end{array}
\right.
\end{equation}

		As the first step, we show $\nu = 0$. 
Let $x=(x_1,\ldots,x_6)$ satisfy \eqref{eq:CSys}. Then 
it follows from \eqref{eq:CSys} that 
	\[
		\begin{aligned}
			\nu &= \sum_{i=1}^3 \nu x_{i+3}^2 
			= \sum_{i=1}^3 2 ( \alpha_i - \lambda) x_i x_{i+3} 
			= 2 \sum_{i=1}^3 \alpha_i x_i x_{i+3}, 
			\\
			\nu &= \sum_{i=1}^3 \nu x_i^2 
			= \sum_{i=1}(-2) (\mu + \alpha_i) x_i x_{i+3} 
			= - 2 \sum_{i=1}^3 \alpha_i x_i x_{i+3}.
		\end{aligned}
	\]
Summing these two equations, we obtain $\nu = 0$.

		Since we know $\nu = 0$, 
by the assumptions $\alpha_i \neq \alpha_j$ for $i \neq j$, 
it is immediate to check that the solutions of \eqref{eq:CSys} are given by 
	\begin{equation}\label{eq:solCP}
		\{(x_1,\ldots, x_6) \; : \; 
		x_i=\pm 1, x_j=\pm 1, x_k=0, \l=\a_i, \mu=-\a_j, \nu=0 \},
	\end{equation}
for exactly one $i \in \{1,2,3\}$, one $j \in \{4,5,6\}$ with 
$j - 3 \neq i$ and  for all $k \in \{1,\cdots,6\} \setminus \{i,j\}$. 
Notice that these 24 solutions correspond to the rotations $R_{(ij)}\in SO(3)$ 
described in the statement of the proposition.

In order to know the index of these 24 critical points, observe that it is enough to perform a second order analysis at $R=Id=R_{(23)}\in SO(3)$: indeed,  the index of $F$ at $R_{(ij)}$ is the same as $F\circ R_{(ij)}$ at $Id$, so the general case just follows by a suitable relabelling of the indices.

By using \eqref{eq:solCP},  at the critical point $R=R_{(23)}=Id$ the Lagrange function \eqref{eq:defLagrange} takes the form
$$ 
L(x, \l=\a_2, \mu=-\a_3, \nu=0)= (\a_1-\a_2)x_1^2+(\a_3-\a_2) x_3^2+(\a_3-\a_1)x_4^2+(\a_3-\a_2)x_5^2.
$$ 
 Since  $v\in \R^6$ is tangent to the constraints \eqref{eq:constr} at $\bar{x}=(0,1,0,0,0,1)$ if and only if it has the form $v=(v_1,0,v_3, v_4,-v_3,0)$, the  Hessian in $x$ of $L$ on the tangent space to the constraint manifold  at $\bar{x}$ is 
 $$\n_x^2 L(\bar{x},\l=\a_2, \mu=-\a_3, \nu=0)[v]=(\a_1-\a_2) v_1^2+ 2(\a_3-\a_2) v_3^2+(\a_3-\a_1) v_4^2. $$
 But such  a constrained Hessian corresponds to the Hessian of $F$ at $R_{(23)}=Id$: $\nabla^2 F(R_{(23)})$. It follows that $\nabla^2 F(R_{(23)})$ is non degenerate if and only if $\a_i\neq \a_j$ for $i\neq j$, that the eigenvalues of $\nabla^2 F(R_{(23)})$ are $\a_1-\a_2, 2(\a_3-\a_2), \a_3-\a_1$. Moreover, assuming $\a_1<\a_2<\a_3$,    the index of $\nabla^2F(R_{(23)})$ is  one and $F(R_{(23)})=\a_2-\a_3<0$. As mentioned above, the second order analysis of $F$ at the general critical point $R_{(ij)}$ follows then by a relabelling argument. 
 \end{proof}

\end{document}